\numberwithin{equation}{section}
\theoremstyle{plain}
\newtheorem{theorem}{Theorem}[section]
\newtheorem{lemma}[theorem]{Lemma}
\newtheorem{corollary}[theorem]{Corollary}
\newtheorem{proposition}[theorem]{Proposition}
\theoremstyle{definition}
\newtheorem{remark}[theorem]{Remark}
\newcommand{\Complex}{ \mathbb{C} }
\newcommand{\Fourier}{ \mathcal{F}}
\newcommand{\FourierInverse}{ \mathcal{F}^{ - 1 } }
\newcommand{\Integer}{ \mathbb{Z} }
\newcommand{\Real}{ \mathbb{R} }
\newcommand{\RealPart}{ \mathrm{Re} }
\title{Modeling and Mathematical Analysis of the Clogging Phenomenon in Filtration Filters Installed in Aquaria}
\author{Ken Furukawa\thanks{RIKEN Cluster for Pioneering Research (CPR), RIKEN, Hyogo 650-0047, Japan, \texttt{ken.furukawa@riken.jp}} \and Hiroyuki Kitahata\thanks{Graduate School of Science, Chiba University, Chiba 263-8522, Japan, \texttt{kitahata@chiba-u.jp}}}
\date{}
\begin{document}

\maketitle

\abstract{
    This paper proposes a mathematical model for replicating a simple dynamics in an aquarium with two components; bacteria and organic matter.
    The model is based on a system of partial differential equations (PDEs) with four components: the drift-diffusion equation, the dynamic boundary condition, the fourth boundary condition, and the prey-predator model.
    The system of PDEs is structured to represent typical dynamics, including the increase of organic matter in the aquarium due to the excretion of organisms ($e.g$. fish), its adsorption into the filtration filter, and the decomposition action of the organic matter both on the filtration filter and within the aquarium.
    In this paper, we prove the well-posedness of the system and show some results of numerical experiments.
    The numerical experiments provide a validity of the modeling and demonstrate filter clogging phenomena.
    We compare the feeding rate with the filtration performance of the filter.
    The model exhibits convergence to a bounded steady state when the feed rate is reasonable, and grow up to an unbounded solution when the feeding is excessively high.
    The latter corresponds to the clogging phenomenon of the filter.
}

%\begin{align}
%    \begin{array}{ccc}
%        a
%        & =
%        & \frac{b}{c} \\ [5mm]
%        a
%        & =
%        & \frac{b}{c}
%    \end{array}
%\end{align}
\section{Introduction}
\subsection{Background}

An aquarium is an artificial reproduction of an aquatic ecosystem within a confined space.
Within the water, living organisms such as fish reside, and the ecosystem is maintained by the balance between the metabolic waste produced by these organisms and bacteria that decompose and neutralize it as food.
To enhance this filtration process, aquariums are equipped with filtration filters.
These filters contain filter media, which serve as homes for bacteria, allowing more bacteria to thrive within them than in the open water.
Through the action of pumps installed in the filtration filter, water is drawn from the aquarium, passes through the filter, and then returns into the aquarium as clean water, thus accelerating the filtration process.
This process is crucial for maintaining the aquarium's ecosystem because the quantity of bacteria needed to decompose the metabolic waste produced by the living organisms is often insufficient in the aquarium water alone.

Besides an aquarium, the filtration to fluid such as water and air is used in various aspects of our daily lives, such as wastewater treatment and air purification.
In this paper, we consider a simple model based on partial differential equations (PDE), considering only the filtration for dust of organic matter in an aquarium and its decomposition by bacteria.
We present a mathematical analysis and some numerical examples of the model.

In the field of water filtration, the activated sludge model was proposed, see the book of Gujer $et$ $al.$ \cite{GujerHenzeLoosedrechtMino2006}.
This mathematical model finds application in wastewater treatment.
It describes the intricate decomposition processes of organic matter in water from both microbiological and chemical perspectives.
It is employed in various sewage treatment processes.
The model introduced in this paper, different from the activated sludge model, is based on a simple predator-prey relationship where microbes(predators) decompose organic matter(prey).
The filtration is characterized by using boundary conditions.
Complex interactions model between microbes and chemical substances are not utilized.
The boundary conditions we employ are dynamic boundary conditions and fourth boundary conditions, see the book of Schm\"{u}dge \cite{Schmudgen2012} Example 14.10.

Dynamic boundary conditions include time derivatives $\partial_t$ in the equations that the boundary unknowns satisfy.
The fourth boundary condition differs from the well-known the Dirichlet boundary condition (the first boundary condition), the Neumann boundary condition (the second boundary condition), and the Robin boundary condition (the third boundary condition).
The fourth boundary condition involves interactions between values at two boundaries.

\subsection{Model and problem}
The overview of the two-dimensional aquarium that we consider is described in Fig. \ref{fig_filter_abstracts}.
Note that, although the figure is two-dimensional for better understanding, we consider the one-dimensional aquarium in this paper for simplicity.
%Moreover, we do not consider the motion of fish in the aquarium.
Moreover, we do not take into account the volume of fish and the metabolic waste they produce.
There are two types of particles in the aquarium: dust of organic matter and bacteria.
The left and right sides of the one-dimensional aquarium are equipped with filters and pumps to circulate the water.
Water is sucked from the right filter, and an equal amount of water is expelled from the left side.
During this process, some of the dust and bacteria contained in the water passing through the left and right filters are absorbed into the filters.
The water that returns from the left side becomes clean.
It is reasonable to assume that the water flow through the pump decreases monotonically with the amount of dust in the filter is reasonable.
When there is no dust in the filter, the pump performance is at its maximum.
Conversely, if there is an infinite amount of dust in the filter, the water flow will be reduced to zero.
This case implies the clogging of the filter.
In this cycle, the water in the aquarium is filtered.
The system should {describe} simple mathematical and physical dynamics of filter clogging phenomena in a one-dimensional mathematically idealized aquarium where fish live.
The variables $u_1$ and $u_2$ correspond to dust or metabolite from fish and predator of dust or metabolite in the aquarium, respectively.
The variables $\rho_1$ and $\rho_2$ correspond to dust and predator of dust on the filtration filter which is identified as the boundary, respectively.

%In our PDE-based model, we consider the following actions and interactions:
%\begin{itemize}
%    \item Drift-diffusion equation for microbes and micro dust
%    \item Inflow of micro dust from external sources
%    \item Prey-Predator model within the interior of the aquarium and at the boundary of the filtration filter
%    \item Garbage does not reproduce on its own
%    \item There is a limit to the carrying capacity of microorganisms
%    \item Filtration at the boundary.
%\end{itemize}

\begin{figure*}[ht]
    \centering
    \includegraphics[width=0.8\textwidth]{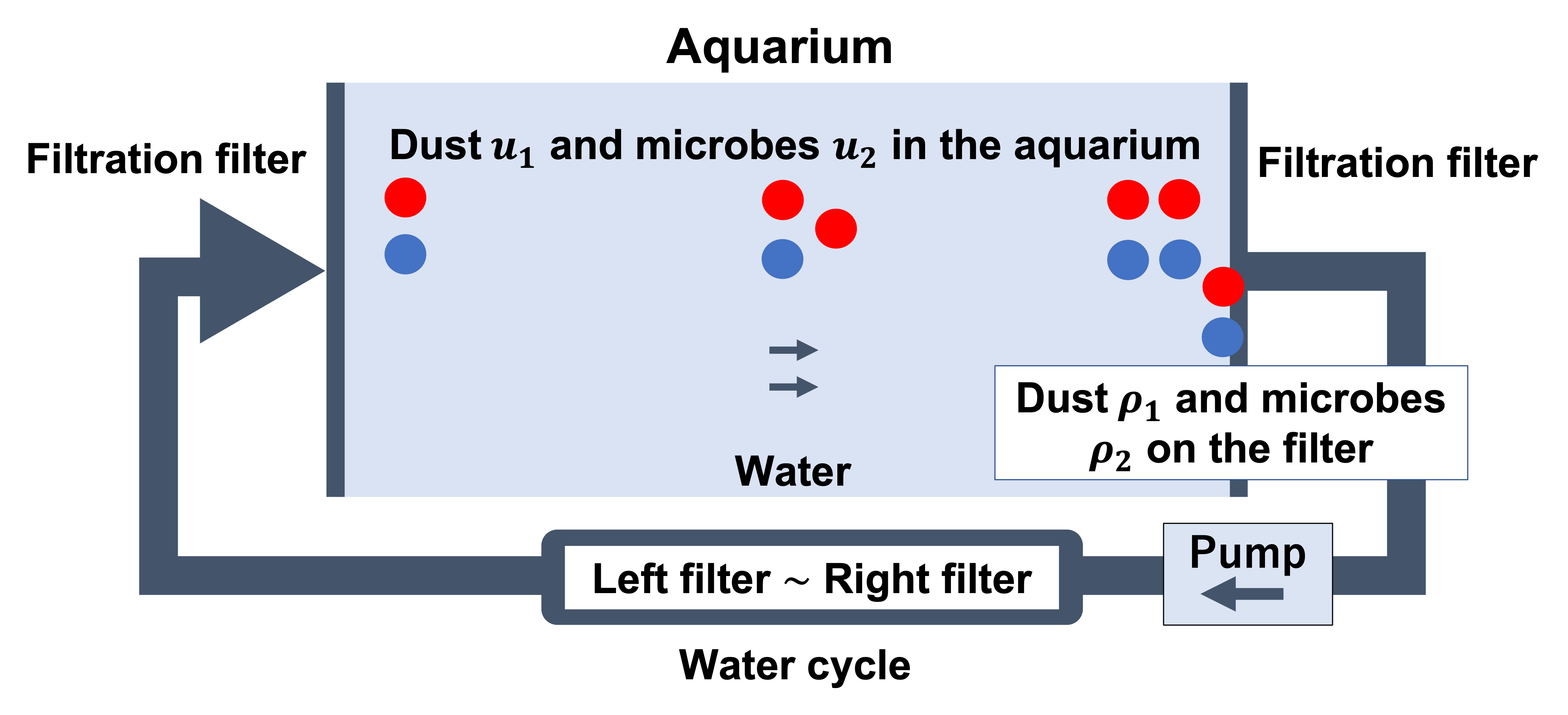}
    \caption{Outline of filtering system of an aquarium.
    There are dust and bacteria in the aquarium.
    Both dust and bacteria are taken in from the right boundary by the action of pumps within the filtration filters installed on both sides.
    A proportion in the filtration filter is absorbed, and the portion not absorbed is expelled from the left side.}
    \label{fig_filter_abstracts}
\end{figure*}

Let $I = (-1,1)$ and $\partial I = \{ x = \pm 1\}$, $\alpha \in [0, 1)$.
Let $\gamma_\pm \varphi = \varphi(\pm 1)$ be the boundary trace operator.
The model of the proposed system of PDE is given as below:
\begin{equation} \label{eq_filter_clogging}
    \begin{split}
        \begin{aligned}[t]
            &\partial_t u_1 - \nu_1 \partial_x^2 u_1 + c(\rho_1) \partial_x u_1
            = - \frac{R_1 u_1}{A + u_1} u_2
            + f,
            & x \in I,
            & \, t >0,\\
            &\partial_t u_2 - \nu_2 \partial_x^2 u_2 + c(\rho_1) \partial_x u_2
            = \left(
                \frac{R_2 u_1}{A + u_1} - \frac{u_2}{C_u}
            \right) u_2,
            & x \in I,
            & \, t >0,\\
            &B_1(u_1; F(\rho_1))
            = 0, \quad 
            B_2(u_1; F(\rho_1))
            =0,
            & x \in \partial I,
            & \, t >0,\\
            &B_1(u_2; F(\rho_1))
            = 0, \quad
            B_2(u_2; F(\rho_1))
            =0,
            & x \in \partial I,
            & \, t >0,\\
            &\frac{d\rho_1}{dt}
            = - \frac{S_1 \rho_1}{B + \rho_1} \rho_2
            + c(\rho_1) F(\rho_1) \gamma_+ u_1,
            & %x \in \partial I,
            & \, t >0,\\
            & \frac{d\rho_2}{dt}
            = \left(
                \frac{S_2 \rho_1}{B + \rho_1}
                - \frac{\rho_2}{C_\rho}
            \right) \rho_2
            + c(\rho_1) F(\rho_1) \gamma_+ u_2,
            & %x \in \partial I,
            & \, t >0,\\
            &c
            = P(\rho_1),
            &%x \in \partial I,
            & \, t >0,\\
        \end{aligned}
    \end{split}
\end{equation}
where $B_1, B_2$ are boundary operators defined by
\begin{align}
    B_1(u; \theta)
    &= (1 - \theta) \gamma_+ u - \gamma_- u, \label{eq_B1} \\
    B_2(u; \theta)
    &= \gamma_+ \partial_x u - (1 - \theta) \gamma_- \partial_x u. \label{eq_B2}
\end{align}
The prey-predator models in (\ref{eq_filter_clogging}), which correspond to first and second equations for the interior dynamics and fifth and sixth equations for the boundary dynamics in (\ref{eq_filter_clogging}), mean effects such that
\begin{itemize}
    \item Drift-diffusion for microbes and dust within the aquarium (for the first and second equations).
    \item Inflow of dust from external sources (for the first equation).
    \item The dust does not reproduce on its own (for the first and fifth equations).
    \item Prey-predator model within the interior of the aquarium and at the boundary of the filtration filter (for the first, second, fifth, and sixth equations).
    \item There is a limit to the carrying capacity of microbes (for the second and sixth equations).
    \item Filtration at the boundary (for the third and fourth equations).
\end{itemize}
The filtration function $F: \Real_{\geq 0} \rightarrow (0, 1]$ with the filter capacity constant $\beta$ is given by
\begin{equation}
    F(s)
    = \frac{1}{1 + \beta s}, \quad
    s \geq 0
\end{equation}
Note that the function $F$ chosen as above is not unique option, the inversely proportional relationship corresponds to the simplest case.
The function $F$ equals to $1$ when the filter is completely clean, however it means that the water is completely filtered, and if the filter is completely clogged, it means that no filtration occurs at all. 
We assume that the velocity $c$ is a function of $\rho_1$.
Furthermore, it is natural to assume that $c$ should tend to zero as $\rho_1 \rightarrow \infty$ and should attain maximum at $\rho_1=0$.
From these observations, the velocity $c$ is given by
\begin{align}
    c = c(\rho_1) = \Omega F(\rho_1) 
\end{align}
for some positive constant $\Omega$.
In this paper, to consider a simplified scenario, it is reasonable to assume that the filtration efficiency $F$ of the filter and the velocity $c$ of the flow are directly proportional.

We consider the boundary condition (\ref{eq_B1}).
The dusty water taken into the right boundary $\{ x = 1 \}$, and then the clean water is pushed out from the left boundary $\{ x = - 1 \}$.
The boundary condition $B_1(u; \theta) = 0$ is equivalent to
\begin{align*}
    \gamma_- u
    = \gamma_+ u - \theta \gamma_+ u.
\end{align*}
In this scenario the dust or predator is absorbed into the filter from the right with the rate $\theta>0$, and then the rest is pushed into the aquarium as cleaner water. 

The absorbed quantity accumulates in the filter and appears at the second term of the right-hand side of the fifth and sixth equations of (\ref{eq_filter_clogging}).
In order to understand the condition (\ref{eq_B2}) we begin by explanation for a simple case.
When $\theta=0$ the boundary conditions (\ref{eq_B1})-(\ref{eq_B2}) correspond to the periodic boundary condition, indicating that there is no filtration effect on the filter.
When $\theta=1$ the boundary conditions (\ref{eq_B1})-(\ref{eq_B2}) correspond to the Dirichlet-Neumann boundary condition for $x=-1$ and $x=1$, respectively, signifying that the filtration efficiency reaches its maximum, and the quantity absorbed from the boundary is entirely transferred into the filter.
Moreover, the pair of the boundary conditions (\ref{eq_B1})-(\ref{eq_B2}) serves as a mathematical requirement to make the Laplace operator $- \partial_x^2$ associated with the domain
\begin{align*}
    D(-\partial_x^2)
    = \{
            \varphi \in H^2(I)
        \,:\,
        B_1(\varphi; \theta)
        = B_2(\varphi; \theta)
        = 0
    \}
\end{align*}
as non-negative and self-adjoint.
It can be seen from the formula
\begin{align*}
    -\int_I 
        \partial_x^2 \varphi(x) \varphi(x)
    dx
    = \int_I 
        \vert
            \partial_x \varphi(x)
        \vert^2
    dx
    \geq 0.
\end{align*}
See also Lemma 3.2.1 in the book by Sohr \cite{Sohr2001}.
Therefore, it can be understood that the boundary conditions (\ref{eq_B1})-(\ref{eq_B2}) represent intermediate boundary conditions from the periodic boundary condition ($\theta=0$) to the Dirichlet-Neumann boundary condition ($\theta=1$) so that the Laplace operator $-\partial_x^2$ is non-negative self-adjoint.
A virtue of the model is replicating filtration by the combination of the forth boundary condition and the dynamic boundary condition.
This analogy should be applicable to a variety of other scientific and engineering disciplines.
For example, it may be applicable to the challenges of controlling substances in the blood by considering organs as boundaries.
It could also be applied to fluid purification problems, such as the targeted removal of specific substances such as toxins and salts from the air and water.
The dimensionless equations to (\ref{eq_filter_clogging}) are such that
\begin{equation} \label{eq_filter_clogging_nondimensional}
    \begin{aligned}
        &\partial_t v_1
        - \nu_1 \partial_x^2 v_1
        + \tilde{c}(\sigma_1) \partial_x v_1
        = - \frac{\tilde{R}_1}{1 + v_1} v_1 v_2 + \tilde{f}
        & x \in I,
        & \, t >0,\\
        &\partial_t v_2
        - \nu_2 \partial_x^2 v_2
        + \tilde{c}(\sigma_1) \partial_x v_2
        = \left(
            \frac{R_2 v_1}{1 + v_1}
            - v_2
        \right) v_2
        & x \in I,
        & \, t >0,\\
        &B_1(v_1; \tilde{F}(\sigma_1))
        = 0, \quad 
        B_2(v_1; \tilde{F}(\sigma_1))
        =0,
        & x \in \partial I,
        & \, t >0,\\
        &B_1(v_2; \tilde{F}(\sigma_1))
        = 0, \quad
        B_2(v_2; \tilde{F}(\sigma_1))
        =0,
        & x \in \partial I,
        & \, t >0,\\
        & \frac{d\sigma_1}{dt}
        = - \frac{\tilde{S}_1}{1 + \sigma_1} \sigma_1 \sigma_2
        + Q_1 \, \tilde{c}(\sigma_1) \tilde{F}(\sigma_1) \gamma_+ v_1,
        & %x \in \partial I,
        & \, t >0,\\
        & \frac{d\sigma_2}{dt}
        = \left(
            \frac{S_2 \sigma_1}{1 + \sigma_1}
            - \sigma_2
        \right) \sigma_2
        + Q_2 \tilde{c}(\sigma_1) \tilde{F}(\sigma_1) \gamma_+ v_2,
        & x \in \partial I,
        & \, t >0,\\
        &\tilde{c}
        = \Omega \tilde{F}(\sigma_1),
        & %x \in \partial I,
        & \, t >0,\\
    \end{aligned}
\end{equation}
where
\begin{align*}
    & u_1
    = A v_1, \quad
    u_2
    = C_u v_2, \\
    & \rho_1
    = B \sigma_1, \quad
    \rho_2
    = C_\rho \sigma_2, \\
    & \tilde{R}_1
    = \frac{R_1 C_u}{A}, \quad
    \tilde{S}_1
    = \frac{S_1 C_\rho}{B}, \quad
    Q_1 = \frac{A}{B}, \quad
    Q_2 = \frac{C_u}{C_\rho}\\
    &\tilde{F}(\sigma_1)
    = F(B \sigma_1)\\
    & \tilde{f}
    = \frac{f}{A}.
\end{align*}

Let $L^p(\Omega)$ be the Lebesgue space for $p \in [1, \infty]$ on $\Omega$ associated with the norm
\begin{align*}
    \Vert
        \varphi
    \Vert_{L^p(\Omega)}
    = \left(
        \int_\Omega
            \vert
                \varphi(x)
            \vert^p
        dx
    \right)^{1/p}.
\end{align*}
We use the standard modification for $p = \infty$.
Let $H^{m, p}$ be the $m$-th Sobolev space for $m \in \Integer_{\geq 0}$ associated with the norm
\begin{align*}
    \Vert
        \varphi
    \Vert_{H^{m,p}(\Omega)}
    = \sum_{\vert \alpha \vert \leq m}
    \Vert
        \partial^\alpha \varphi
    \Vert_{H^{m,p}(\Omega)},
\end{align*}
where $\alpha$ is a multi-index.
We denote by $BC^m(\Omega)$ the space of bounded continuous functions on $\Omega$ such that $m$-th derivatives are also bounded continuous.

There exist several results concerning well-posedness in the Hadamard sense for the Cauchy problem of partial differential equations under dynamic boundary conditions. Hintermann \cite{Hintermann1989} established the well-posedness of partial differential equations of elliptic, parabolic, and hyperbolic types in appropriate Sobolev spaces.
Denk, Pr\"{u}ss, and Zacher \cite{DenkPrussZacher2008} characterized the necessary and sufficient conditions for the well-posedness of $2m$-th order parabolic equations for where $m \in \Integer_{\geq 1}$ under dynamic boundary conditions in the $L^p$-$L^p$ maximal regularity framework.
The first author and Kajiwara \cite{FurukawaKajiwara2021} showed sufficient conditions for well-posedness in the $L^p$-$L^q$ maximal regularity setting.
In the book by Pr\"{u}ss and Simonett \cite{PrussSimonett2016}, various results related to the well-posedness of different types of partial differential equations of the parabolic type under dynamic boundary conditions in the $L^p$-$L^q$ maximal regularity settings are considered.
However, these results consider dynamic boundary conditions coupled with the Dirichlet, Neumann, and Robin boundary conditions.
The results are not applicable to our problem (\ref{eq_filter_clogging_nondimensional}), primarily because our boundary conditions include the fourth boundary condition.
The first main result of this paper is
\begin{theorem} \label{thm_main_theorem}
    Let $f \in C([0,T]; BC(I))$ be a non-negative function satisfying
    \begin{align*}
        f
        & \in H^2_tL^2_x(Q_T)
        \cap H^1_tH^2_x(Q_T)\\
        & \cap C([0,T); H^3(I))
        \cap C^1(0,T; H^1(I)).
    \end{align*}
    Let
    \begin{align*}
        v_{j,0} \in H^3(I),
        v_{j,0} \geq 0, \quad
        \sigma_{j,0}
        >0.
    \end{align*}
    Then there exists a unique solution
    \begin{align*}
        v_j
        \in C^1(0, T; C(\Omega)) \cap C(0,T; C^2(\Omega)), \quad
        \sigma_j
        \in C^1(0,T)
    \end{align*}
    to (\ref{eq_filter_clogging_nondimensional}) such that
    \begin{align} \label{eq_estimate_for_v_j_in_main_theorem}
        \begin{split}
            \sum_{j=1,2}(
                \Vert
                    v_j(t)
                \Vert_{H^2_tL^2_x(Q_T)}^2
                + \Vert
                    v_j(t)
                \Vert_{H^1_tH^2_x(Q_T)}^2
                + \Vert
                    v_j(t)
                \Vert_{L^2_tH^4_x(Q_T)}^2\\
                + \sup_{0<t<T} \Vert
                    v_j(t)
                \Vert_{H^3(I)}^2
                + \sup_{0<t<T} \Vert
                    \partial_t v_j(t)
                \Vert_{H^1(I)}^2
            )
            \leq C
        \end{split}
    \end{align}
    and 
    \begin{align} \label{eq_estimate_for_sigma_j_in_main_theorem}
        \sum_{j=1,2}(
            \sup_{0<t<T} \vert
                \sigma_j(t)
            \vert
            + \sup_{0<t<T} t^{\frac{1}{4} - \delta} \vert
                \sigma_j^\prime(t)
            \vert
            \sup_{0<t<T} t^{\frac{1}{2} -\delta} \vert
                \sigma_j^{\prime\prime}(t)
            \vert
        )
        \leq C
    \end{align}
    for some constant $C = C(f, v_{j,0}, \sigma_{j,0}, T)>0$.
\end{theorem}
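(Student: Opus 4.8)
The plan is to combine a fixed-point argument on a short time interval with a priori estimates that propagate the bounds (\ref{eq_estimate_for_v_j_in_main_theorem})--(\ref{eq_estimate_for_sigma_j_in_main_theorem}) up to the final time $T$. First I would decouple (\ref{eq_filter_clogging_nondimensional}) into a parabolic subsystem for $(v_1,v_2)$, in which the drift coefficient $\tilde c(\sigma_1)=\Omega\tilde F(\sigma_1)$ and the boundary parameter $\tilde F(\sigma_1)$ are regarded as given time-dependent scalars, and an ODE subsystem for $(\sigma_1,\sigma_2)$ driven by the boundary traces $\gamma_+ v_j$. Fixing the reference value $\theta_0=\tilde F(\sigma_{1,0})$, let $A_j=-\nu_j\partial_x^2$ with the fixed domain $D(A_j)=\{\varphi\in H^2(I): B_1(\varphi;\theta_0)=B_2(\varphi;\theta_0)=0\}$, which by the identity after (\ref{eq_B2}) is nonnegative and self-adjoint, hence generates an analytic semigroup and has $L^p$-maximal regularity. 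Given $(\bar v,\bar\sigma)$ in a suitable ball, I solve the linear problem for $v_j$ with source given by the Monod/logistic terms evaluated at $(\bar v,\bar\sigma)$, initial datum $v_{j,0}$, and with the fourth boundary condition rewritten as the inhomogeneous condition $B_i(v_j;\theta_0)=(\tilde F(\bar\sigma_1)-\theta_0)\,g_i(v_j)$, $g_1(v_j)=\gamma_+ v_j$, $g_2(v_j)=\gamma_-\partial_x v_j$; then I solve the linear $\sigma_j$-ODEs with source $Q_j\,\tilde c(\bar\sigma_1)\tilde F(\bar\sigma_1)\gamma_+ v_j$. A fixed point of the map $\Phi:(\bar v,\bar\sigma)\mapsto(v,\sigma)$ solves (\ref{eq_filter_clogging_nondimensional}).

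For the parabolic step I would lift the inhomogeneous fourth boundary condition to a right-hand side by a bounded extension operator into the maximal-regularity class, reducing to an equation for $A_j$ with homogeneous boundary data, and then apply $L^p$-maximal regularity; applying it a second time to the $t$-differentiated equation yields $v_j\in H^2_tL^2_x\cap H^1_tH^2_x\cap L^2_tH^4_x\cap C([0,T];H^3(I))\cap C^1((0,T);H^1(I))$, provided $f$ has exactly the regularity assumed, $v_{j,0}\in H^3(I)$, and the first-order compatibility condition $B_i(v_{j,0};\theta_0)=0$ holds. The factor $\tilde F(\bar\sigma_1)-\theta_0$ in the boundary data and the drift term $\tilde c(\bar\sigma_1)\partial_x v_j$ are of order $<2$ and carry a factor that is small on a short interval (because $\bar\sigma_1(0)=\sigma_{1,0}$ and $\bar\sigma_1$ is continuous), so they are absorbed into the estimate by a Neumann series. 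The ODE step is elementary: the reactions $\tilde S_1\sigma_1\sigma_2/(1+\sigma_1)$ and $(S_2\sigma_1/(1+\sigma_1)-\sigma_2)\sigma_2$ are locally Lipschitz, $\tilde c$ and $\tilde F$ are smooth, and $\gamma_+ v_j$ is controlled by $\|v_j\|_{C([0,T];H^1(I))}$ through the one-dimensional trace embedding. Since every nonlinear map involved is Lipschitz on bounded sets with a constant that can be made small by shrinking the interval, $\Phi$ is a contraction on a small ball; its fixed point is the local solution, and uniqueness on $[0,T]$ follows from a Gronwall estimate on the difference of two solutions.

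To globalize and obtain (\ref{eq_estimate_for_v_j_in_main_theorem})--(\ref{eq_estimate_for_sigma_j_in_main_theorem}) I would establish a priori bounds that do not degrade as the existence time tends to $T$. Testing the $v_j$-equations with the negative parts $v_j^-$ and using that $B_1(v_j;\theta)=B_2(v_j;\theta)=0$ makes the boundary contributions $-\nu_j[\partial_x v_j\,v_j^-]_{\partial I}$ and $-\tfrac{\tilde c}{2}[(v_j^-)^2]_{\partial I}$ nonpositive, while $\tilde f\ge0$ and the reaction terms have the right sign for $v_1,v_2\ge0$, gives $v_j\ge0$, and similarly $\sigma_j>0$. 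Since $-\tilde R_1 v_1 v_2/(1+v_1)\le0$, comparison with the $\tilde f$-forced linear equation bounds $\sup_t\|v_1(t)\|_{L^\infty}$; since $v_1/(1+v_1)\le1$, the $v_2$-equation is dominated by a logistic one and $\sup_t\|v_2(t)\|_{L^\infty}\le\max(\|v_{2,0}\|_{L^\infty},R_2)$; substituting the resulting trace bounds and $\tilde F\le1$ into the $\sigma_j$-ODEs gives at most linear growth of $\sigma_1$ and a logistic bound for $\sigma_2$, hence $\sup_{[0,T]}|\sigma_j|\le C$. In particular $\tilde F(\sigma_1(t))$ stays in a compact subinterval of $(0,1]$, so the fourth boundary problem remains uniformly regular; the same maximal-regularity estimates applied on $[0,T]$ then give (\ref{eq_estimate_for_v_j_in_main_theorem}), and differentiating the $\sigma_j$-ODEs and inserting the trace bounds for $v_j$ and $\partial_t v_j$ produces the (non-sharp) weighted bounds $\sup t^{1/4-\delta}|\sigma_j'|$, $\sup t^{1/2-\delta}|\sigma_j''|$, the weights being harmless remnants of the maximal-regularity trace estimates used before bootstrapping.

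The main obstacle is the parabolic step: unlike the dynamic-boundary-condition results cited in the introduction, here the boundary operator $B_i(\cdot;\tilde F(\sigma_1(t)))$ and hence the domain of the elliptic operator vary with time, so those theorems do not apply directly. The resolution above — freeze the domain at $\theta_0$, use the self-adjointness and nonnegativity from the remark after (\ref{eq_B2}) to get maximal regularity, and treat the $t$-dependence of $\tilde F(\sigma_1)$ as a genuinely lower-order inhomogeneous-boundary perturbation with small coefficient — is where the real work lies, together with checking that the $t$-differentiated, second application of maximal regularity is compatible with the assumed regularity of $f$ and with the compatibility conditions on $v_{j,0}$.
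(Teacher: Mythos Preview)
Your approach differs substantially from the paper's. The paper does not freeze the boundary parameter at $\theta_0$ and absorb the time-dependence by a Neumann series; instead it builds a genuine two-parameter evolution operator for the heat equation with the moving fourth boundary condition via the Kato--Tanabe theory (explicit resolvent formula and estimates for the operator with time-dependent domain), and then derives the $L^2$--$H^3$ energy estimates by hand, introducing an explicit extension operator $E(u;\alpha)$ (Lemma~\ref{lem_extension_theorem}) to homogenize the boundary data that appears after differentiating the boundary conditions in $t$. For the nonlinear problem the paper deliberately separates the two fixed points: $(v_1,v_2)$ is constructed, for a \emph{given} $\sigma_1$, by the Leray--Schauder principle (compactness plus a priori bounds, not contraction), and only afterwards is $(\sigma_1,\sigma_2)$ obtained by a Banach contraction in the weighted space $X_{\sigma,T}$ with norm $\sup|\sigma|+\sup t^{1/4-\delta}|\sigma'|+\sup t^{1/2-\delta}|\sigma''|$. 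The paper even remarks explicitly that it avoids a Banach fixed point for $(v_1,v_2)$ because the evolution operator's dependence on $\sigma_1$ is too opaque to extract the Lipschitz estimates a contraction would require.

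There are two places where your outline would need real work to close. First, your global step ``the same maximal-regularity estimates applied on $[0,T]$'' does not follow from the frozen-domain argument: the Neumann-series absorption of the boundary perturbation $(\tilde F(\sigma_1(t))-\theta_0)$ requires smallness of that factor, which is only available on a short interval near the freezing time; on all of $[0,T]$ you are forced either to re-freeze and patch (which essentially rebuilds the Kato--Tanabe evolution family) or to derive the higher-order bounds by direct energy methods as the paper does in Propositions~\ref{prop_L_infty_H1_estimate_for_v} and Corollary~\ref{cor_H3_estimate_for_v}. Second, the time weights in (\ref{eq_estimate_for_sigma_j_in_main_theorem}) are not ``harmless remnants'': in the paper they are precisely what makes the $\sigma$-contraction close, because $\sigma_j''$ involves $\gamma_+\partial_t v_j$, and $\|\partial_t v_j\|_{L^\infty_tH^1_x}$ carries a factor $T^{-1/2+2\delta}$ for small $T$ that has to be compensated by the weight $t^{1/2-\delta}$ (see the bound (\ref{eq_H3_estimate_for_vj_to_Leray_Schauder}) and its use in the estimates for $\sigma_j''$). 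Treating them as inessential will leave a gap in whichever contraction argument you run.
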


In order to prove Theorem \ref{thm_main_theorem}, we begin by construction to the evolution operator for the equation
\begin{equation} \label{eq_linearized_equation_for_explanation}
    \begin{split}
        \begin{aligned}
            \partial_t u - \partial_x^2 u
            & = f
            & x \in I, \, t > 0,\\
            B_1(u; \theta)
            & = 0,
            & x = \pm 1, \, t > 0,\\
            B_2(u; \theta)
            & = 0,
            & x = \pm 1, \, t > 0,\\
            u
            & = u_0,
            & x \in I, \, t = 0,
        \end{aligned}
    \end{split}
\end{equation}
where $f \in C(0,T; L^q(\Omega))$, $u_0 \in L^q(\Omega)$ $\theta \in C^1(0,T)$.
We define the evolution operator for (\ref{eq_linearized_equation_for_explanation}) such that
\begin{align*}
    & u(t)
    = T(\theta; t, s) u(s)
    + \int_s^t
        T(\theta; t, \tau) f(\tau)
    d\tau, 
    & t > s > 0.
\end{align*}
We show resolvent estimate for the resolvent problem to (\ref{eq_linearized_equation_for_explanation}).
Using the result by Kato and Tanabe \cite{KatoTanabe1962}, we construct $T(t,s)$.
In construction of $T(t,s)$ we use the assumption that $\theta \in C^1(0,T)$.

We next establish a priori estimates for (\ref{eq_linearized_equation_for_explanation}) in $L^2$-framework.
In order to obtain the higher order a priori estimate, we show the existence of a extension operator $E(\theta; u)$ in Lemma \ref{lem_extension_theorem}.
In usual scenario, when we estimate $\sup_{0<t<T} \Vert \partial_t u (t) \Vert_{L^2(I)}$, we apply $\partial_t$ to the boundary condition.
However, the boundary conditions we use are time-dependent.
Since, $\partial_t u$ satisfies
\begin{align*}
    B_1(\theta; \partial_t u)
    & = \theta^\prime \gamma_+ u,\\
    B_2(\theta; \partial_t u)
    & = - \theta^\prime \gamma_- \partial_x u,
\end{align*}
the term on the right-hand side poses an obstacle when performing integration by parts.
we eliminate the reminder terms by the associated extension operator to enable to apply integration by parts, see Lemma \ref{lem_extension_theorem}.
The a priori estimates for $u$ and $\partial_t u$ in $C^1(0,T; H^1(I))$ are used to apply maximal principle to ensure the positivity of the solution.

For the nonlinear problems we construct the solution by the Leray-Schauder principle for $v$ and the Banach fixed point theorem for $\sigma$.
We first construct the solution $(v_1, v_2)$ to the first and second equations in (\ref{eq_filter_clogging_nondimensional}) for a given $\sigma_1 \in C^1(0,T)$ satisfying (\ref{eq_estimate_for_sigma_j_in_main_theorem}).
We consider the slightly generalized case such as (\ref{eq_abstract_filter_clogging_equation_in_the_interior}) to simplify notation.
We show the continuity, compactness, and the boundedness for the solution $v$ to
\begin{align} \label{eq_v_j=lambdaS}
    v_j
    = \lambda \mathcal{S}_{v,j}(v_1, v_2; \sigma_1),
\end{align}
where $\mathcal{S}_{v,j}$ is the solution operator to (\ref{eq_abstract_filter_clogging_equation_in_the_interior}) and $\lambda \in [0,1]$.
In order to show these properties of $v_j$ satisfying (\ref{eq_v_j=lambdaS}), we use a priori estimates for the linear case.
Since we assume sufficient regularity for $v_j$ as in (\ref{eq_estimate_for_v_j_in_main_theorem}), nonlinearity can be treated easily.

We show the existence of the solution to the fifth and sixth equations in (\ref{eq_filter_clogging_nondimensional}) by the Banach fixed point theorem.
In this scenario, $v_1$ and $v_2$ are functions of $\sigma_1$, namely $v_j = v_j(\sigma_1)$.
We prove the solution map is a self-mapping and contractive in the weighted Banach space equipped with the norm as in the left-hand side of (\ref{eq_estimate_for_sigma_j_in_main_theorem}).
The self-mapping property can be seen easily for small $T>0$ from the boundedness of $v_j, \partial_t v_j$ in $BC(0,T; H^1(I))$.
Since the dependence of $\sigma_1$ for $v_j(\sigma_1)$ is complicated, the proof of the contractivity of solution operator for small $T$ is also complicated.
We show this by a direct calculations, namely we estimate the difference
\begin{align*}
    \mathcal{S}_{v,j}(\psi_1, \psi_2; \tau_1)
    - \mathcal{S}_{v,j}(\psi_3, \psi_4; \tau_3)
\end{align*}
by integration by parts for some $\psi_j$ ($j=1,2,3,4$) satisfying (\ref{eq_estimate_for_v_j_in_main_theorem}) and $\tau_1, \tau_3$ satisfying (\ref{eq_estimate_for_sigma_j_in_main_theorem}).
Various methods are known for constructing solutions to partial differential equations. We have some comments regarding potential pitfalls when constructing the solution using certain methods that we do not employ:
\begin{remark}
    \begin{itemize}
        \item There are comprehensive studies of solvability for linear parabolic equations, such as \cite{DenkPrussZacher2008} and \cite{PrussSimonett2016}, but they do not include the fourth boundary conditions case treated in this paper, so their results cannot be used.
        \item We refrain from using the Galerkin approximation to construct the solution $(v_1, v_2)$ to the linearized problem in (\ref{eq_filter_clogging_nondimensional}) because it is difficult to elucidate the dependence on $t$ and $\sigma_j$ for the eigenvalues and eigenfunctions of the one-dimensional Laplace operator associated with the boundary conditions (\ref{eq_B1})-(\ref{eq_B2}).
        In the usual scenario for constructing the solution using the Galerkin approximation, the approximated solution is expanded by the eigenfunctions, and the problem is reduced to the existence of the solution to an ordinary differential equation.
        However, when the boundary conditions are time-dependent, the eigenvalues and eigenfunctions are also time-dependent.
        Therefore, this scenario is not straightforward.
        \item We avoid applying the Banach fixed-point theorem for the construction of $(v_1, v_2)$ because the evolution operator $U(t,s)$ in Lemma \ref{lem_existence_of_linearized_equation} depends on $\sigma_j$ and is derived using an abstract method, making it difficult to clarify the dependence on $\sigma_j$.
        The dependence of $\sigma_j$ is needed to construct the solution to the equation of $\sigma_j$.
    \end{itemize}
\end{remark}

The second main result of this paper is numerical simulations for the equations (\ref{eq_filter_clogging_nondimensional}).
We have performed some numerical experiments in Section \ref{sec_numerical_experiments} to show the behavior of the solution to the equations (\ref{eq_filter_clogging_nondimensional}).
The purpose of this section is to demonstrate numerically that the solutions of the equations exhibit an expected behavior of the aquarium ecosystem model (\ref{eq_filter_clogging_nondimensional}).
We first present the discretization scheme.
This discretization method is quadratic in accuracy with respect to time and spatial variables.
Note that this chapter is not intended for rigorous numerical analysis.
We note that our proposed discretization scheme will be novel for the numerical solution of partial differential equations with dynamic boundary conditions.
This is because, to the best of the authors' knowledge, there have been no numerical computations for this type of problem.
In the numerical experiments we assume that $f$ is a constant function with respect to $x$ and $t$.
We focus on the behavior of the solution for each $C_\rho$ and $f$.
This addresses the question of what is the appropriate feeding $f$ for the filter media $C_\rho$.
First, we see that the solution converges to a bounded steady state when the feeding rate is appropriate for the amount of filter media, and grows to an unbounded solution when the feeding rate is too high.
We then search for the boundary between this convergence and growth for multiple combinations of parameters.

We introduce notation.
For a Banach space $X$, we denote by $\Vert \cdot \Vert_{X \rightarrow X}$ the operator norm.
We write $\Fourier \varphi = \int_\Real e^{-i \xi x} \varphi(x) dx$ and $\FourierInverse \varphi = \frac{1}{2\pi} \int_\Real e^{i \xi x} \varphi(x) dx$ to denote the Fourier transform and the Fourier inverse transform.
We denote by $B(X,Y)$ the space of linear continues operators for Banach spaces $X, Y$ associated with the uniform norm.
We also denote $B(X) = B(X,X)$.
We denote by $L^p_t L^q_x(Q_T)$ the space-time Lebesgue space associated with the norm
\begin{align*}
    \Vert
        \varphi
    \Vert_{L^p_t L^q_x(Q_T)}
    := \left(
        \int_0^T
            \Vert
                \varphi(t, \cdot)
            \Vert_{L^q(\Omega)}^p
        dt
    \right)^{1/p}.
\end{align*}
We analogously define $H^{m,p}_tH^{n,q}_x(Q_T)$ for $m, n \in \Integer_{\geq 0}$ and $p,q \in (1,\infty)$.

In Section \ref{sec_linear_analysis} we show the existence of the solution to \ref{eq_filter_clogging_nondimensional}.
In Section \ref{sec_preliminary}, we introduce some known abstract results for sufficient condition to the existence of the evolution operator by Kato and Tanabe \cite{KatoTanabe1962}.
Using this result, we show existence of the solution to the linearized problems in Section \ref{section_existence_of_the_solution}.
We establish energy estimates for the linear part of the first equation of (\ref{eq_filter_clogging_nondimensional}) with fourth boundary condition.
%We next apply the energy estimates to obtain the solution to the nonlinear problem.
In Section \ref{sec_non_linear_problem} we consider the non-linear problem.
In this section, we consider a slightly generalized problem for simplicity of notation.
We first prove the existence of the solution to the first and second equations in (\ref{eq_filter_clogging_nondimensional}) by the Leray-Schauder principle.
We next prove the existence to the fifth and sixth equations in (\ref{eq_abstract_filter_clogging_equation}) by Banach's fixed point theorem.
In Section \ref{sec_numerical_experiments} we show some numerical result to demonstrate the model imitates the synthetic ecosystem correspond in an aquarium.

%%%%%%%%%%%%%%%%%%%%%%%%%%%%%%%%%%%%%%%%%%%%%%%%%%%%%%%%%%
%%%%%%%%%%%%%%%%%%%%%%%%%%%%%%%%%%%%%%%%%%%%%%%%%%%%%%%%%%
%-----------------------Section1.5-----------------------%
%%%%%%%%%%%%%%%%%%%%%%%%%%%%%%%%%%%%%%%%%%%%%%%%%%%%%%%%%%
%%%%%%%%%%%%%%%%%%%%%%%%%%%%%%%%%%%%%%%%%%%%%%%%%%%%%%%%%%
\section{Analysis for Linearized Problem} \label{sec_linear_analysis}
\subsection{Preliminary} \label{sec_preliminary}
We consider the abstract evolution equation associated with a closed operator $A(t)$ such that
\begin{align*}
    \begin{aligned}
        \partial_t u + A(t)u
        & = f,
        & t>0,\\
        B_1(u; \theta)
        & = h_1,
        & t>0,\\
        B_2(u; \theta)
        & = h_2,
        & t>0.
    \end{aligned}
\end{align*}
in a Hilbert space $X$.
The theory for the evolution operator is developed by Kato and Tanabe \cite{KatoTanabe1962}.
See also the book by Tanabe \cite{Tanabebook} for extensive research on evolution operators.
We construct the evolution operator in the same scenario as Theorem 4.1 in \cite{KatoTanabe1962}.
The domain of $A(t)$ associated with the graph norm is defined by
\begin{gather*}
    D(A(t))
    = \{
        \varphi \in X
        \, ; \,
        A(t) \varphi \in X
    \},\\
    \Vert
        \varphi
    \Vert_{D(A(t))}
    = \Vert
        \varphi
    \Vert_{X}
    + \Vert
        A(t) \varphi
    \Vert_{X}.
\end{gather*}
We construct the evolution operator such that
\begin{align*}
    U(t, s)
    = e^{-(t - s) A(t)}
    + \int_s^t
        e^{- (t - \tau)A(t)} R(\tau, s)
    d \tau
\end{align*}
and the operator $R$ is give as the solution to the integral equation
\begin{align*}
    R(t, s)
    = R_1(t, s)
    + \int_s^t
        R_1(t, \tau)
        R(\tau, s)
    d\tau,
\end{align*}
where
\begin{align*}
    R_1(t, s)
    = -\frac{1}{2 \pi i} \int_{\Gamma}
        e^{- (t - s) \zeta} \partial_t(
            \zeta
            - A(t)
        )^{-1}
    d\zeta
\end{align*}
and $\Gamma$ is a boundary of the sector $\{ z \in \Complex; \vert \arg z \vert < \theta\}$.
Note that if $z=0$ does not belong to the resolvent set of $A(t)$, we apply a standard modification to $\Gamma$ and integrate over $\Gamma_{\phi, \varepsilon}$ such that
\begin{align*}
    \Gamma_{\phi, \varepsilon}
    & = \Gamma_{-, \varepsilon, \phi}
    \cup \Gamma_{\text{circle}, \varepsilon, \phi}
    \cup \Gamma_{+, \varepsilon, \phi},\\
    \Gamma_{-, \varepsilon, \phi}
    & := \{ e^{i\phi}x \in \Complex
        ;
        - \infty < r < - \varepsilon
    \},\\
    \Gamma_{\text{circle}, \varepsilon, \phi}
    & := \{ \varepsilon e^{i \psi}\in \Complex
        ;
        - \pi < \psi < - \phi, \phi < \psi \leq \pi
    \},\\
    \Gamma_{-, \varepsilon, \phi}
    & := \{ e^{i\phi}r \in \Complex
        ;
        \varepsilon < r < \infty
    \},
\end{align*}
for $\varepsilon>0$ and $\phi \in (0,\pi/2)$.
In construction of the evolution operator we use
\begin{lemma}[Theorem 4.1-4.2 in \cite{KatoTanabe1962}] \label{lem_abstract_theorem_for_evolution_operator_by_Kato_Tanabe}
    Let $T>0$ and $X$ be a Banach space.
    Let $\mathcal{A}(t)$ the densely defined closed operator $\mathcal{A}(t)$ associated with the domain $D(\mathcal{A}(t)) \subset X$.
    Assume that
    \begin{enumerate}
        \item The operator $- \mathcal{A}(t)$ is a generator of an analytic semigroup and the resolvent set includes $\Sigma = \{z \in \Complex ; \phi < \arg z \leq 2\pi - \phi\}$ for $\phi \in (0, \pi/2)$ such that
        \begin{align*}
            \Vert
                (\lambda - \mathcal{A}(t))^{-1}
            \Vert_{B(X)}
            \leq \frac{C}{1 + \vert \lambda \vert}
        \end{align*}
        for $t$-independent constant $C>0$.
        \item The inverse operator $\mathcal{A}(t)^{-1}$ is differentiable in $B(X)$ for $t \in [0, T]$.
        \item The derivative $\frac{d\mathcal{A}(t)}{dt}$ is $\alpha$-H\"{o}lder continuous in $B(X)$ for $\alpha \in (0,1)$.
        \item There exist $C>0$, $\phi \in (0,\pi/2)$, and $\beta \leq 1$ such that
        \begin{align*}
            \Vert
                \partial_t (\lambda - \mathcal{A}(t))^{-1}
            \Vert_{B(X)}
            \leq \frac{C}{\vert \lambda \vert^{\beta}}
        \end{align*}
        for $\lambda \in \Sigma$ and $t \in [0,T]$.
    \end{enumerate}
    Then there exists a differentiable evolution operator $\mathcal{T}(t, s) \in B(X)$ for $0 < s \leq t \leq T$ in $B(X)$ such that
    \begin{align*}
        & R(\mathcal{T}(t,s)) \subset D(\mathcal{A}(t))\\
        & \partial_t \mathcal{T}(t,s)
        + \mathcal{A}(t) \mathcal{T}(t,s)
        = 0 \quad
        \text{in B(X) for $0 < s \leq t < T$},
    \end{align*}
    and $\mathcal{T}(s,s)$ is the identify in $B(X)$ satisfying
    \begin{align*}
        \Vert
            \partial_s \mathcal{T}(t,s)
        \Vert_{B(X)}
        & \leq C (t - s)^{-1},\\
        \Vert
            \partial_t \mathcal{T}(t,s)
        \Vert_{B(X)}
        + \Vert
            \mathcal{A}(t) \mathcal{T}(t,s)
        \Vert_{B(X)}
        & \leq C (t - s)^{-1}
    \end{align*}
    for some constant $C>0$.
    Moreover, for $f \in C^\gamma(0,T; X)$ for $\gamma \in (0,1)$ and $u_0 \in X$, there exists a unique solution to
    \begin{align*}
        \frac{du(t)}{dt}
        + \mathcal{A}(t) u
        = f(t) \quad
        \text{in $X$}, \quad
        u(0) = u_0
    \end{align*}
    of the form $u(t) = U(t,0)u_0 + \int_0^t U(t,s) f(s) ds$ in $X$.
\end{lemma}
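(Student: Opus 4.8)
The plan is to carry out the parametrix (successive approximation) construction of Kato and Tanabe \cite{KatoTanabe1962}. For each fixed $t\in[0,T]$, hypothesis (1) lets one define the analytic semigroup generated by $-\mathcal{A}(t)$ through the Dunford integral
\begin{align*}
  e^{-\tau\mathcal{A}(t)}
  := \frac{1}{2\pi i}\int_{\Gamma} e^{-\tau\zeta}\,(\zeta-\mathcal{A}(t))^{-1}\,d\zeta,
  \qquad \tau>0,
\end{align*}
with $\Gamma$ (or $\Gamma_{\phi,\varepsilon}$ when $0$ lies in the spectrum) the contour fixed above, together with the bounds $\|e^{-\tau\mathcal{A}(t)}\|_{B(X)}\le C$ and $\|\mathcal{A}(t)e^{-\tau\mathcal{A}(t)}\|_{B(X)}\le C\tau^{-1}$, which are \emph{uniform in} $t$ because the resolvent constant in (1) is. The crude approximation $\mathcal{T}_0(t,s):=e^{-(t-s)\mathcal{A}(t)}$ almost solves the equation: differentiating in the outer $t$ and using $\partial_t e^{-(t-s)\mathcal{A}(t)}=-\mathcal{A}(t)e^{-(t-s)\mathcal{A}(t)}-R_1(t,s)$ gives $\partial_t\mathcal{T}_0+\mathcal{A}(t)\mathcal{T}_0=-R_1$, where $R_1$ is precisely the operator displayed before the lemma, i.e.\ the part of $\partial_t e^{-(t-s)\mathcal{A}(t)}$ carried by the $t$-dependence of $\mathcal{A}(t)$. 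Hypothesis (4) yields $\|R_1(t,s)\|_{B(X)}\le C(t-s)^{\beta-1}$, and hypotheses (2)–(3) yield in addition Hölder moduli of continuity for $R_1$ in each variable on compact subsets of $\{0\le s<t\le T\}$.

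Next I would correct the defect. Define $R(t,s)$ as the solution of the Volterra equation $R=R_1+R_1*R$, where $(G*H)(t,s):=\int_s^t G(t,\tau)H(\tau,s)\,d\tau$, obtained as the Neumann series $R=\sum_{n\ge1}R_1^{*n}$. Convergence is the usual bookkeeping for weakly singular kernels: since $\int_s^t(t-\tau)^{a-1}(\tau-s)^{b-1}\,d\tau$ equals a Beta-function factor times $(t-s)^{a+b-1}$, the iterated convolution is bounded by $c^n(t-s)^{n\beta-1}$ times a factor that decays fast enough in $n$ for $\sum_n R_1^{*n}$ to converge locally uniformly on $\{0\le s<t\le T\}$ (this is the only place the precise singularity exponent matters). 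Set
\begin{align*}
  \mathcal{T}(t,s)
  := e^{-(t-s)\mathcal{A}(t)}
  + \int_s^t e^{-(t-\tau)\mathcal{A}(t)}\,R(\tau,s)\,d\tau .
\end{align*}
Inserting this into $\partial_t+\mathcal{A}(t)$ and using $\partial_t e^{-(t-\tau)\mathcal{A}(t)}=-\mathcal{A}(t)e^{-(t-\tau)\mathcal{A}(t)}-R_1(t,\tau)$ together with the boundary term at $\tau=t$ collapses the computation to $-R_1(t,s)+R(t,s)-(R_1*R)(t,s)$, which vanishes by the definition of $R$. Hence $\partial_t\mathcal{T}(t,s)+\mathcal{A}(t)\mathcal{T}(t,s)=0$ in $B(X)$ for $0<s\le t<T$, and $\mathcal{T}(s,s)=I$ is immediate.

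The one genuinely delicate point — what I expect to be the main obstacle — is proving $R(\mathcal{T}(t,s))\subset D(\mathcal{A}(t))$ with the stated bound $\|\mathcal{A}(t)\mathcal{T}(t,s)\|_{B(X)}\le C(t-s)^{-1}$. The term $e^{-(t-s)\mathcal{A}(t)}$ is harmless, but applying $\mathcal{A}(t)$ under the integral $\int_s^t e^{-(t-\tau)\mathcal{A}(t)}R(\tau,s)\,d\tau$ produces the non-integrable factor $\|\mathcal{A}(t)e^{-(t-\tau)\mathcal{A}(t)}\|\sim(t-\tau)^{-1}$ against $R(\tau,s)$, which stays bounded as $\tau\to t$. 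This is resolved by the standard cancellation: write $R(\tau,s)=(R(\tau,s)-R(t,s))+R(t,s)$, use the Hölder continuity of $\tau\mapsto R(\tau,s)$ near $\tau=t$ (inherited from (2)–(3) through the Neumann series) to make $\int_s^t\mathcal{A}(t)e^{-(t-\tau)\mathcal{A}(t)}(R(\tau,s)-R(t,s))\,d\tau$ absolutely convergent, and integrate the remainder explicitly via $\int_s^t\mathcal{A}(t)e^{-(t-\tau)\mathcal{A}(t)}\,d\tau=I-e^{-(t-s)\mathcal{A}(t)}$. The bounds for $\partial_s\mathcal{T}$ and $\partial_t\mathcal{T}$ then follow from the same semigroup estimates, the bound on $R$, and the Beta-integral bookkeeping.

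Finally, for the inhomogeneous problem I would take $u(t):=\mathcal{T}(t,0)u_0+\int_0^t\mathcal{T}(t,s)f(s)\,ds$. The $u_0$ part solves $u'+\mathcal{A}(t)u=0$, $u(0)=u_0$, by the evolution equation just established; for the Duhamel term the $\gamma$-Hölder continuity of $f$ is used exactly as in the autonomous parabolic theory — splitting $f(s)=(f(s)-f(t))+f(t)$, the bound $\|\mathcal{A}(t)\mathcal{T}(t,s)\|\lesssim(t-s)^{-1}$ against $\|f(s)-f(t)\|\lesssim(t-s)^{\gamma}$ gives an absolutely convergent integral and the leftover is handled with the identity above — so that $u\in C^1((0,T];X)$, $u(t)\in D(\mathcal{A}(t))$, and $u'(t)+\mathcal{A}(t)u(t)=f(t)$ for $t>0$. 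Uniqueness follows from the usual argument: the difference $w$ of two solutions satisfies $w'+\mathcal{A}(t)w=0$, $w(0)=0$, whence $s\mapsto\mathcal{T}(t,s)w(s)$ is constant in $s$ and equals both $0$ and $w(t)$. Full details are in \cite{KatoTanabe1962}; see also \cite{Tanabebook}.
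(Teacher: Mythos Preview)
The paper does not prove this lemma: it is stated as a citation of Theorems~4.1--4.2 in \cite{KatoTanabe1962}, and the text immediately preceding the lemma merely records the parametrix ansatz $U(t,s)=e^{-(t-s)A(t)}+\int_s^t e^{-(t-\tau)A(t)}R(\tau,s)\,d\tau$ together with the Volterra equation for $R$, exactly the construction you sketch. Your outline is the standard Kato--Tanabe argument and matches both the paper's summary and the cited source, so there is nothing further to compare.
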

\begin{remark}
    \begin{itemize}
        \item If $\mathcal{A}(t)$ is not invertible, we can use Lemma \ref{lem_abstract_theorem_for_evolution_operator_by_Kato_Tanabe} by shifting $\mathcal{A}(t)$ as $\mu + \mathcal{A}(t)$ for some $\mu>0$ or by modifying the integral curve to avoid around zero to define $\mathcal{T}(t,s)$.
        \item The condition (\ref{eq_estimate_for_sigma_j_in_main_theorem}) implies that $\sigma_j$ is in $H^2(0,T)$.
        Therefore, the Sobolev embedding implies that $d\sigma_j/dt \in C^{\alpha}[0,T]$ for $\alpha \in (0,1)$.
        This fact is used for the third assumption of $\mathcal{A}(t)$ in Lemma \ref{lem_abstract_theorem_for_evolution_operator_by_Kato_Tanabe} to construct the evolution operator for the linearized problem to (\ref{eq_filter_clogging_nondimensional}).
    \end{itemize}
\end{remark}

\subsection{Existence of the solution} \label{section_existence_of_the_solution}
We first construct an evolution operator for
\begin{equation} \label{eq_filter_clogging_linearized}
    \begin{split}
        \begin{aligned}[t]
            \partial_t u - \nu \partial_x^2 u
            & = f,
            & t \in (0, T),
            & \, x \in I,\\
            B_1(u; \theta(t))
            & = 0,
            & t \in (0, T),
            & \,\\
             B_2(u; \theta(t))
            & =0,
            & t \in (0, T).
            & \,
        \end{aligned}
    \end{split}
\end{equation}
For $\theta_0 \in (0, 1]$. and functions $\varphi, \psi \in H^2(I)$ satisfying $B_1(\cdot , \theta_0) =  B_2(\cdot , \theta_0) = 0$, we see from by integration by parts that 
\begin{align*}
    & \int_I
        \partial_x^2 \varphi(x) \psi(x)
    dx \\
    & = \partial_x \varphi(l) \psi(l)
    - \partial_x \varphi(-l) \psi(-l)
    + \int_I
        \partial_x \varphi(x) \partial_x \psi(x)
    dx \\
    &= \partial_x \varphi(l) (1 - \theta_0) \psi(-l)
    - (1 - \theta_0) \partial_x \varphi(l) \psi(-l)
    + \int_I
        \partial_x \varphi(x) \partial_x \psi(x)
    dx\\
    & = \int_I
        \partial_x \varphi(x) \partial_x \psi(x)
    dx\\
    & = \int_I
        \varphi(x) \partial_x^2 \psi(x)
    dx.
\end{align*}
Let $B : C^\infty(I) \times C^\infty(I) \rightarrow \Real$ be a bilinear operator such that
\begin{align*}
    & B(\varphi, \psi)
    = \int_I
        \partial_x \varphi(x) \partial_x \psi(x)
    dx
\end{align*}
for $\varphi, \phi \in C^\infty(I)$ satisfying $B_1(\cdot , \theta_0) =  B_2(\cdot , \theta_0) = 0$.
Since the bilinear $B(\cdot , \cdot)$ is a positive and symmetric, Lemma 3.2.1 in the book by Sohr \cite{Sohr2001} implies that the associated one-dimensional Laplace the operator $- \partial_x^2$ with the domain 
\begin{align*}
    D (- \partial_x^2)
    = \left \{  \varphi \in H^2(I) \,:\, B_1(\varphi, \theta_0) =  B_2(\varphi, \theta_0) = 0 \right \}
\end{align*}
is positive self-adjoint.
We denote by $A(t)$ such operator $- \partial_x^2$ with $\theta_0$-dependent domain $D(-\partial_x^2)$.
When the choice of $\theta_0$ is clear, we simply denote it by $- \partial_x^2$.
Note that when $\theta_0 = 0, 1$, the boundary conditions are equivalent to the periodic boundary condition and the Dirichlet-Neumann boundary condition for the left and right boundaries, respectively.
Zero is a spectrum for the case $\theta_0 = 0$.

\begin{lemma} \label{lem_existence_of_linearized_equation}
    Let $T>0$ and $\theta \in C^{1, \alpha}[0,T]$.
    Then there exists a unique evolution operator $U(t, s) \in B(L^2(I))$ for $0 \leq s < t \leq T$,
    \begin{align*}
        u(t)
        = U(t,0) u_0
        + \int_0^t
            U(t,s) f(s)
        ds
    \end{align*}
    is the solution to (\ref{eq_filter_clogging_linearized}) with initial data $u_0 \in L^2(I)$ such that
    \begin{align} \label{eq_estimate_for_u_in_lem_existence_of_linearized_equation}
        \Vert
            \partial_t u(t)
        \Vert_{L^2(I)}
        + \Vert
            \partial_t^2 u(t)
        \Vert_{L^2(I)}
        \leq C t^{-1}
    \end{align}
    for some constant $C>0$.
\end{lemma}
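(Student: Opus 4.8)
The plan is to recast \eqref{eq_filter_clogging_linearized} as the abstract Cauchy problem $u'(t)+A(t)u(t)=f(t)$ in $X=L^2(I)$, where $A(t)=-\nu\partial_x^2$ carries the time-dependent domain $D(A(t))=\{\varphi\in H^2(I):B_1(\varphi;\theta(t))=B_2(\varphi;\theta(t))=0\}$, and then to verify, with constants uniform in $t\in[0,T]$, the four hypotheses of Lemma~\ref{lem_abstract_theorem_for_evolution_operator_by_Kato_Tanabe}. Hypothesis~(i) is essentially contained in the discussion preceding this lemma: for each frozen $t$ the operator $A(t)$ is nonnegative and self-adjoint on $L^2(I)$, because the boundary terms produced by integration by parts cancel thanks to \eqref{eq_B1}--\eqref{eq_B2}, hence $A(t)$ is sectorial; the only point to check is that the sector angle and the resolvent constant can be taken independent of $t$ while $\theta(t)$ ranges over the compact set $\theta([0,T])\subset(0,1]$ (if $\theta$ reaches $0$ at some times, one deforms the integration contour around the origin, as in the remark following Lemma~\ref{lem_abstract_theorem_for_evolution_operator_by_Kato_Tanabe}).

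The main work, and the step I expect to be the principal obstacle, is hypotheses~(ii)--(iv), i.e.\ the $t$-differentiability of $(\lambda-A(t))^{-1}$, the $\alpha$-H\"older continuity of its derivative, and the decay bound for $\partial_t(\lambda-A(t))^{-1}$. Here I would exploit that in one space dimension the resolvent is completely explicit: for $\lambda$ in the sector and $g\in L^2(I)$ the function $\varphi=(\lambda-A(t))^{-1}g$ solves the two-point problem $\lambda\varphi-\nu\varphi''=g$ on $(-1,1)$ under $B_1(\varphi;\theta(t))=B_2(\varphi;\theta(t))=0$, so by variation of parameters $\varphi$ is a particular solution plus $c_+(\lambda,\theta)\Exp{\mu x}+c_-(\lambda,\theta)\Exp{-\mu x}$ with $\mu=\sqrt{\lambda/\nu}$, the constants $c_\pm$ being the solution of a $2\times2$ system whose coefficient matrix $M(\lambda,\theta)$ depends on $\Exp{\pm\mu}$ and affinely on $\theta$. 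Positivity of $A(t)$ forces $\det M(\lambda,\theta)\neq 0$ on the sector, and explicit bounds on $M(\lambda,\theta)^{-1}$ and on the associated Green kernel yield both the uniform resolvent estimate of hypothesis~(i) and an estimate $\|\partial_\theta(\lambda-A)^{-1}\|_{B(L^2)}\le C|\lambda|^{-\beta}$ with some $\beta\le 1$. Since $\theta\mapsto(\lambda-A)^{-1}$ is real-analytic into $B(L^2)$ and $\partial_t(\lambda-A(t))^{-1}=\theta'(t)\,\partial_\theta(\lambda-A)^{-1}\big|_{\theta=\theta(t)}$, hypotheses~(ii) and (iv) follow immediately, and the $\alpha$-H\"older continuity of $\theta'$ — which is exactly the assumption $\theta\in C^{1,\alpha}[0,T]$ — gives hypothesis~(iii). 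Applying Lemma~\ref{lem_abstract_theorem_for_evolution_operator_by_Kato_Tanabe} then produces the evolution operator $U(t,s)$, the representation formula for $u$, and the bounds $\|\partial_t U(t,s)\|_{B(X)}+\|A(t)U(t,s)\|_{B(X)}\le C(t-s)^{-1}$; uniqueness of the solution to \eqref{eq_filter_clogging_linearized} is part of that statement and is also visible from the energy identity $\tfrac12\tfrac{d}{dt}\|u\|_{L^2}^2=-\nu\|\partial_x u\|_{L^2}^2\le 0$, whose boundary terms vanish by \eqref{eq_B1}--\eqref{eq_B2}.

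It remains to deduce \eqref{eq_estimate_for_u_in_lem_existence_of_linearized_equation}. Differentiating $u(t)=U(t,0)u_0+\int_0^t U(t,\tau)f(\tau)\,d\tau$ (using $U(t,t)=\mathrm{Id}$ and $\partial_t U(t,\tau)=-A(t)U(t,\tau)$) gives $\partial_t u(t)=-A(t)U(t,0)u_0+f(t)-\int_0^t A(t)U(t,\tau)f(\tau)\,d\tau$; the first term is $\le Ct^{-1}\|u_0\|_{L^2}$, and the Duhamel term is controlled, once $f\in C^\gamma(0,T;L^2)$, by the classical splitting $\int_0^t A(t)U(t,\tau)\big(f(\tau)-f(t)\big)d\tau+A(t)\big(\int_0^t U(t,\tau)\,d\tau\big)f(t)$, in which the first integrand is $O((t-\tau)^{-1+\gamma})$ and $A(t)\int_0^t U(t,\tau)\,d\tau$ is a bounded operator; as a quantity bounded on $(0,T]$ is trivially $\le(T\sup)\,t^{-1}$, this yields $\|\partial_t u(t)\|_{L^2}\le Ct^{-1}$. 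For $\partial_t^2 u$ I would differentiate \eqref{eq_filter_clogging_linearized} once in time: $w:=\partial_t u$ solves $\partial_t w-\nu\partial_x^2 w=\partial_t f$ with the inhomogeneous boundary data $B_1(w;\theta)=\theta'\gamma_+ u$, $B_2(w;\theta)=-\theta'\gamma_-\partial_x u$; reducing to homogeneous boundary conditions by subtracting the extension $E(\theta;\cdot)$ of Lemma~\ref{lem_extension_theorem}, representing the result through $U(t,s)$ started from a positive time $s=t/2$ — where $w(t/2)$, $f$, $\theta' u$ and $\theta'\partial_x u$ are already controlled by the first-order bound and the trace inequality — and invoking once more the smoothing estimate $\|\partial_t U(t,s)\|\le C(t-s)^{-1}$, one obtains $\|\partial_t^2 u(t)\|_{L^2}\le Ct^{-1}$. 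The H\"older regularity of $\theta'$ is used here both to keep the source $\partial_t E(\theta;\cdot)$ admissible and to apply the evolution-operator machinery a second time; the delicate points throughout are the uniformity of all constants in $\theta(t)$ and the careful handling of the singularity at $t=0$ when passing to the second time derivative.
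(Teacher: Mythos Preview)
Your plan coincides with the paper's: both verify the Kato--Tanabe hypotheses through the explicit one-dimensional resolvent, writing the solution of $(\lambda-A(t))\varphi=g$ as a particular solution plus $c_+e^{\mu x}+c_-e^{-\mu x}$ and analyzing the resulting $2\times2$ boundary system with matrix $M(\lambda,\theta)$. The paper is more concrete than your sketch: it takes the particular solution to be the whole-line inverse $(\lambda-\nu\partial_x^2)^{-1}\tilde g$ via the kernel $e^{-\lambda^{1/2}|x|}/(2\lambda^{1/2})$, factors $M(\lambda,\theta)$ explicitly and locates its zeros on $\mathbb{R}_-$, and then reads off the uniform $L^2$ resolvent bounds term by term from the resulting formula for $v_2$. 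Your shortcut ``positivity of $A(t)$ forces $\det M\neq0$ on the sector'' is cleaner but less explicit; either route delivers hypotheses (i)--(iv), and the use of $\theta\in C^{1,\alpha}$ for (iii) is the same in both.

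There is, however, a genuine gap in your final step. The restart at $s=t/2$ for $w=\partial_t u$ does \emph{not} produce $\|\partial_t^2 u(t)\|_{L^2}\le Ct^{-1}$: you feed in $\|w(t/2)\|\le C(t/2)^{-1}$ and then multiply by the smoothing factor $(t-t/2)^{-1}$, which gives $Ct^{-2}$, not $Ct^{-1}$. For general $u_0\in L^2(I)$ a bound $\|\partial_t^2 u(t)\|\le Ct^{-1}$ is in fact impossible (already for the autonomous heat semigroup $\|\partial_t^2 e^{t\Delta}u_0\|=\|\Delta^2 e^{t\Delta}u_0\|\sim t^{-2}$). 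The paper's own proof does not address this either: it simply ends with ``Lemma~\ref{lem_abstract_theorem_for_evolution_operator_by_Kato_Tanabe} implies the existence of the solution and the estimate~\eqref{eq_estimate_for_u_in_lem_existence_of_linearized_equation}'', and that lemma yields only $\|\partial_t u\|+\|A(t)u\|=\|\partial_t u\|+\nu\|\partial_x^2 u\|\le Ct^{-1}$. So the second term in \eqref{eq_estimate_for_u_in_lem_existence_of_linearized_equation} is almost certainly a typo for $\partial_x^2 u$, in which case it follows immediately from Kato--Tanabe and your entire second-derivative argument (together with the forward reference to Lemma~\ref{lem_extension_theorem}) is unnecessary.
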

\begin{proof}
    The equation has time-dependent term in the boundary condition.
    We cannot use the semigroup theory directly.
    We consider the resolvent problem such that
\begin{equation} \label{eq_linearized_resolvent}
    \begin{split}
        \begin{aligned}[t]
            \lambda v - \nu \partial_x^2 v
            & = g,
            & t \in (0, T),
            & \, x \in I,\\
            B_1(v; \theta(t))
            & = 0,
            & t \in (0, T),
            & \,\\
             B_2(v; \theta(t))
            & =0,
            & t \in (0, T),
            & \,
        \end{aligned}
    \end{split}
\end{equation}
to construct the evolution operator for $\lambda \in \Complex \setminus \Real_-$ for $\theta \in C^{1, \alpha}[0,T]$ for $\alpha \in (0, 1/2)$satisfying
\begin{align*}
    0 < \theta(t) \leq 1, \\
    \inf_{0<t<T} \theta(t) > 0.
\end{align*}
We seek the solution formula for the resolvent problem.
Let $\tilde{g}$ be the zero extension of $g$ with respect to $x$ from $(-1, 1)$ into $\Real$.
We divide $v$ into two parts:
\begin{equation} \label{eq_linearized_resolvent_v1}
    \begin{split}
        \begin{aligned}[t]
            \lambda v_1 - \nu \partial_x^2 v_1
            & = \tilde{g},
            & t \in (0, T), \quad
            & x \in \Real,
        \end{aligned}
    \end{split}
\end{equation}
%where $\tilde{g}$ is an extension from $I$ to $\Real$ $cf.$ Triebel [Theory of function spaces, II, Section 1. 10. 2] with cut-off with in $[-1 - \varepsilon, -1]$ and $[1 - \varepsilon, 1]$, and
and
\begin{equation} \label{eq_linearized_resolvent_v2}
    \begin{split}
        \begin{aligned}[t]
            \lambda v_2 - \nu \partial_x^2 v_2
            & = 0,
            & t \in (0, T), \quad
            & x \in I,\\
            B_1(v_2; \theta(t))
            & = - B_1(v_1; \theta(t)),
            & t \in (0, T),
            & \,\\
             B_2(v_2; \theta(t))
            & = - B_2(v_1; \theta(t)),
            & t \in (0, T).
            & \,
        \end{aligned}
    \end{split}
\end{equation}
By the definition of $v_1, v_2$, we have $v = v_1 + v_2$.
To simplify the notation, we assume that $\nu = 1$.
Since
\begin{align*}
    \Fourier v_1
    = \frac{\Fourier \tilde{g}}{\lambda + \xi^2},
\end{align*}
the Plancherel theorem implies
\begin{align} \label{eq_L2_estimate_for_v1}
    \vert \lambda \vert \Vert
        v_1
    \Vert_{L^2(\Real)}
    + \vert \lambda \vert^{1/2} \Vert
        \partial_x v_1
    \Vert_{L^2(\Real)}
    + \Vert
        \partial_x^2 v_1
    \Vert_{L^2(\Real)}
    \leq C \Vert
        g
    \Vert_{L^2(I)}
\end{align}
for all $t \geq 0$.
We construct the solution formula to $v_2$.
We observe that $v_2$ is of the form
\begin{align} \label{eq_formula_for_v2}
    v_2
    = C_1 e^{+ \lambda^{1/2} x}
    + C_2 e^{- \lambda^{1/2} x}.
\end{align}
We determine $C_1, C_2$.
Since
\begin{align*}
    B_1(e^{\pm \lambda^{1/2}x}; \theta(t))
    &= (1- \theta(t)) e^{\pm \lambda^{1/2}}
    - e^{\mp \lambda^{1/2}},\\
    B_2(e^{\pm \lambda^{1/2}x}; \theta(t))
    &= \lambda^{1/2} \left(
        \pm (1- \theta(t)) e^{\pm \lambda^{1/2}}
        \mp e^{\mp \lambda^{1/2}}
    \right),
\end{align*}
inserting $v_2$ to the boundary conditions, we find that
\begin{align}\label{eq_formula_for_constants_of_v2}
    \begin{split}
        & \left (
            \begin{array}{c}
                C_1 \\
                C_2
            \end{array}
        \right )
        = \frac{
            1
        }{
            M(\lambda, \theta(t))
        }\\
        & \times \left (
            \begin{array}{c}
                - B_2(e^{- \lambda^{1/2} x}; \theta(t)) B_1(v_1; \theta(t))
                + B_1(e^{- \lambda^{1/2} x}; \theta(t)) B_2(v_1; \theta(t)) \\
                B_2(e^{+ \lambda^{1/2} x}; \theta(t)) B_1(v_1; \theta(t))
                + B_1(e^{+ \lambda^{1/2} x}; \theta(t)) B_2(v_1; \theta(t))
            \end{array}
        \right ),\\
        & M(\lambda, \theta(t))
        := B_1(e^{+ \lambda^{1/2} x}; \theta(t)) B_2(e^{- \lambda^{1/2} x}; \theta(t))\\
        & \quad\quad\quad\quad\quad
        - B_1(e^{- \lambda^{1/2} x}; \theta(t)) B_2(e^{+ \lambda^{1/2} x}; \theta(t)).
    \end{split}
\end{align}
By elementary calculations, we observe that
\begin{align*}
    M(\lambda, \theta(t))
    & = \lambda^{1/2} \left(
        (1 - \theta(t)) e^{+ \lambda^{1/2}}
        - e^{- \lambda^{1/2}}
    \right)^2\\
    & + \lambda^{1/2} \left(
        e^{+ \lambda^{1/2}}
        - (1 - \theta(t)) e^{- \lambda^{1/2}}
    \right)^2.
\end{align*}
%We invoke the two formulas such that
We invoke the formula such that
\begin{align*}
    M(\lambda, \theta(t))
    & = \lambda^{1/2} 
    \left[
        \{
            (1 + i) - \theta(t)
        \} e^{+\lambda^{1/2}}
        + \{
            (-1 - i)
            + i \theta(t)
        \} e^{- \lambda^{1/2}}
    \right]\\
    & \times
    \left[
        \{
            (1 - i) - \theta(t)
        \} e^{+\lambda^{1/2}}
        + \{
            (-1 + i)
            - i \theta(t)
        \} e^{- \lambda^{1/2}}
    \right].%,\\
    %M(\lambda, \theta(t))
    %& = \lambda^{1/2} 
    %\left[
    %    (1 + i)
    %    (e^{+\lambda^{1/2}} - e^{- \lambda^{1/2}})
    %    + \theta(t) (- e^{+\lambda^{1/2}} + i e^{- \lambda^{1/2}})
    %\right]\\
    %& \times
    %\left[
    %    (1 - i)
    %    (e^{+\lambda^{1/2}} - e^{- \lambda^{1/2}})
    %    - \theta(t) (
    %        e^{+ \lambda^{1/2}}
    %        + i e^{- \lambda^{1/2}}
    %    ) 
    %\right].
\end{align*}
By the first formula of $M(\lambda, \theta(t))$, the non-zero polars are such that
\begin{align*}
    e^{2\lambda^{1/2}}
    = \frac{
        1
        + (1 - \theta(t)) i
    }{
        (1 - \theta(t))
        + i
    },
    \frac{
        1
        - (1 - \theta(t)) i
    }{
        (1 - \theta(t))
        - i
    }
    =: p_{\theta}^+, p_{\theta}^-.
\end{align*}
The terms in the right-hand side are on the unit circle of $\Complex$.
Therefore, the polars attain on $\RealPart \lambda^{1/2} = 0$, $i.e.$ $\lambda \in \Real_-$ and $1/M(\lambda, \theta(t))$ is bounded for $\lambda \in \Sigma_{\phi}$ for $\pi/2 < \psi < \pi/2$.

We estimate $v_1$ and $v_2$.
Since the interval $I$ is bounded, it is clear that
\begin{align*}
    \Vert
        e^{\pm \lambda^{1/2} x}
    \Vert_{L^2(I)}
    \leq C e^{\vert \lambda \vert^{1/2}}.
\end{align*}
The kernel $k_\lambda (x)$ for $(\lambda + \xi^2)^{-1}$ is given by
\begin{align*}
    k_\lambda (x)
    & = \frac{e^{- \lambda^{1/2} \vert x \vert}}{2 \lambda^{1/2}}
\end{align*}
and satisfies
\begin{align*}
    \frac{d}{dx} k_\lambda (x)
    & = - \lambda^{1/2} (\mathrm{sgn} x ) k_\lambda (x)
    = \frac{(- \mathrm{sgn} x) e^{- \lambda^{1/2} \vert x \vert}}{2}.
\end{align*}
Therefore, we have
\begin{align*}
    & \gamma_\pm v_1(x)
    = \int_\Real
        \frac{
            e^{- \lambda^{1/2} \vert
                \pm 1 - y
            \vert
        }}{
            2 \lambda^{1/2}
        }
        \tilde{g}(y)
    dy,\\
    & \gamma_\pm \partial_x v_1(x)
    = \int_\Real
        (- \mathrm{sgn} (\pm 1 - y))\frac{
            e^{- \lambda^{1/2} \vert
                \pm 1 - y
            \vert}
        }{
            2
        }
        \tilde{g}(y)
    dy.
\end{align*}
By (\ref{eq_formula_for_v2}) and (\ref{eq_formula_for_constants_of_v2}), we obtain the formula for $v_2$ such that
\begin{align} \label{eq_formula_of_v2}
    \begin{split}
        & v_2(x) \\
        & = \frac{1}{2 \lambda^{1/2}} \frac{
            1
        }{
            \left(
                (1 - \theta(t)) e^{+ \lambda^{1/2}}
                - e^{- \lambda^{1/2}}
            \right)^2
            + \left(
                e^{+ \lambda^{1/2}}
                - (1 - \theta(t)) e^{- \lambda^{1/2}}
            \right)^2
        }\\
        & \times \left[
            - (
                e^{- \lambda^{1/2}}
                + (1 - \theta(t)) e^{+ \lambda^{1/2}}
            )
            \left(
                (1- \theta(t))
                \int_\Real
                    e^{
                        - \lambda^{1/2} \vert + 1 - y \vert
                        + \lambda^{1/2}x
                    }
                    \tilde{g}(y)
            \right.
            dy
        \right.\\
        & \left.
            \quad\quad\quad\quad\quad\quad\quad\quad\quad\quad\quad\quad
                - \int_\Real
                    e^{
                        - \lambda^{1/2}\vert - 1 -y \vert
                        + \lambda^{1/2}x
                    }
                    \tilde{g}(y)
                dy
            \right)\\
        & \quad + (
            (1 - \theta(t)) e^{- \lambda^{1/2}}
            - e^{+ \lambda^{1/2}}
        )
        \left(
            \int_\Real
                - \mathrm{sgn} (+ 1 - y) e^{
                    - \lambda^{1/2} \vert + 1 - y \vert
                    + \lambda^{1/2}x
                }
                \tilde{g}(y)
            dy
        \right.\\
        & \left.
            \quad\quad\quad\quad\quad\quad\quad\quad\quad
            + (1 + \theta(t)) \int_\Real
                ( - \mathrm{sgn} (- 1 - y)) e^{
                    - \lambda^{1/2} \vert - 1 - y \vert
                    + \lambda^{1/2}x
                }
                \tilde{g}(y)
            dy
        \right)\\
        & \quad + (
            e^{- \lambda^{1/2}}
            - (1 - \theta(t)) e^{- \lambda^{1/2}}
        )
        \left(
            (1- \theta(t))
            \int_\Real
                e^{
                    - \lambda^{1/2} \vert + 1 - y \vert
                    - \lambda^{1/2}x
                }
                \tilde{g}(y)
            dy
        \right.\\
        & \left.
            \quad\quad\quad\quad\quad\quad\quad\quad\quad\quad\quad\quad
            - \int_\Real
                e^{
                    - \lambda^{1/2} \vert - 1 - y \vert
                    - \lambda^{1/2}x
                }
                \tilde{g}(y)
            dy
        \right)\\
        & %\left.
            \quad + (
            (1 - \theta(t)) e^{+ \lambda^{1/2}}
            - e^{- \lambda^{1/2}}
        )
        \left(
            \int_\Real
                ( - \mathrm{sgn} (+1 - y)) e^{
                    - \lambda^{1/2} \vert + 1 - y \vert
                    - \lambda^{1/2}x
                }
                \tilde{g}(y)
            dy
        \right.\\
        & \left.
            \quad\quad\quad\quad\quad\quad\quad\quad\quad
            - (1- \theta(t))
            \int_\Real
                ( - \mathrm{sgn} (-1 - y)) e^{
                    - \lambda^{1/2} \vert - 1 - y \vert
                    - \lambda^{1/2}x
                }
                \tilde{g}(y)
            dy
        \right)\\
        & =: l_1(\lambda, x)
        + l_2(\lambda, x)
        + l_3(\lambda, x)
        + l_4(\lambda, x).
    \end{split}
\end{align}
Since $x \in (-1, 1)$ and $\mathrm{spt} \tilde{g} = [-1 ,1]$, the sign of $1 \pm x$ and $\pm 1 - y$ is consistent.
Therefore, we can take the supremum with respect to $\lambda$ and use the boundedness of the Hilbert transform to get
\begin{align*}
    & \left \Vert
        \int_\Real
            \lambda^{1/2} e^{\lambda^{1/2}(1 \pm x)} e^{\lambda^{1/2} \vert
                \pm 1 - y
            \vert}
            \tilde{g}(y)
        dy
    \right \Vert_{L^2(I)}
    \leq C \Vert
        \tilde{g}
    \Vert_{L^2(I)}
    \leq C \Vert
        g
    \Vert_{L^2(I)},
\end{align*}
and similarly
\begin{align*}
    \left \Vert
        \partial_x^2 \int_\Real
            \lambda^{-1/2} e^{\lambda^{1/2}(1 \pm x)} e^{\lambda^{1/2} \vert
                \pm 1 - y
            \vert}
            \tilde{g}(y)
        dy
    \right \Vert_{L^2(I)}
    \leq C \Vert
        g
    \Vert_{L^2(I)}
\end{align*}
for some constant $C>0$, which is independent of $\lambda$.
The reader is also referred to Lemma 3.3 in \cite{Abels2002}.
%\begin{align*}
%    & \left \Vert
%            \lambda
%        \frac{
%            B_2(e^{\lambda^{1/2} x}; \theta(t)) B_1(v_1; \theta(t))
%        }{
%            M(\lambda, \theta(t))
%        }
%        e^{\lambda^{\pm 1/2}x}
%    \right \Vert_{L^2(I)}\\
%    & + \left \Vert
%            \partial_x^2
%        \frac{
%            B_2(e^{\lambda^{1/2} x}; \theta(t)) B_1(v_1; \theta(t))
%        }{
%            M(\lambda, \theta(t))
%        }
%        e^{\lambda^{\pm 1/2}x}
%    \right \Vert_{L^2(I)}\\
%    & \leq C \sup_{\lambda \in \Sigma_\phi} \left|
%        \frac{
%            \left(
%                (1- \theta(t)) e^{+ \lambda^{1/2}}
%                - e^{- \lambda^{1/2}}
%            \right)
%            \vert
%            (
%                \vert 1 - \theta(t) \vert
%                + 1
%            )
%        }{
%            \left(
%                (1 - \theta(t)) e^{+ \lambda^{1/2}}
%                - e^{- \lambda^{1/2}}
%            \right)^2
%            + \left(
%                e^{+ \lambda^{1/2}}
%                - (1 - \theta(t)) e^{- \lambda^{1/2}}
%            \right)^2
%        }
%    \right| \Vert
%        g
%    \Vert_{L^2(I)}
%\end{align*}
%for $\phi = \arg \lambda \in (0, \pi)$ and some constant $C>0$.
\begin{align*}
    & \left \Vert
        \lambda l_1(\lambda, \cdot)
    \right \Vert_{L^2(I)}
    + \left \Vert
        \partial_x^2 l_1(\lambda, \cdot)
    \right \Vert_{L^2(I)}\\
    & \leq C \sup_{\lambda \in \Sigma_\phi} \left|
        \frac{
            \left(
                (1- \theta(t)) e^{+ \lambda^{1/2}}
                - e^{- \lambda^{1/2}}
            \right)
            \vert
            (
                \vert 1 - \theta(t) \vert
                + 1
            )
        }{
            \left(
                (1 - \theta(t)) e^{+ \lambda^{1/2}}
                - e^{- \lambda^{1/2}}
            \right)^2
            + \left(
                e^{+ \lambda^{1/2}}
                - (1 - \theta(t)) e^{- \lambda^{1/2}}
            \right)^2
        }
    \right| \Vert
        g
    \Vert_{L^2(I)}\\
    & \leq C \Vert
        g
    \Vert_{L^2(I)}
\end{align*}
for $\phi = \arg \lambda \in (0, \pi)$ and some constant $C>0$.
We can similarly estimate $l_2$, $l_3$, and $l_4$ to get
\begin{align*}
    \left \Vert
        \lambda l_j(\lambda, \cdot)
    \right \Vert_{L^2(I)}
    + \left \Vert
        \partial_x^2 l_j(\lambda, \cdot)
    \right \Vert_{L^2(I)}
    \leq C \Vert
        g
    \Vert_{L^2(I)}
\end{align*}
for all $j=2,3,4$.
Therefore, combining this with the interpolation inequality, we conclude that
\begin{align*}
    \vert \lambda \vert \Vert
        v_2 (t)
    \Vert_{L^2(\Real)}
    + \vert \lambda \vert^{1/2} \Vert
        \partial_x v_2 (t)
    \Vert_{L^2(\Real)}
    + \Vert
        \partial_x^2 v_2 (t)
    \Vert_{L^2(\Real)}
    \leq C \Vert
        g
    \Vert_{L^2(I)},
\end{align*}
and conclude that
\begin{align} \label{eq_estimate_for_v}
    \vert \lambda \vert \Vert
        v (t)
    \Vert_{L^2(\Real)}
    + \vert \lambda \vert^{1/2} \Vert
        \partial_x v (t)
    \Vert_{L^2(\Real)}
    + \Vert
        \partial_x^2 v (t)
    \Vert_{L^2(\Real)}
    \leq C \Vert
        g
    \Vert_{L^2(I)}
\end{align}
for some constant $C>0$ and all $0 \leq t \leq T$.
Since the $v_1$ is independent of $t$.
The integrands in (\ref{eq_formula_of_v2}) are independent of $t$ and the coefficients are $C^{1, \alpha}[0,T]$ by the assumption for $\theta$.
Moreover, the derivative of $1/M(\lambda, \theta)$ and $B_2(e^{\pm \lambda x}; \theta)$ is $\alpha$-H\"{o}lder continuous by the formula (\ref{eq_formula_for_v2}) and (\ref{eq_formula_for_constants_of_v2}).
In the same way as (\ref{eq_estimate_for_v}), we deduce that
\begin{align} \label{eq_estimate_for_partial_t_v}
    \begin{split}
        & \vert \lambda \vert \Vert
            \partial_t v (t)
        \Vert_{L^2(\Real)}
        + \vert \lambda \vert^{1/2} \Vert
            \partial_x \partial_t v (t)
        \Vert_{L^2(\Real)}
        + \Vert
            \partial_x^2 \partial_t v (t)
        \Vert_{L^2(\Real)}\\
        & \leq C \Vert
            g
        \Vert_{L^2(I)}
    \end{split}
\end{align}
for some constant $C>0$ and $C$ is uniform on $\lambda$ and $t$.
Therefore, Lemma \ref{lem_abstract_theorem_for_evolution_operator_by_Kato_Tanabe} implies the existence of the solution and the estimate (\ref{eq_estimate_for_u_in_lem_existence_of_linearized_equation}).
\end{proof}

%\subsection{Higher order regularity in linear cases}
\subsection{$L^2$-estimates}
We establish a priori estimates.
Throughout this section we assume that $\nu = 1$ and $\sigma \in C^{1, \alpha}[0,T]$.
We consider the equations
\begin{equation} \label{eq_linear_heat}
    \begin{split}
        \begin{aligned}
            &\partial_t v - \partial_x^2 v
            = f,
            & t \in (0, T),
            & \, x \in I,\\
            &B_1(v; F(\sigma))
            = 0,
            & t \in (0, T),
            & \,\\
            &B_2(v; F(\sigma))
            =0,
            & t \in (0, T),
            & \,\\
            &\frac{d\sigma}{dt}
            = g
            + a \gamma_+ v,
            & t \in (0, T),
            & \,\\
            &v
            =v_0,
            \quad \sigma
            = \sigma_0
            & t=0,
            &\,x \in I
        \end{aligned}
    \end{split}
\end{equation}
for some positive initial data $v_0 \in H^1(I)$ and $\sigma_0 \in \Real_+$ and some non-negative functions $f \in L^2_tL^2_x(Q_T)$, $g \in L^\infty(0,T) \hookrightarrow L^2(0,T)$, $a \in L^\infty(0,T)$, and $F: \Real_+ \rightarrow (0, 1) \in BC^\infty(\Real_+)$ satisfying
\begin{align*}
    \lim_{s\rightarrow 0}F(s)
    =1, \quad
    \lim_{s \rightarrow \infty} F(s)
    = 0.
\end{align*}
We first establish the $H^1$- a priori estimates
\begin{gather*}
    \int_0^t
        \Vert
            \partial_t v(s)
        \Vert_{L^2(I)}^2
    ds
    + \Vert
        \partial_x v(t)
    \Vert_{L^2(I)}^2
    + \int_0^t
        \Vert
            \partial_x^2 v(s)
        \Vert_{L^2(I)}^2
    ds
    < \infty
\end{gather*}
for $0 < t < T$.
\begin{proposition} \label{prop_L2_estimate_for_linear_heat_eq}
    It holds that
    \begin{align} \label{eq_L2_estimate_for_linear_heat_eq}
        \Vert
            v(t)
        \Vert_{L^2(I)}^2
        + \int_0^t
            \Vert
                \partial_x v(s)
            \Vert_{L^2(I)}^2
        ds
        \leq Ce^{t} \left[
            \Vert
                v_0
            \Vert_{L^2(I)}^2
            + \int_0^t
                \Vert
                    f(s)
                \Vert_{L^2(I)}^2
            ds
        \right]
    \end{align}
    for some constant $C>0$.
\end{proposition}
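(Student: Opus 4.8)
The plan is to run the standard $L^2$ energy method; the only non-routine point is that the fourth boundary condition must make the boundary terms disappear. I would first work with $v$ as regular as the solution produced by the evolution operator in Lemma~\ref{lem_existence_of_linearized_equation}, so that the manipulations below are legitimate for $t>0$ (the estimate then extends to $t=0$ by continuity, and for general $f\in L^2_tL^2_x(Q_T)$ one concludes by approximation with smooth data). Multiplying the first equation of (\ref{eq_linear_heat}) by $v$ and integrating over $I$ gives
\[
    \frac{1}{2}\frac{d}{dt}\Vert v(t)\Vert_{L^2(I)}^2
    - \int_I \partial_x^2 v(x)\, v(x)\, dx
    = \int_I f(x)\, v(x)\, dx,
\]
and integration by parts yields
\[
    -\int_I \partial_x^2 v\, v\, dx
    = \Vert \partial_x v\Vert_{L^2(I)}^2
    - \bigl[ \gamma_+(\partial_x v)\,\gamma_+ v - \gamma_-(\partial_x v)\,\gamma_- v \bigr].
\]

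The key step is to eliminate the boundary bracket. Writing $\theta=F(\sigma)$, the relations $B_1(v;\theta)=0$ and $B_2(v;\theta)=0$ read $\gamma_- v=(1-\theta)\gamma_+ v$ and $\gamma_+(\partial_x v)=(1-\theta)\gamma_-(\partial_x v)$, hence
\[
    \gamma_+(\partial_x v)\,\gamma_+ v - \gamma_-(\partial_x v)\,\gamma_- v
    = (1-\theta)\gamma_-(\partial_x v)\,\gamma_+ v - \gamma_-(\partial_x v)\,(1-\theta)\gamma_+ v
    = 0.
\]
This is exactly the cancellation already exploited in Section~\ref{sec_linear_analysis} to make $-\partial_x^2$ self-adjoint, and it holds for every value $\theta=F(\sigma)\in(0,1)$, so no smallness or positivity of $\sigma$ is needed here; the dynamic equation for $\sigma$ plays no role at this level, entering only through $F(\sigma)\in(0,1)$ which the cancellation absorbs. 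We are left with $\tfrac12\frac{d}{dt}\Vert v\Vert_{L^2(I)}^2 + \Vert\partial_x v\Vert_{L^2(I)}^2 = \int_I fv\,dx$.

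Finally, Cauchy--Schwarz and Young give $\int_I fv\,dx \le \tfrac12\Vert f\Vert_{L^2(I)}^2 + \tfrac12\Vert v\Vert_{L^2(I)}^2$, so that
\[
    \frac{d}{dt}\Vert v(t)\Vert_{L^2(I)}^2 + 2\Vert\partial_x v(t)\Vert_{L^2(I)}^2
    \le \Vert f(t)\Vert_{L^2(I)}^2 + \Vert v(t)\Vert_{L^2(I)}^2.
\]
Dropping the gradient term and applying Gronwall's inequality yields $\Vert v(t)\Vert_{L^2(I)}^2 \le e^t\bigl(\Vert v_0\Vert_{L^2(I)}^2 + \int_0^t\Vert f(s)\Vert_{L^2(I)}^2\,ds\bigr)$; substituting this back into the integrated differential inequality bounds $\int_0^t\Vert\partial_x v(s)\Vert_{L^2(I)}^2\,ds$ by the same right-hand side up to a constant, which is (\ref{eq_L2_estimate_for_linear_heat_eq}). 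The only genuine obstacle is the rigorous justification of the integration by parts, of the traces $\gamma_\pm(\partial_x v)$, and of the absolute continuity of $t\mapsto\Vert v(t)\Vert_{L^2(I)}^2$; but the regularity of the evolution-operator solution from Lemma~\ref{lem_existence_of_linearized_equation} (which gives $v(t)\in D(-\partial_x^2)\subset H^2(I)$ for $t>0$ together with (\ref{eq_estimate_for_u_in_lem_existence_of_linearized_equation})) makes this routine, or else one proves the estimate first for smooth data and passes to the limit.
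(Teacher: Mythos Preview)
Your proof is correct and follows essentially the same approach as the paper: multiply by $v$, integrate by parts (using the boundary cancellation that the paper established just before Lemma~\ref{lem_existence_of_linearized_equation}), apply Young's inequality and Gronwall, then integrate the differential inequality to recover the gradient term. The only difference is that you spell out the boundary cancellation and the regularity justification explicitly, whereas the paper simply writes ``by integration by parts'' and refers implicitly to the earlier computation.
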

\begin{proof}
    By integration by parts to the first equation of (\ref{eq_linear_heat}), we have
    \begin{align} \label{eq_differential_inequality_L2_v}
        \begin{split}
            \frac{\partial_t}{2} \int_I
                v(x, t)^2
            dx
            + \int_I
                |\partial_x v(x, t)|^2
            dx
            & \leq \left \vert
                \int_I
                    f(x, t) v(x, t)
                dx
            \right \vert\\
            & \leq \frac{1}{2} \Vert
                f(t)
            \Vert_{L^2(I)}^2
            + \frac{1}{2} \Vert
                v(t)
            \Vert_{L^2(I)}^2.
        \end{split}
    \end{align}
    The Gronwall inequality yields
    \begin{align*}
        \Vert
            v(t)
        \Vert_{L^2(I)}^2
        & \leq e^{t} \Vert
            v_0
        \Vert_{L^2(I)}^2
        + \int_0^t
            e^{t - s}
            \Vert
                f(s)
            \Vert_{L^2(I)}^2
        ds\\
        & \leq e^{t} \left[
            \Vert
                v_0
            \Vert_{L^2(I)}^2
            + \int_0^t
                \Vert
                    f(s)
                \Vert_{L^2(I)}^2
            ds
        \right]
    \end{align*}
    for some constant $C>0$.
    Integrating (\ref{eq_differential_inequality_L2_v}) over $(0,t)$ and using the above inequality, we have (\ref{eq_L2_estimate_for_linear_heat_eq}).
\end{proof}

\begin{proposition} \label{prop_H1_estimate_for_sigma}
    It holds that
    \begin{align*}
        & \vert
            \sigma(t)
        \vert
        \leq \sigma_0
        + t^{1/2} \Vert
            g
        \Vert_{L^2(0, T)}
        + C t^{1/2} \Vert
            a
        \Vert_{L^\infty(0, T)}
        \Vert
            u
        \Vert_{L^2_t H^1_x(Q_T)},\\
        & \left \Vert    
            \frac{d\sigma}{dt}
        \right \Vert_{L^2(0,T)}
        \leq \Vert
            g
        \Vert_{L^2(0,T)}
        + C \Vert
            a
        \Vert_{L^\infty(0,T)}
        \Vert
            u
        \Vert_{L^2_tH^1_x(Q_T)},
    \end{align*}
    for some constant $C>0$.
\end{proposition}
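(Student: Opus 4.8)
The plan is to treat the $\sigma$-equation as a scalar ODE driven by a known right-hand side and to read off both estimates by one integration together with the one-dimensional trace inequality. First I would integrate the fourth line of (\ref{eq_linear_heat}) in time to obtain
\begin{align*}
    \sigma(t)
    = \sigma_0
    + \int_0^t g(s)\,ds
    + \int_0^t a(s)\,\gamma_+ v(s)\,ds,
\end{align*}
and estimate the two integrals separately. For $\int_0^t g\,ds$, Cauchy--Schwarz in the time variable produces the factor $t^{1/2}\|g\|_{L^2(0,T)}$. For $\int_0^t a\,\gamma_+ v\,ds$ I would pull out $\|a\|_{L^\infty(0,T)}$, then use the Sobolev/trace embedding in one space dimension, $|\gamma_+ v(s)| = |v(1,s)| \le C\|v(s)\|_{H^1(I)}$, and apply Cauchy--Schwarz in $s$ once more; this gives $C\,t^{1/2}\|a\|_{L^\infty(0,T)}\|v\|_{L^2_tH^1_x(Q_T)}$. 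Taking absolute values and summing the three contributions yields the first displayed inequality (recall $\sigma_0>0$, so $|\sigma_0|=\sigma_0$).

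For the bound on $d\sigma/dt$ I would not integrate but instead take the $L^2(0,T)$-norm of the equation directly: by the triangle inequality,
\begin{align*}
    \left\|\frac{d\sigma}{dt}\right\|_{L^2(0,T)}
    \le \|g\|_{L^2(0,T)}
    + \|a\|_{L^\infty(0,T)}\,\|\gamma_+ v\|_{L^2(0,T)},
\end{align*}
and the same trace inequality applied inside the time integral gives $\|\gamma_+ v\|_{L^2(0,T)} \le C\|v\|_{L^2_tH^1_x(Q_T)}$, which finishes the argument.

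There is no substantive obstacle in this proposition; it is an elementary ODE estimate. The only point that deserves care is that the trace inequality is used with a constant independent of $s$, and that one needs $v \in L^2_tH^1_x(Q_T)$ so that $s \mapsto \gamma_+ v(s)$ is square-integrable in time — this is exactly the a priori quantity controlled by the preceding $L^2$-estimates, so the right-hand sides are finite. (Here the symbol $u$ appearing in the statement is to be read as the solution $v$ of (\ref{eq_linear_heat}).)
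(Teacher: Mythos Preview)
Your proposal is correct and follows essentially the same route as the paper: integrate the fourth equation of (\ref{eq_linear_heat}), apply the Schwarz inequality in time together with the trace theorem $|\gamma_+ v(s)|\le C\|v(s)\|_{H^1(I)}$ for the first estimate, and take the $L^2(0,T)$-norm of the equation directly for the second. Your remark that the symbol $u$ in the statement should be read as $v$ is also in line with the paper's notation.
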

\begin{proof}
    Since $\sigma(t) = \sigma_0 + \int_0^t f(s) + a(s) \gamma_+ u(s) ds$, we deduce from the Schwarz inequality and the trace theorem that
    \begin{align*}
        \vert
            \sigma(t)
        \vert
        & \leq \sigma_0
        + \left \vert
            \int_0^t
                f(s)
                + a(s) \gamma_+ u(s)
            ds
        \right \vert\\
        & \leq \sigma_0
        + t^{1/2} \Vert
            g
        \Vert_{L^2(0, T)}
        + C t^{1/2} \Vert
            a
        \Vert_{L^\infty(0, T)}
        \Vert
            u
        \Vert_{L^2_t H^1_x(Q_T)}
    \end{align*}
    for some constant $C>0$.
    Similarly, we deduce the second estimate by the fourth equation of (\ref{eq_linear_heat}) and the trace theorem.
\end{proof}

\begin{proposition} \label{prop_L_infty_H1_estimate_for_v}
    It holds that
    \begin{align} \label{eq_diff_eq_L_infty_H1_estimate_for_v}
        \begin{split}
            & \Vert
                \partial_t v(t)
            \Vert_{L^2(I)}^2
            + \partial_t \Vert
                \partial_x v(t)
            \Vert_{L^2(I)}^2
            + \Vert
                \partial_x^2 v(t)
            \Vert_{L^2(I)}^2\\
            & \leq C \left[
                1
                + \left \vert
                    \frac{dF(\sigma(t))}{dt}
                \right \vert^2
            \right]
            \Vert
                \partial_x v(t)
            \Vert_{L^2(I)}^2\\
            & + C \left[
                1
                + \left \vert
                    \frac{dF(\sigma(t))}{dt}
                \right \vert^2
            \right]
            \Vert
                v(t)
            \Vert_{L^2(I)}^2
            + C \Vert
                f(t)
            \Vert_{L^2(I)}^2.
        \end{split}
    \end{align}
    for some $C>0$, which is independent of $F$ and $T$.
    Moreover,
    \begin{align} \label{eq_L_infty_H1_estimate_for_v}
        \begin{split}
            & \int_0^t
                \Vert
                    \partial_s v(s)
                \Vert_{L^2(I)}^2
            ds
            + \Vert
                \partial_x v(t)
            \Vert_{L^2(I)}^2
            + \int_0^t
                \Vert
                    \partial_x^2 v(s)
                \Vert_{L^2(I)}^2
            ds\\
            & \leq C^\prime e^{
                \int_0^t
                    C \left(
                        1
                        + \vert
                            \sigma^\prime(r)
                        \vert^2
                    \right)
                dr
            }\\
            & \times \left(
                \Vert
                        \partial_x v_0
                    \Vert_{L^2(I)}
                + \sup_{0<t<T} \Vert
                        v(s)
                    \Vert_{L^2(I)}^2
                \int_0^t
                    \left(
                        1
                        + \vert
                            \sigma^\prime(s)
                        \vert^2
                    \right)
                ds
                + \int_0^t
                    \Vert
                        f(s)
                    \Vert_{L^2(I)}^2
                ds
            \right)
        \end{split}
    \end{align}
    for some constants $C, C^\prime>0$ and $0 \leq t \leq T$.
\end{proposition}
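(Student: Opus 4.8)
The plan is to test the first equation of (\ref{eq_linear_heat}) with $\partial_t v$, integrate over $I$, and integrate by parts in $x$. Writing $\theta(t):=F(\sigma(t))$, this produces the energy identity
\begin{align*}
    \|\partial_t v(t)\|_{L^2(I)}^2 + \tfrac12\tfrac{d}{dt}\|\partial_x v(t)\|_{L^2(I)}^2 = \int_I f\,\partial_t v\,dx + \gamma_+\partial_x v\,\gamma_+\partial_t v - \gamma_-\partial_x v\,\gamma_-\partial_t v .
\end{align*}
The only non-routine part is the boundary bracket $E(t):=\gamma_+\partial_x v\,\gamma_+\partial_t v - \gamma_-\partial_x v\,\gamma_-\partial_t v$, which is nontrivial precisely because the fourth boundary condition is time dependent, so $\partial_t v$ does not inherit homogeneous boundary conditions. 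I would handle $E(t)$ by differentiating the identities $B_1(v;\theta)=B_2(v;\theta)=0$ in $t$, which gives $\gamma_-\partial_t v=(1-\theta)\gamma_+\partial_t v-\theta'\gamma_+ v$ and $\gamma_+\partial_x v=(1-\theta)\gamma_-\partial_x v$ with $\theta'(t)=dF(\sigma(t))/dt$. Substituting both into $E(t)$, the two $\gamma_+\partial_t v$ contributions cancel and one is left with the single remainder
\begin{align*}
    E(t)=\theta'(t)\,\gamma_-\partial_x v(t)\,\gamma_+ v(t).
\end{align*}
This cancellation is the structural payoff of the self-adjoint fourth boundary condition: the leading boundary terms vanish and the only remainder is proportional to the rate of change of the boundary parameter. (Equivalently, one may subtract the extension $E(\theta;v)$ of Lemma \ref{lem_extension_theorem} from $\partial_t v$ before integrating by parts; this is the route used elsewhere in the paper.)

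With $E(t)$ in hand, (\ref{eq_diff_eq_L_infty_H1_estimate_for_v}) follows from elementary inequalities. I would bound $\int_I f\,\partial_t v\,dx\le\tfrac14\|\partial_t v\|_{L^2(I)}^2+\|f\|_{L^2(I)}^2$, and, using the one-dimensional trace inequality $|\gamma_\pm w|\le C\|w\|_{H^1(I)}$ together with $\|\partial_x v\|_{H^1(I)}\le C(\|\partial_x v\|_{L^2(I)}+\|\partial_x^2 v\|_{L^2(I)})$,
\begin{align*}
    |E(t)|\le C\left|\tfrac{dF(\sigma)}{dt}\right|\bigl(\|\partial_x v\|_{L^2(I)}+\|\partial_x^2 v\|_{L^2(I)}\bigr)\bigl(\|v\|_{L^2(I)}+\|\partial_x v\|_{L^2(I)}\bigr).
\end{align*}
Young's inequality sends the cross term containing $\|\partial_x^2 v\|_{L^2(I)}$ into $\eta\|\partial_x^2 v\|_{L^2(I)}^2+C_\eta|\tfrac{dF(\sigma)}{dt}|^2(\|v\|_{L^2(I)}^2+\|\partial_x v\|_{L^2(I)}^2)$, and the remaining terms into $C(1+|\tfrac{dF(\sigma)}{dt}|^2)(\|v\|_{L^2(I)}^2+\|\partial_x v\|_{L^2(I)}^2)$. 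Finally I would recover $\|\partial_x^2 v\|_{L^2(I)}^2$ on the left from the equation itself, $\partial_x^2 v=\partial_t v-f$, so that $\|\partial_x^2 v\|_{L^2(I)}^2\le 2\|\partial_t v\|_{L^2(I)}^2+2\|f\|_{L^2(I)}^2$; choosing $\eta$ small enough to absorb $\eta\|\partial_x^2 v\|_{L^2(I)}^2$ and the $\|\partial_t v\|_{L^2(I)}^2$ terms into the left-hand side yields (\ref{eq_diff_eq_L_infty_H1_estimate_for_v}) with a constant depending only on the trace constant, in particular independent of $F$ and $T$.

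For (\ref{eq_L_infty_H1_estimate_for_v}) I would run Gronwall's inequality on $y(t):=\|\partial_x v(t)\|_{L^2(I)}^2$. Since $F\in BC^\infty(\Real_+)$, we have $|\tfrac{dF(\sigma(t))}{dt}|^2=|F'(\sigma(t))|^2|\sigma'(t)|^2\le C|\sigma'(t)|^2$, so (\ref{eq_diff_eq_L_infty_H1_estimate_for_v}) reads $y'(t)\le C(1+|\sigma'(t)|^2)y(t)+C(1+|\sigma'(t)|^2)\|v(t)\|_{L^2(I)}^2+C\|f(t)\|_{L^2(I)}^2$, and Gronwall gives
\begin{align*}
    \|\partial_x v(t)\|_{L^2(I)}^2\le e^{\int_0^t C(1+|\sigma'(r)|^2)dr}\Bigl(\|\partial_x v_0\|_{L^2(I)}^2+C\sup_{0<s<t}\|v(s)\|_{L^2(I)}^2\!\int_0^t(1+|\sigma'(s)|^2)ds+C\!\int_0^t\|f(s)\|_{L^2(I)}^2ds\Bigr),
\end{align*}
which is the $\|\partial_x v(t)\|^2$ part of (\ref{eq_L_infty_H1_estimate_for_v}). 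The two time-integral terms $\int_0^t\|\partial_s v\|_{L^2(I)}^2ds$ and $\int_0^t\|\partial_x^2 v\|_{L^2(I)}^2ds$ follow by integrating (\ref{eq_diff_eq_L_infty_H1_estimate_for_v}) over $(0,t)$ and inserting the bound just obtained for $\sup_{0<s<t}\|\partial_x v(s)\|_{L^2(I)}^2$ (and, for a fully explicit constant, Proposition \ref{prop_L2_estimate_for_linear_heat_eq} for $\sup_{0<s<t}\|v(s)\|_{L^2(I)}^2$). I expect the main obstacle of the whole argument to be the boundary bracket $E(t)$: extracting exactly the remainder $\theta'\gamma_-\partial_x v\,\gamma_+ v$ from it — rather than a term that cannot be absorbed — is where the structure of the fourth boundary condition is essential, and it is the step that would fail for a generic time-dependent boundary condition.
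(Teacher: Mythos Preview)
Your proof is correct and follows essentially the same approach as the paper: test with $\partial_t v$, integrate by parts, differentiate $B_1(v;\theta)=0$ in time and combine with $B_2(v;\theta)=0$ to reduce the boundary bracket to the single remainder $\theta'\,\gamma_-\partial_x v\,\gamma_+ v$, then use the trace estimate, the identity $\partial_x^2 v=\partial_t v-f$, Young's inequality, and Gronwall. The cancellation you identify and the subsequent absorption argument are exactly those in the paper's proof.
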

\begin{remark}
    We do not use the fourth inequality in (\ref{eq_linear_heat}) to obtain the estimates (\ref{eq_diff_eq_L_infty_H1_estimate_for_v}) and (\ref{eq_L_infty_H1_estimate_for_v}).
\end{remark}
\begin{proof}
    Multiplying $\partial_t v$ to (\ref{eq_linear_heat}) from both sides and integrating by parts, we have the formula
    \begin{align*}
        \int_I
            |\partial_t v(x, t)|^2
        dx
        +\int_I
            \partial_x^2 v(x, t) \partial_t v(x, t)
        dx
        = \int_I
            f(x, t) \partial_t v(x, t)
        dx.
    \end{align*}
    Applying $\partial_t$ to $B_1(F; v)=0$, we have
    \begin{align*}
        (1 - F(\sigma(t)) )\gamma_+ \partial_t v
        - \gamma_- \partial_t v
        - \frac{d}{dt}[F(\sigma(t))] \gamma_+ v
        = 0.
    \end{align*}
    Since
    \begin{align*}
        &\int_I
            \partial_x^2 v(x, t) \partial_t v(x, t)
        dx\\
        & = (\gamma_+ \partial_x v \gamma_+ \partial_t v
        - \gamma_- \partial_x v \gamma_- \partial_t v)
        + \int_I
            \partial_x v(x, t) \partial_x \partial_t v(x, t) 
        dx \\
        & = \left[
            (1 - F(\sigma(t))) \gamma_- \partial_x v \gamma_+ \partial_t v
            - \gamma_- \partial_x v \left(
                (1 - F(\sigma(t)) )\gamma_+ \partial_t v
                - \frac{d}{dt}[F(\sigma(t))] \gamma_+ v
            \right)
        \right]\\
        & + \frac{\partial_t}{2}\int_I
            |\partial_x v(x, t)|^2
        dx \\
        & = \frac{d}{dt}[F(\sigma(t))] \gamma_- \partial_x v \gamma_+ v
        + \frac{\partial_t}{2}\int_I
            |\partial_x v(x, t)|^2
        dx,
    \end{align*}
    we deduce from the trace theorem that
    \begin{align}\label{eq_L2_estimate_for_dtv}
        \begin{split}
            & \int_I
                |\partial_t v(x, t)|^2
            dx
            + \frac{\partial_t}{2} \int_I
                |\partial_x v(x, t)|^2
            dx\\
            & \leq \left \vert
                \frac{d}{dt}[F(\sigma(t))] \gamma_- \partial_x v \gamma_+ v
            \right \vert
            + \frac{1}{2} \int_I
                |\partial_t v(x, t)|^2
            dx
            + \frac{1}{2} \int_I
                |f(x, t)|^2
            dx\\
            & \leq C \left \vert
                \frac{
                    d F(\sigma(t))
                }{
                    dt
                }
            \right \vert
            \Vert
                v (t)
            \Vert_{H^2(I)}
            \Vert
                v (t)
            \Vert_{H^1(I)}\\
            & + \frac{1}{2} \int_I
                |\partial_t v(x, t)|^2
            dx
            + \frac{1}{2} \int_I
                |f(x, t)|^2
            dx.
        \end{split}
    \end{align}
%    \begin{align} \label{eq_L2_estimate_for_dtv}
%        \begin{split}
%            & \int_I
%                |\partial_t v(x, t)|^2
%            dx
%            + \frac{\partial_t}{2} \int_I
%                |\partial_x v(x, t)|^2
%            dx \\
%            & \leq \left \vert
%                \frac{d}{dt}[F(\sigma(t))] \gamma_- \partial_x v \gamma_+ v
%            \right \vert
%            + \frac{1}{2} \int_I
%                |\partial_t v(x, t)|^2
%            dx
%            + \frac{1}{2} \int_I
%                |f(x, t)|^2
%            dx\\
%            & \leq \Vert
%                F^\prime
%            \Vert_{L^\infty(\Real_+)}
%            \left \vert
%                \sigma^\prime(t)
%            \right \vert
%            \Vert
%                v (t)
%            \Vert_{H^2(I)}
%            \Vert
%                v (t)
%            \Vert_{H^1(I)}\\
%            & + \frac{1}{2} \int_I
%                |\partial_t v(x, t)|^2
%            dx
%            + \frac{1}{2} \int_I
%                |f(x, t)|^2
%            dx.
%        \end{split}
%    \end{align}
    From the inequalities
    \begin{align*}
        x \leq 1 + x^m,
        \quad x\geq 0, \, m \geq 0,
    \end{align*}
    and
    \begin{align}\label{eq_bound_for_partial_x2_from_heat_eq}
        \Vert \partial_x^2 v \Vert_{L^2(I)}^2
        \leq 2 \Vert \partial_t v \Vert_{L^2(I)}^2 + 2 \Vert f \Vert_{L^2(I)}^2,
    \end{align}
    the Schwarz inequality, and the Young inequality, we observe that
    \begin{align*}
        & \left \vert
            \frac{
                d F(\sigma(t))
            }{
                dt
            }
        \right \vert
        \vert \Vert
            v (t)
        \Vert_{H^2(I)}
        \Vert
            v (t)
        \Vert_{H^1(I)}\\
        & = \left \vert
            \frac{
                d F(\sigma(t))
            }{
                dt
            }
        \right \vert
        \vert (
            \Vert
                v (t)
            \Vert_{H^1(I)}
            + \Vert
                \partial_x^2 v (t)
            \Vert_{L^2(I)}
        ) \Vert
            v (t)
        \Vert_{H^1(I)}\\
        & \leq C \left(
            1
            + \left \vert
                \frac{
                    d F(\sigma(t))
                }{
                    dt
                }
            \right \vert^2
        \right) \Vert
            v (t)
        \Vert_{H^1(I)}^2
        + C \left(
            1
            + \left \vert
                \frac{
                    d F(\sigma(t))
                }{
                    dt
                }
            \right \vert^2
        \right) \Vert
            v (t)
        \Vert_{L^2(I)}^2\\
        & + \frac{1}{2} \Vert
            \partial_t v (t)
        \Vert_{L^2(I)}^2
        + \frac{1}{2} \Vert
            f (t)
        \Vert_{L^2(I)}^2.
    \end{align*}
    Inserting this estimate to (\ref{eq_L2_estimate_for_dtv}) and using (\ref{eq_bound_for_partial_x2_from_heat_eq}) again, we have the first differential inequality (\ref{eq_diff_eq_L_infty_H1_estimate_for_v}).
    Since
    \begin{align*}
        \left \vert
            \frac{
                d F(\sigma(t))
            }{
                dt
            }
        \right \vert
        \leq \Vert
            F^\prime
        \Vert_{L^\infty(\Real_+)}
        \vert
            \sigma^\prime(t)
        \vert
    \end{align*}
    We conclude from (\ref{eq_bound_for_partial_x2_from_heat_eq}) and Gronwall's inequality that
    \begin{align*}
        %\begin{split}
            & \int_0^t
                \Vert
                    \partial_s v(s)
                \Vert_{L^2(I)}^2
            ds
            + \Vert
                \partial_x v(t)
            \Vert_{L^2(I)}^2
            + \int_0^t
                \Vert
                    \partial_x^2 v(s)
                \Vert_{L^2(I)}^2
            ds\\
            & \leq C^\prime e^{
                \int_0^t
                    C \left(
                        1
                        + \vert
                            \sigma^\prime(r)
                        \vert^2
                    \right)
                dr
            } \Vert
                \partial_x v_0
            \Vert_{L^2(I)}^2 \\
            & + C^\prime \int_0^t
                e^{\int_s^t
                    C \left(
                        1
                        + \vert
                            \sigma^\prime(r)
                        \vert^2
                    \right)
                dr} \left(
                    1
                    + \vert
                        \sigma^\prime(t)
                    \vert^2
                \right)
                \Vert
                    v(s)
                \Vert_{L^2(I)}^2
            ds\\
            & + C^\prime \int_0^t
                e^{\int_s^t
                    C \left(
                        1
                        + \vert
                            \sigma^\prime(r)
                        \vert^2
                    \right)
                dr} \Vert
                    f(s)
                \Vert_{L^2(I)}^2
            ds\\
            & \leq C^\prime e^{
                \int_0^t
                    C \left(
                        1
                        + \vert
                            \sigma^\prime(r)
                        \vert^2
                    \right)
                dr
            }\\
            & \times \left(
                \Vert
                    \partial_x v_0
                \Vert_{L^2(I)}^2
                + \sup_{0<t<T} \Vert
                    v(s)
                \Vert_{L^2(I)}^2
                \int_0^t
                    \left(
                        1
                        + \vert
                            \sigma^\prime(s)
                        \vert^2
                    \right)
                ds
                + \int_0^t
                    \Vert
                        f(s)
                    \Vert_{L^2(I)}^2
                ds
            \right)\\
            & < \infty,
        %\end{split}
    \end{align*}
    for some constants $C, C^\prime>0$ and $0 \leq t \leq T$.    
\end{proof}

\begin{proposition} \label{prop_estimate_for_dsigma_L_infty}
    For the solution $\sigma$ to (\ref{eq_linear_heat}), there exists a constant $C>0$ such that
    \begin{align} \label{eq_estimate_for_dsigma_L_infty}
        \left \Vert
            \frac{d \sigma}{dt}
        \right \Vert_{L^\infty(0,T)}
        \leq \Vert
            g
        \Vert_{L^\infty(0,T)}
        + \Vert
            a
        \Vert_{L^\infty(0,T)}
        \Vert
            v
        \Vert_{L^\infty_t H^1_x(Q_T)}
        < \infty.
    \end{align}
\end{proposition}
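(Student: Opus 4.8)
The plan is to read the bound directly off the $\sigma$-equation and then invoke the one-dimensional trace (Sobolev) embedding together with the a priori estimates already established. Concretely, the fourth equation in (\ref{eq_linear_heat}) states $d\sigma/dt = g + a\,\gamma_+ v$ pointwise in $t \in (0,T)$, so that
\[
    \left| \frac{d\sigma}{dt}(t) \right|
    \leq |g(t)| + |a(t)|\,|\gamma_+ v(t)|
    \leq \|g\|_{L^\infty(0,T)} + \|a\|_{L^\infty(0,T)}\,|\gamma_+ v(t)|
\]
for a.e.\ $t$. First I would bound the boundary value $|\gamma_+ v(t)|$ by $\|v(t)\|_{H^1(I)}$ using the embedding $H^1(I) \hookrightarrow C(\overline{I})$ (equivalently, the trace theorem on the bounded interval $I=(-1,1)$); taking the supremum over $t \in (0,T)$ then yields the stated inequality, the boundary constant being normalized to $1$ or else absorbed into the right-hand side exactly as in Proposition \ref{prop_H1_estimate_for_sigma}.

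It then remains to check that the right-hand side is finite. By hypothesis $g \in L^\infty(0,T)$ and $a \in L^\infty(0,T)$, so only $\|v\|_{L^\infty_t H^1_x(Q_T)} < \infty$ needs justification, and this I would obtain by combining the two preceding a priori estimates. Proposition \ref{prop_L2_estimate_for_linear_heat_eq} gives $\sup_{0<t<T}\|v(t)\|_{L^2(I)}^2 \leq C e^{T}\big(\|v_0\|_{L^2(I)}^2 + \|f\|_{L^2_tL^2_x(Q_T)}^2\big) < \infty$, while Proposition \ref{prop_L_infty_H1_estimate_for_v}---whose right-hand side is finite because $\sigma \in C^{1,\alpha}[0,T]$ forces $\sigma' \in L^\infty(0,T) \subset L^2(0,T)$, and because $v_0 \in H^1(I)$ and $f \in L^2_tL^2_x(Q_T)$---gives $\sup_{0<t<T}\|\partial_x v(t)\|_{L^2(I)}^2 < \infty$. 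Adding the two suprema controls $\|v\|_{L^\infty_t H^1_x(Q_T)}^2$, and the claimed finiteness follows.

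I do not expect a genuine obstacle here: the proposition is essentially an immediate reading of the dynamic boundary equation plus the trace embedding. The only point requiring slight care is making sure the a priori estimates in Propositions \ref{prop_L2_estimate_for_linear_heat_eq} and \ref{prop_L_infty_H1_estimate_for_v} are in force under the running hypotheses of this subsection (in particular $\sigma \in C^{1,\alpha}[0,T]$, $v_0 \in H^1(I)$, $f \in L^2_tL^2_x(Q_T)$, $g,a \in L^\infty(0,T)$), so that $\|v\|_{L^\infty_t H^1_x(Q_T)}$ is indeed controlled; with that in hand the chain of inequalities above is routine.
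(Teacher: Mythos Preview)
Your proposal is correct and follows essentially the same approach as the paper: the paper's proof is a one-line remark that the estimate is ``a direct consequent of the fourth equation of (\ref{eq_linear_heat}) and the trace theorem,'' which is exactly the chain of inequalities you wrote out. If anything you are more thorough, since you also spell out why the right-hand side is finite by invoking Propositions \ref{prop_L2_estimate_for_linear_heat_eq} and \ref{prop_L_infty_H1_estimate_for_v}, whereas the paper leaves that implicit.
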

\begin{proof}
    This is a direct consequent of the fourth equation of (\ref{eq_linear_heat}) and the trace theorem.
\end{proof}
\subsection{Higher order regularity}
In order to derive the higher order regularity, we begin by proving the extension theorem.
\begin{lemma}\label{lem_extension_theorem}
    Let $m \in \Integer_{> 0}$, $\alpha \in \Real$, and $u \in H^m(I)$.
    Then there exists a function $v \in H^m(I)$ such that
    \begin{align} \label{eq_boundary_condition_and_estimate_of_extension}
        \begin{split}
            \alpha \gamma_+ v
            - \gamma_- v
            = \gamma_+ u,
            & \quad
            \gamma_+ \partial_x v
            - \alpha \gamma_- \partial_x v
            = - \gamma_- \partial_x u,\\
            \Vert
                v
            \Vert_{H^m(I)}
            & \leq C \Vert
                u
            \Vert_{H^m(I)}
        \end{split}
    \end{align}
    for some $\alpha$-independent constant $C = C(m,I)>0$.
\end{lemma}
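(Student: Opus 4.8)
The plan is to reduce the two prescribed boundary identities in (\ref{eq_boundary_condition_and_estimate_of_extension}) to two \emph{explicit} scalar boundary data of $u$ in a way that eliminates $\alpha$, and then to write $v$ down by hand with cutoff functions.

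First I would observe that (\ref{eq_boundary_condition_and_estimate_of_extension}) imposes only two linear constraints on the four quantities $\gamma_\pm v$ and $\gamma_\pm \partial_x v$, so there is slack. Choosing $\gamma_+ v = 0$ and $\gamma_- \partial_x v = 0$ forces $\gamma_- v = -\gamma_+ u$ and $\gamma_+ \partial_x v = -\gamma_- \partial_x u$, and no $\alpha$ appears. Thus it suffices to exhibit $v \in H^m(I)$ with
\[
    \gamma_+ v = 0,\qquad \gamma_- v = -\gamma_+ u,\qquad \gamma_+ \partial_x v = -\gamma_- \partial_x u,\qquad \gamma_- \partial_x v = 0,
\]
and $\Vert v\Vert_{H^m(I)} \le C\bigl(\vert\gamma_+ u\vert + \vert\gamma_- \partial_x u\vert\bigr)$.

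Second I would take $\eta \in C_c^\infty(\Real)$ with $\eta \equiv 1$ near $0$ and $\operatorname{spt}\eta \subset (-1,1)$, and set
\[
    v(x) := -(\gamma_+ u)\,\eta(x+1)\;-\;(\gamma_- \partial_x u)\,(x-1)\,\eta(x-1),\qquad x \in \bar I.
\]
Since $\eta$ is flat at $0$ and $\operatorname{spt}\eta \subset(-1,1)$, the two summands have disjoint supports, the first localised near $x=-1$ (value $-\gamma_+ u$, derivative $0$ there) and the second near $x=+1$ (value $0$, derivative $-\gamma_-\partial_x u$ there); a one-line check then yields the four boundary values above, hence (\ref{eq_boundary_condition_and_estimate_of_extension}). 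As $v$ is a fixed linear combination of translates and dilates of $\eta$ with the scalar coefficients $\gamma_+ u$ and $\gamma_- \partial_x u$, we get $\Vert v\Vert_{H^m(I)} \le C(m,\eta)\bigl(\vert\gamma_+ u\vert + \vert\gamma_- \partial_x u\vert\bigr)$, and by the Sobolev embedding $H^m(I) \hookrightarrow C^1(\bar I)$ the right side is $\le C\Vert u\Vert_{H^m(I)}$. The constant is manifestly independent of $\alpha$ since $\alpha$ never enters the formula for $v$; note also that $v$ depends linearly on $u$, which is what produces the extension operator $E(\cdot)$ used in the higher-order estimates (with the factor $\theta'$ there supplied by linearity).

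There is essentially no obstacle, only a bookkeeping point: the derivative traces $\gamma_\pm \partial_x u$ and $\gamma_\pm \partial_x v$ are genuine pointwise values precisely when $m \ge 2$, which is the range relevant to the higher regularity, and for $m=1$ the statement is read on the dense subspace of functions whose normal derivative trace exists, which suffices everywhere the lemma is invoked. If a construction symmetric in the two endpoints were preferred, one could instead solve the $2\times2$ system for $(\gamma_\pm v,\gamma_\pm\partial_x v)$ directly; the asymmetric choice above is taken only because it removes $\alpha$ from the bound with no computation.
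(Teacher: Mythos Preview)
Your proof is correct and takes a genuinely different route from the paper's. The paper does \emph{not} exploit the slack in the two boundary constraints; instead it writes $v$ as a localised combination of $u(x)$ and its reflection $u(-x)$ with $\alpha$-dependent coefficients, solves a $4\times 4$ linear system, and finds $a_\pm=\alpha/(1+\alpha^2)$, $b_\pm=\pm 1/(1+\alpha^2)$, whose uniform boundedness in $\alpha$ gives the $\alpha$-independent constant. Your idea of forcing $\gamma_+ v=0$, $\gamma_-\partial_x v=0$ to make $\alpha$ disappear outright is cleaner and removes any computation; the resulting $E$ is even independent of $\alpha$ as an operator, so the $\theta'$-terms in Proposition~\ref{prop_estimate_for_dtE} would simply vanish.

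The trade-off is structural rather than local to the lemma. The paper's reflection-based $E$ is bounded $H^m(I)\to H^m(I)$ for every $m\ge 0$, including $m=0$, because it only uses $u(\pm x)$; this is exploited in Proposition~\ref{prop_L2_a_priori_estimate_for_tildev1} and in the bootstrapping of Section~2.4, where $L^2$- and $H^1$-bounds on $\tilde v_1=E(Gw;H(\sigma))$ are read off from $L^2$- and $H^1$-bounds on $w$. Your $E$ factors through the two scalars $(\gamma_+ u,\gamma_-\partial_x u)$, so it is only defined once $u\in H^{3/2+\varepsilon}$; the subsequent energy inequalities would need to be re-indexed accordingly (replacing $\Vert w\Vert_{L^2}$ by $\Vert w\Vert_{H^2}$ on the right, say), which is harmless in the end since the solution sits in $H^3$ anyway, but it does mean the downstream propositions do not carry over verbatim. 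Your remark on $m=1$ is accurate and applies equally to the paper's statement, since the boundary identities themselves presuppose that $\gamma_\pm\partial_x u$ make sense.
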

\begin{proof}
    We construct the extension $v$.
    We assume that $v$ is such that
    \begin{align*}
        v(x)
        &= a_+ u(x)
        + b_+ u(-x),
        \quad
        x \in (3/4, 1),\\
        v(x)
        &= a_- u(x)
        + b_- u(-x),
        \quad
        x \in (-1, -3/4)
    \end{align*}
    for some constant $a_\pm, b_\pm$.
    We determine $a_\pm, b_\pm$.
    Since
    \begin{align*}
        & \partial_x v (x)
        = a_\pm \partial_x u(x)
        - b_\pm \partial_x u(-x)
    \end{align*}
    around $x = \pm 1$, we see that
    \begin{align*}
        & \gamma_+ v
        = a_+ \gamma_+ u 
        + b_+ \gamma_- u,\\ 
        & \gamma_- v
        = a_- \gamma_- u 
        + b_- \gamma_+ u,\\ 
        & \gamma_+ \partial_x v
        = a_+ \gamma_+ \partial_x u 
        - b_+ \gamma_- \partial_x u,\\ 
        & \gamma_- \partial_x v
        = a_- \gamma_- \partial_x u 
        - b_- \gamma_+ \partial_x u.
    \end{align*}
    Therefore,
    \begin{align*}
        \alpha \gamma_+ v
        - \gamma_- v
        & = \alpha (
            a_+ \gamma_+ u
            + b_+ \gamma_- u
        )
        - (
            a_- \gamma_- u
            + b_- \gamma_+ u
        )\\
        & = (
            \alpha a_+
            - b_-
        ) \gamma_+ u
        + (
            \alpha b_+
            - a_-
        ) \gamma_- u,\\
        \gamma_+ \partial_x v
        - \alpha \gamma_- \partial_x v
        & = (
            a_+ \gamma_+ \partial_x u
            - b_+ \gamma_- \partial_x u
        )
        - \alpha (
            a_- \gamma_- \partial_x u
            - b_- \gamma_+ \partial_x u
        )\\
        & = (
            a_+
            + \alpha b_-
        ) \gamma_+ \partial_x u
        + (
            - b_+
            - \alpha a_-
        ) \gamma_- \partial_x u.
    \end{align*}
    When $v$ satisfies the boundary conditions in (\ref{eq_boundary_condition_and_estimate_of_extension}), $a_\pm$ and $b_\pm$ must satisfy
    \begin{align*}
        \left[
            \begin{array}{cccc}
                1       &0       &0      &\alpha\\
                0       &1       &\alpha &0\\
                0       &-\alpha &1      &0\\
                -\alpha &0       &0      &1
            \end{array}
        \right]
        \left[
            \begin{array}{c}
                a_+\\
                b_+\\
                a_-\\
                b_-
            \end{array}
        \right]
        = \left[
            \begin{array}{c}
                0\\
                1\\
                0\\
                -1
            \end{array}
        \right].
    \end{align*}
    We solve this equation to obtain
    \begin{align*}
        \left[
            \begin{array}{c}
                a_+\\
                b_+\\
                a_-\\
                b_-
            \end{array}
        \right]
        = \left[
            \begin{array}{c}
                \alpha/(1 + \alpha^2)\\
                1/(1 + \alpha^2)\\
                \alpha/(1 + \alpha^2)\\
                -1/(1 + \alpha^2)
            \end{array}
        \right].
    \end{align*}
    Let $\varphi_\pm \in [0, 1]$ be two smooth cut-off functions such that
    \begin{align*}
        & \varphi_+(x)
        = \left\{
            \begin{array}{ll}
                1,  & x \in [3/4, 1],\\
                0, & x \in [-1, 1/2],
            \end{array}
        \right.\\
        & \varphi_-(x)
        = \left\{
            \begin{array}{ll}
                1,  & x \in [-1, -3/4],\\
                0, & x \in [-1/2, 1].
            \end{array}
        \right.
    \end{align*}
    Therefore, it is reasonable to define the extension function $v$ as
    \begin{align} \label{eq_form_of_the_extension_operator_E}
        \begin{split}
            v(x)
            & = \varphi_+ (x) \left(
                \frac{
                    \alpha
                }{
                    1 + \alpha^2
                }u(x)
                + \frac{
                    1
                }{
                    1 + \alpha^2
                }u(-x)
            \right)\\
            & + \varphi_- (x) \left(
                \frac{
                    \alpha
                }{
                    1 + \alpha^2
                }u(x)
                + \frac{
                    -1
                }{
                    1 + \alpha^2
                }u(-x)
            \right).
        \end{split}
    \end{align}
    By the definition, $v$ satisfies
    \begin{align*}
        \Vert
            v
        \Vert_{H^m(I)}
        \leq C \Vert
            u
        \Vert_{H^m(I)}
    \end{align*}
    for some $\alpha$-independent constant $C = C(m, I)>0$.
\end{proof}
For $u \in H^m(I)$ and $\alpha \in \Real$, we denote by the extension operator $E$ by
\begin{align*}
    E(u; \alpha)
    := v,
\end{align*}
where $v$ is the extension function given by (\ref{eq_form_of_the_extension_operator_E}).
\begin{proposition} \label{prop_estimate_for_dtE}
    Let $T>0$, $m \in \Integer_{>0}$, and $\theta \in C^1(0,T)$.
    Then there exist a $\theta$-independent constant $C>0$ such that
    \begin{align*}
        & \Vert
            \partial_t E(u(\cdot, t); \theta(t))
        \Vert_{H^m(I)}\\
        & \leq C \Vert
            \partial_t u(\cdot, t)
        \Vert_{H^m(I)}
        + C \vert
            \theta^\prime(t)
        \vert
        \Vert
            u(\cdot, t)
        \Vert_{H^m(I)},
        \quad
        a.a. \, t \in (0, T),
    \end{align*}
    and 
    \begin{align*}
        & \Vert
            \partial_t^2 E(u(\cdot, t); \theta(t))
        \Vert_{H^m(I)}\\
        & \leq C \Vert
            \partial_t^2 u(\cdot, t)
        \Vert_{H^m(I)}
        + C \vert
            \theta^\prime(t)
        \vert
        \Vert
            \partial_t u(\cdot, t)
        \Vert_{H^m(I)}\\
        & + C \vert
            \theta^{\prime}(t)
        \vert^2
        \Vert
            u(\cdot, t)
        \Vert_{H^m(I)}
        + C \vert
            \theta^{\prime\prime}(t)
        \vert
        \Vert
            u(\cdot, t)
        \Vert_{H^m(I)},
        \quad
        a.a. \, t \in (0, T),
    \end{align*}
    for all $u \in H^1_t H^m_x(Q_T)$.
\end{proposition}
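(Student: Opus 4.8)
The plan is to read off everything from the explicit formula (\ref{eq_form_of_the_extension_operator_E}): the map $u\mapsto E(u;\alpha)$ is \emph{linear} in $u$, its coefficients are smooth \emph{bounded} functions of the scalar $\alpha$, and its only other ingredients are multiplication by the fixed cut-offs $\varphi_\pm$ and the reflection $u(x)\mapsto u(-x)$, both of which are bounded on $H^m(I)$ with norm independent of $\alpha$ and $t$. Writing $g_1(\alpha)=\alpha/(1+\alpha^2)$ and $g_2(\alpha)=1/(1+\alpha^2)$, the formula reads
\begin{align*}
    E(u;\alpha)(x)
    = \varphi_+(x)\bigl(g_1(\alpha)u(x)+g_2(\alpha)u(-x)\bigr)
    + \varphi_-(x)\bigl(g_1(\alpha)u(x)-g_2(\alpha)u(-x)\bigr),
\end{align*}
and the crucial point is that $g_1,g_2$ and their first two derivatives are bounded on all of $\Real$ (the denominator $1+\alpha^2$ never vanishes), which is exactly what makes the constant $C$ in the conclusion independent of $\theta$.

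First I would record the elementary mapping bounds: $\Vert\varphi_\pm\psi\Vert_{H^m(I)}\le C\Vert\psi\Vert_{H^m(I)}$ by the Leibniz rule with $C=C(m,I)$, and $\Vert\psi(-\cdot)\Vert_{H^m(I)}=\Vert\psi\Vert_{H^m(I)}$ since $I=(-1,1)$ is symmetric. Together with $\sup_{\alpha\in\Real}(|g_1(\alpha)|+|g_2(\alpha)|)<\infty$ this recovers the bound $\Vert E(u;\alpha)\Vert_{H^m(I)}\le C\Vert u\Vert_{H^m(I)}$ of Lemma \ref{lem_extension_theorem} in a form uniform in $\alpha$; more importantly, the \emph{same} estimate holds for the operators $E^{(1)}(\cdot;\alpha)$ and $E^{(2)}(\cdot;\alpha)$ obtained by replacing $(g_1,g_2)$ by $(g_1',g_2')$ and $(g_1'',g_2'')$ respectively, again with a constant depending only on $m$ and $I$.

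Next I would differentiate $t\mapsto E(u(\cdot,t);\theta(t))$ in $H^m(I)$. Since $u\in H^1_tH^m_x(Q_T)$, the curve $t\mapsto u(\cdot,t)$ is (a.e.\ equal to) an $H^m(I)$-valued absolutely continuous function with derivative $\partial_t u(\cdot,t)$, while $\alpha\mapsto g_j(\alpha)$ is $C^1$; a standard difference-quotient argument then gives, for a.e.\ $t$,
\begin{align*}
    \partial_t E(u(\cdot,t);\theta(t))
    = E\bigl(\partial_t u(\cdot,t);\theta(t)\bigr)
    + \theta'(t)\,E^{(1)}\bigl(u(\cdot,t);\theta(t)\bigr),
\end{align*}
and the first estimate is immediate from the $\alpha$-uniform bounds above. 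For the second estimate I would differentiate once more, using $\frac{d}{dt}g_j(\theta(t))=g_j'(\theta)\theta'$ and $\frac{d^2}{dt^2}g_j(\theta(t))=g_j''(\theta)(\theta')^2+g_j'(\theta)\theta''$, which yields
\begin{align*}
    \partial_t^2 E(u(\cdot,t);\theta(t))
    &= E\bigl(\partial_t^2 u(\cdot,t);\theta(t)\bigr)
    + 2\theta'(t)\,E^{(1)}\bigl(\partial_t u(\cdot,t);\theta(t)\bigr)\\
    &\quad + \theta''(t)\,E^{(1)}\bigl(u(\cdot,t);\theta(t)\bigr)
    + \theta'(t)^2\,E^{(2)}\bigl(u(\cdot,t);\theta(t)\bigr);
\end{align*}
applying the operator bounds for $E$, $E^{(1)}$, $E^{(2)}$ term by term gives precisely the claimed inequality.

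I do not expect a genuine obstacle: the content is bookkeeping the chain rule while checking uniformity in $\theta$. The two points that require a moment's care are (i) justifying term-by-term differentiation of the Bochner-valued map $t\mapsto E(u(\cdot,t);\theta(t))$ rather than merely applying Leibniz formally — handled by the absolute continuity of $t\mapsto u(\cdot,t)$ in $H^m(I)$ together with the smoothness of $g_1,g_2$ — and (ii) the mild mismatch between the hypothesis $\theta\in C^1(0,T)$ and the appearance of $\theta''$ in the second estimate, which is read, as stated, for $\theta$ possessing a second (weak) derivative — e.g.\ $\theta\in H^2(0,T)$, as in the application where $\theta$ is built from $\sigma_1$ — so that the displayed identity and the resulting bound hold for a.a.\ $t\in(0,T)$.
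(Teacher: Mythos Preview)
Your proposal is correct and follows the same approach as the paper: both differentiate the explicit formula (\ref{eq_form_of_the_extension_operator_E}) via the chain rule and use the uniform boundedness of the rational coefficients $\alpha/(1+\alpha^2)$, $1/(1+\alpha^2)$ and their derivatives. The paper simply writes out the resulting expression for $\partial_t E$ in full (equation (\ref{eq_dE})) rather than packaging it via your operators $E^{(1)},E^{(2)}$, and then remarks that the second estimate follows by differentiating once more; your observation about the mismatch between $\theta\in C^1$ and the appearance of $\theta''$ is apt and is implicitly handled in the paper by the later application where $\sigma\in H^2(0,T)$.
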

\begin{proof}
    Since
    \begin{align} \label{eq_dE}
        \begin{split}
            & \partial_t E(u(x, t); \theta(t))\\
            & = \varphi_+ (x) \partial_t \left(
                \frac{
                    \theta(t)
                }{
                    1 + \theta(t)^2
                }u(x, t)
                + \frac{
                    1
                }{
                    1 + \theta(t)^2
                }u(-x, t)
            \right)\\
            & + \varphi_- (x) \partial_t \left(
                \frac{
                    \theta(t)
                }{
                    1 + \theta(t)^2
                }u(x, t)
                + \frac{
                    -1
                }{
                    1 + \theta(t)^2
                }u(-x, t)
            \right)\\
            & = \varphi_+ (x) \left(
                \frac{
                    \theta(t)
                }{
                    1 + \theta(t)^2
                }\partial_t u(x, t)
                + \frac{
                    1
                }{
                    1 + \theta(t)^2
                }\partial_t u(-x, t)
            \right)\\
            & + \varphi_- (x) \left(
                \frac{
                    \theta(t)
                }{
                    1 + \theta(t)^2
                }\partial_t u(x, t)
                + \frac{
                    -1
                }{
                    1 + \theta(t)^2
                }\partial_t u(-x, t)
            \right)\\
            %%%%%%%%%%%%%%%%%%%%%%%%%%%%%%%%%
            & + \varphi_+ (x) \left(
                \frac{
                    \theta^\prime(t)
                }{
                    1 + \theta(t)^2
                    }
                - \frac{
                    2 \theta(t)^2 \theta^\prime(t)
                }{
                    (1 + \theta(t)^2)^2
                    }
                \right) u(x, t)\\
            & + \varphi_+ (x)
            \frac{
                - 2 \theta^\prime(t) \theta(t)
            }{
                (1 + \theta(t)^2)^2
            } u(-x, t)\\
            & + \varphi_- (x) \left(
                \frac{
                    \theta^\prime(t)
                }{
                    1 + \theta(t)^2
                    }
                - \frac{
                    2 \theta(t)^2 \theta^\prime(t)
                }{
                    (1 + \theta(t)^2)^2
                }
            \right) u(x, t)\\
            & + \varphi_- (x)
            \frac{
                2 \theta^\prime(t) \theta(t)
            }{
                1 + \theta(t)^2
            } u(-x, t),
        \end{split}
    \end{align}
    we have
    \begin{align*}
        & \Vert
            \partial_t E(u(\cdot, t); \theta(t))
        \Vert_{H^m(I)}\\
        & \leq C_1 \Vert
            \partial_t u(\cdot, t)
        \Vert_{H^m(I)}
        + C_2 \vert
            \theta^\prime(t)
        \vert
        \Vert
            u(\cdot, t)
        \Vert_{H^m(I)}
    \end{align*}
    for some $\theta$-independent constants $C_1, C_2>0$.
    The second assertion is derived by applying $\partial_t$ to (\ref{eq_dE}) and the elementary calculations.
    Since the way is same as (\ref{eq_dE}), we omit the detail here.
\end{proof}
We next observe the higher order regularity.
We assume that
\begin{align*}
    & v_0
    \in H^3(I), \\
    & f
    \in H^1_tL^2_x(Q_T)
    \cap L^2_tH^2_x(Q_T)
    \hookrightarrow C ([0,T); H^1(I)),\\
    & a, g
    \in C^1[0,T]
    \hookrightarrow H^1(0,T)
    \hookrightarrow L^\infty(0,T).
\end{align*}
Applying $\partial_t$ to the first equation of (\ref{eq_linear_heat}) from the both sides, we obtain the equation
\begin{equation} \label{eq_linear_heat_time_derivative}
    \begin{split}
        \begin{aligned}[t]
            \partial_t \tilde{v} - \partial_x^2 \tilde{v}
            & = \tilde{f},
            & t \in (0, T),
            & \, x \in I,\\
            B_1(\tilde{v}; F(\sigma))
            & = \frac{
                dF(\sigma(t))
            }{
                dt
            } \gamma_+ v,
            & t \in (0, T),
            & \,\\
            B_2(\tilde{v}; F(\sigma))
            & = -  \frac{
                dF(\sigma(t))
            }{
                dt
            } \gamma_- \partial_x v,
            & t \in (0, T),
            & \,\\
            \tilde{v}
            &=\tilde{v}_0,
            & t=0,
            &\,x \in I
        \end{aligned}
    \end{split}
\end{equation}
where
\begin{align*}
    & \tilde{v}
    := \partial_t v,\,
    \tilde{\sigma}
    := \frac{d\sigma}{dt}, \,
    \tilde{f}
    := \partial_t f,\\
    & \tilde{v}_0
    := \partial_x^2 v_0 + f|_{t=0}.
\end{align*}
Note that
\begin{align*}
    \lim_{t \rightarrow 0}\tilde{v}(t)
    = \lim_{t \rightarrow 0}\partial_t v(t)
    = \lim_{t \rightarrow 0}\partial_x^2 v(t)
    + \lim_{t \rightarrow 0}f(t)
    = \partial_x^2 v_0
    + f|_{t=0}
    \in H^1(I).
\end{align*}
We consider the generalized equation from (\ref{eq_linear_heat_time_derivative}) such that
\begin{equation} \label{eq_linear_heat_time_derivative_generalized}
    \begin{split}
        \begin{aligned}[t]
            \partial_t \tilde{v} - \partial_x^2 \tilde{v}
            & = \tilde{f},
            & t \in (0, T),
            & \,x \in I,\\
            B_1(\tilde{v}; F(\sigma))
            & = \gamma_+ G w
            & t \in (0, T),
            & \,\\
            B_2(\tilde{v}; F(\sigma))
            & = - \gamma_- \partial_x G w,
            & t \in (0, T),
            & \,\\
            v
            &=v_0,
            & t=0,
            &\,x \in I,
        \end{aligned}
    \end{split}
\end{equation}
for smooth functions $G: [0, T) \rightarrow \Real$ and $w : Q_T \rightarrow \Real$.
When $G=dF/dt$ and $v=w$, the equation (\ref{eq_linear_heat_time_derivative_generalized}) is equivalent to (\ref{eq_linear_heat_time_derivative}).
We set
\begin{align} \label{eq_def_Hsigma_tildev1}
    \begin{split}
        H(\sigma)
        & : = 1 - F(\sigma),\\
        \tilde{v}_1
        & : = E(G w; H(\sigma)),
    \end{split}
\end{align}
where $E$ is the extension operator given by Lemma \ref{lem_extension_theorem}.
For the solution $\tilde{v}$ to (\ref{eq_linear_heat_time_derivative_generalized}), we set
\begin{align} \label{eq_definition_v2}
    \tilde{v}_2
    : = \tilde{v} - \tilde{v}_1.
\end{align}
Then $\tilde{v}_2$ satisfies the heat equation under the homogeneous boundary condition such that
\begin{equation} \label{eq_linear_heat_time_derivative_tildev2}
    \begin{split}
        \begin{aligned}[t]
            \partial_t \tilde{v}_2 - \partial_x^2 \tilde{v}_2
            & = \tilde{f}
            - (\partial_t \tilde{v}_1 - \partial_x^2 \tilde{v}_1),
            & t \in (0, T),
            & \, x \in I,\\
            B_1(\tilde{v}_2; F(\sigma))
            & = 0,
            & t \in (0, T),
            & \,\\
            B_2(\tilde{v}_2; F(\sigma))
            & = 0,
            & t \in (0, T).
            & \,
        \end{aligned}
    \end{split}
\end{equation}
Differentiating the fourth equation (\ref{eq_linear_heat}) from both sides, we observe that
\begin{align} \label{eq_d_tilde_sigma}
    \frac{d \tilde{\sigma}}{dt}
    = g^\prime
    + a^\prime \gamma_+ v
    + a \gamma_+ \tilde{v}.
\end{align}
\begin{proposition}\label{prop_L2_a_priori_estimate_for_tildev1}
    Let $\sigma \in H^1(0,T)$ and $G\in W^{1,\infty}(0,T)$.
    Let $w \in L^\infty_t L^2_x(Q_T) \cap L^2_t H^1(Q_T)$.
    For $\tilde{v}_1$ defined in (\ref{eq_def_Hsigma_tildev1}), there exists a constant $C>0$ such that
    \begin{align} \label{eq_estimate_for_tildev1_by_w}
        \begin{split}
            & \Vert
                \tilde{v}_1(t)
            \Vert_{L^2(I)}^2
            + \int_0^t
                \Vert
                    \partial_x \tilde{v}_1(s)
                \Vert_{L^2(I)}^2
            ds\\
            & \leq C \sup_{0<t<T}
            \left \vert
                G(\sigma(t))
            \right \vert^2
            \left[
                \Vert
                    w(t)
                \Vert_{L^2(I)}^2
                + \int_0^t
                    \Vert
                        w(s)
                    \Vert_{H^1(I)}^2
                ds
            \right].
        \end{split}
    \end{align}
    Moreover, the higher order estimates
    \begin{align} \label{eq_L2L2_estimate_for_dtw_dx2w}
        \begin{split}
            & \Vert
                \tilde{v}_1(t)
            \Vert_{H^1(I)}^2
            \leq C \vert
                G(\sigma(t))
            \vert^2
            \Vert
                w(t)
            \Vert_{H^1(I)}^2,\\
            & \Vert
                \partial_x^2 \tilde{v}_1(t)
            \Vert_{L^2(I)}^2
            + \Vert
                \partial_t \tilde{v}_1(t)
            \Vert_{L^2(I)}^2\\
            & \leq 
            C \vert
                G(\sigma(t))
            \vert^2
            (
                \Vert
                    w(t)
                \Vert_{H^2(I)}^2
                + \Vert
                    \partial_t w(t)
                \Vert_{L^2(I)}^2
            )\\
            & + C \left[
                \left \vert
                    \frac{
                        dG(\sigma(t))
                    }{
                        dt
                    }
                \right \vert^2
                + \left \vert
                    \frac{
                        d H(\sigma(t))
                    }{
                        dt
                    }
                \right \vert^2
                \vert
                    G(\sigma(t))
                \vert^2
            \right]
            \Vert
                w(t)
            \Vert_{L^2(I)}^2
        \end{split}
    \end{align}
    hold for $a.a.$ $t>0$.
\end{proposition}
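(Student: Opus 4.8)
The plan is to derive all three estimates as essentially immediate corollaries of Lemma~\ref{lem_extension_theorem} and Proposition~\ref{prop_estimate_for_dtE}, applied with the input $u = Gw$ (the pointwise product of the scalar coefficient $G$ with $w(\cdot,t)$) and the boundary parameter $\alpha = H(\sigma(t)) = 1 - F(\sigma(t))$, combined with the product rule $\partial_t(Gw) = \frac{dG}{dt}\,w + G\,\partial_t w$ and the chain rule $\frac{d}{dt}H(\sigma(t)) = -F'(\sigma(t))\sigma'(t)$. No compactness or fixed-point machinery is needed; the content is purely the uniform-in-$\alpha$ boundedness of the extension operator and of its time derivative.

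First I would record the ``static'' bounds. Because the extension operator $E(\,\cdot\,;\alpha)$ constructed in Lemma~\ref{lem_extension_theorem} has operator norm on $H^m(I)$ bounded by a constant independent of $\alpha$ — and, reading the explicit formula (\ref{eq_form_of_the_extension_operator_E}) whose coefficients $\alpha/(1+\alpha^2)$ and $\pm 1/(1+\alpha^2)$ are bounded uniformly in $\alpha \in \Real$, also on $L^2(I)$ — one obtains, pointwise in $t$ and for $k=0,1,2$,
\[
    \|\tilde v_1(t)\|_{H^k(I)}
    = \|E(Gw; H(\sigma))(t)\|_{H^k(I)}
    \le C\,\|G(t)\,w(\cdot,t)\|_{H^k(I)}
    = C\,|G(t)|\,\|w(\cdot,t)\|_{H^k(I)}.
\]
The case $k=1$ is, after squaring, exactly the first line of (\ref{eq_L2L2_estimate_for_dtw_dx2w}); the case $k=2$ gives $\|\partial_x^2\tilde v_1(t)\|_{L^2(I)}^2 \le C|G(t)|^2\|w(t)\|_{H^2(I)}^2$, which is the $\|w\|_{H^2}$-term of the last estimate. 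For (\ref{eq_estimate_for_tildev1_by_w}) I would use $k=0$ to bound $\|\tilde v_1(t)\|_{L^2(I)}^2 \le C|G(t)|^2\|w(t)\|_{L^2(I)}^2 \le C\sup_{0<t<T}|G|^2\,\|w(t)\|_{L^2(I)}^2$, and $k=1$, integrated over $(0,t)$, to bound $\int_0^t\|\partial_x\tilde v_1(s)\|_{L^2(I)}^2\,ds \le C\sup_{0<t<T}|G|^2\int_0^t\|w(s)\|_{H^1(I)}^2\,ds$; adding these two gives (\ref{eq_estimate_for_tildev1_by_w}).

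It remains to estimate $\|\partial_t\tilde v_1(t)\|_{L^2(I)}$. For this I would invoke Proposition~\ref{prop_estimate_for_dtE} (its $L^2$, i.e. $m=0$, version, which follows from the explicit expansion (\ref{eq_dE}) exactly as the stated $m\ge 1$ case does, since the cutoffs $\varphi_\pm$ are smooth and all the coefficients occurring there are bounded by a constant times $1$ or $|\theta'|$) with $u=Gw$ and $\theta = H(\sigma)$. This yields, for a.a.\ $t$,
\[
    \|\partial_t\tilde v_1(t)\|_{L^2(I)}
    \le C\,\|\partial_t(Gw)(t)\|_{L^2(I)}
    + C\,\bigl|\tfrac{d}{dt}H(\sigma(t))\bigr|\,\|(Gw)(t)\|_{L^2(I)},
\]
and since $\partial_t(Gw)=\frac{dG}{dt}w+G\,\partial_t w$ the first term is $\le C|\tfrac{dG}{dt}(t)|\,\|w(t)\|_{L^2(I)}+C|G(t)|\,\|\partial_t w(t)\|_{L^2(I)}$ while the second equals $C\bigl|\tfrac{d}{dt}H(\sigma(t))\bigr|\,|G(t)|\,\|w(t)\|_{L^2(I)}$. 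Squaring, absorbing constants, and adding the $\|\partial_x^2\tilde v_1(t)\|_{L^2(I)}^2$ bound from the previous paragraph produces the last estimate in (\ref{eq_L2L2_estimate_for_dtw_dx2w}), with the bracket $|\tfrac{dG}{dt}|^2+|\tfrac{d}{dt}H(\sigma)|^2|G|^2$ multiplying $\|w(t)\|_{L^2(I)}^2$; here $\sigma\in H^1(0,T)$ and $F\in BC^\infty$ guarantee $\frac{d}{dt}H(\sigma)=-F'(\sigma)\sigma'\in L^2(0,T)$, and $G\in W^{1,\infty}(0,T)$ gives $\sup|G|<\infty$ and $\frac{dG}{dt}\in L^\infty(0,T)$, so each right-hand side is finite (for the higher-order estimates, under the extra regularity of $w$ that makes $\|w(t)\|_{H^2}$ and $\|\partial_t w(t)\|_{L^2}$ finite, as holds in the intended application $w=v$, $G=\frac{dF(\sigma)}{dt}$).

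The proof has no genuine obstacle: the proposition is a bookkeeping corollary of the two extension lemmas. The only points deserving care are (i) that we use the $m=0$ cases of Lemma~\ref{lem_extension_theorem} and Proposition~\ref{prop_estimate_for_dtE}, which are not literally stated but are read off from the explicit formulas (\ref{eq_form_of_the_extension_operator_E}) and (\ref{eq_dE}) — all coefficients there stay bounded uniformly in $\alpha=H(\sigma)\in[0,1)$ — and (ii) keeping straight that (\ref{eq_estimate_for_tildev1_by_w}) carries $\sup_{0<t<T}|G|^2$ in front whereas (\ref{eq_L2L2_estimate_for_dtw_dx2w}) retains the pointwise factor $|G(t)|^2$, which is merely a question of where the supremum is taken.
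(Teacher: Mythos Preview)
Your proposal is correct and follows essentially the same route as the paper: apply Lemma~\ref{lem_extension_theorem} to $u=G(\sigma)w$ to obtain the pointwise $H^m$ bounds (and integrate for (\ref{eq_estimate_for_tildev1_by_w})), then invoke Proposition~\ref{prop_estimate_for_dtE} with $\theta=H(\sigma)$ and expand $\partial_t(Gw)$ to get the $\partial_t\tilde v_1$ estimate. Your explicit remark that the $m=0$ cases are read off from the formulas (\ref{eq_form_of_the_extension_operator_E}) and (\ref{eq_dE}) is a welcome clarification the paper leaves implicit.
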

\begin{proof}
    By Lemma \ref{lem_extension_theorem}, we have
    \begin{align}\label{eq_Hm_estimate_for_w}
        \begin{split}
            & \Vert
                \tilde{v}_1(t)
            \Vert_{H^m(I)}
            \leq C
            \vert
                G(\sigma(t))
            \vert
            \Vert
                w(t)
            \Vert_{H^m(I)}
        \end{split}
    \end{align}
    for some constant $C = C(F^\prime, I)>0$ and $m \in \Integer_{\geq 0}$.
    The integration over $(0, T)$ implies (\ref{eq_estimate_for_tildev1_by_w}).
    We deduce from Proposition \ref{prop_estimate_for_dtE} that
    \begin{align}\label{eq_L2L2_estimate_for_dw}
        \begin{split}
            & \Vert
                \partial_t \tilde{v}_1 (t)
            \Vert_{L^2(I)}\\
            & \leq C \Vert
                \partial_t (
                    G(\sigma(t))w(t)
                )
            \Vert_{L^2(I)}
            + C \left \vert
                \frac{
                    dH(\sigma(t))
                }{
                    dt
                }
            \right \vert
            \vert
                G(\sigma(t))
            \vert
            \Vert
                v(t)
            \Vert_{L^2(I)}\\
            & \leq C
            \vert
                    G(\sigma(t))
                \vert^2
            \Vert
                \partial_t w(t)
            \Vert_{L^2(I)}
            + C
            \left \vert
                \frac{
                    dG(\sigma(t))
                }{
                    dt
                }
            \right \vert
            \Vert
                w(t)
            \Vert_{L^2(I)}\\
            & + C \left \vert
                \frac{
                    d H(\sigma(t))
                }{
                    dt
                }
            \right \vert
            \vert
                G(\sigma(t))
            \vert
            \Vert
                w(t)
            \Vert_{L^2(I)}.
        \end{split}
    \end{align}
    The estimates (\ref{eq_Hm_estimate_for_w}) and (\ref{eq_L2L2_estimate_for_dw}) imply (\ref{eq_L2L2_estimate_for_dtw_dx2w}).
\end{proof}

\begin{corollary} \label{cor_differential_ineq_L2_v2}
    Let $\tilde{f} \in L^2_t L^2_x(Q_T)$.
    Under the same assumption as Proposition \ref{prop_L2_a_priori_estimate_for_tildev1} for $w$, $\sigma$, and $G$, the solution $\tilde{v}_2$ to (\ref{eq_linear_heat_time_derivative_tildev2}) satisfies
    \begin{align}\label{eq_differential_ineq_L2_v2}
        \begin{split}
            & \frac{\partial_t}{2} \Vert
                \tilde{v}_2(t)
            \Vert_{L^2(I)}^2
            + \Vert
                \partial_x \tilde{v}_2(t)
            \Vert_{L^2(I)}^2\\
            & \leq \frac{1}{2}
            \Vert
                \tilde{v}_2(t)
            \Vert_{L^2(I)}^2
            + C \Vert
                \tilde{f}(t)
            \Vert_{L^2(I)}^2\\
            & + C \vert
                G(\sigma(t))
            \vert^2
            \left[
                \Vert
                    w(t)
                \Vert_{H^2(I)}^2
                + \Vert
                    \partial_t w(t)
                \Vert_{L^2(I)}^2
            \right]\\
            & + C \left[
                \left \vert
                    \frac{
                        dG(\sigma(t))
                    }{
                        dt
                    }
                \right \vert^2
                + \left \vert
                    \frac{
                        d H(\sigma(t))
                    }{
                        dt
                    }
                \right \vert^2
                \vert
                    G(\sigma(t))
                \vert^2
            \right]
            \Vert
                w(t)
            \Vert_{L^2(I)}^2.
        \end{split}
    \end{align}
    for some constant $C>0$.
\end{corollary}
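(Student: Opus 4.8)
The plan is to run a standard $L^2$ energy estimate on the equation (\ref{eq_linear_heat_time_derivative_tildev2}) satisfied by $\tilde{v}_2$, exploiting that $\tilde{v}_2$ obeys the \emph{homogeneous} fourth boundary conditions $B_1(\tilde{v}_2; F(\sigma)) = B_2(\tilde{v}_2; F(\sigma)) = 0$; this is precisely the condition under which the integration-by-parts identity established at the beginning of Section \ref{section_existence_of_the_solution} produces no boundary contribution.

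First I would multiply the first equation of (\ref{eq_linear_heat_time_derivative_tildev2}) by $\tilde{v}_2$ and integrate over $I$. Since for a.a.\ $t$ the function $\tilde{v}_2(\cdot,t)$ lies in $H^2(I)$ and satisfies $B_1(\cdot; F(\sigma(t))) = B_2(\cdot; F(\sigma(t))) = 0$, the cited identity gives $-\int_I \partial_x^2 \tilde{v}_2\, \tilde{v}_2\, dx = \int_I |\partial_x \tilde{v}_2|^2\, dx$ with vanishing boundary terms, so that
\[
\frac{\partial_t}{2}\Vert \tilde{v}_2(t)\Vert_{L^2(I)}^2 + \Vert \partial_x \tilde{v}_2(t)\Vert_{L^2(I)}^2 = \int_I \bigl( \tilde{f} - \partial_t \tilde{v}_1 + \partial_x^2 \tilde{v}_1 \bigr)\, \tilde{v}_2\, dx .
\]
Then the Schwarz and Young inequalities bound the right-hand side by $\tfrac12\Vert \tilde{v}_2(t)\Vert_{L^2(I)}^2 + C\Vert \tilde{f}(t)\Vert_{L^2(I)}^2 + C\Vert \partial_t \tilde{v}_1(t)\Vert_{L^2(I)}^2 + C\Vert \partial_x^2 \tilde{v}_1(t)\Vert_{L^2(I)}^2$, and absorbing the first term leaves only $\Vert \partial_t \tilde{v}_1(t)\Vert_{L^2(I)}^2 + \Vert \partial_x^2 \tilde{v}_1(t)\Vert_{L^2(I)}^2$ to be controlled. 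Feeding in the higher-order estimate (\ref{eq_L2L2_estimate_for_dtw_dx2w}) of Proposition \ref{prop_L2_a_priori_estimate_for_tildev1}, which bounds exactly this quantity by $C|G(\sigma(t))|^2(\Vert w(t)\Vert_{H^2(I)}^2 + \Vert \partial_t w(t)\Vert_{L^2(I)}^2)$ plus the term with $|dG(\sigma)/dt|^2$ and $|dH(\sigma)/dt|^2|G(\sigma)|^2$ multiplying $\Vert w(t)\Vert_{L^2(I)}^2$, yields (\ref{eq_differential_ineq_L2_v2}) essentially verbatim.

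The algebra is routine, so the only delicate point — and the single place where the structure of the problem really enters — is the vanishing of the boundary terms in the energy identity, which relies on the inhomogeneous boundary data of (\ref{eq_linear_heat_time_derivative_generalized}) having been shifted entirely onto $\tilde{v}_1 = E(Gw; H(\sigma))$ through the extension operator of Lemma \ref{lem_extension_theorem}. I would also verify at the outset that $\tilde{v}_2 = \tilde{v} - \tilde{v}_1$ carries enough spatial regularity (namely $\tilde{v}_2(\cdot,t) \in H^2(I)$ for a.a.\ $t$) to legitimize the integration by parts; for $\tilde{v}_1$ this follows from Lemma \ref{lem_extension_theorem} applied to $w(\cdot,t) \in H^2(I)$, and for $\tilde{v}$ from the regularity already available for the solution of (\ref{eq_linear_heat_time_derivative_generalized}).
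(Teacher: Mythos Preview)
Your proposal is correct and follows essentially the same route as the paper: the paper's proof simply says the inequality is obtained ``similar to the proof of Proposition~\ref{prop_L2_estimate_for_linear_heat_eq}'' by integration by parts (using that $\tilde{v}_2$ satisfies the homogeneous boundary conditions) together with Proposition~\ref{prop_L2_a_priori_estimate_for_tildev1}, which is exactly what you do. One minor wording issue: the term $\tfrac12\Vert \tilde{v}_2(t)\Vert_{L^2(I)}^2$ is not ``absorbed'' but remains on the right-hand side of (\ref{eq_differential_ineq_L2_v2}); otherwise your argument matches the paper's.
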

\begin{proof}
    Similar to the proof of Proposition \ref{prop_L2_estimate_for_linear_heat_eq}, the differential inequality (\ref{eq_differential_ineq_L2_v2}) is derived through integration by parts and Proposition \ref{prop_L2_a_priori_estimate_for_tildev1}.
\end{proof}
\begin{proposition} \label{prop_L2_a_priori_estimate_for_tildev2}
    %Let
    %\begin{align*}
    %    \tilde{\sigma}
    %    & = \frac{d\sigma}{dt} \in H^1(0,T) \hookrightarrow L^\infty(0,T),\\
    %    \tilde{f}
    %    & = \partial_t f \in L^2_t L^2_x(Q_T).
    %\end{align*}
    Assume that
    \begin{align*}
        G(\sigma(t))
        = \frac{
            dF(\sigma(t))
        }{
            dt
        },\quad
        w
        = v. \quad
        %H(\sigma(t))
        %= 1 - F(\sigma(t))
    \end{align*}
    Then the solution $\tilde{v}_2$ to (\ref{eq_linear_heat_time_derivative_tildev2}) satisfies
    \begin{align}\label{eq_L2_a_priori_estimate_for_tildev2}
        \begin{split}
            & \Vert
                \tilde{v}_2(t)
            \Vert_{L^2(I)}^2
            + \int_0^t
                \Vert
                    \partial_x \tilde{v}_2(s)
                \Vert_{L^2(I)}^2
            ds\\
            %& \leq C \Vert
            %    \tilde{\sigma}
            %\Vert_{L^\infty(0,T)}^2
            %\left[
            %    \Vert
            %        v(t)
            %    \Vert_{L^2(I)}^2
            %    + \int_0^t
            %        \Vert
            %            \partial_x v(s)
            %        \Vert_{L^2(I)}^2
            %    ds
            %\right]\\
            & \leq C e^t \left[
                \Vert
                    \tilde{v}_0
                \Vert_{L^2(I)}^2
                + \int_0^t
                    \Vert
                        \tilde{f}(s)
                    \Vert_{L^2(I)}^2
                ds
            \right.\\
            & \left.
                \quad\quad
                + \Vert
                    \tilde{\sigma}
                \Vert_{L^\infty(0,T)}^2
                \int_0^t
                    \Vert
                        v(s)
                    \Vert_{H^2(I)}^2
                    + \Vert
                        \partial_t v(s)
                    \Vert_{L^2(I)}^2
                ds
            \right.\\
            & \left.
                \quad\quad
                + 
                \int_0^t
                    \vert
                        \tilde{\sigma}(t)
                    \vert^4
                    \Vert
                        v(s)
                    \Vert_{L^2(I)}^2
                ds
                + \sup_{0<t<T}\Vert
                    v(t)
                \Vert_{L^2(I)}^2
                \int_0^t
                    \left \vert
                        \frac{
                            d\tilde{\sigma}(s)
                        }{
                            ds
                        }
                    \right \vert^2
                ds
            \right]
        \end{split}
    \end{align}
    for some constant $C>0$.
    Moreover, if $\tilde{\sigma}$ is the solution to (\ref{eq_d_tilde_sigma}), then
    \begin{align*}
        \begin{split}
            & \Vert
                \tilde{v}_2(t)
            \Vert_{L^2(I)}^2
            + \int_0^t
                \Vert
                    \partial_x \tilde{v}_2(s)
                \Vert_{L^2(I)}^2
            ds\\
            %& \leq C \Vert
            %    \tilde{\sigma}
            %\Vert_{L^\infty(0,T)}^2
            %\left[
            %    \Vert
            %        v(t)
            %    \Vert_{L^2(I)}^2
            %    + \int_0^t
            %        \Vert
            %            \partial_x v(s)
            %        \Vert_{L^2(I)}^2
            %    ds
            %\right]\\
            %& + C \left(
            & \leq C \left(
                1
                + e^{
                    \int_0^t
                        \Phi_2(s)
                    ds
                }
                + e^{
                    \int_0^t
                        \Phi_2(s)
                    ds
                }
                \int_0^t
                    \Phi_2(s)
                ds
            \right)\\
            & \quad\quad\quad
            \times \left(
                \Vert
                    \tilde{v}_2(0)
                \Vert_{L^2(I)}^2
                + \int_0^t
                    \Phi_1(s)
                ds
            \right),
        \end{split}
    \end{align*}
    where $\Phi_1, \Phi_2 \in L^1(0,T)$ are define as
    \begin{align*}
        \Phi_1(t)
        & = C \left[
            \Vert
                \tilde{f}(t)
            \Vert_{L^2(I)}^2
            + \Vert
                \tilde{\sigma}
            \Vert_{L^\infty(0,T)}^2 \left(
                \Vert
                    \partial_t v(t)
                \Vert_{L^2(I)}^2
                + \Vert
                    v(t)
                \Vert_{H^2(I)}^2
            \right)
        \right.\\
        &\left.
            \quad\quad\quad
            + \Vert
                v(t)
            \Vert_{L^2(I)}^2
            (
                \Vert
                    \tilde{\sigma}
                \Vert_{L^\infty(0,T)}^4
                + \vert
                    g^\prime(t)
                \vert^2
        \right .\\
        & \left .
                \quad\quad\quad
                + \vert
                    a^\prime(t)
                \vert^2
                \Vert
                    v(t)
                \Vert_{H^1(I)}^2
                + \Vert
                    a
                \Vert_{L^\infty(0,T)}^2
                \Vert
                    \tilde{\sigma}
                \Vert_{L^\infty(0,T)}^2
                \Vert
                    v(t)
                \Vert_{H^1(I)}^2
            )
        \right],\\
        \Phi_2(t)
        & = C \left[
            1
            + \left(
                \Vert
                    a
                \Vert_{L^\infty(0,T)}^2
                + \Vert
                    a
                \Vert_{L^\infty(0,T)}^8
            \right)
            \Vert
                v(t)
            \Vert_{L^2(I)}^2
        \right],
    \end{align*}
    for some constant $C>0$.
\end{proposition}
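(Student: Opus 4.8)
The plan is to run an $L^2$ energy estimate on the homogeneous problem (\ref{eq_linear_heat_time_derivative_tildev2}) satisfied by $\tilde v_2$, feeding in the bounds for $\tilde v_1$ from Proposition \ref{prop_L2_a_priori_estimate_for_tildev1}, and then, in the second half, to substitute the dynamic boundary equation (\ref{eq_d_tilde_sigma}) for $\tilde\sigma$ to close the estimate. For the first estimate I would start from the differential inequality (\ref{eq_differential_ineq_L2_v2}) of Corollary \ref{cor_differential_ineq_L2_v2}, specialized to $G(\sigma(t)) = dF(\sigma(t))/dt$ and $w = v$. Writing $dF(\sigma)/dt = F'(\sigma)\tilde\sigma$, $d^2F(\sigma)/dt^2 = F''(\sigma)\tilde\sigma^2 + F'(\sigma)\tilde\sigma'$ and $dH(\sigma)/dt = -F'(\sigma)\tilde\sigma$, and using $F \in BC^\infty(\Real_+)$, the coefficients in (\ref{eq_differential_ineq_L2_v2}) are bounded by $|G(\sigma(t))|^2 \lesssim |\tilde\sigma|^2$ and $|dG(\sigma(t))/dt|^2 + |dH(\sigma(t))/dt|^2|G(\sigma(t))|^2 \lesssim |\tilde\sigma'|^2 + |\tilde\sigma|^4$. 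This turns (\ref{eq_differential_ineq_L2_v2}) into $\tfrac{d}{dt}\|\tilde v_2\|_{L^2(I)}^2 + 2\|\partial_x\tilde v_2\|_{L^2(I)}^2 \le \|\tilde v_2\|_{L^2(I)}^2 + C\|\tilde f\|_{L^2(I)}^2 + C|\tilde\sigma|^2(\|v\|_{H^2(I)}^2 + \|\partial_t v\|_{L^2(I)}^2) + C(|\tilde\sigma'|^2 + |\tilde\sigma|^4)\|v\|_{L^2(I)}^2$; integrating over $(0,t)$ and applying Gronwall's lemma (the $\|\tilde v_2\|_{L^2(I)}^2$ term on the right giving the $e^t$ factor), then bounding $|\tilde\sigma(t)| \le \|\tilde\sigma\|_{L^\infty(0,T)}$ in the third term and $\|v(s)\|_{L^2(I)}^2 \le \sup_{0<t<T}\|v(t)\|_{L^2(I)}^2$ in the last term, yields (\ref{eq_L2_a_priori_estimate_for_tildev2}).

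For the second estimate I would insert $\tilde\sigma' = g' + a'\gamma_+ v + a\gamma_+\tilde v$ from (\ref{eq_d_tilde_sigma}) with $\tilde v = \tilde v_1 + \tilde v_2$. By Lemma \ref{lem_extension_theorem} and the trace theorem, $|\gamma_+\tilde v_1| \le C\|\tilde v_1\|_{H^1(I)} \le C|dF(\sigma(t))/dt|\,\|v\|_{H^1(I)} \lesssim |\tilde\sigma|\,\|v\|_{H^1(I)}$; moreover $|\tilde\sigma(t)| \le \|\tilde\sigma\|_{L^\infty(0,T)} < \infty$ by Proposition \ref{prop_estimate_for_dsigma_L_infty}. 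For the remaining boundary trace I would use the one-dimensional interpolation inequality $|\gamma_+\tilde v_2|^2 \le \varepsilon\|\partial_x\tilde v_2\|_{L^2(I)}^2 + C_\varepsilon\|\tilde v_2\|_{L^2(I)}^2$. Combining, $|\tilde\sigma'|^2 \lesssim |g'|^2 + |a'|^2\|v\|_{H^1(I)}^2 + |a|^2|\tilde\sigma|^2\|v\|_{H^1(I)}^2 + \varepsilon|a|^2\|\partial_x\tilde v_2\|_{L^2(I)}^2 + C_\varepsilon|a|^2\|\tilde v_2\|_{L^2(I)}^2$. Substituting this into the term $C(|\tilde\sigma'|^2 + |\tilde\sigma|^4)\|v\|_{L^2(I)}^2$ of the differential inequality and choosing $\varepsilon$ small relative to $\|a\|_{L^\infty(0,T)}$ and $\sup_{0<t<T}\|v(t)\|_{L^2(I)}$, the contribution $\varepsilon|a|^2\|v(t)\|_{L^2(I)}^2\|\partial_x\tilde v_2\|_{L^2(I)}^2$ is absorbed by the dissipative term on the left; the resulting $C_\varepsilon \sim \varepsilon^{-1}$ is exactly what generates the powers $\|a\|_{L^\infty(0,T)}^2 + \|a\|_{L^\infty(0,T)}^8$ appearing in the Gronwall coefficient $\Phi_2$. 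The surviving source terms ($\|\tilde f\|_{L^2(I)}^2$, $\|\tilde\sigma\|_{L^\infty}^2(\|\partial_t v\|_{L^2(I)}^2 + \|v\|_{H^2(I)}^2)$, and $\|v\|_{L^2(I)}^2$ times $\|\tilde\sigma\|_{L^\infty}^4 + |g'|^2 + |a'|^2\|v\|_{H^1(I)}^2 + \|a\|_{L^\infty}^2\|\tilde\sigma\|_{L^\infty}^2\|v\|_{H^1(I)}^2$) assemble into $\Phi_1$. Applying the generalized Gronwall inequality $y' \le \Phi_2 y + \Phi_1$, and then integrating the differential inequality once more (bounding the $\Phi_2 y$ contribution by $\sup_s y(s)\cdot\int_0^t\Phi_2$) to recover $\int_0^t\|\partial_x\tilde v_2\|_{L^2(I)}^2$, gives the stated bound with the factor $1 + e^{\int\Phi_2} + e^{\int\Phi_2}\int\Phi_2$. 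Finally, $\Phi_2 \in L^\infty(0,T) \subset L^1(0,T)$ since $a \in C^1[0,T]$ and $v \in L^\infty_tL^2_x(Q_T)$ (Proposition \ref{prop_L2_estimate_for_linear_heat_eq}), while $\Phi_1 \in L^1(0,T)$ since $\tilde f = \partial_t f \in L^2_tL^2_x$, $\partial_t v \in L^2_tL^2_x$ and $v \in L^2_tH^2_x \cap L^\infty_tH^1_x$ (Proposition \ref{prop_L_infty_H1_estimate_for_v}), $\tilde\sigma \in L^\infty(0,T)$ (Proposition \ref{prop_estimate_for_dsigma_L_infty}), and $g', a' \in C[0,T]$.

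The step I expect to be the main obstacle is the treatment of the feedback term $a\gamma_+\tilde v_2$ in (\ref{eq_d_tilde_sigma}): it re-couples $\tilde\sigma'$ to the very unknown $\tilde v_2$ being estimated, through a boundary trace, so one must dominate $\gamma_+\tilde v_2$ by interior norms, trading a small multiple of $\|\partial_x\tilde v_2\|_{L^2(I)}^2$ — absorbed into the parabolic dissipation — against a large multiple of $\|\tilde v_2\|_{L^2(I)}^2$ that feeds the Gronwall coefficient. Arranging this absorption so that a strictly positive dissipation coefficient survives on the left (which forces $\varepsilon$ to depend on $\|a\|_{L^\infty(0,T)}$ and $\sup_t\|v(t)\|_{L^2(I)}$, and is the origin of the eighth power of $\|a\|_{L^\infty(0,T)}$ in $\Phi_2$), together with the bookkeeping needed to confirm that every $v$-norm that appears lies in the integrability class furnished by Propositions \ref{prop_L2_estimate_for_linear_heat_eq}, \ref{prop_L_infty_H1_estimate_for_v}, and \ref{prop_estimate_for_dsigma_L_infty}, is the delicate part; the remainder is routine integration by parts and Gronwall iteration.
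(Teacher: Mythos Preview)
Your proposal is correct and follows essentially the same route as the paper's proof. The only notable difference is in the trace step: the paper uses the fractional interpolation
\[
|\gamma_\pm\varphi|\le C\|\varphi\|_{L^2(I)}^{1/2-\delta}\|\varphi\|_{H^1(I)}^{1/2+\delta}
\]
(display (\ref{eq_interpolation_inequality_for_trace_operator})) with $\delta=1/4$, followed by Young's inequality with exponents $2/(1-2\delta)$ and $2/(1+2\delta)$; it is this choice that produces the specific power $\|a\|_{L^\infty}^{4/(1-2\delta)}=\|a\|_{L^\infty}^{8}$ in $\Phi_2$. Your simpler splitting $|\gamma_+\tilde v_2|^2\le\varepsilon\|\partial_x\tilde v_2\|_{L^2}^2+C_\varepsilon\|\tilde v_2\|_{L^2}^2$ would yield a different (but equally integrable) power, so the argument still closes. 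Your remark that $\varepsilon$ must be chosen relative to $\|a\|_{L^\infty}$ and $\sup_t\|v(t)\|_{L^2(I)}$ is in fact more careful than the paper, which fixes $\varepsilon=1/2$ and tacitly absorbs the factor $\|v\|_{L^2}^2$ (already bounded by Proposition~\ref{prop_L2_estimate_for_linear_heat_eq}) into the generic constant.
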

\begin{proof}
    We estimate the right-hand side of (\ref{eq_differential_ineq_L2_v2}).
    Since
    \begin{align*}
        - \frac{
            dH(\sigma(t))
        }{
            dt
        }, \, 
        G(\sigma(t))
        & = F^\prime(\sigma(t)) \sigma^\prime(t)
        = F^\prime(\sigma(t)) \tilde{\sigma}(t),\\
        \frac{
            dG(\sigma(t))
        }{
            dt
        }
        & = F^{\prime\prime}(\sigma(t))
        \vert
            \tilde{\sigma}(t)
        \vert^2 
        + F^\prime(\sigma(t)) \tilde{\sigma}^\prime(t),
    \end{align*}
    we observe that
    \begin{align} \label{eq_estimates_dH_G_dG}
        \begin{split}
            & \left\vert
                \frac{
                    dH(\sigma(t))
                }{
                    dt
                }
            \right\vert^2,
            \vert
                G(\sigma(t))
            \vert
            \leq \Vert
                F^\prime
            \Vert_{L^\infty(\Real_+)}^2
            \vert
                \tilde{\sigma}(t)
            \vert^2,\\
            & \left \vert
                \frac{
                    dG(\sigma(t))
                }{
                    dt
                }
            \right \vert^2
            \leq \Vert
                F^{\prime\prime}
            \Vert_{L^\infty(\Real_+)}^2
            \vert
                \tilde{\sigma}(t)
            \vert^4
            + \Vert
                F^{\prime}
            \Vert_{L^\infty(\Real_+)}^2
            \left \vert
                \frac{
                    d\tilde{\sigma}(t)
                }{
                    dt
                }
            \right \vert^2.
        \end{split}
    \end{align}
    The definition (\ref{eq_definition_v2}) of $\tilde{v}_2$ , Proposition \ref{prop_L2_a_priori_estimate_for_tildev1}, Corollary \ref{cor_differential_ineq_L2_v2}, and the Gronwall inequality imply (\ref{eq_L2_a_priori_estimate_for_tildev2}).
    We next estimate the term $\vert d \tilde{\sigma}/dt \vert^2$ in (\ref{eq_estimates_dH_G_dG}).
    We invoke the interpolation inequality that
    \begin{align}\label{eq_interpolation_inequality_for_trace_operator}
        \begin{split}
            \vert
                \gamma_\pm \varphi(t)
            \vert
            & \leq C \Vert
                \varphi(t)
            \Vert_{H^{1/2+\delta}(I)}\\
            & \leq C \Vert
                \varphi(t)
            \Vert_{L^2(I)}^{1/2 - \delta}
            \Vert
                \varphi(t)
            \Vert_{H^1(I)}^{1/2 + \delta}\\
            & \leq C \left(
                \Vert
                    \varphi(t)
                \Vert_{L^2(I)}
                + \Vert
                    \varphi(t)
                \Vert_{L^2(I)}^{1/2 - \delta}
                \Vert
                    \partial_x \varphi(t)
                \Vert_{L^2(I)}^{1/2 + \delta}
            \right)
        \end{split}
    \end{align}
    for small $\delta>0$ and all $\varphi \in H^1(I)$.
    We find from the triangle inequality and this interpolation inequality that
    \begin{align} \label{eq_estimate_dsigma}
        \begin{split}
            & \left \vert
                \frac{d\tilde{\sigma}(t)}{dt}
            \right \vert^2\\
            & \leq C \vert
                g^\prime(t)
            \vert^2
            + C \vert
                a^\prime(t)
            \vert^2
            \vert
                \gamma_+ v(t)
            \vert^2
            + C \Vert
                a
            \Vert_{L^\infty(0,T)}^2
            \vert
                \gamma_+ \tilde{v}(t)
            \vert^2\\
            & \leq C \vert
                g^\prime(t)
            \vert^2
            + C \vert
                a^\prime(t)
            \vert^2
            \vert
                \gamma_+ v(t)
            \vert^2\\
            & + C \Vert
                a
            \Vert_{L^\infty(0,T)}^2
            \vert
                \gamma_+ \tilde{v}_1(t)
            \vert^2
            + C \Vert
                a
            \Vert_{L^\infty(0,T)}^2
            \vert
                \gamma_+ \tilde{v}_2(t)
            \vert^2\\
            & \leq C \vert
                g^\prime(t)
            \vert^2
            + C \vert
                a^\prime(t)
            \vert^2
            \Vert
                v(t)
            \Vert_{H^1(I)}^2
            + C \Vert
                a
            \Vert_{L^\infty(0,T)}^2
            \Vert
                \tilde{v}_1(t)
            \Vert_{H^1(I)}^2\\
            & + C \Vert
                a
            \Vert_{L^\infty(0,T)}^2
            \left(
                \Vert
                    \tilde{v}_2(t)
                \Vert_{L^2(I)}^2
                + \Vert
                    \tilde{v}_2(t)
                \Vert_{L^2(I)}^{1 - 2 \delta}
                \Vert
                    \partial_x \tilde{v}_2(t)
                \Vert_{L^2(I)}^{1 + 2\delta}
            \right)\\
            & \leq C \vert
                g^\prime(t)
            \vert^2
            + C \vert
                a^\prime(t)
            \vert^2
            \Vert
                v(t)
            \Vert_{H^1(I)}^2\\
            & + C \Vert
                a
            \Vert_{L^\infty(0,T)}^2
            \Vert
                \tilde{\sigma}
            \Vert_{L^\infty(0,T)}^2
            \Vert
                \tilde{v}(t)
            \Vert_{H^1(I)}^2\\
            & + \left(
                C \Vert
                    a
                \Vert_{L^\infty(0,T)}^2
                + C_\varepsilon \Vert
                    a
                \Vert_{L^\infty(0,T)}^{4/(1 - 2\delta)}
            \right)
            \Vert
                \tilde{v}_2(t)
            \Vert_{L^2(I)}^2
            + \varepsilon
            \Vert
                \partial_x \tilde{v}_2(t)
            \Vert_{L^2(I)}^2
        \end{split}
    \end{align}
    for small $\varepsilon > 0$.
    Therefore, taking $\delta=1/4,\, \varepsilon=1/2$, multiplying $\tilde{v}_2$ from both sides of (\ref{eq_estimate_dsigma}), integrating over $I$, and using Corollary \ref{cor_differential_ineq_L2_v2} and the estimates (\ref{eq_estimates_dH_G_dG}), we deduce that
    \begin{align*}
        & \partial_t\Vert
            \tilde{v}_2(t)
        \Vert_{L^2(I)}^2
        + \Vert
            \partial_x \tilde{v}_2(t)
        \Vert_{L^2(I)}^2\\
        & \leq C \left[
            \Vert
                \tilde{f}(t)
            \Vert_{L^2(I)}^2
            + \Vert
                \tilde{\sigma}
            \Vert_{L^\infty(0,T)}^2
            (
                \Vert
                    \partial_t v(t)
                \Vert_{L^2(I)}^2
                + \Vert
                    v(t)
                \Vert_{H^2(I)}^2
            )
        \right.\\
        &\left.
            \quad\quad\quad
            + \Vert
                v(t)
            \Vert_{L^2(I)}^2
            \left(
                \Vert
                    \tilde{\sigma}
                \Vert_{L^\infty(0,T)}^4
                + \vert
                    g^\prime(t)
                \vert^2
            \right.
        \right.\\
        & \left.
            \left.
                \quad\quad\quad
                + \vert
                    a^\prime(t)
                \vert^2
                \Vert
                    v(t)
                \Vert_{H^1(I)}^2
                + \Vert
                    a
                \Vert_{L^\infty(0,T)}^2
                \Vert
                    \tilde{\sigma}
                \Vert_{L^\infty(0,T)}^2
                \Vert
                    v(t)
                \Vert_{H^1(I)}^2
            \right)
        \right]\\
        & + C
        \left[
            1
            + (
                \Vert
                    a
                \Vert_{L^\infty(0,T)}^2
                + \Vert
                    a
                \Vert_{L^\infty(0,T)}^8
            )
            \Vert
                v(t)
            \Vert_{L^2(I)}^2
        \right]
        \Vert
            \tilde{v}_2(t)
        \Vert_{L^2(I)}^2\\
        &  + \frac{1}{2} \Vert
            \partial_x \tilde{v}_2(t)
        \Vert_{L^2(I)}^2\\
        & =: \Phi_1(t)
        + \Phi_2(t) \Vert
            \tilde{v}_2(t)
        \Vert_{L^2(I)}^2
        + \frac{1}{2} \Vert
            \partial_x \tilde{v}_2(t)
        \Vert_{L^2(I)}^2.
    \end{align*}
    We obtain by the Gronwall inequality that
    \begin{align*}
        \Vert
            \tilde{v}_2(t)
        \Vert_{L^2(I)}^2
        \leq e^{
            \int_0^t
                \Phi_2(s)
            ds
        }
        \left(
            \Vert
                \tilde{v}_2(0)
            \Vert_{L^2(I)}^2
            + \int_0^t
                \Phi_1(s)
            ds
        \right)
        \in L^\infty(0,T).
    \end{align*}
    Therefore, we conclude that
    \begin{align*}
        & \Vert
            \tilde{v}_2(t)
        \Vert_{L^2(I)}^2
        + \int_0^t
            \Vert
                \partial_x \tilde{v}_2(s)
            \Vert_{L^2(I)}^2
        ds\\
        & \leq (
            1
            + e^{
                \int_0^t
                    \Phi_2(s)
                ds
            }
        )
        \left(
            \Vert
                \tilde{v}_2(0)
            \Vert_{L^2(I)}^2
            + \int_0^t
                \Phi_1(s)
            ds
        \right)\\
        & + \sup_{0<s<t} \Vert
            \tilde{v}_2(s)
        \Vert_{L^2(I)}
        \int_0^t
            \Phi_2(s)
        ds\\
        & \leq \left(
            1
            + e^{
                \int_0^t
                    \Phi_2(s)
                ds
            }
            + e^{
                \int_0^t
                    \Phi_2(s)
                ds
            }
            \int_0^t
                \Phi_2(s)
            ds
        \right)
        \left(
            \Vert
                \tilde{v}_2(0)
            \Vert_{L^2(I)}^2
            + \int_0^t
                \Phi_1(s)
            ds
        \right). 
    \end{align*}
\end{proof}
\begin{corollary}\label{cor_L2_a_priori_estimate_for_tildev}
    %Assume that
    %\begin{align*}
    %    G(t)
    %    = \frac{
    %        dF(\sigma(t))
    %    }{
    %        dt
    %    },\quad
    %    w
    %    = v. \quad
    %    %H(\sigma(t))
    %    %= 1 - F(\sigma(t))
    %\end{align*}
    Under the same assumption for $G, w$ as in Proposition \ref{prop_L2_a_priori_estimate_for_tildev2}, the solution $\tilde{v}$ to (\ref{eq_linear_heat_time_derivative_generalized}) satisfies
    \begin{align}\label{eq_L2_a_priori_estimate_for_tildev}
        \begin{split}
            & \Vert
                \tilde{v}(t)
            \Vert_{L^2(I)}^2
            + \int_0^t
                \Vert
                    \partial_x \tilde{v}(s)
                \Vert_{L^2(I)}^2
            ds\\
            & \leq C \Vert
                \tilde{\sigma}
            \Vert_{L^\infty(0,T)}^2
            \left[
                \Vert
                    v(t)
                \Vert_{L^2(I)}^2
                + \int_0^t
                    \Vert
                        \partial_x v(s)
                    \Vert_{L^2(I)}^2
                ds
            \right]\\
            & + C e^t \left[
                \Vert
                    \tilde{v}_0
                \Vert_{L^2(I)}^2
                + \int_0^t
                    \Vert
                        \tilde{f}(s)
                    \Vert_{L^2(I)}^2
                ds
            \right.\\
            & \left.
                \quad\quad
                + \Vert
                    \tilde{\sigma}
                \Vert_{L^\infty(0,T)}^2
                \int_0^t
                    \Vert
                        v(s)
                    \Vert_{H^2(I)}^2
                    + \Vert
                        \partial_t v(s)
                    \Vert_{L^2(I)}^2
                ds
            \right.\\
            & \left.
                \quad\quad
                + 
                \int_0^t
                    \vert
                        \tilde{\sigma}(t)
                    \vert^4
                    \Vert
                        v(s)
                    \Vert_{L^2(I)}^2
                ds
                + \sup_{0<t<T}\Vert
                    v(t)
                \Vert_{L^2(I)}^2
                \int_0^t
                    \left \vert
                        \frac{
                            d\tilde{\sigma}(s)
                        }{
                            ds
                        }
                    \right \vert^2
                ds
            \right]\\
            & < \infty
        \end{split}
    \end{align}
    for all $\tilde{\sigma} \in H^1(0,T) \hookrightarrow L^\infty(0,T)$.
    Moreover, if $\tilde{\sigma}$ is the solution to (\ref{eq_d_tilde_sigma}), then
    \begin{align*}
        \begin{split}
            & \Vert
                \tilde{v}(t)
            \Vert_{L^2(I)}^2
            + \int_0^t
                \Vert
                    \partial_x \tilde{v}(s)
                \Vert_{L^2(I)}^2
            ds\\
            & \leq C \Vert
                \tilde{\sigma}
            \Vert_{L^\infty(0,T)}^2
            \left[
                \Vert
                    v(t)
                \Vert_{L^2(I)}^2
                + \int_0^t
                    \Vert
                        \partial_x v(s)
                    \Vert_{L^2(I)}^2
                ds
            \right]\\
            & + C \left(
                1
                + e^{
                    \int_0^t
                        \Phi_2(s)
                    ds
                }
                + e^{
                    \int_0^t
                        \Phi_2(s)
                    ds
                }
                \int_0^t
                    \Phi_2(s)
                ds
            \right)
            \left(
                \Vert
                    \tilde{v}_2(0)
                \Vert_{L^2(I)}^2
                + \int_0^t
                    \Phi_1(s)
                ds
            \right)\\
            & < \infty,
        \end{split}
    \end{align*}
    where $\Phi_1, \Phi_2 \in L^2(0,T)$ are the same function as in Proposition \ref{prop_L2_a_priori_estimate_for_tildev2}.
\end{corollary}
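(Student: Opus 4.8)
The idea is to superpose the two preceding propositions via the decomposition $\tilde{v} = \tilde{v}_1 + \tilde{v}_2$ fixed in (\ref{eq_def_Hsigma_tildev1})--(\ref{eq_definition_v2}). With the present choice $G(\sigma(t)) = dF(\sigma(t))/dt = F^\prime(\sigma(t))\tilde{\sigma}(t)$ and $w = v$, one has $\sup_{0<t<T}\vert G(\sigma(t))\vert^2 \leq \Vert F^\prime\Vert_{L^\infty(\Real_+)}^2\Vert\tilde{\sigma}\Vert_{L^\infty(0,T)}^2$, so the estimate (\ref{eq_estimate_for_tildev1_by_w}) of Proposition \ref{prop_L2_a_priori_estimate_for_tildev1} yields
\begin{align*}
\Vert\tilde{v}_1(t)\Vert_{L^2(I)}^2 + \int_0^t\Vert\partial_x\tilde{v}_1(s)\Vert_{L^2(I)}^2\,ds
\leq C\Vert\tilde{\sigma}\Vert_{L^\infty(0,T)}^2\Bigl[\Vert v(t)\Vert_{L^2(I)}^2 + \int_0^t\Vert v(s)\Vert_{H^1(I)}^2\,ds\Bigr].
\end{align*}
Splitting $\Vert v\Vert_{H^1(I)}^2 = \Vert v\Vert_{L^2(I)}^2 + \Vert\partial_x v\Vert_{L^2(I)}^2$ and absorbing $C\Vert\tilde{\sigma}\Vert_{L^\infty(0,T)}^2\int_0^t\Vert v(s)\Vert_{L^2(I)}^2\,ds$ into the term $C\Vert\tilde{\sigma}\Vert_{L^\infty(0,T)}^2\int_0^t\Vert v(s)\Vert_{H^2(I)}^2\,ds$ already appearing (times $e^t\geq 1$) on the right of (\ref{eq_L2_a_priori_estimate_for_tildev}), this is precisely the first bracketed contribution there.

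Next I would apply Proposition \ref{prop_L2_a_priori_estimate_for_tildev2} to $\tilde{v}_2$: its first conclusion (\ref{eq_L2_a_priori_estimate_for_tildev2}) is exactly the remaining $Ce^t[\cdots]$ contribution in (\ref{eq_L2_a_priori_estimate_for_tildev}). Combining the two pieces through $\Vert\tilde{v}(t)\Vert_{L^2(I)}^2 \leq 2\Vert\tilde{v}_1(t)\Vert_{L^2(I)}^2 + 2\Vert\tilde{v}_2(t)\Vert_{L^2(I)}^2$, together with the analogous inequality for $\partial_x\tilde{v}$, and relabelling the constant, gives (\ref{eq_L2_a_priori_estimate_for_tildev}). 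For the ``moreover'' part, when $\tilde{\sigma}$ solves (\ref{eq_d_tilde_sigma}), I would keep the same bound for $\tilde{v}_1$ and replace the bound for $\tilde{v}_2$ by the second conclusion of Proposition \ref{prop_L2_a_priori_estimate_for_tildev2}, the one expressed through $\Phi_1$ and $\Phi_2$, and add.

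What remains is to confirm finiteness of the right-hand side: $\int_0^t\Vert\tilde{f}(s)\Vert_{L^2(I)}^2\,ds$ is finite since $\tilde{f} = \partial_t f \in L^2_tL^2_x(Q_T)$; $\Vert\tilde{\sigma}\Vert_{L^\infty(0,T)}$ is finite by Proposition \ref{prop_estimate_for_dsigma_L_infty}; $\sup_{0<t<T}\Vert v(t)\Vert_{L^2(I)}$ and $\int_0^t(\Vert v(s)\Vert_{H^2(I)}^2 + \Vert\partial_t v(s)\Vert_{L^2(I)}^2)\,ds$ are finite by Propositions \ref{prop_L2_estimate_for_linear_heat_eq} and \ref{prop_L_infty_H1_estimate_for_v}; and $\int_0^t\vert d\tilde{\sigma}(s)/ds\vert^2\,ds$, hence $\Phi_1,\Phi_2 \in L^1(0,T)$, are finite by the estimate (\ref{eq_estimate_dsigma}) established inside the proof of Proposition \ref{prop_L2_a_priori_estimate_for_tildev2} once the lower-order quantities are known to be finite. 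This bookkeeping — checking that every product of norms entering $\Phi_1$, $\Phi_2$, and (\ref{eq_L2_a_priori_estimate_for_tildev}) indeed lies in $L^1(0,T)$ — is the only point requiring care; the corollary itself is a direct superposition of the two preceding propositions, so I expect no substantive obstacle.
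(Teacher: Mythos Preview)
Your proposal is correct and follows exactly the paper's approach: the paper's proof is the single sentence ``This directly follows from Propositions \ref{prop_L2_a_priori_estimate_for_tildev1} and \ref{prop_L2_a_priori_estimate_for_tildev2},'' and you have simply spelled out the superposition $\tilde{v} = \tilde{v}_1 + \tilde{v}_2$ together with the relevant bookkeeping. Your added care in checking finiteness of each term is useful detail but introduces no new idea beyond what the paper intends.
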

\begin{proof}
    This directly follows from Propositions \ref{prop_L2_a_priori_estimate_for_tildev1} and \ref{prop_L2_a_priori_estimate_for_tildev2}.
\end{proof}

\begin{proposition}\label{prop_H1_estimates_for_tilde_sigma}
    Let $g \in H^1(0,T)$, $a \in H^1(0,T) \hookrightarrow L^\infty(0,T)$, $v \in L^\infty_t H^1_x(Q_T)$, $\tilde{v} = \partial_t v \in L^2_t H^1_x(Q_T)$ for $T>0$.
    Then the solution $\tilde{\sigma}$ to (\ref{eq_d_tilde_sigma}) satisfies
    \begin{align*}
        \left \Vert
            \frac{
                d \tilde{\sigma}
            }{
                dt
            }
        \right \Vert_{L^2(0,T)}^2
        & \leq C \Vert
            g^\prime
        \Vert_{L^2(0,T)}^2
        + C \Vert
            a^\prime
        \Vert_{L^2(0,T)}^2
        \sup_{0<t<T} \Vert
            v(t)
        \Vert_{H^1(I)}^2\\
        & + C  \Vert
            a
        \Vert_{L^\infty(0,T)}^2
        \int_0^T
            \Vert
                \tilde{v}(s)
            \Vert_{H^1(I)}^2
        ds
    \end{align*}
    for some constant $C>0$.
\end{proposition}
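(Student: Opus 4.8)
The plan is to obtain the estimate directly from the representation (\ref{eq_d_tilde_sigma}), handled pointwise in $t$ and then integrated in time; no differential inequality or Gronwall argument is needed here. Recall that
\begin{align*}
    \frac{d\tilde{\sigma}(t)}{dt}
    = g^\prime(t)
    + a^\prime(t) \gamma_+ v(t)
    + a(t) \gamma_+ \tilde{v}(t).
\end{align*}
Taking the $L^2(0,T)$-norm of both sides and using the triangle inequality together with $(x + y + z)^2 \leq 3(x^2 + y^2 + z^2)$, it suffices to bound $\Vert a^\prime \gamma_+ v \Vert_{L^2(0,T)}^2$ and $\Vert a \gamma_+ \tilde{v} \Vert_{L^2(0,T)}^2$, the term $\Vert g^\prime \Vert_{L^2(0,T)}^2$ already being one of the asserted quantities.

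For the first of these I would factor out the trace in $L^\infty$ in time: by the trace theorem $\vert \gamma_+ v(t) \vert \leq C \Vert v(t) \Vert_{H^1(I)}$ for a.a. $t$, so that
\begin{align*}
    \Vert a^\prime \gamma_+ v \Vert_{L^2(0,T)}^2
    \leq \Big( \sup_{0<t<T} \vert \gamma_+ v(t) \vert \Big)^2 \Vert a^\prime \Vert_{L^2(0,T)}^2
    \leq C \Vert a^\prime \Vert_{L^2(0,T)}^2 \sup_{0<t<T} \Vert v(t) \Vert_{H^1(I)}^2,
\end{align*}
which is finite since $v \in L^\infty_t H^1_x(Q_T)$. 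For the second term the regularity is arranged the other way round: $a \in H^1(0,T) \hookrightarrow L^\infty(0,T)$ is bounded while $\gamma_+ \tilde{v}$ is only square-integrable in time, so I would factor out $a$ in $L^\infty$ in time and apply the trace theorem to $\tilde v$, giving
\begin{align*}
    \Vert a \gamma_+ \tilde{v} \Vert_{L^2(0,T)}^2
    \leq \Vert a \Vert_{L^\infty(0,T)}^2 \int_0^T \vert \gamma_+ \tilde{v}(s) \vert^2 \, ds
    \leq C \Vert a \Vert_{L^\infty(0,T)}^2 \int_0^T \Vert \tilde{v}(s) \Vert_{H^1(I)}^2 \, ds,
\end{align*}
which is finite because $\tilde{v} = \partial_t v \in L^2_t H^1_x(Q_T)$. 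Combining the three bounds yields the claimed inequality.

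There is no genuine obstacle in this proposition; it is a bookkeeping estimate. The only point requiring a moment's care is recognizing that the two bilinear products must be split differently --- $a^\prime$ paired with the bounded trace $\gamma_+ v$, and the bounded coefficient $a$ paired with the merely $L^2$-in-time trace $\gamma_+ \tilde{v}$ --- since this is exactly what the available regularity of $g$, $a$, $v$, and $\tilde v$ permits and is what makes the right-hand side finite, so that the estimate can be fed into the later contraction argument for the $\sigma$-equation.
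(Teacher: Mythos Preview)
Your proof is correct and follows exactly the paper's approach: the paper's proof is the single sentence ``Taking the $L^2(0,T)$-norm of both sides of (\ref{eq_d_tilde_sigma}), we obtain the estimate,'' and you have simply spelled out the implicit use of the trace theorem and the two different $L^\infty$--$L^2$ splittings that make this work.
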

\begin{proof}
    Taking the $L^2(0,T)$-norm of both sides of (\ref{eq_d_tilde_sigma}), we obtain the estimate.
\end{proof}
%Now we have found that $d^2\sigma/dt^2 = d\tilde{\sigma}/dt$ makes sense in $L^2(0,T)$.
%Therefore, we obtain
\begin{proposition} \label{prop_H1_estimate_for_tilde_v2}
    \begin{enumerate}[label=(\roman*)]
        \item 
    For the solution $\tilde{v}_2$ to (\ref{eq_linear_heat_time_derivative_generalized}), it follows that
    \begin{align}%\label{eq_differential_ineq_L2_v2}
        \begin{split}
            & \frac{\partial_t}{2} \Vert
                \partial_x \tilde{v}_2(t)
            \Vert_{L^2(I)}^2
            + \Vert
                \partial_t \tilde{v}_2(t)
            \Vert_{L^2(I)}^2
            + \Vert
                \partial_x^2 \tilde{v}_2(t)
            \Vert_{L^2(I)}^2\\
            & \leq C \left[
                1
                + \left \vert
                    \frac{
                        dF(\sigma(t))
                    }{
                        dt
                    }
                \right \vert^2
            \right]
            \Vert
                \partial_x \tilde{v}_2(t)
            \Vert_{L^2(I)}^2\\
            & + C \left[
                1
                + \left \vert
                    \frac{
                        dF(\sigma(t))
                    }{
                        dt
                    }
                \right \vert^2
            \right]
            \Vert
                \tilde{v}_2(t)
            \Vert_{L^2(I)}^2\\
            & + C \vert
                G(\sigma(t))
            \vert^2
            \left[
                \Vert
                    w(t)
                \Vert_{H^2(I)}^2
                + \Vert
                    \partial_t w(t)
                \Vert_{L^2(I)}^2
            \right]\\
            & + C \left[
                \left \vert
                    \frac{
                        dG(\sigma(t))
                    }{
                        dt
                    }
                \right \vert^2
                + \left \vert
                    \frac{
                        d H(\sigma(t))
                    }{
                        dt
                    }
                \right \vert^2
                \vert
                    G(\sigma(t))
                \vert^2
            \right]
            \Vert
                w(t)
            \Vert_{L^2(I)}^2\\
            & + C \Vert
                \tilde{f}(t)
            \Vert_{L^2(I)}^2,
        \end{split}
    \end{align}
    for some constant $C>0$.
    \item
    Assume that
    \begin{align*}
        G(\sigma(t))
        = \frac{
            dF(\sigma(t))
        }{
            dt
        },\quad
        w
        = v,
        %H(\sigma(t))
        %= 1 - F(\sigma(t))
    \end{align*}
    and set
    \begin{align*}
        %& \tilde{v}_1 
        %:= E(F^\prime(\sigma) \tilde{\sigma} v; 1 - F(\sigma))\\
        %& v_2
        %:= \tilde{v} - \tilde{v}_1 \\
        \tilde{f}_\ast
        := \tilde{f}
        - (\partial_t \tilde{v}_1  - \partial_x^2 \tilde{v}_1 )
        \in L^2_tL^2_x(Q_T).
    \end{align*}
    Then it follows that
    \begin{align} \label{eq_H1_a_priori_estimate_for_tildev2}
        \begin{split}
            & \int_0^t
                \Vert
                    \partial_s \tilde{v}_2(s)
                \Vert_{L^2(I)}^2
            ds
            + \Vert
                \partial_x \tilde{v}_2(t)
            \Vert_{L^2(I)}^2
            + \int_0^t
                \Vert
                    \partial_x^2 \tilde{v}_2(s)
                \Vert_{L^2(I)}^2
            ds\\
            & \leq e^{
                \int_0^t
                    C \left(
                        1
                        + \vert
                            \sigma^\prime(r)
                        \vert^2
                    \right)
                dr
            }
            \left[
                \Vert
                    \partial_x \tilde{v}_2(0)
                \Vert_{L^2(I)}
            \right.\\
            & \left.
                \quad\quad\quad
                + \sup_{0<t<T} \Vert
                        \tilde{v}_2(s)
                    \Vert_{L^2(I)}
                \int_0^t
                    \left(
                        1
                        + \vert
                            \sigma^\prime(s)
                        \vert^2
                    \right)
                ds
                + \int_0^t
                    \Vert
                        \tilde{f}_\ast(s)
                    \Vert_{L^2(I)}^2
                ds
            \right]\\
            & < \infty
        \end{split}
    \end{align}
    for $a.a.$ $t \in (0,T)$.
    \end{enumerate}
\end{proposition}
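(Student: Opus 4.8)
The plan is to run the standard parabolic $H^1$-energy argument on the equation satisfied by $\tilde v_2$, reusing verbatim the boundary computation already carried out in the proof of Proposition \ref{prop_L_infty_H1_estimate_for_v}, and then to control the inhomogeneous forcing through Proposition \ref{prop_L2_a_priori_estimate_for_tildev1}.

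For part (i) I would start from the fact that $\tilde v_2 = \tilde v - \tilde v_1$ solves (\ref{eq_linear_heat_time_derivative_tildev2}), i.e.\ the heat equation with right-hand side $\tilde f_\ast := \tilde f - (\partial_t \tilde v_1 - \partial_x^2 \tilde v_1)$ and \emph{homogeneous} but $t$-dependent boundary conditions $B_1(\tilde v_2; F(\sigma)) = B_2(\tilde v_2; F(\sigma)) = 0$. Multiplying by $\partial_t \tilde v_2$, integrating over $I$, and integrating by parts in $\int_I \partial_x^2 \tilde v_2\, \partial_t \tilde v_2\, dx$, one differentiates the two boundary identities in $t$ to get $B_1(\partial_t \tilde v_2; F(\sigma)) = \tfrac{dF(\sigma)}{dt}\gamma_+\tilde v_2$ and $B_2(\partial_t \tilde v_2; F(\sigma)) = -\tfrac{dF(\sigma)}{dt}\gamma_-\partial_x \tilde v_2$, exactly as in Proposition \ref{prop_L_infty_H1_estimate_for_v}; the boundary contribution then collapses to $\tfrac{dF(\sigma(t))}{dt}\,\gamma_-\partial_x\tilde v_2\,\gamma_+\tilde v_2$. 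Bounding this by the trace theorem as $C\,\big|\tfrac{dF(\sigma)}{dt}\big|\,\|\tilde v_2\|_{H^2(I)}\|\tilde v_2\|_{H^1(I)}$, using $\|\partial_x^2 \tilde v_2\|_{L^2(I)}^2 \le 2\|\partial_t \tilde v_2\|_{L^2(I)}^2 + 2\|\tilde f_\ast\|_{L^2(I)}^2$ read off from the equation, and absorbing $\|\partial_t \tilde v_2\|_{L^2(I)}^2$ by Young's inequality, I would obtain the differential inequality with the coefficients $C\big[1 + |\tfrac{dF(\sigma)}{dt}|^2\big]$ in front of $\|\partial_x\tilde v_2\|_{L^2(I)}^2$ and $\|\tilde v_2\|_{L^2(I)}^2$, plus a term $C\|\tilde f_\ast\|_{L^2(I)}^2$. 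It then remains to split $\|\tilde f_\ast\|_{L^2(I)}^2 \le 2\|\tilde f\|_{L^2(I)}^2 + 2\|\partial_t \tilde v_1 - \partial_x^2 \tilde v_1\|_{L^2(I)}^2$ and to estimate the last term by the second bound of Proposition \ref{prop_L2_a_priori_estimate_for_tildev1}, namely (\ref{eq_L2L2_estimate_for_dtw_dx2w}); this inserts precisely the remaining families of terms $C|G(\sigma)|^2\big[\|w\|_{H^2}^2 + \|\partial_t w\|_{L^2}^2\big]$ and $C\big[|\tfrac{dG(\sigma)}{dt}|^2 + |\tfrac{dH(\sigma)}{dt}|^2|G(\sigma)|^2\big]\|w\|_{L^2}^2$, giving the claimed inequality.

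For part (ii) I would substitute $G(\sigma) = \tfrac{dF(\sigma)}{dt}$ and $w = v$, whereupon (\ref{eq_estimates_dH_G_dG}) gives $|G(\sigma(t))|, |\tfrac{dH(\sigma(t))}{dt}| \le C|\sigma'(t)|$, so every coefficient from part (i) is dominated by $C(1 + |\sigma'(t)|^2)$ times the relevant factor and the forcing collapses to $\tilde f_\ast$. Integrating the differential inequality over $(0,t)$ and applying Gronwall's inequality yields exactly (\ref{eq_H1_a_priori_estimate_for_tildev2}), with the $\|\tilde v_2\|_{L^2}^2$-term moved into the $\sup_{0<t<T}\|\tilde v_2(s)\|_{L^2(I)}$ factor. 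Finiteness of the right-hand side I would deduce from the earlier a priori estimates: $\sigma' = \tilde\sigma \in L^\infty(0,T)$ by Proposition \ref{prop_estimate_for_dsigma_L_infty}, so $\int_0^t (1+|\sigma'|^2)\,ds < \infty$; $\sup_{0<t<T}\|\tilde v_2\|_{L^2(I)} < \infty$ by Proposition \ref{prop_L2_a_priori_estimate_for_tildev2}; $\tilde v_2(0) = \tilde v_0 - \tilde v_1(0)$ with $\tilde v_0 = \partial_x^2 v_0 + f|_{t=0} \in H^1(I)$ and $\|\tilde v_1(0)\|_{H^1(I)} \le C|G(\sigma(0))|\,\|v_0\|_{H^1(I)} < \infty$ by Lemma \ref{lem_extension_theorem}; and $\tilde f_\ast \in L^2_t L^2_x(Q_T)$ since $\tilde f = \partial_t f \in L^2_t L^2_x(Q_T)$ while $\partial_t \tilde v_1 - \partial_x^2 \tilde v_1 \in L^2_t L^2_x(Q_T)$ by (\ref{eq_L2L2_estimate_for_dtw_dx2w}) together with the already-established bounds $v \in L^2_t H^2_x(Q_T)$, $\partial_t v \in L^2_t L^2_x(Q_T)$.

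I expect the main obstacle to be organizational rather than conceptual: the delicate step, namely the boundary computation with the time-dependent conditions, is literally the one already performed in Proposition \ref{prop_L_infty_H1_estimate_for_v}, so the argument is essentially bookkeeping. The genuine work lies in verifying that peeling $\tilde v_1$ off $\tilde f_\ast$ reproduces \emph{exactly} the $|G(\sigma)|^2$-, $|\tfrac{dG(\sigma)}{dt}|^2$-, and $|\tfrac{dH(\sigma)}{dt}|^2|G(\sigma)|^2$-weighted terms as stated, and, in part (ii), in threading the chain of previously obtained a priori bounds (on $\tilde\sigma$, on $v$, and on $\tilde v_2$ in $L^2$) to conclude finiteness.
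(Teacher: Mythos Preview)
Your proposal is correct and follows the paper's approach: part (i) is precisely Proposition \ref{prop_L_infty_H1_estimate_for_v} applied to $\tilde v_2$ with forcing $\tilde f_\ast$, then unpacked via Proposition \ref{prop_L2_a_priori_estimate_for_tildev1}, and part (ii) is Gronwall plus the earlier a priori bounds. The one ingredient you omit in the finiteness check for $\tilde f_\ast \in L^2_tL^2_x(Q_T)$ is that $|dG(\sigma)/dt|^2$ contains $|\sigma''|^2$ (see (\ref{eq_estimates_dH_G_dG})), so you also need $d\tilde\sigma/dt = \sigma'' \in L^2(0,T)$; the paper obtains this from Corollary \ref{cor_L2_a_priori_estimate_for_tildev} together with Proposition \ref{prop_H1_estimates_for_tilde_sigma}, which you should add to your chain of cited bounds.
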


\begin{proof}
    Propositions \ref{prop_L_infty_H1_estimate_for_v} and \ref{prop_L2_a_priori_estimate_for_tildev1} yield the first differential inequality.
    Corollary \ref{cor_L2_a_priori_estimate_for_tildev} and Proposition \ref{prop_H1_estimates_for_tilde_sigma} imply that
    \begin{align*}
        \frac{
            d \tilde{\sigma}
        }{
            dt
        }
        \in L^2(0,T).
    \end{align*}
    Therefore, the estimates (\ref{eq_estimates_dH_G_dG}) and Propositions \ref{prop_L2_a_priori_estimate_for_tildev1} and \ref{prop_H1_estimates_for_tilde_sigma} imply $f_\ast \in L^2_t L^2_x(Q_T)$.
    We deduce (\ref{eq_H1_a_priori_estimate_for_tildev2}) from Proposition \ref{prop_L_infty_H1_estimate_for_v}.
\end{proof}
We establish $H^3$-a priori estimate for the solution $v$ to (\ref{eq_linear_heat}).
\begin{corollary} \label{cor_H3_estimate_for_v}
    Let
    \begin{align*}
        & f
        \in H^1_t L^2_2(Q_T)
        \cap L^2_t H^2_x(Q_T)
        \cap C([0,T); H^1(I)),\\
        & v_0 \in H^3(I), \sigma_0 > 0,\\
        & a, g \in H^1(0,T) \hookrightarrow C[0,T].
    \end{align*}
    Let $(v, \sigma)$ be the solution to (\ref{eq_linear_heat}) and
    \begin{align*}
        \tilde{v}
        := \partial_t v,\,
        \tilde{f}
        := \partial_t f.
    \end{align*}
    Then $\sigma \in H^2(0,T)$ and
    \begin{align} \label{eq_H1_a_pripori_estimate_for_tilde_v_no_v1}
        \begin{split}
            & \int_0^t
                \Vert
                    \partial_s \tilde{v}(s)
                \Vert_{L^2(I)}^2
            ds
            + \Vert
                \partial_x \tilde{v}(t)
            \Vert_{L^2(I)}^2
            + \int_0^t
                \Vert
                    \partial_x^2 \tilde{v}(s)
                \Vert_{L^2(I)}^2
            ds\\
            & \leq C (
                1
                + e^{
                    \int_0^t
                        C \left(
                            1
                            + \vert
                                \sigma^\prime(r)
                            \vert^2
                        \right)
                    dr
                }
            )\\
            & \quad
            \times
            \left[
                \Vert
                    \sigma^\prime
                \Vert_{L^\infty(0,T)}^2
                \int_0^t
                    (
                        \Vert
                            v(s)
                        \Vert_{H^2(I)}^2
                        + \Vert
                            \partial_t v(s)
                        \Vert_{L^2(I)}^2
                    )
                ds
            \right.\\
            & \left.
                \quad\quad\quad
                +
                \int_0^t
                        \vert
                            \sigma^\prime(s)
                        \vert^4
                        + \vert
                            \sigma^{\prime\prime}(s)
                        \vert^2
                ds
                \sup_{0<t<T} \Vert
                    v(t)
                \Vert_{L^2(I)}^2
            \right]\\
            & + C
            \Vert
                \sigma^\prime
            \Vert_{L^\infty(0,T)}^2
            \Vert
                v(t)
            \Vert_{H^1(I)}^2\\
            & + C e^{
                \int_0^t
                    C \left(
                        1
                        + \vert
                            \sigma^\prime(r)
                        \vert^2
                    \right)
                dr
            }\\
            & \times
            \left[
                \Vert
                    v_0
                \Vert_{H^3(I)}
                + \sup_{0<t<T} \Vert
                        \tilde{v}(s)
                    \Vert_{L^2(I)}
                \int_0^t
                    \left(
                        1
                        + \vert
                            \sigma^\prime(s)
                        \vert^2
                    \right)
                ds
                + \int_0^t
                    \Vert
                        \tilde{f}(s)
                    \Vert_{L^2(I)}^2
                ds
            \right]\\
            & < \infty.
        \end{split}
    \end{align}
    Moreover, $v \in C([0,T]; H^3(I)) \cap L^2_tH^4_x(Q_T)$ such that
    \begin{gather}\label{eq_H3_estimate_for_v}
        \begin{split}
            \Vert
                \partial_x^3 v(t)
            \Vert_{L^2(I)}^2
            & \leq C(
                \Vert
                    \partial_x \tilde{v} (t)
                \Vert_{L^2(I)}^2
                + \Vert
                    \partial_x f(t)
                \Vert_{L^2(I)}^2
            )
            < \infty,\\
            \int_0^t
                \Vert
                    \partial_x^4 v(s)
                \Vert_{L^2(I)}^2
            ds
            & \leq C \int_0^t
                \Vert
                    \partial_x^2 \tilde{v} (s)
                \Vert_{L^2(I)}^2
                + \Vert
                    \partial_x^2 f(s)
                \Vert_{L^2(I)}^2
            ds
            < \infty,
        \end{split}
    \end{gather}
    for $a.a. \, t \geq 0$ and some constant $C>0$.
    Furthermore, if $a, g \in C^1[0,T]$, then $\sigma \in C^2[0,T]$.
\end{corollary}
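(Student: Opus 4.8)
The plan is to assemble the a priori bounds proven above in an order that breaks the apparent circular dependence between the $H^1$-estimate for $\tilde v=\partial_t v$ (whose right-hand side involves $\sigma'$ and $\sigma''$) and the $L^2$-bound for $\sigma''$ (whose right-hand side involves $\tilde v$ in $H^1_x$), and then to transfer the resulting regularity of $\tilde v$ back to $v$ through the equation itself.

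First I would collect the information that does not see $\tilde v$. Propositions~\ref{prop_L2_estimate_for_linear_heat_eq}, \ref{prop_H1_estimate_for_sigma} and \ref{prop_L_infty_H1_estimate_for_v} give $v\in L^\infty_tH^1_x(Q_T)$ together with $\partial_t v,\partial_x^2 v\in L^2_tL^2_x(Q_T)$ and $\sigma\in H^1(0,T)$, and Proposition~\ref{prop_estimate_for_dsigma_L_infty} upgrades this to $\sigma'\in L^\infty(0,T)$. These are exactly the inputs required by the \emph{Moreover} part of Corollary~\ref{cor_L2_a_priori_estimate_for_tildev}: since $\sigma$ solves the ordinary differential equation in (\ref{eq_linear_heat}), its derivative $\tilde\sigma=\sigma'$ solves (\ref{eq_d_tilde_sigma}), so, taking $G(\sigma(t))=dF(\sigma(t))/dt$ and $w=v$, the weight functions $\Phi_1,\Phi_2$ appearing there lie in $L^1(0,T)$ (using $\tilde f\in L^2_tL^2_x$, $v\in L^\infty_tH^1_x$, $\sigma'\in L^\infty$ and $a,g\in H^1(0,T)$), and we obtain $\tilde v\in L^\infty_tL^2_x(Q_T)\cap L^2_tH^1_x(Q_T)$; here the initial value $\tilde v(0)=\partial_x^2 v_0+f|_{t=0}$ is finite in $H^1(I)$ because $v_0\in H^3(I)$ and $f\in C([0,T];H^1(I))$. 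Now that $\tilde v\in L^2_tH^1_x(Q_T)$ is available, Proposition~\ref{prop_H1_estimates_for_tilde_sigma} (equivalently, differentiating the ordinary differential equation in (\ref{eq_linear_heat}) and applying the trace theorem to (\ref{eq_d_tilde_sigma})) gives $\sigma''=d\tilde\sigma/dt\in L^2(0,T)$, i.e. $\sigma\in H^2(0,T)$, which is the first assertion; the circularity is thereby resolved.

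Next I would derive (\ref{eq_H1_a_pripori_estimate_for_tilde_v_no_v1}) by decomposing $\tilde v=\tilde v_1+\tilde v_2$ as in (\ref{eq_def_Hsigma_tildev1})--(\ref{eq_definition_v2}) with $G(\sigma(t))=dF(\sigma(t))/dt$ and $w=v$. For $\tilde v_1=E(Gv;H(\sigma))$ with $H(\sigma)=1-F(\sigma)$ I use the higher order bounds (\ref{eq_L2L2_estimate_for_dtw_dx2w}) of Proposition~\ref{prop_L2_a_priori_estimate_for_tildev1} together with the pointwise estimates (\ref{eq_estimates_dH_G_dG}), namely $|G(\sigma)|\le C|\sigma'|$, $|dH(\sigma)/dt|\,|G(\sigma)|\le C|\sigma'|^2$ and $|dG(\sigma)/dt|\le C(|\sigma'|^2+|\sigma''|)$; integrating in time and invoking $\sigma'\in L^\infty$, $\sigma''\in L^2$ and $v\in L^\infty_tH^1_x$ bounds $\Vert\partial_x\tilde v_1(t)\Vert_{L^2(I)}^2$, $\int_0^t\Vert\partial_s\tilde v_1\Vert_{L^2(I)}^2\,ds$ and $\int_0^t\Vert\partial_x^2\tilde v_1\Vert_{L^2(I)}^2\,ds$ by the first three groups of terms on the right of (\ref{eq_H1_a_pripori_estimate_for_tilde_v_no_v1}). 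For $\tilde v_2$ I invoke Proposition~\ref{prop_H1_estimate_for_tilde_v2}(ii), which provides exactly the exponential--Gronwall estimate (\ref{eq_H1_a_priori_estimate_for_tildev2}); there the source $\tilde f_\ast=\tilde f-(\partial_t\tilde v_1-\partial_x^2\tilde v_1)$ is controlled in $L^2_tL^2_x$ by the same $\tilde v_1$-bounds, $\sup_{0<t<T}\Vert\tilde v_2(t)\Vert_{L^2(I)}$ by the $L^2$-bound already obtained, and the initial term is estimated through $\Vert\partial_x\tilde v_2(0)\Vert_{L^2(I)}\le\Vert\partial_x\tilde v(0)\Vert_{L^2(I)}+\Vert\partial_x\tilde v_1(0)\Vert_{L^2(I)}\le C(\Vert v_0\Vert_{H^3(I)}+\Vert f\Vert_{C([0,T];H^1(I))}+|\sigma'(0)|\,\Vert v_0\Vert_{H^1(I)})$. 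Summing the $\tilde v_1$- and $\tilde v_2$-contributions yields (\ref{eq_H1_a_pripori_estimate_for_tilde_v_no_v1}).

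Finally I would transfer this to $v$ through the equation. From the first line of (\ref{eq_linear_heat}) one has $\partial_x^2 v=\tilde v-f$, hence $\partial_x^3 v=\partial_x\tilde v-\partial_x f$ and $\partial_x^4 v=\partial_x^2\tilde v-\partial_x^2 f$; combined with $\tilde v\in L^\infty_tH^1_x(Q_T)\cap L^2_tH^2_x(Q_T)$ and $f\in C([0,T];H^1(I))\cap L^2_tH^2_x(Q_T)$ this gives the two inequalities in (\ref{eq_H3_estimate_for_v}) and shows $v\in L^2_tH^4_x(Q_T)$. Moreover $\partial_t\tilde v\in L^2_tL^2_x(Q_T)$ and $\tilde v\in L^2_tH^2_x(Q_T)$, so the Lions--Aubin type embedding gives $\tilde v\in C([0,T];H^1(I))$, whence $\partial_x^2 v=\tilde v-f\in C([0,T];H^1(I))$ and $v\in C([0,T];H^3(I))$. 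If in addition $a,g\in C^1[0,T]$, then $\sigma''=g'+a'\gamma_+v+a\gamma_+\tilde v$ by (\ref{eq_d_tilde_sigma}), and the right-hand side is continuous on $[0,T]$ because the traces $\gamma_+v$ and $\gamma_+\tilde v$ are continuous (as $v,\tilde v\in C([0,T];H^1(I))$), so $\sigma\in C^2[0,T]$. The one genuinely delicate point is the sequencing in the second paragraph: one must secure $\sigma'\in L^\infty$ and $v\in L^\infty_tH^1_x$ \emph{before} estimating $\tilde v$, then use precisely the form of the $\tilde v$-estimate in Corollary~\ref{cor_L2_a_priori_estimate_for_tildev} that depends on $\sigma'$ alone, and only afterwards close the loop for $\sigma\in H^2(0,T)$; everything else is a bookkeeping combination of the inequalities already established and elementary manipulation of the equation.
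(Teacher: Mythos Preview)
Your proposal is correct and follows essentially the same route as the paper: decompose $\tilde v=\tilde v_1+\tilde v_2$ via the extension operator, control $\tilde v_1$ through Proposition~\ref{prop_L2_a_priori_estimate_for_tildev1} and the pointwise bounds (\ref{eq_estimates_dH_G_dG}), control $\tilde v_2$ through Proposition~\ref{prop_H1_estimate_for_tilde_v2}(ii), and then read off the $H^3$ and $H^4$ regularity of $v$ from the equation $\partial_x^2 v=\tilde v-f$. Your explicit discussion of the sequencing---first securing $\sigma'\in L^\infty$ and $v\in L^\infty_tH^1_x$, then invoking the \emph{Moreover} clause of Corollary~\ref{cor_L2_a_priori_estimate_for_tildev} to obtain $\tilde v\in L^2_tH^1_x$, and only then closing $\sigma\in H^2(0,T)$ via Proposition~\ref{prop_H1_estimates_for_tilde_sigma}---is a useful clarification that the paper leaves implicit when it simply asserts ``Proposition~\ref{prop_H1_estimates_for_tilde_sigma} implies $\sigma\in H^2(0,T)$'' at the outset.
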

\begin{proof}
    Set
    \begin{align*}
        & \tilde{v}_1 
        := E \left(
            \frac{
                dF(\sigma(t))
            }{
                dt
            } v; 1 - F(\sigma)
        \right)
        \in H^1_tL^2_x(Q_T)
        \cap L^2_t H^2_x(Q_T),\\
        %& v_2
        %:= \tilde{v} - \tilde{v}_1 \\
        & \tilde{f}_\ast
        := \tilde{f}
        - (\partial_t \tilde{v}_1  - \partial_x^2 \tilde{v}_1 )
        \in L^2_tL^2_x(Q_T).
    \end{align*}
    Proposition \ref{prop_H1_estimates_for_tilde_sigma} implies $\sigma \in H^2(0,T)$.
    Proposition \ref{prop_H1_estimate_for_tilde_v2}(ii) and the triangle inequality imply
    \begin{align} \label{eq_H1_a_pripori_estimate_for_tilde_v}
        \begin{split}
            & \int_0^t
                \Vert
                    \partial_s \tilde{v}(s)
                \Vert_{L^2(I)}^2
            ds
            + \Vert
                \partial_x \tilde{v}(t)
            \Vert_{L^2(I)}^2
            + \int_0^t
                \Vert
                    \partial_x^2 \tilde{v}(s)
                \Vert_{L^2(I)}^2
            ds\\
            & \leq C \left(
                \int_0^t
                    \Vert
                        \partial_s \tilde{v}_1 (s)
                    \Vert_{L^2(I)}^2
                ds
                + \Vert
                    \partial_x \tilde{v}_1 (t)
                \Vert_{L^2(I)}^2
                + \int_0^t
                    \Vert
                        \partial_x^2 \tilde{v}_1 (s)
                    \Vert_{L^2(I)}^2
                ds
            \right)\\
            & + C e^{
                \int_0^t
                    C \left(
                        1
                        + \vert
                            \sigma^\prime(r)
                        \vert^2
                    \right)
                dr
            }\\
            & \times \left(
                \Vert
                    v_0
                \Vert_{H^3(I)}
                + \sup_{0<t<T} \Vert
                    \tilde{v}(s)
                \Vert_{L^2(I)}
                \int_0^t
                    \left(
                        1
                        + \vert
                            \sigma^\prime(s)
                        \vert^2
                    \right)
                ds
                + \int_0^t
                    \Vert
                        \tilde{f}_\ast(s)
                    \Vert_{L^2(I)}^2
                ds
            \right)\\
            & =: I_1 + I_2.
        \end{split}
    \end{align}
    We find from Proposition \ref{prop_L2_a_priori_estimate_for_tildev1} and the estimates (\ref{eq_estimates_dH_G_dG}) that
    \begin{align*}
        \begin{split}
            I_1
            & \leq C \int_0^t
                \vert
                    G(\sigma(s))
                \vert^2
                (
                    \Vert
                        v(s)
                    \Vert_{H^2(I)}^2
                    + \Vert
                        \partial_t v(s)
                    \Vert_{L^2(I)}^2
                )\\
            & \quad\quad\quad
                + \left[
                    \left \vert
                        \frac{
                            dG(\sigma(s))
                        }{
                            dt
                        }
                    \right \vert^2
                    + \left \vert
                        \frac{
                            d H(\sigma(s))
                        }{
                            dt
                        }
                    \right \vert^2
                    \vert
                        G(\sigma(s))
                    \vert^2
                \right]
                \Vert
                    v(s)
                \Vert_{L^2(I)}^2
            ds\\
            & + C \vert
                G(\sigma(t))
            \vert^2
            \Vert
                v( t)
            \Vert_{H^1(I)}^2\\
            & \leq C \int_0^t
                \vert
                    \sigma^\prime(s)
                \vert^2
                (
                    \Vert
                        v(s)
                    \Vert_{H^2(I)}^2
                    + \Vert
                        \partial_t v(s)
                    \Vert_{L^2(I)}^2
                )\\
            & \quad\quad\quad
                + (
                    \vert
                        \sigma^\prime(s)
                    \vert^4
                    + \vert
                        \sigma^{\prime\prime}(s)
                    \vert^2
                )
                \Vert
                    v(s)
                \Vert_{L^2(I)}^2
            ds\\
            & + C \vert
                \sigma^\prime(t)
            \vert^2
            \Vert
                v(t)
            \Vert_{H^1(I)}^2
        \end{split}
    \end{align*}
    for some constant $C>0$.
    Similarly, we deduce that
    \begin{align*}
        \begin{split}
            & \int_0^t
                \Vert
                    \tilde{f}_\ast(s)
                \Vert_{L^2(I)}^2
            ds\\
            & \leq C \int_0^t
                \Vert
                    \partial_t f(s)
                \Vert_{L^2(I)}^2
            ds\\
            & + C \int_0^t
                \vert
                    \sigma^\prime(t)
                \vert^2
                (
                    \Vert
                        v(s)
                    \Vert_{H^2(I)}^2
                    + \Vert
                        \partial_t v(s)
                    \Vert_{L^2(I)}^2
                )
            ds\\
            &+ C \int_0^t
                (
                    \vert
                        \sigma^\prime(s)
                    \vert^4
                    + \vert
                        \sigma^{\prime\prime}(s)
                    \vert^2
                )
                \Vert
                    v(s)
                \Vert_{L^2(I)}^2
            ds
        \end{split}
    \end{align*}
    for some constant $C>0$.
    The third terms in the right-hand side makes sense because of the embedding $H^2(0,T) \hookrightarrow H^{1,4}(0,T)$.
    Then we obtain (\ref{eq_H1_a_pripori_estimate_for_tilde_v_no_v1}).
    Since $\partial_t v - \partial_x^2 v = f$, we have
    \begin{align*}
        - \partial_x^2 v
        & = -\partial_t v
        + f\\
        & = - \tilde{v}
        + f
        \in L^\infty_t H^1_x(Q_T),\\
        - \partial_x^4 v
        & = - \partial_x^2 \partial_t v
        + \partial_x^2 f\\
        & = - \partial_x^2 \tilde{v}
        + \partial_x^2 f
        \in L^2_tL^2_x(Q_T).
    \end{align*}
    Therefore, we find that
    \begin{align*}
        v
        \in C([0,T); H^3(I))
        \cap L^2_tH^4_x(Q_T)
    \end{align*}
    and $v$ satisfies (\ref{eq_H3_estimate_for_v}).
    We deduce that
    \begin{align*}
        \left \Vert
            \frac{
                d^2\sigma
            }{
                dt
            }
        \right\Vert_{L^\infty(0,T)}
        & \leq \Vert
            g^\prime
        \Vert_{L^\infty(0,T)}
        + C \Vert
            a^\prime
        \Vert_{L^\infty(0,T)}
        \Vert
            v
        \Vert_{L^\infty_t H^1(Q_T)}\\
        & + C \Vert
            a
        \Vert_{L^\infty(0,T)}
        \Vert
            \partial_t v
        \Vert_{L^\infty_t H^1(Q_T)}\\
        & < \infty.
    \end{align*}
    We have proved Corollary \ref{cor_H3_estimate_for_v}.
\end{proof}

Similarly, by repeating the above procedures, we obtain
\begin{lemma} \label{lemma_H2m_a_priori_estimates}
    Let $T>0$, $m \in \Integer_{\geq 0}$ and
    \begin{align}\label{eq_conditions_v0_f_g_a_in_higher_regularity_theorem}
        \begin{split}
            & v_0
            \in H^{2m+1}(I), \\
            & f
            \in H^m_tL^2_x(Q_T)
            \cap L^2_tH^{2m}_x(Q_T)
            \hookrightarrow C ([0,T); H^{2m+1}(I)),\\
            & g
            \in H^m(0,T),\\
            & a
            \in H^m(0,T).
        \end{split}
    \end{align}
    Then the solution $v, \sigma$ to (\ref{eq_linear_heat}) satisfy
    \begin{align*}
        & v
        \in H^{m+1}_tL^2_x(Q_T)
        \cap BC([0,T);H^{2m+1}(I))
        \cap L^2_tH^{2m+2}_x(Q_T),\\
        & \sigma
        \in H^{m+1}(0,T),
    \end{align*}
    such that
    \begin{align} \label{eq_H2m1_estimate_for_v}
        \begin{split}
            & \Vert
                v(t)
            \Vert_{H^{2m+1}(I)}^2
            + \int_0^t
                \sum_{0\leq j \leq m+1}
                    \Vert
                        \partial_s^{j} \partial_x^{2(m+1-j)} v(s)
                    \Vert_{L^2(I)}^2
            ds
            %& + \int_0^t
            %    \sum_{0\leq j \leq m}
            %        \Vert
            %            \partial_x^{2j+2}v(s)
            %        \Vert_{L^2(I)}^2
            %ds
            \leq C_1(t),\\
            & \sum_{0 \leq j \leq m+1}
                \left \Vert
                    \frac{
                        d^{j+1} \sigma
                    }{
                        dt^{j+1}
                    }
                \right \Vert_{L^2(0,T)}
            \leq C_2(t),
        \end{split}
    \end{align}
    for $t>0$ and some bounded functions $C_1, C_2$ in $[0,T]$ which depend continuously on the norms of the functions in (\ref{eq_conditions_v0_f_g_a_in_higher_regularity_theorem}) to which they belong.
    Furthermore, if $a, g \in C^m[0,T]$
    \begin{align*}
        \sum_{0 \leq j \leq m+1}
            \left \Vert
                \frac{
                    d^{j} \sigma
                }{
                    dt^{j}
                }
            \right \Vert_{L^\infty(0,T)}
            < \infty.
    \end{align*}
\end{lemma}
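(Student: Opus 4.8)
The plan is to argue by induction on $m$, the inductive step being a repetition of the argument that produced Corollary \ref{cor_H3_estimate_for_v} (which is the case $m=1$). The base case $m=0$ follows from Propositions \ref{prop_L2_estimate_for_linear_heat_eq}, \ref{prop_H1_estimate_for_sigma}, \ref{prop_L_infty_H1_estimate_for_v} and \ref{prop_estimate_for_dsigma_L_infty}, which give $v\in H^1_tL^2_x(Q_T)\cap BC([0,T];H^1(I))\cap L^2_tH^2_x(Q_T)$ and $\sigma\in H^1(0,T)$ with the stated bounds, the spatial part following from $-\partial_x^2 v=f-\partial_t v$. Throughout, the function whose regularity is upgraded is the solution furnished by Lemma \ref{lem_existence_of_linearized_equation}; the time differentiations below are legitimate because at each level the regularity gained at the previous level makes $\partial_t^{j}v$ and $d^{j}\sigma/dt^{j}$ genuine functions in the relevant spaces (alternatively one uses difference quotients and passes to the limit).

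For the inductive step I would assume the statement at level $m-1$ with data satisfying (\ref{eq_conditions_v0_f_g_a_in_higher_regularity_theorem}) at level $m$. Applying $\partial_t$ to the system obtained at level $m-1$, the function $\tilde v:=\partial_t^{m}v$ solves an equation of the form (\ref{eq_linear_heat_time_derivative_generalized}) whose boundary data $\gamma_+Gw$, $-\gamma_-\partial_xGw$ are finite sums of products $\frac{d^{i}F(\sigma)}{dt^{i}}\,\partial_t^{k}v$ with $1\le i\le m$ and $k=m-i\le m-1$; the higher time derivatives of $F(\sigma)$ are polynomials in $\sigma',\dots,\sigma^{(i)}$ with bounded coefficients, as in (\ref{eq_estimates_dH_G_dG}), so by the induction hypothesis (in particular $\sigma\in H^{m}(0,T)$ and $\partial_t^{k}v$ controlled in $BC_tH^{2i-1}_x\cap L^2_tH^{2i}_x$ for $k=m-i$) every such factor is already estimated. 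Next I would split $\tilde v=\tilde v_1+\tilde v_2$ with $\tilde v_1:=E(Gw;\,1-F(\sigma))$ the extension from Lemma \ref{lem_extension_theorem}, so that $\tilde v_2:=\tilde v-\tilde v_1$ solves the homogeneous-boundary heat equation (\ref{eq_linear_heat_time_derivative_tildev2}); Proposition \ref{prop_estimate_for_dtE}, applied once more, bounds $\tilde v_1,\partial_t\tilde v_1,\partial_x^2\tilde v_1$ in $L^2(I)$ in terms of $\sigma',\sigma''$ and the lower-order norms of $v$. The $L^2$- and $H^1$-energy estimates for $\tilde v_2$ are then obtained exactly as in Corollary \ref{cor_differential_ineq_L2_v2} and Propositions \ref{prop_L2_a_priori_estimate_for_tildev2} and \ref{prop_H1_estimate_for_tilde_v2}: multiply by $\tilde v_2$, respectively $\partial_t\tilde v_2$, integrate by parts (the boundary terms drop since $\tilde v_2$ satisfies the homogeneous fourth boundary condition), and close by Gronwall. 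Combined with the $\tilde v_1$ bounds this gives the bound on $\int_0^t\sum_{0\le i\le m+1}\Vert\partial_s^{\,i}\partial_x^{2(m+1-i)}v(s)\Vert_{L^2(I)}^2\,ds$ and on $\sup_t\Vert\partial_x\tilde v(t)\Vert_{L^2(I)}$, i.e.\ (\ref{eq_H2m1_estimate_for_v}) for $v$.

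The regularity of $\sigma$ would be recovered in parallel: differentiating $\sigma'=g+a\gamma_+v$ a total of $m$ times gives $\frac{d^{m+1}\sigma}{dt^{m+1}}=g^{(m)}+\sum_{j=0}^{m}\binom{m}{j}a^{(m-j)}\gamma_+\partial_t^{j}v$, whose $L^2(0,T)$-norm is estimated using $a\in H^{m}(0,T)\hookrightarrow L^\infty(0,T)$, the trace theorem, and for the top term $a\,\gamma_+\partial_t^{m}v$ the interpolation inequality (\ref{eq_interpolation_inequality_for_trace_operator}). The spatial regularity is then algebraic: $\partial_x^2\partial_t^{j}v=\partial_t^{j+1}v-\partial_t^{j}f$ for $0\le j\le m$ yields $v\in BC([0,T];H^{2m+1}(I))\cap L^2_tH^{2m+2}_x(Q_T)$ together with (\ref{eq_H2m1_estimate_for_v}); and when in addition $a,g\in C^{m}[0,T]$, reading the differentiated ODE pointwise gives $d^{j}\sigma/dt^{j}\in L^\infty(0,T)$ for $0\le j\le m+1$. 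Finally one invokes the embedding $H^{m+1}(0,T)\hookrightarrow H^{j,4}(0,T)$ for $j\le m$ to make sense of the terms $|\sigma^{(i)}|^2|\sigma^{(l)}|^2$ that arise in the identities for the time derivatives of $F(\sigma)$, exactly as the embedding $H^2(0,T)\hookrightarrow H^{1,4}(0,T)$ was used in Corollary \ref{cor_H3_estimate_for_v}.

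The hard part will be the top-order coupling between $\frac{d^{m+1}\sigma}{dt^{m+1}}$ and $\partial_t^{m}v$. On one hand, controlling the inhomogeneous extension term in the $\partial_t^{m}v$-energy estimate produces, through the highest time derivative of $F(\sigma)$ (namely the term $F'(\sigma)\sigma^{(m+1)}$ arising when $\frac{d^{m}F(\sigma)}{dt^{m}}$ is differentiated once more in Corollary \ref{cor_differential_ineq_L2_v2}), a contribution involving $\sigma^{(m+1)}$; on the other hand, the only available bound for $\sigma^{(m+1)}$ requires the trace $\gamma_+\partial_t^{m}v$, hence via (\ref{eq_interpolation_inequality_for_trace_operator}) a bound on $\Vert\partial_x\partial_t^{m}v\Vert_{L^2(I)}$, which is itself a quantity at the current level. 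The two must be decoupled exactly as in the proof of Proposition \ref{prop_L2_a_priori_estimate_for_tildev2}: substitute the $m$-times differentiated ODE into the energy inequality for $\tilde v_2$, bound $\gamma_+\tilde v_2$ by interpolation, and absorb the resulting $\varepsilon\Vert\partial_x\tilde v_2\Vert_{L^2(I)}^2$ into the left-hand side before applying Gronwall, precisely the mechanism of (\ref{eq_estimate_dsigma}). The remaining work is bookkeeping: one checks that the Gronwall factors, which are exponentials of $\int_0^t C(1+|\sigma'(r)|^2)\,dr$ and are finite since $\sigma'\in L^\infty(0,T)$ at the previous level, together with all other constants, remain bounded on $[0,T]$ and depend continuously — in fact polynomially, up to those exponential factors — on the norms of the data in (\ref{eq_conditions_v0_f_g_a_in_higher_regularity_theorem}), which produces the functions $C_1,C_2$ with the asserted continuous dependence.
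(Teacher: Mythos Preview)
Your proposal is correct and follows essentially the same approach as the paper: induction on $m$, with the inductive step obtained by applying $\partial_t^{k+1}$ to the boundary conditions, using the extension operator of Lemma~\ref{lem_extension_theorem} to reduce to the homogeneous-boundary problem, and then repeating the $L^2$/$H^1$ energy argument of Propositions~\ref{prop_L2_estimate_for_linear_heat_eq}--\ref{prop_H1_estimate_for_tilde_v2} and Corollary~\ref{cor_H3_estimate_for_v}. Your treatment is in fact more explicit than the paper's short proof---in particular your identification of the top-order coupling between $\sigma^{(m+1)}$ and $\gamma_+\partial_t^{m}v$ and its resolution via the interpolation/absorption argument of (\ref{eq_estimate_dsigma})---but the underlying mechanism is the same.
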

\begin{proof}
    We prove the lemma by induction.
    Let $k \in \Integer_{\geq 1}$.
    Assume that (\ref{eq_H2m1_estimate_for_v}) holds for $m=k$ and (\ref{eq_conditions_v0_f_g_a_in_higher_regularity_theorem}) holds for $m=k+1$.
    Applying $\partial_t^{k+1}$ to the boundary conditions, we observe that
    \begin{align*}
        & B_1(\partial_t^{k+1} v; F(\sigma(t)))
        = \gamma_+ \left(
            \frac{
                d^{k+1} F(\sigma(t))
            }{
                dt^{k+1}
            }
        \right) v
        + \gamma_+ \left(
            \sum_{j=1}^k
                \frac{d^{k-j}}{dt^{k-j}} F(\sigma(t))
                \partial_t^j v
        \right),\\
        & B_2(\partial_t^{k+1} v; F(\sigma(t)))
        = \gamma_- \partial_x \left(
            \frac{
                d^{k+1} F(\sigma(t))
            }{
                dt^{k+1}
            }
        \right) v
        + \gamma_- \partial_x \left(
            \sum_{j=1}^k
                \frac{d^{k-j}}{dt^{k-j}} F(\sigma(t))
                \partial_t^j v
        \right).
    \end{align*}
    We find from the assumptions and Lemma \ref{lem_extension_theorem} that
    \begin{align*}
        E \left(
            \frac{d^{k-j}}{dt^{k-j}} F(\sigma(t)) \partial_t^j v
            ; 1 - F(\sigma(t))
        \right)
        \in H^1_tL^2_x(Q_T)
        \cap L^2_tH^2_x(Q_T),
    \end{align*}
    then
    \begin{align*}
        f_{k+1}
        & := \partial_t^{k+1} f - (
            \partial_t
            - \partial_x^2
        ) E \left(
            \sum_{j=1}^k
                \frac{d^{k-j}}{dt^{k-j}} F(\sigma(t))
                \partial_t^j v;
                1 - F(\sigma(t))
        \right)\\
        & \in L^2_t L^2_x(Q_T).
    \end{align*}
    Therefore, by repeating the procedure of proving the $H^2$-a priori estimate for the solution to (\ref{eq_linear_heat}), we deduce that the bounds in Proposition \ref{prop_L2_estimate_for_linear_heat_eq} and Corollary \ref{cor_H3_estimate_for_v} hold for $\partial_t^{k+1} v$.
    Furthermore, from the equation for $\sigma$ and the trace theorem, we also deduce that the $L^2$-estimate holds for $d^{k+2}\sigma/dt^{k+2}$, and if $a, g \in C^{k+1}(0,T)$, the $L^\infty$-estimate holds for $d^{k+2}\sigma/dt^{k+2}$.
\end{proof}

%%%%%%%%%%%%%%%%%%%%%%%%%%%%%%%%%%%%%%%%%%%%%%%%%%%%%%%%%%
%%%%%%%%%%%%%%%%%%%%%%%%%%%%%%%%%%%%%%%%%%%%%%%%%%%%%%%%%%
%------------------------Section3------------------------%
%%%%%%%%%%%%%%%%%%%%%%%%%%%%%%%%%%%%%%%%%%%%%%%%%%%%%%%%%%
%%%%%%%%%%%%%%%%%%%%%%%%%%%%%%%%%%%%%%%%%%%%%%%%%%%%%%%%%%
\section{Non-linear Problems} \label{sec_non_linear_problem}
To simplify the notation we assume that $\nu_1 = \nu_2 = 1$ without loss of generality.
We consider a slightly generalized equations than the original equations (\ref{eq_filter_clogging_nondimensional}) to simplify notation such that
\begin{equation} \label{eq_abstract_filter_clogging_equation}
    \begin{aligned}
        &\partial_t v_1
        - \partial_x^2 v_1
        + c(\sigma_1) \partial_x v_1
        = - N_{v,1}(v_1) v_2 + f
        & t > 0,
        & \, x \in I,\\
        &\partial_t v_2
        - \partial_x^2 v_2
        + c(\sigma_1) \partial_x v_2
        = N_{v,2}(v_1, v_2) v_2
        & t > 0,
        & \, x \in I,\\
        &B_1(v_1; F(\sigma_1))
        = 0, \quad 
        B_2(v_1; F(\sigma_1))
        =0,
        & t>0,
        & \,\\
        &B_1(v_2; F(\sigma_1))
        = 0, \quad
        B_2(v_2; F(\sigma_1))
        =0,
        & t>0,
        & \,\\
        &\frac{d\sigma_1}{dt}
        = - N_{\sigma, 1}(\sigma_1) \sigma_2
        + Q_1 \, d(\sigma_1) \, \gamma_+ v_1,
        & t>0,
        & \,\\
        & \frac{d\sigma_2}{dt}
        = N_{\sigma, 2}(\sigma_1, \sigma_2) \sigma_2
        + Q_2 \, d(\sigma_1) \, \gamma_+ v_2,
        & t>0,
        & \, \\
        & c
        = \Omega F(\sigma_1).
        & t>0,
        &\,\\
        & v_1
        = v_{1,0}>0, \quad
        v_2
        = v_{2,0}>0
        & t = 0,
        & \, x \in I,\\
        & \sigma_1
        = \sigma_{1,0}>0, \quad
        \sigma_2
        = \sigma_{2,0}>0
        & t = 0.
        & \,
    \end{aligned}
\end{equation}
We assume the following assumptions:

\underline{Assumption A}
\begin{itemize}
    \item The functions $d$, $F$, and $c$ are a monotonically decreasing positive $C^\infty$- function such that
    \begin{align*}
        d(+\infty)
        = F(+\infty)
        = c(+\infty)
        = 0.
    \end{align*}
    \item The functions $N_{v, 1}, N_{\sigma, 1} \in BC^\infty(\Real_+)$ is a non-negative monotonically increasing function such that $N_{v, 1}(0), N_{\sigma, 1}(0) = 0$.
    \item The functions
    \begin{align*}
        N_{v, 2}(v_1,v_2)
        = \alpha_1 N_{v, 1}(v_1) - \alpha_2 v_2
    \end{align*}
    for some $\alpha_1, \alpha_2 > 0$.
    $N_{\sigma, 2}(\sigma_1, \sigma_2) = \beta_1 N_{\sigma, 1}(\sigma_1) - \beta_2 \sigma_2$ for some $\beta_1, \beta_2 >0$.
    %\item The function $d \in BC^\infty(\Real_+)$ is non-negative monotonically decreasing such that $d(0) = 1$ and $d(+\infty)=0$.
\end{itemize}

We denote the function space for external forces and the solution by
\begin{align*}
    X_{f,T}
    & : = L^2_t H^2_x (Q_T)
    \cap H^1_t L^2_x (Q_T)
    \hookrightarrow BC(Q_T)\\
    X_{v,T}
    & : = L^2_t H^4_x (Q_T)
    \cap H^2_t L^2_x (Q_T)
    \cap C([0, T); H^3(I))
    \cap C^1([0, T); H^1(I)),\\
    %X_{\sigma, T}
    %& := H^2(0,T)
    X_{\sigma,T}
    & := \{
        \sigma \in L^\infty(0,T) \cap W^{2,\infty}_{loc}(0,T)
        :
        \Vert
            \sigma
        \Vert_{X_{\sigma, T}}
        < \infty
    \},
\end{align*}
and denote their norm by
\begin{align*}
    \Vert
        \cdot
    \Vert_{X_{v,T}}
    & =: \Vert
        \cdot
    \Vert_{L^2_t H^4_x (Q_T)}
    + \Vert
        \cdot
    \Vert_{H^2_t L^2_x (Q_T)}
    + \Vert
        \cdot
    \Vert_{L^\infty_t H^3_x(Q_T)}
    + \Vert
        \cdot
    \Vert_{L^\infty_t H^3_x(Q_T)}.%,\\
    %\Vert
    %    \cdot
    %\Vert_{X_{\sigma,T}}
    %& := \Vert
    %    \cdot
    %\Vert_{H^2(0,T)}.
\end{align*}
and
\begin{align*}
    \Vert
        \cdot
    \Vert_{X_{\sigma, T}}
    & := \sup_{0<t<T} \vert
        \sigma(t)
    \vert
    + \sup_{0<t<T} t^{\frac{1}{4} - \delta} \vert
        \sigma^\prime(t)
    \vert
    + \sup_{0<t<T} t^{\frac{1}{2} - \delta} \vert
        \sigma^{\prime\prime}(t)
    \vert
\end{align*}
for some small $\delta>0$.
For $T>0$, the external force $f$, and initial data $v_{0, j}$, $\sigma_{j,0}$, we consider the linear equations such that
\begin{equation} \label{eq_for_iteration_of_full_filter_clogging_equations}
    \begin{aligned}
        &\partial_t v_1
        - \partial_x^2 v_1
        = - c(\tau_1) \partial_x \psi_1
        - N_{v,1}(\psi_1) \psi_2
        + f
        & t>0,\,
        & x \in I,\\
        &\partial_t v_2
        - \partial_x^2 v_2
        = - c(\tau_1) \partial_x \psi_2
        + N_{v,2}(\psi_1, \psi_2) \psi_2
        & t>0,\,
        & x \in I,\\
        &B_1(v_1; F(\tau_1))
        = 0, \quad 
        B_2(v_1; F(\tau_1))
        =0,
        & t>0,
        & \,\\
        &B_1(v_2; F(\tau_1))
        = 0, \quad
        B_2(v_2; F(\tau_1))
        =0,
        & t>0,
        & \,\\
        &\frac{d\sigma_1}{dt}
        = - N_{\sigma, 1}(\tau_1) \tau_2
        + d(\tau_1) \, \gamma_+ \psi_1,
        & t>0,
        & \,\\
        & \frac{d\sigma_2}{dt}
        = N_{\sigma, 2}(\tau_1, \tau_2) \tau_2
        + d(\tau_1) \, \gamma_+ \psi_2,
        & t>0,
        & \, \\
        & c
        = \Omega F(\tau_1),
        & t>0,
        & \, \\
        & v_1
        =v_{1,0}, \quad
        v_2
        = v_{2,0},
        & t=0,
        & \, x \in I,\\
        & \sigma_1
        = \sigma_{0,1}, \quad
        \sigma_2
        = \sigma_{0,2},
        & t=0.
        & \,
    \end{aligned}
\end{equation}
The solutions to (\ref{eq_abstract_filter_clogging_equation}) are given by the fixed point to (\ref{eq_definition_v1_v2_sigma1_sigma2}).
%The pair of functions $(v_1, v_2, \sigma_1, \sigma_2)$ define by (\ref{eq_definition_v1_v2_sigma1_sigma2}) is the solution to the linear equations
We denote the solution operators to (\ref{eq_for_iteration_of_full_filter_clogging_equations}) by
\begin{align} \label{eq_definition_v1_v2_sigma1_sigma2}
    \begin{split}
        & v_1
        = \mathcal{S}_{v,1}(\psi_1, \psi_2, \tau_1),\\
        & v_2
        = \mathcal{S}_{v,2}(\psi_1, \psi_2, \tau_1),\\
        & \sigma_1
        = \mathcal{S}_{\sigma,1}(\psi_1, \tau_1, \tau_2),\\
        & \sigma_2
        = \mathcal{S}_{\sigma,2}(\psi_2, \tau_1, \tau_2).
    \end{split}
\end{align}
Although $\mathcal{S}_{v,j}$ and $\mathcal{S}_{\sigma, j}$ are depend on $T, f, v_{j,0}, \sigma_{j,0}$ and constants in the equations, we omit them for simplicity of notation.
We first construct the solution $(v_1, v_2)$ for given function $\sigma_1 \in X_{\sigma, T}$ by the Leray-Schauder principle, then we construct the local-in-time solution $(\sigma_1, \sigma_2)$ by Banach's fixed point theorem assuming that $v_j$ is a non-linear function of $\sigma_1$.
We extend the existence time of $\sigma_j$ up to $T$ by a priori estimates.
%For given functions
%\begin{align*}
%    (\psi_1, \psi_2, \tau_1, \tau_2)
%    \in X_{v,T}^2 \times X_{\sigma,T}^2.
%\end{align*}
We also consider the equation of the difference for fixed point arguments.
For $(\psi_3, \psi_4, \tau_3, \tau_4)$, we set
\begin{align*}
    & v_3
    = \mathcal{S}_{v,1}(\psi_3, \psi_4, \tau_3),\\
    & v_4
    = \mathcal{S}_{v,2}(\psi_3, \psi_4, \tau_3),\\
    & \sigma_3
    = \mathcal{S}_{\sigma,1}(\psi_3 \tau_3, \tau_4),\\
    & \sigma_4
    = \mathcal{S}_{\sigma,2}(\psi_4, \tau_3, \tau_4),
\end{align*}
We also set the differences by
\begin{align*}
    & V_1
    := v_1
    - v_3, \quad
    V_2
    := v_2
    - v_4,\\
    & \Sigma_1
    := \sigma_1
    - \sigma_3, \quad
    \Sigma_2
    := \sigma_2
    - \sigma_4.
\end{align*}
We see from the formulas
\begin{align*}
    &B_1(v_1; F(\tau_1))
    - B_1(v_3; F(\tau_3))\\
    & = \gamma_+ V
    - \gamma_- V
    - (
        F(\tau_1) \gamma_+ v_1
        - F(\tau_3) \gamma_+ v_3
    )\\
    & = \gamma_+ V
    - \gamma_- V
    - \frac{
        F(\tau_1)
        - F(\tau_3)
    }{
        2
    }\gamma_+(
        v_1
        + v_3
    )
    - \frac{
        F(\tau_1)
        + F(\tau_3)
    }{
        2
    } \gamma_+ (
        v_1
        - v_3
    )\\
    &= B_1 \left(
        V; \frac{
                F(\tau_1)
                + F(\tau_3)
            }{
                2
            }
    \right)
    - \frac{
        F(\tau_1)
        - F(\tau_3)
    }{
        2
    }\gamma_+(
        v_1
        + v_3
    )\\
    &= 0,
\end{align*}
and
\begin{align*}
    &B_2(v_1; F(\tau_1))
    - B_2(v_3; F(\tau_3))\\
    &= B_2 \left(
        V; \frac{
            F(\tau_1)
            + F(\tau_3)
        }{
            2
        }
    \right)
    + \frac{
        F(\tau_1)
        - F(\tau_2)
    }{
        2
    }\gamma_- \partial_x (
        v_1
        + v_3
    )\\
    &= 0,
\end{align*}
that the pair of the differences $(V_1, V_2, \Sigma_1, \Sigma_2)$ satisfies
\begin{equation}\label{eq_difference_equation_for_iteration_of_full_filter_clogging_equations}
    \begin{aligned}
        &\partial_t V_1
        - \partial_x^2 V_1
        = - \mathcal{N}_{V,0}(\psi_1, \psi_3, \tau_1, \tau_3)
        - \mathcal{N}_{V,1}(\psi_1, \psi_2, \psi_3, \psi_4),
        & t>0, \,
        & x \in I,\\
        &\partial_t V_2
        - \partial_x^2 V_2
        = - \mathcal{N}_{V,0}(\psi_2, \psi_4, \tau_1, \tau_3)
        + \mathcal{N}_{V,2}(\psi_1, \psi_2, \psi_3, \psi_4),
        & t>0, \,
        & x \in I,\\
        &B_1 \left(
            V_1;
            \frac{
                F(\tau_1)
                + F(\tau_3)
            }{
                2
            }
        \right)
        = \gamma_+ \frac{
            F(\tau_1)
            - F(\tau_3)
        }{
            2
        }
        (
            v_1
            + v_3
        ),
        & t>0,
        & \,\\
        & B_2\left(
            V_1;
            \frac{
                F(\tau_1)
                + F(\tau_3)
            }{
                2
            }
        \right)
        = - \gamma_- \partial_x \frac{
            F(\tau_1)
            - F(\tau_3)
        }{
            2
        }
        (
            v_1
            + v_3
        ),
        & t>0,
        & \,\\
        &B_1 \left(
            V_2;
            \frac{
                F(\tau_1)
                + F(\tau_3)
            }{
                2
            }
        \right)
        = \gamma_+ \frac{
            F(\tau_1)
            - F(\tau_3)
        }{
            2
        }
        (
            v_2
            + v_4
        ),
        & t>0,
        & \,\\
        & B_2\left(
            V_2;
            \frac{
                F(\tau_1)
                + F(\tau_3)
            }{
                2
            }
        \right)
        = - \gamma_- \partial_x \frac{
            F(\tau_1)
            - F(\tau_3)
        }{
            2
        }
        (
            v_2
            + v_4
        ),
        & t>0,
        & \,\\
        &\frac{d\Sigma_1}{dt}
        = - \mathcal{N}_{\Sigma,1}(\tau_1, \tau_2, \tau_3, \tau_4)
        + Q_1 \mathcal{N}_{\Sigma,0}( \psi_1, \psi_3, \tau_1, \tau_3),
        & t>0,
        & \,\\
        & \frac{d\Sigma_2}{dt}
        = \mathcal{N}_{\Sigma,2}(\tau_1, \tau_2, \tau_3, \tau_4)
        + Q_2 \mathcal{N}_{\Sigma,0}( \psi_1, \psi_3, \tau_2, \tau_4),
        & t>0,
        & \, \\
        & V_1
        =0, \quad
        V_2
        = 0,
        & t=0,
        & \, x \in I,\\
        & \Sigma_1
        = 0, \quad
        \Sigma_2
        = 0,
        & t=0,
        & \,
    \end{aligned}
\end{equation}
where
\begin{align*}
    \mathcal{N}_{V,0}(\psi_1, \psi_3, \tau_1, \tau_3)
    &:= c(\tau_1) \partial_x \psi_1
    - c(\tau_3) \partial_x \psi_3,\\
    \mathcal{N}_{V,1}(\psi_1, \psi_2, \psi_3, \psi_4)
    &:= N_{v,1}(\psi_1) \psi_2
    - N_{v,1}(\psi_3) \psi_4,\\
    \mathcal{N}_{V,2}(\psi_1, \psi_2, \psi_3, \psi_4)
    &:= N_{v,2}(\psi_1, \psi_2) \psi_2
    - N_{v,2}(\psi_3, \psi_4) \psi_4,
\end{align*}
and
\begin{align*}
    \mathcal{N}_{\Sigma,0}(\psi_1, \psi_3, \tau_1, \tau_3)
    &:= d(\tau_1) \gamma_+ \psi_1
    - d(\tau_3) \gamma_+ \psi_3\\
    \mathcal{N}_{\Sigma,1}(\tau_1, \tau_2, \tau_3, \tau_4)
    &:= N_{\sigma,1}(\tau_1) \tau_2
    - N_{\sigma,1}(\tau_3) \tau_4,\\
    \mathcal{N}_{\Sigma,2}(\tau_1, \tau_2, \tau_3, \tau_4)
    &:= N_{\sigma,2}(\tau_1, \tau_2) \tau_2
    - N_{\sigma,2}(\tau_3, \tau_4) \tau_4.
\end{align*}
We observe that
\begin{align*}
    \mathcal{N}_{\Sigma,0}(\psi_1, \psi_3, \tau_1, \tau_3)
    &= (
        d(\tau_1) - d(\tau_3)
    )\gamma_+ \psi_1
    - d(\tau_3) \gamma_+ 
    (
        \psi_1
        - \psi_3
    )\\
    \mathcal{N}_{\Sigma,1}(\tau_1, \tau_2, \tau_3, \tau_4)
    & = 
    (
        N_{\sigma,1}(\tau_1)
        - N_{\sigma,1}(\tau_3)
    )\tau_2
    - N_{\sigma,1}(\tau_3)
    (
        \tau_2
        - \tau_4
    ),\\
    \mathcal{N}_{\Sigma,2}(\tau_1, \tau_2, \tau_3, \tau_4)
    & =
    (
        N_{\sigma,2}(\tau_1, \tau_2)
        - N_{\sigma,2}(\tau_3, \tau_4)
    ) \tau_2\\
    & \quad - N_{\sigma,2}(\tau_3, \tau_4)
    (
        \tau_2
        - \tau_4
    ).
\end{align*}

\begin{theorem} \label{thm_global_well_posedness_for_eq_abstract_filter_clogging_equation}
    Let $T>0$ and $\delta \in (0,1/8)$.
    Assume that $v_{j,0} \in H^3(I)$, $\sigma_{0,j} \in \Real$ are positive and $f \in X_{f,T}$ is non-negative.
    Then there exists a unique solution $(v_1, v_2, \sigma_1, \sigma_2)$ to (\ref{eq_abstract_filter_clogging_equation}) such that
    \begin{align*}
        \Vert
            (v_1, v_2)
        \Vert_{X_{v,T}^2}
        + \Vert
            (\sigma_1, \sigma_2)
        \Vert_{X_{\sigma,T}^2}
        < \infty.
    \end{align*}
\end{theorem}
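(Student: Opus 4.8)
The plan is to combine the linear theory of Section~\ref{sec_linear_analysis} with two nested fixed-point arguments, following the two-layer structure already announced: an outer Banach iteration for $(\sigma_1,\sigma_2)$ in the weighted space $X_{\sigma,T}$ and, inside it, a Leray--Schauder argument producing $(v_1,v_2)\in X_{v,T}^2$ for each frozen $\sigma_1$. The solution of \eqref{eq_abstract_filter_clogging_equation} is then the common fixed point described in \eqref{eq_definition_v1_v2_sigma1_sigma2}.

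\textbf{Step 1 (interior system for a frozen $\sigma_1$).} Fix $\sigma_1\in X_{\sigma,T}$ with $\|\sigma_1\|_{X_{\sigma,T}}\le M$. By the Sobolev embedding noted in the remark after Lemma~\ref{lem_abstract_theorem_for_evolution_operator_by_Kato_Tanabe}, such $\sigma_1$ lies in $H^2(0,T)\hookrightarrow C^{1,\alpha}[0,T]$ for $\alpha\in(0,1/2)$, so Lemma~\ref{lem_existence_of_linearized_equation} together with the a priori estimates of Propositions~\ref{prop_L2_estimate_for_linear_heat_eq}--\ref{prop_L_infty_H1_estimate_for_v}, Corollary~\ref{cor_H3_estimate_for_v} and Lemma~\ref{lemma_H2m_a_priori_estimates} apply to the linearized system \eqref{eq_for_iteration_of_full_filter_clogging_equations}; in particular $\mathcal S_{v,j}(\cdot,\cdot,\sigma_1)$ is well defined and maps into $X_{v,T}$. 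I would set up the Leray--Schauder problem $v_j=\lambda\,\mathcal S_{v,j}(v_1,v_2,\sigma_1)$, $\lambda\in[0,1]$, on a ball in $C([0,T];H^1(I))^2$. Continuity of the solution operator follows from the linear estimates applied to the difference of two linearized problems; compactness follows from the regularity gain, since $X_{v,T}$ embeds compactly into $C([0,T];H^1(I))$ by Rellich--Kondrachov in $x$ plus the $t$-equicontinuity coming from the $H^1_tL^2_x$ bound. The a priori bound on fixed points, uniform in $\lambda$, comes from the iterated parabolic estimates; the key point is $X_{v,T}\hookrightarrow BC(Q_T)$, so the reaction terms $N_{v,1}(v_1)v_2$, $N_{v,2}(v_1,v_2)v_2$ and the drift $c(\sigma_1)\partial_x v_j$ are genuinely lower order and are absorbed after a Gronwall argument, the resulting constant depending on $M$, $f$ and the data. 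Leray--Schauder then yields $(v_1,v_2)=(v_1(\sigma_1),v_2(\sigma_1))$. Positivity $v_j\ge0$ follows from the maximum principle, using the $C^1([0,T);H^1(I))$ regularity, from $f\ge0$ and $N_{v,1}(0)=0$ for $v_1$, and then from the logistic form of $N_{v,2}$ for $v_2$.

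\textbf{Step 2 (boundary ODE system, local in time).} With $v_j=v_j(\sigma_1)$ available, define $\Phi$ sending $(\sigma_1,\sigma_2)$ to the solution of the fifth and sixth equations of \eqref{eq_abstract_filter_clogging_equation} with $\gamma_+v_j$ taken from $v_j(\sigma_1)$, and apply Banach's fixed point theorem to $\Phi$ in $X_{\sigma,T_0}$ for small $T_0>0$. The self-mapping property is easy: the right-hand sides of the two ODEs are bounded by $C(1+\|v_j\|_{BC(0,T_0;H^1(I))})$ via the trace theorem, $|\sigma_j''|$ is controlled through $\tfrac{d}{dt}\gamma_+v_j$ and the bound \eqref{eq_estimate_for_u_in_lem_existence_of_linearized_equation}, and the weights $t^{1/4-\delta}$, $t^{1/2-\delta}$ make the relevant suprema small for $T_0$ small. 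Contractivity is the main obstacle: estimating $\mathcal S_{\sigma,j}(\cdot;\tau_1,\tau_2)-\mathcal S_{\sigma,j}(\cdot;\tau_3,\tau_4)$ requires controlling $v_j(\tau_1)-v_j(\tau_3)=\mathcal S_{v,j}(\psi_1,\psi_2,\tau_1)-\mathcal S_{v,j}(\psi_3,\psi_4,\tau_3)$. For this I would use the difference system \eqref{eq_difference_equation_for_iteration_of_full_filter_clogging_equations}, whose inhomogeneous fourth-type boundary data are proportional to $F(\tau_1)-F(\tau_3)$ times traces of $v_1+v_3$ (resp.\ $v_2+v_4$); these are controlled by $\|\tau_1-\tau_3\|_{X_{\sigma,T_0}}$ and the $X_{v,T_0}$-bounds of Step~1. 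I would remove those boundary terms with the extension operator $E\bigl(\cdot;\tfrac12(F(\tau_1)+F(\tau_3))\bigr)$ of Lemma~\ref{lem_extension_theorem} and Proposition~\ref{prop_estimate_for_dtE}, reducing to a homogeneous parabolic problem to which the energy estimates of Section~\ref{sec_linear_analysis} apply, and picking up a factor $T_0^{\kappa}$ with $\kappa>0$ from the time integration. This produces $\|\Phi(\sigma)-\Phi(\tilde\sigma)\|_{X_{\sigma,T_0}}\le\tfrac12\|\sigma-\tilde\sigma\|_{X_{\sigma,T_0}}$ for $T_0$ small, hence a unique local solution $(\sigma_1,\sigma_2)$, and with it $(v_1,v_2)$, on $[0,T_0]$; positivity $\sigma_j>0$ follows from $N_{\sigma,1}(0)=0$, $d>0$ and $\gamma_+v_j\ge0$.

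\textbf{Step 3 (global continuation and uniqueness).} Finally I would rule out finite-time blow-up on $[0,T]$, so that the local solution extends to the full interval. Since $v_j\ge0$, the term $-N_{\sigma,1}(\sigma_1)\sigma_2$ in the $\sigma_1$-equation is nonpositive, whence $\sigma_1(t)\le\sigma_{1,0}+Q_1\|d\|_{L^\infty}\int_0^t\gamma_+v_1\,ds$; boundedness of $\gamma_+v_j$ follows from the trace theorem and the energy estimates (Propositions~\ref{prop_L2_estimate_for_linear_heat_eq} and \ref{prop_L_infty_H1_estimate_for_v}), which involve only $f$, the data and $\|\sigma_1'\|_{L^2}^2$ through Gronwall, while $\|\sigma_1'\|_{L^\infty}$ is bounded by $\|N_{\sigma,1}\|_{L^\infty}\|\sigma_2\|_{L^\infty}+Q_1\|d\|_{L^\infty}\|\gamma_+v_1\|_{L^\infty}$ ($\sigma_2$ being controlled similarly via the logistic structure of $N_{\sigma,2}$), which closes the loop on any finite interval. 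Feeding these bounds into Corollary~\ref{cor_H3_estimate_for_v} and Lemma~\ref{lemma_H2m_a_priori_estimates} gives $\|(v_1,v_2)\|_{X_{v,T}^2}+\|(\sigma_1,\sigma_2)\|_{X_{\sigma,T}^2}<\infty$. Uniqueness on $[0,T]$ follows from the same difference-equation estimates as in Step~2, now without smallness, via Gronwall. I expect the contractivity estimate in Step~2 --- quantifying how $v_j$ depends on $\sigma_1$ through the $\sigma_1$-dependent fourth boundary condition --- to be the genuinely delicate point; the rest is a careful but routine bootstrap on top of the linear theory already developed.
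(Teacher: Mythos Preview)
Your proposal is correct and follows essentially the same architecture as the paper: Leray--Schauder for $(v_1,v_2)$ at frozen $\sigma_1$ (the paper's Theorem~\ref{thm_existence_of_v_1_v_2_for_given_tau_1}), then a Banach contraction for $(\sigma_1,\sigma_2)$ in $X_{\sigma,T_0}$ using the difference system \eqref{eq_difference_equation_for_iteration_of_full_filter_clogging_equations} with the extension operator to homogenize the fourth boundary conditions, followed by a priori bounds to continue globally. The only notable deviations are cosmetic: the paper runs Leray--Schauder in $H^1_tH^1_x(Q_T)\cap L^2_tH^3_x(Q_T)$ rather than $C([0,T];H^1(I))^2$, and for the self-mapping step it closes via the algebraic inequality $\|(\sigma_1,\sigma_2)\|\le C_1+C_2T^{1/4}\|(\tau_1,\tau_2)\|^4$ and the smaller root of $y=C_1+C_2T^{1/4}y^4$, which is a slightly sharper bookkeeping device than your qualitative ``small for $T_0$ small'' --- but the substance is the same, and you have correctly flagged the contractivity estimate through the $\sigma_1$-dependent boundary condition as the genuinely technical part.
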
    

\subsection{Non-linear estimates}
We begin by establishing some nonlinear estimates.
%We first invoke the following estimates from elementary calculus:
\begin{proposition} \label{prop_nonlinear_estimates_for_Nv}
    \begin{enumerate}[label=(\roman*)]
        %\item
        %%By the definition of $N_{v, j}$ and $N_{\sigma,j}$ we have
        %It follows that
        %\begin{align} \label{eq_pointwis_estimates_for_nonlinear_terms}
        %    \begin{aligned}
        %        & - N_{v,1} (v_1) v_2 v_1
        %        \leq 0,
        %        && - N_{\sigma,1} (\sigma_1) \sigma_2 \sigma_1 
        %        \leq 0,\\
        %        & N_{v,1} (v_1) v_2 \varphi
        %        \leq R_1 v_2 \varphi,
        %        && N_{v,2} (v_1, v_2) v_2 \varphi
        %        \leq R_1 \alpha_1 v_2 \varphi
        %        - \alpha_2 v_2 \varphi,\\
        %        & N_{\sigma,1} (v_1) \sigma_2
        %        \leq R_1 \sigma_2,
        %        && N_{\sigma,2} (v_1, \sigma_2) \sigma_2
        %        \leq R_1 \beta_1 v_2
        %        - \beta_2 v_2.
        %    \end{aligned}
        %\end{align}
        \item It follows that
        \begin{align} \label{eq_estimates_for_Nvj_dNvj_ddNvj}
            \begin{split}
                \vert
                    N_{v, 1} (v_1) v_2
                \vert
                & \leq C \vert
                    v_2
                \vert\\
                \vert
                    N_{v, 2} (v_1, v_2) v_2
                \vert
                & \leq C (
                    \vert
                        v_2
                    \vert
                    + \vert
                        v_2
                    \vert^2
                )\\
                \vert
                    \partial (
                        N_{v, 1} (v_1) v_2
                    )
                \vert
                & \leq C
                (
                    \vert
                        \partial v_1
                    \vert
                    \vert
                        v_2
                    \vert
                    + \vert
                        \partial v_2
                    \vert
                ),\\
                %%%%%%%%%%%%%%%%%%%%%%%%%%%%%%%%%%%%%
                \vert
                    \partial (
                        N_{v,2} (v_1, v_2) v_2
                    )
                \vert
                & \leq C
                \vert
                    \partial (
                        N_{v, 1} (v_1) v_2
                    )
                \vert
                + C \vert
                    \partial v_2
                \vert
                \vert
                    v_2
                \vert,\\
                %%%%%%%%%%%%%%%%%%%%%%%%%%%%%%%%%%%%%
                \vert
                    \partial^2 (
                        N_{v, 1} (v_1) v_2
                    )
                \vert
                & \leq C
                (
                    \vert
                        \partial v_1
                    \vert^2
                    \vert
                        v_2
                    \vert
                    + \vert
                        \partial^2 v_1
                    \vert
                    \vert
                        v_2
                    \vert
                    + \vert
                        \partial v_1
                    \vert
                    \vert
                        \partial v_2
                    \vert
                    + \vert
                        v_1
                    \vert
                    \vert
                        \partial^2 v_2
                    \vert
                ),\\
                %%%%%%%%%%%%%%%%%%%%%%%%%%%%%%%%%%%%%
                \vert
                    \partial^2 (
                        N_{v,2} (v_1) v_2
                    )
                \vert
                & \leq C
                \vert
                    \partial^2 (
                        N_{v, 1} (v_1) v_2
                    )
                \vert
                + C \vert
                    \partial v_2
                \vert^2
                + C \vert
                    v_2
                \vert
                \vert
                    \partial^2 v_2
                \vert,
            \end{split}
        \end{align}
        for $\partial \in \{\partial_t, \partial_x\}$, positive smooth functions $v_j$ $(j=1,2)$, and a constant $C>0$.
        %Moreover, the same estimates holds for $N_{\sigma, j}$ $(j=1,2)$.
        \item There exists a constant $C>0$ such that
        \begin{align}\label{eq_L2_estimates_for_Nvj}
            \begin{split}
                & \Vert
                    N_{v, 1} (v_1) v_2
                \Vert_{L^2(I)}
                \leq C
                \Vert
                    v_2
                \Vert_{L^2(I)},\\
                & \Vert
                    N_{v, 2} (v_1, v_2) v_2
                \Vert_{L^2(I)}
                \leq C (
                    \Vert
                        v_2
                    \Vert_{L^2(I)}
                    + \Vert
                        v_2
                    \Vert_{L^2(I)}
                    \Vert
                        v_2
                    \Vert_{H^1(I)}
                ),\\
                %%%%%%%%%%%%%%%%%%%%%%%%%%%%%%%%%%%%%
            \end{split}     
        \end{align}
        for all positive functions $v_j \in H^1(I)$ $(j=1,2)$.
        \item There exists a constant $C>0$ such that
        \begin{align}\label{eq_L2_estimates_for_dNvj}
            \begin{split}
                & \Vert
                    \partial (
                        N_{v, 1} (v_1) v_2
                    )
                \Vert_{L^2(I)}
                \leq C
                (
                    \Vert
                        \partial v_1
                    \Vert_{L^2(I)}
                    \Vert
                        v_2
                    \Vert_{H^1(I)}
                    + \Vert
                        \partial v_2
                    \Vert_{L^2(I)}
                ),\\
                %%%%%%%%%%%%%%%%%%%%%%%%%%%%%%%%%%%%%
                & \Vert
                    \partial (
                        N_{v,2} (v_1, v_2) v_2
                    )
                \Vert_{L^2(I)}\\
                & \leq C (
                    \Vert
                        \partial (
                            N_{v, 1} (v_1) v_2
                        )
                    \Vert_{L^2(I)}
                    + \Vert
                            \partial v_2
                        \Vert_{L^2(I)}
                        \Vert
                            v_2
                        \Vert_{H^1(I)}
                ),
                %%%%%%%%%%%%%%%%%%%%%%%%%%%%%%%%%%%%%
            \end{split}     
        \end{align}
        for $a.a. \, t>0$, $\partial \in \{\partial_t, \partial_x\}$, and all positive functions $v_j \in L^2_t H^1_x(Q_T)$ $(j=1,2)$ satisfying $\partial v_j \in L^2_t H^1_x(Q_T)$.
        \item There exists a constant $C>0$ such that
        \begin{align}\label{eq_L2_estimates_for_ddNvj}
            \begin{split}
                & \Vert
                    \partial^2 (
                        N_{v, 1} (v_1) v_2
                    )
                \Vert_{L^2(I)}\\
                & \leq C
                (
                    \Vert
                        \partial v_1
                    \Vert_{L^2(I)}
                    \Vert
                        \partial v_1
                    \Vert_{H^1(I)}
                    \Vert
                        v_2
                    \Vert_{H^1(I)}
                    + \Vert
                        \partial^2 v_1
                    \Vert_{L^2(I)}
                    \Vert
                        v_2
                    \Vert_{H^1(I)}\\
                & \quad\quad
                    + \Vert
                        \partial v_1
                    \Vert_{L^2(I)}
                    \Vert
                        \partial v_2
                    \Vert_{H^1(I)}
                    + \Vert
                        v_1
                    \Vert_{H^1(I)}
                    \Vert
                        \partial^2 v_2
                    \Vert_{L^2(I)}
                ),\\
                %%%%%%%%%%%%%%%%%%%%%%%%%%%%%%%%%%%%%
                & \Vert
                    \partial^2 (
                        N_{v,2} (v_1) v_2
                    )
                \Vert_{L^2(I)}\\
                & \leq C (
                \Vert
                    \partial^2 (
                        N_{v, 1} (v_1) v_2
                    )
                \Vert_{L^2(I)}\\
                & \quad\quad + \Vert
                        \partial v_2
                    \Vert_{L^2(I)}
                    \Vert
                        \partial v_2
                    \Vert_{H^1(I)}
                    + \Vert
                        v_2
                    \Vert_{H^1(I)}
                    \Vert
                        \partial^2 v_2
                    \Vert_{L^2(I)},
            \end{split}
        \end{align}
        for $a.a. \, t>0$, $\partial \in \{\partial_t, \partial_x\}$, and all positive functions $v_j \in L^2_t H^1_x(Q_T)$ $(j=1,2)$ satisfying $\partial v_j \in L^2_t H^1_x(Q_T)$ and $\partial^2 v_j \in L^2_t L^2_x(Q_T)$.
    \end{enumerate}
\end{proposition}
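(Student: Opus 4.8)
The plan is to derive the pointwise bounds \eqref{eq_estimates_for_Nvj_dNvj_ddNvj} in part (i) by the Leibniz and chain rules, using that $N_{v,1}\in BC^\infty(\Real_+)$ (so that $N_{v,1}$ and all of its derivatives are bounded, and moreover $N_{v,1}(s)=\int_0^s N_{v,1}'(\tau)\,d\tau\le\|N_{v,1}'\|_{L^\infty}\,s$ for $s\ge 0$ because $N_{v,1}(0)=0$), together with the structural identity $N_{v,2}(v_1,v_2)v_2=\alpha_1 N_{v,1}(v_1)v_2-\alpha_2 v_2^2$ from Assumption A; and then to obtain parts (ii)--(iv) simply by taking $L^2(I)$ norms of those pointwise bounds and estimating the resulting products with one-dimensional Sobolev tools. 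Since the arguments for $\partial=\partial_t$ and $\partial=\partial_x$ are identical, it suffices to treat a generic first-order derivative $\partial$.

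For part (i): boundedness of $N_{v,1}$ gives $|N_{v,1}(v_1)v_2|\le C|v_2|$, and the identity for $N_{v,2}$ together with $|v_2^2|=|v_2|^2$ gives the second line. Differentiating once, $\partial\bigl(N_{v,1}(v_1)v_2\bigr)=N_{v,1}'(v_1)(\partial v_1)v_2+N_{v,1}(v_1)\partial v_2$, and boundedness of $N_{v,1}'$ and $N_{v,1}$ yields the third line; the fourth line follows from $\partial\bigl(N_{v,2}(v_1,v_2)v_2\bigr)=\alpha_1\partial\bigl(N_{v,1}(v_1)v_2\bigr)-2\alpha_2 v_2\partial v_2$. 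Differentiating once more, $\partial^2\bigl(N_{v,1}(v_1)v_2\bigr)=N_{v,1}''(v_1)(\partial v_1)^2 v_2+N_{v,1}'(v_1)(\partial^2 v_1)v_2+2N_{v,1}'(v_1)(\partial v_1)(\partial v_2)+N_{v,1}(v_1)\partial^2 v_2$, and bounding $N_{v,1}''$, $N_{v,1}'$ by constants and $N_{v,1}(v_1)\le C|v_1|$ in the last term gives the fifth line; the sixth line follows from $\partial^2(v_2^2)=2(\partial v_2)^2+2v_2\partial^2 v_2$.

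For parts (ii)--(iv): I would take the $L^2(I)$ norm of each term appearing in \eqref{eq_estimates_for_Nvj_dNvj_ddNvj} and use H\"older's inequality on $I$, the Sobolev embedding $H^1(I)\hookrightarrow L^\infty(I)$, and the interpolation inequality $\|w\|_{L^4(I)}\le C\|w\|_{L^2(I)}^{1/2}\|w\|_{H^1(I)}^{1/2}$, equivalently $\|w^2\|_{L^2(I)}\le C\|w\|_{L^2(I)}\|w\|_{H^1(I)}$. Thus $\|v_2^2\|_{L^2}\le\|v_2\|_{L^\infty}\|v_2\|_{L^2}\le C\|v_2\|_{H^1}\|v_2\|_{L^2}$ gives \eqref{eq_L2_estimates_for_Nvj}; for \eqref{eq_L2_estimates_for_dNvj} one estimates $\||\partial v_1|\,|v_2|\|_{L^2}\le\|\partial v_1\|_{L^2}\|v_2\|_{L^\infty}\le C\|\partial v_1\|_{L^2}\|v_2\|_{H^1}$ and $\||v_2|\,|\partial v_2|\|_{L^2}\le\|\partial v_2\|_{L^2}\|v_2\|_{L^\infty}$; and for \eqref{eq_L2_estimates_for_ddNvj} the only term quadratic in a derivative, $|\partial v_1|^2|v_2|$, is handled by $\|(\partial v_1)^2\|_{L^2}\|v_2\|_{L^\infty}\le C\|\partial v_1\|_{L^2}\|\partial v_1\|_{H^1}\|v_2\|_{H^1}$, while $|\partial^2 v_1||v_2|$, $|\partial v_1||\partial v_2|$ and $|v_1||\partial^2 v_2|$ are each a product of one $L^2$-factor and one $L^\infty$-factor controlled by $H^1(I)\hookrightarrow L^\infty(I)$.

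There is no genuine obstacle here: since $I$ is a bounded one-dimensional interval and $N_{v,1}$ together with all its derivatives is globally bounded, every product occurring is controlled by the stated Sobolev norms. The only points requiring care are bookkeeping --- matching the precise Sobolev exponents on the right-hand sides of \eqref{eq_L2_estimates_for_dNvj} and \eqref{eq_L2_estimates_for_ddNvj}, and consistently exploiting the linear-in-$v_2$ structure of $N_{v,2}$ so that its estimates reduce to those for $N_{v,1}$ plus the elementary bounds for $v_2^2$, $v_2\partial v_2$, and $(\partial v_2)^2+v_2\partial^2 v_2$.
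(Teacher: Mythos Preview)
Your proposal is correct and follows essentially the same approach as the paper: the paper's own proof consists of the two sentences ``The estimates in \eqref{eq_estimates_for_Nvj_dNvj_ddNvj} are due to direct calculations. The estimates (ii)--(iv) are from \eqref{eq_estimates_for_Nvj_dNvj_ddNvj} and the embedding $H^1(I)\hookrightarrow L^\infty(I)$,'' and your write-up is precisely a fleshed-out version of those two sentences, including the key observation $N_{v,1}(v_1)\le C|v_1|$ needed for the $|v_1|\,|\partial^2 v_2|$ term.
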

\begin{remark}
    The same estimates as (\ref{eq_estimates_for_Nvj_dNvj_ddNvj}) hold for $N_{\sigma, 1}(\sigma_1) \sigma_2$ and $N_{\sigma, 2}(\sigma_1, \sigma_2) \sigma_2$.
\end{remark}
\begin{proof}
    %The bounds (\ref{eq_pointwis_estimates_for_nonlinear_terms}) are from the definition of the nonlinear terms.
    The estimates in (\ref{eq_estimates_for_Nvj_dNvj_ddNvj}) are due to direct calculations.
    The estimates (ii)-(iv) are from (\ref{eq_estimates_for_Nvj_dNvj_ddNvj}) and the embedding $H^1(I) \hookrightarrow L^\infty(I)$.
\end{proof}

\begin{proposition} \label{prop_estimates_for_difference_between_nonlinear_terms}
    \begin{enumerate}[label=(\roman*)]
        \item There exists a constant $C>0$ such that
        \begin{align*}
            & \Vert
                N_{v, 1}(\varphi_1) \psi_1
                - N_{v, 1}(\varphi_2) \psi_2
            \Vert_{L^2(I)}\\
            & \leq (
                1
                + \Vert
                    \psi_1
                \Vert_{H^1(I)}
            )
            (
                \Vert
                    \varphi_1
                    - \varphi_2
                \Vert_{L^2(I)}
                + \Vert
                    \psi_1
                    - \psi_2
                \Vert_{L^2(I)}
            )
        \end{align*}
        for all positive functions $\varphi_j, \psi_j \in H^1(I)$ $(j=1,2)$.
        \item There exists a constant $C>0$ such that
        \begin{align*}
            & \Vert
                N_{v, 2}(\varphi_1) \psi_1
                - N_{v, 2}(\varphi_2) \psi_2
            \Vert_{L^2(I)}\\
            & \leq C
            (
                1
                + \Vert
                    \psi_1
                \Vert_{H^1(I)}
                + \Vert
                    \psi_2
                \Vert_{H^1(I)}
            )
            (
                \Vert
                    \varphi_1
                    - \varphi_2
                \Vert_{L^2(I)}
                + \Vert
                    \psi_1
                    - \psi_2
                \Vert_{L^2(I)}
            )
        \end{align*}
        for all positive functions $\varphi_j, \psi_j \in H^1(I)$ $(j=1,2)$.
        \item There exists a constant $C>0$ such that
        \begin{align*}
            \begin{split}
                & \Vert
                    \partial (
                        N_{v, 1} (\varphi_1) \psi_1
                        - N_{v, 1} (\varphi_2) \psi_2
                    )
                \Vert_{L^2(I)}\\
                &\leq C \left(
                    \sum_{j=1,2}
                        \Vert
                            \partial \varphi_j
                        \Vert_{L^2(I)}
                    \Vert
                        \psi_1
                    \Vert_{H^1(I)}
                    + \Vert
                        \partial \psi_1
                    \Vert_{L^2(I)}
                \right)
                \Vert
                    \varphi_1
                    - \varphi_2
                \Vert_{H^1(I)}\\
                & + C \Vert
                    \psi_1
                \Vert_{H^1(I)}
                \Vert
                    \partial \varphi_1
                    - \partial \varphi_2
                \Vert_{L^2(I)}\\
                & + C \Vert
                    \partial \varphi_2
                \Vert_{L^2(I)}
                \Vert
                    \psi_1
                    -  \psi_2
                \Vert_{H^1(I)}\\
                & + C
                \Vert
                    \partial \psi_1
                    - \partial \psi_2
                \Vert_{L^2(I)}
            \end{split}
        \end{align*}
        for $a.a. \, t>0$, $\partial \in \{\partial_t, \partial_x\}$, and all positive functions $\varphi_j, \psi_j \in L^2_t H^1_x(Q_T)$ $(j=1,2)$ satisfying $\partial \varphi_j, \partial \psi_j \in L^2_t H^1_x(Q_T)$.
        \item There exists a constant $C>0$ such that
        \begin{align*}
            \begin{split}
                & \Vert
                    \partial (
                        N_{v, 2} (\varphi_1, \psi_1) \psi_1
                        - N_{v, 2} (\varphi_2, \psi_2) \psi_2
                    )
                \Vert_{L^2(I)}\\
                & \leq \Vert
                    \partial (
                        N_{v, 1} (\varphi_1) \psi_1
                        - N_{v, 1} (\varphi_2) \psi_2
                    )
                \Vert_{L^2(I)}\\
                & + C \sum_{j=1,2} \left(
                    \Vert
                        \partial \psi_j
                    \Vert_{L^2(I)}
                    \Vert
                        \psi_1
                        - \psi_2
                    \Vert_{H^1(I)}
                \right.\\
                & \left.
                    \quad\quad\quad\quad\quad
                    + \Vert
                            \psi_j
                        \Vert_{H^1(I)}
                    \Vert
                        \partial \psi_1
                        - \partial \psi_2
                    \Vert_{L^2(I)}
                \right)
            \end{split}
        \end{align*}
        for $a.a. \, t>0$ and all positive functions $\varphi_j, \psi_j \in L^2_t H^1_x(Q_T)$ $(j=1,2)$ satisfying $\partial \varphi_j, \partial \psi_j \in L^2_t H^1_x(Q_T)$.
        \item There exists a constant $C>0$ such that
        \begin{align*}
            \begin{split}
                & \Vert
                    \partial^2_x (
                        N_{v, 1} (\varphi_1) \psi_1
                        - N_{v, 1} (\varphi_2) \psi_2
                    )
                \Vert_{L^2(I)}\\
                & \leq C
                \sum_{j,k=1,2}
                    (
                        \Vert
                            \varphi_j
                        \Vert_{H^1(I)}
                        \Vert
                            \varphi_j
                        \Vert_{H^2(I)}
                        \Vert
                            \psi_k
                        \Vert_{H^1(I)}
                        + \Vert
                            \varphi_j
                        \Vert_{H^2(I)}
                        \Vert
                            \psi_k
                        \Vert_{H^1(I)}\\
                    & \quad\quad\quad
                        + \Vert
                            \varphi_j
                        \Vert_{H^1(I)}
                        \Vert
                            \psi_k
                        \Vert_{H^2(I)}
                        + \Vert
                            \psi_k
                        \Vert_{H^2(I)}
                    )
                \Vert
                    \varphi_1
                    -  \varphi_2
                \Vert_{H^1(I)}\\
                & + \sum_{j=1,2}
                    (
                        1
                        + \Vert
                            \varphi_j
                        \Vert_{H^1(I)}
                    )
                    \Vert
                        \varphi_j
                    \Vert_{H^2(I)}
                \Vert
                    \psi_1
                    - \psi_2
                \Vert_{H^1(I)}\\
                & + C
                \Vert
                    \psi_1
                \Vert_{H^1(I)}
                \Vert
                    \varphi_1
                    - \varphi_2
                \Vert_{H^2(I)}
                + C
                \Vert
                    \psi_1
                    -  \psi_2
                \Vert_{H^2(I)}
            \end{split}
        \end{align*}
        for $a.a. \, t>0$, $\partial \in \{\partial_t, \partial_x\}$, and all positive functions $\varphi_j, \psi_j \in L^2_t H^1_x(Q_T)$ $(j=1,2)$ satisfying $\partial \varphi_j, \psi_j \in L^2_t H^1_x(Q_T)$ and $\partial^2 \varphi_j, \partial^2 \psi_j \in L^2_t L^2_x(Q_T)$.
        \item There exists a constant $C>0$ such that
        \begin{align} \label{eq_difference_d2_Nv2_psi1_Nv1_psi2}
            \begin{split}
                & \Vert
                    \partial_x^2 (
                        N_{v, 2} (\varphi_1, \psi_1) \psi_1
                        - N_{v, 2} (\varphi_2, \psi_2) \psi_2
                    )
                \Vert_{L^2(I)}\\
                & \leq \Vert
                    \partial_x^2 (
                        N_{v, 1} (\varphi_1) \psi_1
                        - N_{v, 1} (\varphi_2) \psi_2
                    )
                \Vert_{L^2(I)}\\
                & + C \sum_{j=1,2} \left(
                    \Vert
                        \partial \psi_j
                    \Vert_{H^2(I)}
                    \Vert
                        \psi_1
                        - \psi_2
                    \Vert_{H^1(I)}
                \right.\\
                & \left.
                    \quad\quad\quad\quad\quad
                    + \Vert
                            \psi_j
                        \Vert_{H^1(I)}
                    \Vert
                        \partial \psi_1
                        - \partial \psi_2
                    \Vert_{H^2(I)}
                \right)
            \end{split}
        \end{align} 
        for $a.a. \, t>0$ and all positive functions $\varphi_j, \psi_j \in L^2_t H^1_x(Q_T)$ $(j=1,2)$ satisfying $\partial \varphi_j, \psi_j \in L^2_t H^1_x(Q_T)$ and $\partial^2 \varphi_j, \partial^2 \psi_j \in L^2_t L^2_x(Q_T)$.
    \end{enumerate}
\end{proposition}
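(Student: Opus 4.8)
The plan is to derive all six estimates from a single mechanism: a telescoping decomposition of each difference, the mean value theorem applied to $N_{v,1}$ and its derivatives, Leibniz' rule whenever a spatial or temporal derivative occurs, and the Sobolev embedding $H^1(I)\hookrightarrow L^\infty(I)$ (valid since $I$ is a bounded one-dimensional interval, which moreover makes $H^1(I)$ a Banach algebra) to close the resulting products. The starting point is Assumption A: $N_{v,1}\in BC^\infty(\Real_+)$, so $N_{v,1}$ and all of its derivatives are bounded on $\Real_+$; by the chain rule the same is true for the compositions $N_{v,1}'(\varphi)$, $N_{v,1}''(\varphi)$, \dots\ with any positive function $\varphi$, and in particular $|N_{v,1}^{(k)}(\varphi_1)-N_{v,1}^{(k)}(\varphi_2)|\le C|\varphi_1-\varphi_2|$ pointwise for each $k$.

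For (i) I write
\begin{align*}
  N_{v,1}(\varphi_1)\psi_1 - N_{v,1}(\varphi_2)\psi_2
  = \bigl(N_{v,1}(\varphi_1)-N_{v,1}(\varphi_2)\bigr)\psi_1
  + N_{v,1}(\varphi_2)\,(\psi_1-\psi_2),
\end{align*}
bound the first bracket by $C|\varphi_1-\varphi_2|$ and $N_{v,1}(\varphi_2)$ by $C$, take $L^2(I)$ norms, and estimate $\|\psi_1\|_{L^\infty(I)}\le C\|\psi_1\|_{H^1(I)}$ on the remaining factor. For (ii) the explicit form $N_{v,2}(\varphi,\psi)\psi=\alpha_1 N_{v,1}(\varphi)\psi-\alpha_2\psi^2$ reduces the difference to the term already treated plus $-\alpha_2(\psi_1^2-\psi_2^2)=-\alpha_2(\psi_1+\psi_2)(\psi_1-\psi_2)$, which contributes exactly the factor $(1+\|\psi_1\|_{H^1(I)}+\|\psi_2\|_{H^1(I)})$. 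The same reduction gives (iv) from (iii) and (vi) from (v), so the core of the work lies in the $N_{v,1}$-estimates (iii) and (v).

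For (iii) and (v) I apply $\partial\in\{\partial_t,\partial_x\}$, respectively $\partial_x^2$, to the telescoping identity and distribute by Leibniz. Each resulting summand is a product of (a) a bounded function, namely $N_{v,1}$ or a composed derivative of it, possibly itself a difference such as $N_{v,1}'(\varphi_1)-N_{v,1}'(\varphi_2)$ which is $O(|\varphi_1-\varphi_2|)$; (b) derivatives up to the relevant order of $\varphi_j$ or $\psi_j$; and (c) a difference $\varphi_1-\varphi_2$, $\psi_1-\psi_2$, or a derivative thereof. One places the $L^2(I)$ norm on the single factor carrying the top-order derivative and absorbs all remaining factors through $H^1(I)\hookrightarrow L^\infty(I)$. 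For instance, in (v) the term coming from $N_{v,1}''(\varphi)(\partial_x\varphi)^2\psi$ is split as
\begin{align*}
  & \bigl(N_{v,1}''(\varphi_1)-N_{v,1}''(\varphi_2)\bigr)(\partial_x\varphi_1)^2\psi_1
  + N_{v,1}''(\varphi_2)\bigl((\partial_x\varphi_1)^2-(\partial_x\varphi_2)^2\bigr)\psi_1 \\
  & \quad + N_{v,1}''(\varphi_2)(\partial_x\varphi_2)^2(\psi_1-\psi_2),
\end{align*}
the middle term rewritten with $(\partial_x\varphi_1+\partial_x\varphi_2)(\partial_x\varphi_1-\partial_x\varphi_2)$; similarly the contribution from $N_{v,1}'(\varphi)\partial_x^2\varphi\,\psi$ splits into a $\varphi$-difference, a $\partial_x^2\varphi$-difference, and a $\psi$-difference part. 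Matching each term to the right-hand sides of (iii) and (v) is then bookkeeping.

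I expect the only genuine difficulty to be the $H^2$-level estimate (v): differentiating the composition $N_{v,1}(\varphi)$ twice produces both $N_{v,1}'(\varphi)\partial_x^2\varphi$ and $N_{v,1}''(\varphi)(\partial_x\varphi)^2$, so after multiplication by $\psi$ and subtraction one must control triple products of derivatives, each of which has to be split so that exactly one factor keeps its $L^2(I)$ norm while the others are moved to $L^\infty(I)$ via $H^1(I)$ — with the additional care that the norms landing on the right are never stronger than $\|\varphi_j\|_{H^2(I)}$, $\|\psi_j\|_{H^2(I)}$ and $H^1(I)$-norms on the differences. This is precisely where the one-dimensionality of $I$ is used. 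Finally, (iv) and (vi) follow from (iii) and (v) via the quadratic identity for $\psi_1^2-\psi_2^2$ together with the same embedding, and the parallel statements for $N_{\sigma,1},N_{\sigma,2}$ are obtained verbatim since these functions satisfy the identical structural hypotheses in Assumption A.
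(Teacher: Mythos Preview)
Your proposal is correct and follows essentially the same route as the paper: telescoping decomposition, Lipschitz bounds on $N_{v,1}$ and its derivatives via the mean value theorem, Leibniz rule, and the embedding $H^1(I)\hookrightarrow L^\infty(I)$. The only cosmetic difference is that the paper packages the mean value theorem into an explicit integral representation
\[
N_{v,1}(\varphi_1)-N_{v,1}(\varphi_2)
=\Bigl(\int_0^1 N_{v,1}'(\theta\varphi_1+(1-\theta)\varphi_2)\,d\theta\Bigr)(\varphi_1-\varphi_2)
=:\tilde N_{v,1}(\varphi_1,\varphi_2)\,(\varphi_1-\varphi_2),
\]
and then differentiates the product $\tilde N_{v,1}(\varphi_1,\varphi_2)\psi_1\cdot(\varphi_1-\varphi_2)$ directly, whereas you first expand $\partial_x^k\bigl(N_{v,1}(\varphi_j)\psi_j\bigr)$ and then telescope each resulting term. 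Both organizations give the same bounds; the paper's device just avoids having to re-telescope $N_{v,1}'(\varphi_1)-N_{v,1}'(\varphi_2)$, $N_{v,1}''(\varphi_1)-N_{v,1}''(\varphi_2)$, etc.\ at each stage.
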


\begin{proof}
    Since
    \begin{align*}
        N_{v,1}(\varphi_1) \psi_1
        - N_{v,1}(\varphi_2) \psi_2
        = (
            N_{v,1}(\varphi_1)
            - N_{v,1}(\varphi_2)
        ) \psi_1
        + N_{v,1}(\varphi_2)
        (
            \psi_1
            - \psi_2
        ),
    \end{align*}
    combining with the embedding $H^1(I) \hookrightarrow L^\infty(I)$, we deduce that
    \begin{align*}
        & \Vert
            N_{v, 1} (\varphi_1) \psi_1
            - N_{v, 1} (\varphi_2) \psi_2
        \Vert_{L^2(I)}\\
        & \leq C
        (
            1
            + \Vert
                \psi_1
            \Vert_{H^1(I)}
        )
        (
            \Vert
                \varphi_1
                - \varphi_2
            \Vert_{L^2(I)}
            + \Vert
                \psi_1
                - \psi_2
            \Vert_{L^2(I)}
        ).
    \end{align*}
    We have proved (i).
    Similarly, we find from the definition of $N_{v, 2}$ and the formula
    \begin{align*}
        \psi_1^2 - \psi_2^2
        = (
            \psi_1
            + \psi_2
        )
        (
            \psi_1
            - \psi_2
        )
    \end{align*}
    that
    \begin{align*}
        & \Vert
            N_{v, 2} (\varphi_1, \psi_1) \psi_1
            - N_{v, 2} (\varphi_2, \psi_2) \psi_2
        \Vert_{L^2(I)}\\
        & \leq \Vert
            N_{v, 1} (\varphi_1) \psi_1
            - N_{v, 1} (\varphi_2) \psi_2
        \Vert_{L^2(I)}
        + C \Vert
            \psi_1^2
            - \psi_2^2
        \Vert_{L^2(I)}\\
        & \leq C
        (
            1
            + \Vert
                \psi_1
            \Vert_{H^1(I)}
            + \Vert
                \psi_2
            \Vert_{H^1(I)}
        )
        (
            \Vert
                \varphi_1
                - \varphi_2
            \Vert_{L^2(I)}
            + \Vert
                \psi_1
                - \psi_2
            \Vert_{L^2(I)}
        ).
    \end{align*}
    We have proved (ii).
    We invoke the formula
    \begin{align} \label{eq_diff_Nv1_phi1_phi2}
        \begin{split}
            N_{v,1}(\varphi_1)
            - N_{v,1}(\varphi_2)
            & = \int_0^1
                N_{v,1}^\prime(\varphi_\theta)
            d\theta
            (
                \varphi_1
                - \varphi_2
            )\\
            & =: \tilde{N}_{v,1}(\varphi_1, \varphi_2)(
                \varphi_1
                - \varphi_2
            ),\\
            \varphi_\theta
            & := \theta \varphi_1 + (1-\theta)\varphi_2.
        \end{split}
    \end{align}
    %%%%%%%%%%%%%%%%%%%%%%%%%%%%%%%%%%%%%%%%%%%%%%%
    Using this formula, we deduce the estimates:
    \begin{itemize}
        \item We see find from the above formula that
        \begin{align*}
            \begin{split}
                & \vert
                    \partial (
                        N_{v, 1} (\varphi_1) \psi_1
                        - N_{v, 1} (\varphi_2) \psi_2
                    )
                \vert\\
                &\leq \vert
                    \partial [
                        (
                            N_{v, 1} (\varphi_1)
                            - N_{v, 1} (\varphi_2)
                        )\psi_1
                    ]
                \vert
                + \vert
                    \partial [
                        N_{v, 1} (\varphi_2) (
                            \psi_1
                            -  \psi_2
                        )
                    ]
                \vert\\
                &\leq \vert
                    \partial (
                        \tilde{N}_{v,1}(\varphi_1, \varphi_2) \psi_1
                    )
                \vert
                \vert
                    \varphi_1
                    - \varphi_2
                \vert
                + \vert
                    \tilde{N}_{v,1}(\varphi_1, \varphi_2) \psi_1
                \vert
                \vert
                    \partial \varphi_1
                    - \partial \varphi_2
                \vert\\
                & + \vert
                    \partial N_{v, 1}(\varphi_2)
                \vert
                \vert
                    \psi_1
                    -  \psi_2
                \vert
                + \vert
                    N_{v, 1}(\varphi_2)
                \vert
                \vert
                    \partial \psi_1
                    - \partial \psi_2
                \vert.
            \end{split}
        \end{align*}
        Therefore, we have
        \begin{align*}
            \begin{split}
                & \Vert
                    \partial (
                        N_{v, 1} (\varphi_1) \psi_1
                        - N_{v, 1} (\varphi_2) \psi_2
                    )
                \Vert_{L^2(I)}\\
                &\leq C \left(
                    \sum_{j=1,2}
                        \Vert
                            \partial \varphi_j
                        \Vert_{L^2(I)}
                    \Vert
                        \psi_1
                    \Vert_{H^1(I)}
                    + \Vert
                        \partial \psi_1
                    \Vert_{L^2(I)}
                \right)
                \Vert
                    \varphi_1
                    - \varphi_2
                \Vert_{H^1(I)}\\
                & + C \Vert
                    \psi_1
                \Vert_{H^1(I)}
                \Vert
                    \partial \varphi_1
                    - \partial \varphi_2
                \Vert_{L^2(I)}\\
                & + C \Vert
                    \partial \varphi_2
                \Vert_{L^2(I)}
                \Vert
                    \psi_1
                    -  \psi_2
                \Vert_{H^1(I)}\\
                & + C
                \Vert
                    \partial \psi_1
                    - \partial \psi_2
                \Vert_{L^2(I)}.
            \end{split}
        \end{align*}
        \item We see from the definition of $N_{v,2}$ that
        \begin{align*}
            \begin{split}
                & \Vert
                    \partial (
                        N_{v, 2} (\varphi_1, \psi_1) \psi_1
                        - N_{v, 2} (\varphi_2, \psi_2) \psi_2
                    )
                \Vert_{L^2(I)}\\
                & \leq \Vert
                    \partial (
                        N_{v, 1} (\varphi_1) \psi_1
                        - N_{v, 1} (\varphi_2) \psi_2
                    )
                \Vert_{L^2(I)}\\
                & + C \sum_{j=1,2} \left(
                    \Vert
                        \partial \psi_j
                    \Vert_{L^2(I)}
                    \Vert
                        \psi_1
                        - \psi_2
                    \Vert_{H^1(I)}
                \right.\\
                & \left.
                    \quad\quad\quad\quad\quad
                    + \Vert
                            \psi_j
                        \Vert_{H^1(I)}
                    \Vert
                        \partial \psi_1
                        - \partial \psi_2
                    \Vert_{L^2(I)}
                \right).
            \end{split}
        \end{align*}
        \item We find from (\ref{eq_diff_Nv1_phi1_phi2}) that
        \begin{align*}
            \begin{split}
                & \vert
                    \partial^2 (
                        N_{v, 1} (\varphi_1) \psi_1
                        - N_{v, 1} (\varphi_2) \psi_2
                    )
                \vert\\
                &\leq \vert
                    \partial^2 (
                        \tilde{N}_{v,1}(\varphi_1, \varphi_2) \psi_1
                    )
                \vert
                \vert
                    \varphi_1
                    - \varphi_2
                \vert\\
                & + \vert
                    \partial (
                        \tilde{N}_{v,1}(\varphi_1, \varphi_2) \psi_1
                    )
                \vert
                \vert
                    \partial \varphi_1
                    - \partial \varphi_2
                \vert\\
                & + \vert
                    \tilde{N}_{v,1}(\varphi_1, \varphi_2) \psi_1
                \vert
                \vert
                    \partial^2 \varphi_1
                    - \partial^2 \varphi_2
                \vert\\
                %%%%%%%%%%%%%%%%%%%%%%%%%%%%%%%%%%%%%%%%%%%
                & + \vert
                    \partial^2 N_{v, 1}(\varphi_2)
                \vert
                \vert
                    \psi_1
                    - \psi_2
                \vert
                + \vert
                    \partial N_{v, 1}(\varphi_2)
                \vert
                \vert
                    \partial \psi_1
                    - \partial \psi_2
                \vert\\
                & + \vert
                    N_{v, 1}(\varphi_2)
                \vert
                \vert
                    \partial^2 \psi_1
                    - \partial^2 \psi_2
                \vert\\
                & =: I_1 + I_2 + I_3 + I_4 + I_5 + I_6.
            \end{split}
        \end{align*}
        \underline{Estimate for $I_1$:}
        We see from the definition of $\tilde{N}_{v,1}$ that
        \begin{align*}
            & \vert
                \partial^2 (
                    \tilde{N}_{v,1}(\varphi_1, \varphi_2) \psi_1
                )
            \vert\\
            & \leq
            C \vert
                \nabla^2 \tilde{N}_{v,1}(\varphi_1, \varphi_2)
            \vert
            \sum_{j=1,2}\vert
                    \partial \varphi_j
                \vert^2
            \vert
                \psi_1
            \vert
            + C \vert
                \nabla \tilde{N}_{v,1}(\varphi_1, \varphi_2)
            \vert
            \sum_{j=1,2} \vert
                    \partial^2 \varphi_j
                \vert
            \vert
                \psi_1
            \vert\\
            & + C \vert
                \tilde{N}_{v,1}(\varphi_1, \varphi_2)
            \vert
            \sum_{j=1,2}\vert
                    \partial \varphi_j
                \vert
            \vert
                \partial \psi_1
            \vert
            + C \vert
                \tilde{N}_{v,1}(\varphi_1, \varphi_2)
            \vert
            \sum_{j=1,2} \vert
                    \varphi_j
                \vert
            \vert
                \partial^2 \psi_1
            \vert.
        \end{align*}
        Therefore, we have
        \begin{align*}
            & \Vert
               I_1
            \Vert_{L^2(I)}\\
            & \leq C \Vert
                \partial^2 (
                    \tilde{N}_{v,1}(\varphi_1, \varphi_2) \psi_1
                )
            \Vert_{L^2(I)}
            \Vert
                \varphi_1
                - \varphi_2
            \Vert_{H^1(I)}\\
            & \leq
            C \sum_{j=1,2} \left(
                \Vert
                    \partial \varphi_j
                \Vert_{L^2(I)}
                \Vert
                    \partial \varphi_j
                \Vert_{H^1(I)}
                \Vert
                    \psi_1
                \Vert_{H^1(I)}
                + \Vert
                    \partial^2 \varphi_j
                \Vert_{L^2(I)}
                \Vert
                    \psi_1
                \Vert_{H^1(I)}
            \right.\\
            & \left.
                \quad\quad\quad
                + \Vert
                    \partial \varphi_j
                \Vert_{H^1(I)}
                \Vert
                    \partial \psi_1
                \Vert_{L^2(I)}
                + \Vert
                    \varphi_j
                \Vert_{H^1(I)}
                \Vert
                    \partial^2 \psi_1
                \Vert_{L^2(I)}
            \right)
            \Vert
                \varphi_1
                - \varphi_2
            \Vert_{H^1(I)}.
        \end{align*}

        \underline{Estimates for $I_2$ and $I_4$:}
        We see from the definition of $\tilde{N}_{v,1}$ that
        \begin{align*}
            & \Vert
               I_2
            \Vert_{L^2(I)}
            + \Vert
               I_4
            \Vert_{L^2(I)}\\
            & \leq
            C \left(
                \sum_{j=1,2}
                    \Vert
                        \partial \varphi_j
                    \Vert_{H^1(I)}
                    \Vert
                        \psi_1
                    \Vert_{H^1(I)}
                + \Vert
                    \partial \psi_1
                \Vert_{H^1(I)}
            \right)
            \Vert
                \partial \varphi_1
                - \partial \varphi_2
            \Vert_{L^2(I)}\\
            & + C \Vert
                \partial \varphi_2
            \Vert_{H^1(I)}
            \Vert
                \partial \psi_1
                - \partial \psi_2
            \Vert_{L^2(I)}.
        \end{align*}

        \underline{Estimates for $I_3$ and $I_6$:}
        Applying the embedding $H^1(I) \hookrightarrow L^\infty(I)$, we have
        \begin{align*}
            & \Vert
               I_3
            \Vert_{L^2(I)}
            + \Vert
               I_6
            \Vert_{L^2(I)}\\
            & \leq
            C (
                \Vert
                    \psi_1
                \Vert_{H^1(I)}
                \Vert
                    \partial^2 \varphi_1
                    - \partial^2 \varphi_2
                \Vert_{L^2(I)}
                + \Vert
                    \partial^2 \psi_1
                    - \partial^2 \psi_2
                \Vert_{L^2(I)}
            ).
        \end{align*}

        \underline{Estimates for $I_4$:}
        Since
        \begin{align*}
            \vert
                \partial^2 N_{v,1}(\varphi_2)
            \vert
            \leq \vert
                N_{v,1}^{\prime\prime}(\varphi_2)
            \vert
            \vert
                \partial \varphi_2
            \vert^2
            +  \vert
                N_{v,1}^\prime(\varphi_2)
            \vert
            \vert
                \partial^2 \varphi_2
            \vert,
        \end{align*}
        we have
        \begin{align*}
            \Vert
               I_4
            \Vert_{L^2(I)}
            \leq
            C (
                \Vert
                    \partial \varphi_2
                \Vert_{L^2(I)}
                \Vert
                    \partial \varphi_2
                \Vert_{H^1(I)}
                + \Vert
                    \partial^2 \varphi_2
                \Vert_{L^2(I)}
            )
            \Vert
                \psi_1
                - \psi_2
            \Vert_{H^1(I)}.
        \end{align*}
        %%%%%%%%%%%%%%%%%%%%%%%%%%%%%%%%%%%%%%%%%%%%%%
        %%%%%%%%%%%%%%%%%%%%%%%%%%%%%%%%%%%%%%%%%%%%%%
        %%%%%%%%%%%%%%%%%%%%%%%%%%%%%%%%%%%%%%%%%%%%%%
        \underline{Summary:}
        We obtain
        \begin{align} \label{eq_estimate_partial2_for_N_v_1}
            \begin{split}
                & \Vert
                    \partial^2_x (
                        N_{v, 1} (\varphi_1) \psi_1
                        - N_{v, 1} (\varphi_2) \psi_2
                    )
                \Vert_{L^2(I)}\\
                & \leq C
                \sum_{j,k=1,2}
                    (
                        \Vert
                            \varphi_j
                        \Vert_{H^1(I)}
                        \Vert
                            \varphi_j
                        \Vert_{H^2(I)}
                        \Vert
                            \psi_k
                        \Vert_{H^1(I)}
                    \\
                & \quad\quad\quad
                        + \Vert
                            \varphi_j
                        \Vert_{H^2(I)}
                        \Vert
                            \psi_k
                        \Vert_{H^1(I)}
                        + \Vert
                            \varphi_j
                        \Vert_{H^1(I)}
                        \Vert
                            \psi_k
                        \Vert_{H^2(I)}\\
                & \quad\quad\quad
                        + \Vert
                            \psi_k
                        \Vert_{H^2(I)}
                    )
                \Vert
                    \varphi_1
                    -  \varphi_2
                \Vert_{H^1(I)}\\
                & + C \sum_{j=1,2}
                    (
                        1
                        + \Vert
                            \varphi_j
                        \Vert_{H^1(I)}
                    )
                    \Vert
                        \varphi_j
                    \Vert_{H^2(I)}
                \Vert
                    \psi_1
                    - \psi_2
                \Vert_{H^1(I)}\\
                & + C
                \Vert
                    \psi_1
                \Vert_{H^1(I)}
                \Vert
                    \varphi_1
                    - \varphi_2
                \Vert_{H^2(I)}
                + C
                \Vert
                    \psi_1
                    -  \psi_2
                \Vert_{H^2(I)}.
            \end{split}
        \end{align}
        \item
        Since
        \begin{align*}
            \partial^2 (
                \psi_1^2
                - \psi_2^2
            )
            & =
            (
                \partial^2 \psi_1
                + \partial^2 \psi_2
            )
            (
                \psi_1
                - \psi_2
            )\\
            & + 2(
                \partial \psi_1
                + \partial \psi_2
            )
            (
                \partial \psi_1
                - \partial \psi_2
            )
            + (
                \psi_1
                + \psi_2
            )
            (
                \partial^2\psi_1
                - \partial^2 \psi_2
            ),
        \end{align*}
        we deduce that
        \begin{align*}
            \begin{split}
                & \Vert
                    \partial^2 (
                        \psi_1^2
                        - \psi_2^2
                    )
                \Vert_{L^2(I)}\\
                & \leq C \sum_{j=1,2} (
                    \Vert
                        \partial \psi_j
                    \Vert_{H^2(I)}
                    \Vert
                        \psi_1
                        - \psi_2
                    \Vert_{H^1(I)}\\
                & \quad\quad\quad\quad\quad
                    + \Vert
                            \psi_j
                        \Vert_{H^1(I)}
                    \Vert
                        \partial \psi_1
                        - \partial \psi_2
                    \Vert_{H^2(I)}
                ).
            \end{split}
        \end{align*}
        This estimate and (\ref{eq_estimate_partial2_for_N_v_1}) imply (\ref{eq_difference_d2_Nv2_psi1_Nv1_psi2}).
    \end{itemize}
\end{proof}

We next estimate the advection terms.
\begin{proposition} \label{prop_nonlinear_estimates_for_cdv}
    \begin{enumerate}[label=(\roman*)]
        \item There exists a constant $C>0$ such that
        \begin{align*}
            & \Vert
                \partial_x^m (
                    c(\tau) \partial_x \varphi
                )
            \Vert_{L^2(I)}
            \leq C \Vert
                \partial_x^{m+1} \varphi
            \Vert_{L^2(I)}
        \end{align*}
        for $\varphi \in H^m(I)$ and $\tau \in L^\infty(0,T)$.
        \item There exists a constant $C>0$ such that
        \begin{align*}
            \Vert
                \partial_t (c(\tau) \partial_x \varphi)
            \Vert_{L^2(I)}
            & \leq C (
                \vert
                    \tau^\prime
                \vert
                \Vert
                    \partial_x \varphi
                \Vert_{L^2(I)}
                + \Vert
                    \partial_t \partial_x \varphi
                \Vert_{L^2(I)}
            )            
        \end{align*}
        for $a.a. \, t>0$ and $\varphi \in H^1_tH^1_x(I)$ and $\tau \in H^1(0,T)$.
        \item
        \begin{align*}
            \begin{split}
                \Vert
                    \partial_t^2 (c(\tau) \varphi)
                \Vert_{L^2(I)}
                & \leq C (
                    \vert
                        \tau^\prime
                    \vert^2
                    \Vert
                        \partial_x \varphi
                    \Vert_{L^2(I)}
                    + \vert
                        \tau^{\prime\prime}
                    \vert
                    \Vert
                        \partial_x \varphi
                    \Vert_{L^2(I)}\\
                & \quad\quad\quad
                    + \vert
                        \tau^\prime
                    \vert
                    \Vert
                        \partial_t \partial_x \varphi
                    \Vert_{L^2(I)}
                    + \Vert
                        \partial_t^2 \partial_x \varphi
                    \Vert_{L^2(I)}
                )
            \end{split}
        \end{align*}
        for $a.a. \, t>0$ and $\varphi \in H^2_tH^1_x(I)$ and $\tau \in H^2(0,T)$.
    \end{enumerate}
\end{proposition}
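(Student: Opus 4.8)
The key observation is that $c=\Omega F(\sigma_1)$ is a function of the time variable alone (it enters through $\tau$), so that for each fixed $t$ the map $\varphi\mapsto c(\tau)\partial_x\varphi$ is just multiplication of $\partial_x\varphi$ by the scalar $c(\tau(t))$. Since $F$ takes values in $(0,1]$ we have $0<c(\tau)\le\Omega$ pointwise in $t$, and since $\tau\in L^\infty(0,T)$ its range is contained in a compact interval $J:=\overline{\tau((0,T))}\subset\Real_{\ge0}$, on which the $C^\infty$ functions $c$, $c'$, $c''$ are bounded, say all by a constant $C$. All three estimates follow from this fact together with the chain and Leibniz rules and the triangle inequality; this is why I state everything in terms of $\|c\|_{L^\infty(J)},\|c'\|_{L^\infty(J)},\|c''\|_{L^\infty(J)}$.

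For (i), the $x$-independence of $c(\tau)$ gives $\partial_x^m\bigl(c(\tau)\partial_x\varphi\bigr)=c(\tau)\,\partial_x^{m+1}\varphi$, whence $\|\partial_x^m(c(\tau)\partial_x\varphi)\|_{L^2(I)}\le\Omega\,\|\partial_x^{m+1}\varphi\|_{L^2(I)}$. For (ii) I would differentiate once in time,
\[
\partial_t\bigl(c(\tau)\partial_x\varphi\bigr)=c'(\tau)\,\tau'\,\partial_x\varphi+c(\tau)\,\partial_t\partial_x\varphi,
\]
and take the $L^2(I)$-norm, bounding $|c(\tau)|,|c'(\tau)|\le C$ and using $\tau\in H^1(0,T)\hookrightarrow L^\infty(0,T)$ so that $\tau'$ is defined for a.a.\ $t$. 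For (iii) one more time derivative produces
\[
\partial_t^2\bigl(c(\tau)\partial_x\varphi\bigr)=c''(\tau)(\tau')^2\partial_x\varphi+c'(\tau)\,\tau''\,\partial_x\varphi+2c'(\tau)\,\tau'\,\partial_t\partial_x\varphi+c(\tau)\,\partial_t^2\partial_x\varphi,
\]
and the triangle inequality with the uniform bounds on $c,c',c''$ reproduces exactly the four terms on the right-hand side of the claimed inequality.

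The argument is essentially routine; the only step deserving a sentence of justification is the boundedness of $c'$ and $c''$ along the solution, which is not imposed globally in Assumption A (there only $C^\infty$ is required for $c$, whereas $BC^\infty$ is required for $N_{v,1},N_{\sigma,1}$) but is legitimate here precisely because $\tau$ is bounded, so the chain rule is applied with $c',c''$ restricted to the compact set $J$, on which continuity supplies the required bounds. Thus there is no genuine analytic obstacle in Proposition~\ref{prop_nonlinear_estimates_for_cdv}, and the only care needed is bookkeeping of which derivatives of $c$ and of $\tau$ multiply which derivative of $\varphi$.
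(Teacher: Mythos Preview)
Your proof is correct and follows essentially the same route as the paper's: use that $c(\tau)$ is independent of $x$, apply the chain and Leibniz rules to compute $\partial_t(c(\tau)\partial_x\varphi)$ and $\partial_t^2(c(\tau)\partial_x\varphi)$, and take $L^2(I)$-norms. Your extra sentence justifying the boundedness of $c',c''$ via the compact range of $\tau$ is a nice touch that the paper leaves implicit.
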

\begin{proof}
    The first estimate is trivial.
    Since
    \begin{align*}
        & \partial_t(
            c(\tau) \varphi
        )
        = c^\prime(\tau) \tau^\prime \partial_x \varphi
        + c(\tau) \partial_t \partial_x \varphi,\\
        & \partial_t^2(
            c(\tau) \varphi
        )
        = c^{\prime\prime}(\tau)
        \vert
            \tau^\prime
        \vert^2
        \partial_x \varphi
        + c^\prime(\tau) \tau^{\prime\prime} \partial_x \varphi
        + 2 c^\prime(\tau) \tau^{\prime} \partial_t \partial_x \varphi
        + c(\tau) \partial_t^2 \partial_x \varphi,
    \end{align*}
    we obtain the estimates in (ii) and (iii).
\end{proof}

\begin{proposition}\label{prop_estimates_for_difference_between_convection_terms}
    %The following estimates hold:
    \begin{enumerate}[label=(\roman*)]
        \item There exists a constant $C>0$ such that
        \begin{align*}
            \begin{split}
                & \Vert
                    c(\tau_1) \partial_x \varphi_1
                    - c(\tau_2) \partial_x \varphi_2
                \Vert_{H^m(I)}\\
                & \leq C (
                    \vert
                        \tau_1
                        - \tau_2
                    \vert
                    \sum_{j=1,2}
                        \Vert
                           \varphi_j
                        \Vert_{H^{m+1}(I)}
                    +
                    \Vert
                        \varphi_1
                        -\varphi_2
                    \Vert_{H^{m+1}(I)}
                )
            \end{split}
        \end{align*}
        for $\varphi_j \in H^m(I)$ and $\tau_j \in L^\infty(0,T)$ for $j=1,2$.
        \item There exists a constant $C>0$ such that
        \begin{align*}
            \begin{split}
                & \Vert   
                    \partial_t (
                        c(\tau_1) \partial_x \varphi_1
                        - c(\tau_2) \partial_x \varphi_2
                    )
                \Vert_{H^m(I)}\\
                & \leq C (
                    \sum_{j=1,2}
                        \vert
                            \tau_j^\prime
                        \vert
                    \vert
                        \tau_1
                        - \tau_2
                    \vert
                    + \vert
                    \tau_1^\prime
                    - \tau_2^\prime
                    \vert
                )
                \sum_{k=1,2}
                    \Vert
                       \varphi_k
                    \Vert_{H^{m+1}(I)}\\
                & + C \vert
                    \tau_1
                    - \tau_2
                \vert
                \sum_{k=1,2}
                    \Vert
                       \partial_t \varphi_k
                    \Vert_{H^{m+1}(I)}\\
                & + \sum_{j=1,2}
                    \vert
                        \tau_j^\prime
                    \vert
                \Vert
                    \varphi_1
                    -\varphi_2
                \Vert_{H^{m+1}(I)}
                +
                    \Vert
                        \partial_t \varphi_1
                        - \partial_t \varphi_2
                    \Vert_{H^{m+1}(I)}
                )
            \end{split}
        \end{align*}
        for $a.a. \, t>0$ and $\varphi_j \in H^1_tH^{m+1}_x(I)$ and $\tau_j \in H^1(0,T)$ for $j=1,2$.
    \end{enumerate}
\end{proposition}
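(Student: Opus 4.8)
The plan is to reduce both estimates to elementary product-rule manipulations, exploiting the key structural fact that each coefficient $c(\tau_j)$ depends only on $t$. Consequently, for every $k$ one has $\partial_x^{k}\bigl(c(\tau_j)\,\partial_x\varphi_j\bigr) = c(\tau_j)\,\partial_x^{k+1}\varphi_j$, so that when taking the $H^m(I)$-norm (at a.e.\ fixed $t$) the spatial derivatives only hit $\varphi_j$. Moreover, by Assumption A the function $c$ is $C^\infty$, positive, and monotonically decreasing to $0$, hence $c$ and all its derivatives are bounded on $\Real_{\geq 0}$; in particular $\|c\|_{L^\infty}$, $\|c'\|_{L^\infty}$, and $\|c''\|_{L^\infty}$ are finite. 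I will also use the mean-value identity $c(\tau_1) - c(\tau_2) = \left(\int_0^1 c'(\theta\tau_1 + (1-\theta)\tau_2)\,d\theta\right)(\tau_1 - \tau_2)$, whose prefactor is bounded by $\|c'\|_{L^\infty}$, and the analogous identity for $c'$.

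For part (i), I would use the standard splitting
\begin{align*}
    c(\tau_1)\partial_x\varphi_1 - c(\tau_2)\partial_x\varphi_2
    = \bigl(c(\tau_1) - c(\tau_2)\bigr)\partial_x\varphi_1
    + c(\tau_2)\bigl(\partial_x\varphi_1 - \partial_x\varphi_2\bigr),
\end{align*}
apply the $H^m(I)$-norm in $x$, and invoke the commutation property above together with the mean-value identity. The first term is then bounded by $\|c'\|_{L^\infty}\,|\tau_1-\tau_2|\,\|\varphi_1\|_{H^{m+1}(I)}$, which I absorb into $|\tau_1-\tau_2|\sum_{j=1,2}\|\varphi_j\|_{H^{m+1}(I)}$, and the second by $\|c\|_{L^\infty}\,\|\varphi_1-\varphi_2\|_{H^{m+1}(I)}$. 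This gives (i) directly.

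For part (ii), which is the difference analogue of Proposition~\ref{prop_nonlinear_estimates_for_cdv}(ii), I would first differentiate in $t$,
\begin{align*}
    \partial_t\bigl(c(\tau_j)\partial_x\varphi_j\bigr)
    = c'(\tau_j)\tau_j'\,\partial_x\varphi_j + c(\tau_j)\,\partial_t\partial_x\varphi_j,
\end{align*}
take the difference for $j=1,2$, and telescope each resulting term so that exactly one difference ($\tau_1-\tau_2$, $\tau_1'-\tau_2'$, $\varphi_1-\varphi_2$, or $\partial_t\varphi_1-\partial_t\varphi_2$) is isolated per summand. Concretely, I would write $c'(\tau_1)\tau_1'\partial_x\varphi_1 - c'(\tau_2)\tau_2'\partial_x\varphi_2$ as the sum of $\bigl(c'(\tau_1)-c'(\tau_2)\bigr)\tau_1'\partial_x\varphi_1$, $c'(\tau_2)(\tau_1'-\tau_2')\partial_x\varphi_1$, and $c'(\tau_2)\tau_2'(\partial_x\varphi_1-\partial_x\varphi_2)$; the mean-value identity applied to $c'(\tau_1)-c'(\tau_2)$ (prefactor $\|c''\|_{L^\infty}$) produces the coefficients $\sum_j|\tau_j'|\,|\tau_1-\tau_2|$ and $|\tau_1'-\tau_2'|$ in front of $\sum_k\|\varphi_k\|_{H^{m+1}(I)}$, and $\|c'\|_{L^\infty}\sum_j|\tau_j'|$ in front of $\|\varphi_1-\varphi_2\|_{H^{m+1}(I)}$. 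Likewise, $c(\tau_1)\partial_t\partial_x\varphi_1 - c(\tau_2)\partial_t\partial_x\varphi_2 = \bigl(c(\tau_1)-c(\tau_2)\bigr)\partial_t\partial_x\varphi_1 + c(\tau_2)\bigl(\partial_t\partial_x\varphi_1 - \partial_t\partial_x\varphi_2\bigr)$ yields the terms $|\tau_1-\tau_2|\sum_k\|\partial_t\varphi_k\|_{H^{m+1}(I)}$ and $\|\partial_t\varphi_1 - \partial_t\varphi_2\|_{H^{m+1}(I)}$. Taking $H^m_x$-norms at a.e.\ $t$ and using the $x$-independence of the coefficients throughout completes the estimate. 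I do not expect a genuine obstacle here; the only point that needs care is the bookkeeping of the telescoping decomposition in (ii), making sure each difference is isolated in exactly one factor so that precisely the claimed combination of prefactors appears with no spurious cross terms — a routine but somewhat tedious triangle-inequality argument.
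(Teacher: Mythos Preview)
Your proposal is correct and follows essentially the same approach as the paper: the paper also uses the mean-value identity $c(\tau_1)-c(\tau_2)=\tilde c(\tau_1,\tau_2)(\tau_1-\tau_2)$ together with the splitting $c(\tau_1)\partial_x\varphi_1-c(\tau_2)\partial_x\varphi_2=\tilde c(\tau_1,\tau_2)(\tau_1-\tau_2)\partial_x\varphi_1+c(\tau_2)\partial_x(\varphi_1-\varphi_2)$, then takes the $H^m$-norm for (i) and differentiates this splitting in $t$ for (ii). The only cosmetic difference is that the paper differentiates the already-split difference while you differentiate first and then telescope, but the resulting decomposition and estimates are identical.
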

\begin{proof}
    We invoke the formulas such that
    \begin{align*}
        c(\tau_1)
        - c(\tau_2)
        & = \int_0^1
            c^\prime(
                \theta \tau_1
                + (1-\theta) \tau_2
            )
        d\theta
        (
            \tau_1
            - \tau_2
        )\\
        & =: \tilde{c}(\tau_1, \tau_2)
        (
            \tau_1
            - \tau_2
        )
    \end{align*}
    and
    \begin{align} \label{eq_formula_for_difference_to_c_dphi}
        c(\tau_1) \partial_x \varphi_1
        - c(\tau_2) \partial_x \varphi_2
        = \tilde{c}(\tau_1, \tau_2) (
            \tau_1
            - \tau_2
        )
        \partial_x \varphi_1
        + c(\tau_2) \partial_x (
            \varphi_1
            - \varphi_2
        ).
    \end{align}
    Taking $L^2(I)$-norm, we have (i).
    Then we take the time-derivatives to (\ref{eq_formula_for_difference_to_c_dphi}) to obtain the estimate in (ii).
\end{proof}

We next establish some estimates for extension functions.
\begin{proposition} \label{prop_estimates_for_difference_between_boundary_extension_terms}
    Let $T>0$, $\tau_1, \tau_2 \in L^2(0,T)$, and $v \in L^2_t H^m_x(Q_T)$.
    We set
    \begin{align*}
        \mathcal{V}
        & := E \left(
            \frac{
                F(\tau_1)
                - F(\tau_2)
            }{
                2
            }
            v
            ;
            1 - \frac{
                F(\tau_1)
                + F(\tau_2)
            }{
                2
            }
        \right).
    \end{align*}
    (i) There exists a constant $C>0$ such that
    \begin{align} \label{eq_Hm_estimate_for_V1}
        \Vert
            \mathcal{V}(t)
        \Vert_{H^m(I)}
        \leq C
        \vert
            \tau_1(t)
            - \tau_2(t)
        \vert
        \Vert
            v(t)
        \Vert_{H^m(I)} \quad
        \text{for $a.a.$ $t>0$.}
    \end{align}
    (ii) There exists a constant $C>0$ such that for $v$ satisfying $v, \partial_t v \in L^2_tH^m_x((Q_T))$ and $\tau_j \in H^1(0,T)$, it holds that
    \begin{align} \label{eq_bound_dt_V1}
        \begin{split}
            & \Vert
                \partial_t \mathcal{V}(t)
            \Vert_{H^m(I)}\\
            & \leq C \left(
                \sum_{j=1,2}
                    \vert
                        \tau_j^\prime(t)
                    \vert
                \vert
                    \tau_1(t)
                    - \tau_2(t)
                \vert
                +
                \vert
                    \tau_1^\prime(t)
                    - \tau_2^\prime(t)
                \vert
            \right)
            \Vert
                v(t)
            \Vert_{H^m(I)}\\
            & + C
            \vert
                \tau_1(t)
                - \tau_2(t)
            \vert
            \Vert
                \partial_t v(t)
            \Vert_{H^m(I)} \quad
            \text{for $a.a. t>0$.}
        \end{split}
    \end{align}
    (iii) There exists a constant $C>0$ such that for $v$ satisfying $v, \partial_t v, \partial_t^2 v \in L^2_tH^m_x(Q_T)$ and $\tau_j \in H^2(0,T)$, it holds that
    \begin{align*}
        \begin{split}
            & \Vert
                \partial_t^2 \mathcal{V}(t)
            \Vert_{H^m(I)}\\
            & \leq C (
                \sum_{j=1,2}
                    \vert
                        \tau_j^\prime(t)
                    \vert^2
                \vert
                    \tau_1(t)
                    - \tau_2(t)
                \vert
                + 
                    \vert
                        \tau_j^{\prime\prime}(t)
                    \vert
                \vert
                    \tau_1(t)
                    - \tau_2(t)
                \vert
            \\
            &
                \quad\quad\quad\quad\quad\quad
                \vert
                        \tau_j^\prime(t)
                    \vert
                \vert
                    \tau_1^\prime(t)
                    - \tau_2^\prime(t)
                \vert
                +
                \vert
                    \tau_1^{\prime\prime}(t)
                    - \tau_2^{\prime\prime}(t)
                \vert
            )
            \Vert
                v(t)
            \Vert_{H^m(I)}\\
            & + C \left(
                \sum_{j=1,2}
                    \vert
                        \tau_j^\prime(t)
                    \vert
                \vert
                    \tau_1(t)
                    - \tau_2(t)
                \vert
                +
                \vert
                    \tau_1^\prime(t)
                    - \tau_2^\prime(t)
                \vert
            \right)
            \Vert
                \partial_t v(t)
            \Vert_{H^m(I)}\\
            & + C
            \vert
                \tau_1(t)
                - \tau_2(t)
            \vert
            \Vert
                \partial_t^2 v(t)
            \Vert_{H^m(I)} \quad
            \text{for $a.a. \, t \in (0,T)$.}
        \end{split}
    \end{align*}
\end{proposition}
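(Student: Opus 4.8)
The plan is to reduce all three estimates to the extension-operator bounds already established: the $\alpha$-independent $H^m$-estimate of Lemma~\ref{lem_extension_theorem} and the first- and second-order time-derivative estimates of Proposition~\ref{prop_estimate_for_dtE}. The key observation making this work is that the coefficient $\frac{F(\tau_1)-F(\tau_2)}{2}$ that multiplies $v$ inside $E$ depends only on $t$, so it passes through the spatial $H^m(I)$-norm as a scalar. For (i), since $E(\,\cdot\,;\alpha)$ is bounded on $H^m(I)$ with a constant independent of $\alpha$,
\[
\|\mathcal V(t)\|_{H^m(I)}
\le C\Big\|\tfrac{F(\tau_1(t))-F(\tau_2(t))}{2}\,v(t)\Big\|_{H^m(I)}
= C\Big|\tfrac{F(\tau_1(t))-F(\tau_2(t))}{2}\Big|\,\|v(t)\|_{H^m(I)},
\]
and since $F\in BC^\infty(\Real_+)$ the mean value theorem gives $|F(\tau_1)-F(\tau_2)|\le\|F'\|_{L^\infty(\Real_+)}|\tau_1-\tau_2|$, which is (\ref{eq_Hm_estimate_for_V1}).

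For (ii) I would apply the first estimate of Proposition~\ref{prop_estimate_for_dtE} with $u=\frac{F(\tau_1)-F(\tau_2)}{2}v$ and $\theta=1-\frac{F(\tau_1)+F(\tau_2)}{2}$, so that $\theta'=-\frac{F'(\tau_1)\tau_1'+F'(\tau_2)\tau_2'}{2}$ and hence $|\theta'|\le C(|\tau_1'|+|\tau_2'|)$, while
\[
\partial_t u=\tfrac{F'(\tau_1)\tau_1'-F'(\tau_2)\tau_2'}{2}\,v+\tfrac{F(\tau_1)-F(\tau_2)}{2}\,\partial_t v.
\]
The one substantive step is the telescoping identity $F'(\tau_1)\tau_1'-F'(\tau_2)\tau_2'=F'(\tau_1)(\tau_1'-\tau_2')+(F'(\tau_1)-F'(\tau_2))\tau_2'$, which with the mean value theorem for $F'$ bounds this difference by $C\big(|\tau_1'-\tau_2'|+|\tau_1-\tau_2|\,|\tau_2'|\big)$; combining this with $|F(\tau_1)-F(\tau_2)|\le C|\tau_1-\tau_2|$ and with the $|\theta'|\,\|u\|_{H^m}$ contribution yields (\ref{eq_bound_dt_V1}).

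For (iii) I would use the second estimate of Proposition~\ref{prop_estimate_for_dtE}, bounding $\|\partial_t^2E(u;\theta)\|_{H^m}$ by $\|\partial_t^2u\|_{H^m}+|\theta'|\,\|\partial_tu\|_{H^m}+(|\theta'|^2+|\theta''|)\,\|u\|_{H^m}$, and expand $\partial_t^2 u$ by the product rule. The new ingredient is $\frac{d}{dt}\big[\frac{F'(\tau_1)\tau_1'-F'(\tau_2)\tau_2'}{2}\big]$, which contains $F''(\tau_1)(\tau_1')^2-F''(\tau_2)(\tau_2')^2$ and $F'(\tau_1)\tau_1''-F'(\tau_2)\tau_2''$; I would telescope each of these exactly as in (ii) — using in addition $(\tau_1')^2-(\tau_2')^2=(\tau_1'+\tau_2')(\tau_1'-\tau_2')$ and the mean value theorem for $F''$ — to extract the factors $|\tau_1-\tau_2|$, $|\tau_1'-\tau_2'|$, $|\tau_1''-\tau_2''|$ with the stated products of $|\tau_j'|$ and $|\tau_j''|$ as coefficients, while the $\theta'$- and $\theta''$-contributions supply the remaining terms. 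In all three parts the constant is independent of $\tau_1,\tau_2$ and of $T$ because the constant in Lemma~\ref{lem_extension_theorem} is $\alpha$-independent and $F',F'',F'''$ are globally bounded.

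There is no genuine obstacle here: the argument is pure bookkeeping. The only point needing care is to split every difference of products so that exactly one factor of a difference ($\tau_1-\tau_2$, $\tau_1'-\tau_2'$, or $\tau_1''-\tau_2''$) appears in each resulting term while the remaining coefficients stay uniformly bounded — this is precisely what keeps the final constants $\tau$- and $T$-independent. I expect part (iii) to be the most tedious purely because of the number of terms generated by the product rule, but each is disposed of by the same add-and-subtract device.
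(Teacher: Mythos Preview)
Your proposal is correct and follows essentially the same approach as the paper: both reduce (i) to Lemma~\ref{lem_extension_theorem} and (ii)--(iii) to the first and second estimates of Proposition~\ref{prop_estimate_for_dtE}, controlling the differences $F(\tau_1)-F(\tau_2)$ and their time derivatives via the mean value theorem (the paper writes this as the integral representation $\tilde F(\tau_1,\tau_2)(\tau_1-\tau_2)$, which is the same device as your telescoping). The bookkeeping you outline for (iii) matches the paper's decomposition into four contributions coming from $\partial_t^2 u$, $|\theta'|\,\|\partial_t u\|$, $|\theta'|^2\|u\|$, and $|\theta''|\,\|u\|$.
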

\begin{proof}
    The estimate (\ref{eq_Hm_estimate_for_V1}) is due to Lemma \ref{lem_extension_theorem}.
    % and Proposition \ref{prop_estimate_for_dtE}.
    We invoke the formula
    \begin{align*}
        & F(\tau_1(t))
        - F(\tau_2(t))\\
        & = \int_0^1
            F^\prime(
                s\tau_1(t)
                + (1-s) \tau_2(t)
            )
        ds
        (
            \tau_1(t)
            - \tau_2(t)
        )
        =: \tilde{F}(\tau_1, \tau_2) (\tau_1 - \tau_1).
    \end{align*}
    Then we deduce that
    \begin{align*}
        & \vert
            F(\tau_1(t))
            - F(\tau_2(t))
        \vert
        \leq C \vert
            \tau_1
            - \tau_2
        \vert,\\
        %%%%%%%%%%%%%%%%%%%%%%%%%%%%%%%%%%%%%%%%%%%
        & \left \vert
            \frac{d}{dt}(
                F(\tau_1(t))
                - F(\tau_2(t))
            )
        \right \vert
        \leq C
        \sum_{j=1,2}
            \vert
                \tau_j^\prime
            \vert
        \vert
            \tau_1(t)
            - \tau_2(t)
        \vert
        + C
        \vert
            \tau_1^\prime(t)
            - \tau_2^\prime(t)
        \vert,\\
        %%%%%%%%%%%%%%%%%%%%%%%%%%%%%%%%%%%%%%%%%%%
        & \left \vert
            \frac{d^2}{dt^2}(
                F(\tau_1(t))
                - F(\tau_2(t))
            )
        \right \vert\\
        & \leq C
        \sum_{j=1,2} \vert
            \tau_j^\prime(t)
        \vert^2
        \vert
            \tau_1(t)
            - \tau_2(t)
        \vert
        + C \sum_{j=1,2} \vert
            \tau_j^{\prime\prime}(t)
        \vert
        \vert
            \tau_1(t)
            - \tau_2(t)
        \vert\\
        & + C 
        \sum_{j=1,2} \vert
            \tau_j^{\prime}(t)
        \vert
        \vert
            \tau_1^\prime(t)
            - \tau_2^\prime(t)
        \vert
        + C
        \vert
            \tau_1^{\prime\prime}(t)
            - \tau_2^{\prime\prime}(t)
        \vert,
    \end{align*}
    for some constant $C>0$ depending on $F$ and its higher order derivatives.
    We find from Proposition \ref{prop_estimate_for_dtE} that
    \begin{align*}
        \begin{split}
            & \Vert
                \partial_t \mathcal{V}(t)
            \Vert_{H^m(I)}\\
            & \leq C \left \Vert
                \partial_t [
                    (
                        F(\tau_1(t))
                        - F(\tau_2(t))
                    )
                    v(t)
                ]
            \right \Vert_{H^m(I)}\\
            & + C \left \vert
                \partial_t [
                    F(\tau_1(t))
                    + F(\tau_2(t))
                ]
            \right \vert
            \left \Vert
                [
                    F(\tau_1(t))
                    - F(\tau_2(t))
                ]
                v(t)
            \right \Vert_{H^m(I)}\\
            %%%%%%%%%%%%%%%%%%%%%%%%%%%%%%%%%%%%%%%%%%%
            %%%%%%%%%%%%%%%%%%%%%%%%%%%%%%%%%%%%%%%%%%%
            & \leq C \left(
                \sum_{j=1,2}
                    \vert
                        \tau_j^\prime(t)
                    \vert
                \vert
                    \tau_1(t)
                    - \tau_2(t)
                \vert
                +
                \vert
                    \tau_1^\prime(t)
                    - \tau_2^\prime(t)
                \vert
            \right)
            \Vert
                v(t)
            \Vert_{H^m(I)}\\
            & + C
            \vert
                \tau_1(t)
                - \tau_2(t)
            \vert
            \Vert
                \partial_t v(t)
            \Vert_{H^m(I)},
        \end{split}
    \end{align*}
    and
    \begin{align} \label{eq_bound_dt2_V1}
        \begin{split}
            & \Vert
                \partial_t^2 \mathcal{V}(t)
            \Vert_{H^m(I)}\\
            & \leq C \left \Vert
                \partial_t^2 [
                    (
                        F(\tau_1(t))
                        - F(\tau_2(t))
                    )
                    v(t)
                ]
            \right \Vert_{H^m(I)}\\
            & + C \left \vert
                \frac{d}{dt}
                (
                    F(\tau_1(t))
                    + F(\tau_2(t))
                )
            \right \vert
            \left \Vert
                \partial_t [
                    (
                        F(\tau_1(t))
                        - F(\tau_2(t))
                    )
                    v(t)
                ]
            \right \Vert_{H^m(I)}\\
            & + C \left \vert
                \frac{d}{dt}(
                    F(\tau_1(t))
                    + F(\tau_2(t))
                )
            \right \vert^2
            \left \Vert
                [
                    F(\tau_1(t))
                    - F(\tau_2(t))
                ]
                v(t)
            \right \Vert_{H^m(I)}\\
            & + C \left \vert
                \frac{d^2}{dt^2}(
                    F(\tau_1(t))
                    + F(\tau_2(t))
                )
            \right \vert
            \left \Vert
                [
                    F(\tau_1(t))
                    - F(\tau_2(t))
                ]
                v(t)
            \right \Vert_{H^m(I)}\\
            & =: I_1 + I_2 + I_3 + I_4.
        \end{split}
    \end{align}
    We have proved (ii).
    Elementary calculations lead to
    \begin{align*}
        \begin{split}
            I_1
            & \leq C (
                \sum_{j=1,2}
                    \vert
                        \tau_j^\prime(t)
                    \vert^2
                \vert
                    \tau_1(t)
                    - \tau_2(t)
                \vert
                + 
                    \vert
                        \tau_j^{\prime\prime}(t)
                    \vert
                \vert
                    \tau_1(t)
                    - \tau_2(t)
                \vert
            \\
            &
                \quad\quad\quad\quad
                +
                \vert
                        \tau_j^\prime(t)
                    \vert
                \vert
                    \tau_1^\prime(t)
                    - \tau_2^\prime(t)
                \vert
                +
                \vert
                    \tau_1^{\prime\prime}(t)
                    - \tau_2^{\prime\prime}(t)
                \vert
            )
            \Vert
                v(t)
            \Vert_{H^m(I)}\\
            & + C (
                \sum_{j=1,2}
                    \vert
                        \tau_j^\prime(t)
                    \vert
                \vert
                    \tau_1(t)
                    - \tau_2(t)
                \vert
                +
                \vert
                    \tau_1^\prime(t)
                    - \tau_2^\prime(t)
                \vert
            )
            \Vert
                \partial_t v(t)
            \Vert_{H^m(I)}\\
            & + C
            \vert
                \tau_1(t)
                - \tau_2(t)
            \vert
            \Vert
                \partial_t^2 v(t)
            \Vert_{H^m(I)},
        \end{split}
    \end{align*}
    \begin{align*}
        \begin{split}
            I_2
            & \leq C
            %\sum_{j=1,2}
            %    \vert
            %        \tau_j^\prime(t)
            %    \vert
            (
                \sum_{j=1,2}
                    \vert
                        \tau_j^\prime(t)
                    \vert^2
                    \vert
                        \tau_1(t)
                        - \tau_2(t)
                    \vert
                    +
                    \vert
                        \tau_j^\prime(t)
                    \vert
                    \vert
                        \tau_1^\prime(t)
                        - \tau_2^\prime(t)
                    \vert
            )
            \Vert
                v(t)
            \Vert_{H^m(I)}\\
            & + C
            \sum_{j=1,2}
                \vert
                    \tau_j^\prime(t)
                \vert
            \vert
                \tau_1(t)
                - \tau_2(t)
            \vert
            \Vert
                \partial_t v(t)
            \Vert_{H^m(I)},
        \end{split}
    \end{align*}
    and
    \begin{align*}
        \begin{split}
            I_3
            + I_4
            & \leq C (
                \sum_{j=1,2}
                    \vert
                        \tau_j^\prime(t)
                    \vert^2
                +
                    \vert
                        \tau_j^{\prime\prime}(t)
                    \vert
            )
            \vert
                \tau_1(t)
                - \tau_2(t)
            \vert
            \Vert
                v(t)
            \Vert_{H^m(I)}.
        \end{split}
    \end{align*}
    We have proved (iii).
\end{proof}

\subsection{Construction of the solution $v_1$ and $v_2$ for a given $\tau_1$}

    We seek the solution $(v_1, v_2)$ to
    \begin{equation} \label{eq_abstract_filter_clogging_equation_in_the_interior}
        \begin{aligned}
            &\partial_t v_1
            - \partial_x^2 v_1
            = - c(\tau_1) \partial_x v_1
            - N_{v,1}(v_1) v_2
            + f
            & t > 0,
            & \, x \in I,\\
            &\partial_t v_2
            - \partial_x^2 v_2
            = -  c(\tau_1) \partial_x v_2
            + N_{v,2}(v_1, v_2) v_2
            & t > 0,
            & \, x \in I,\\
            &B_1(v_1; F(\tau_1))
            = 0, \quad 
            B_2(v_1; F(\tau_1))
            =0,
            & t>0,
            & \,\\
            &B_1(v_2; F(\tau_1))
            = 0, \quad
            B_2(v_2; F(\tau_1))
            =0,
            & t>0,
            & \,\\
            &v_1
            = v_{1,0}, \quad
            v_2
            = v_{2,0},
            & t=0,
            & \, x \in I,
        \end{aligned}
    \end{equation}
    for a given function $\tau_1 \in X_{\sigma, T} \hookrightarrow L^\infty(0,T) \cap H^2(0,T)$.
    % by means of the Leray-Schauder principle.
    The non-linear external forces are not so singular.
    The nonlinearity from the boundary conditions are more involved.
    We assume that the functions $\psi_1, \psi_2$ are such that
    \begin{align*}
        \psi_1, \psi_2
        &\in H^1_tH^1_x(Q_T) \cap L^2_t H^3_x(Q_T)
        \hookrightarrow H^{\theta}_t H^{1 + 2(1-\theta)}_x(Q_T).
    \end{align*}
    The embedding is derived from using the extension to a function on $Q_T$ to $\Real^2$ and the interpolation inequality.
    To simplify notation, we use the following symbols:
    \begin{align*}
        \begin{split}
            & A_v(\psi_j)
            := (
                \Vert
                    \psi_j
                \Vert_{H^1_tL^2_x(Q_T)}^2
                + \Vert
                    \psi_j
                \Vert_{L^2_t H^3_x(Q_T)}^2
            )^{1/2}, \quad
            j=1,2,\\
            &A_\tau(\tau_1)
            := (
                \Vert
                    \tau_1
                \Vert_{H^2(0,T)}^2
            )^{1/2},\\
            &B_{0,v}
            := (
                \sum_{j=1,2}
                    \Vert
                        v_{j,0}
                    \Vert_{H^3(I)}^2
            )^{1/2},
            B_{0,\sigma}
            := (
                \sum_{j=1,2}
                    \vert
                        \sigma_{j,0}
                    \vert^2
            )^{1/2},\\
            &B_f(f)
            := (
                \Vert
                    f
                \Vert_{H^1_tL^2_x(Q_T)}^2
                + \Vert
                    f
                \Vert_{L^2_tH^2_x(Q_T)}^2
            )^{1/2}.
        \end{split}
    \end{align*}
    We denote the external forces in the left-hand side of (\ref{eq_abstract_filter_clogging_equation_in_the_interior}) by
    \begin{align*}
        \tilde{N}_{v,1}(\psi_1, \psi_2; \tau_1)
        : = - c(\tau_1) \partial_x \psi_1
        - N_{v,1}(\psi_1) \psi_2
        + f,\\
        \tilde{N}_{v,2}(\psi_1, \psi_2; \tau_1)
        : = - c(\tau_1) \partial_x \psi_2
        + N_{v,2}(\psi_1, \psi_2) \psi_2.
    \end{align*}
    We denote the solution operator for the equation
    \begin{equation} \label{eq_abstract_filter_clogging_equation_in_the_interior_for_fixed_point_theorem}
        \begin{aligned}
            &\partial_t v_1
            - \partial_x^2 v_1
            = \tilde{N}_{v,1}(\psi_1, \psi_2; \tau_1)
            & t > 0,
            & \, x \in I,\\
            &\partial_t v_2
            - \partial_x^2 v_2
            = \tilde{N}_{v,2}(\psi_1, \psi_2; \tau_1)
            & t > 0,
            & \, x \in I,\\
            &B_1(v_1; F(\tau_1))
            = 0, \quad 
            B_2(v_1; F(\tau_1))
            =0,
            & t>0,
            & \,\\
            &B_1(v_2; F(\tau_1))
            = 0, \quad
            B_2(v_2; F(\tau_1))
            =0,
            & t>0,
            & \,
        \end{aligned}
    \end{equation}
    as
    \begin{align*}
        v_1
        = \mathcal{S}_{v,1}(\psi_1, \psi_2; \tau_1), \quad
        v_2
        = \mathcal{S}_{v,2}(\psi_1, \psi_2; \tau_1).
    \end{align*}
    \begin{theorem} \label{thm_existence_of_v_1_v_2_for_given_tau_1}
        Let $T>0$.
        Let the functions $v_{j,0} \in H^3(I)$ and $\tau_1 \in X_{\sigma,T}$ be positive and $f \in H^1_tL^2_x(Q_T) \cap L^2_tH^2_x(Q_T)$ be non-negative.
        Then there exist a unique non-negative solution
        \begin{align*}
            v_1, v_2 \in H^2_tL^2_x(Q_T) \cap L^4_tH^4_x(Q_T)
            \hookrightarrow H^{2\theta}_t H^{4(1-\theta)}_x(Q_T), \quad
            \theta \in [0,1],
        \end{align*}
        to (\ref{eq_abstract_filter_clogging_equation_in_the_interior_for_fixed_point_theorem}) such that
        \begin{align*}
            \sum_{j=1,2} (
                \Vert
                    v_j
                \Vert_{H^2_tL^2_x(Q_T)}
                + \Vert
                    v_j
                \Vert_{L^2_tH^4_x(Q_T)}
            )
            \leq C
        \end{align*}
        for some $C>0$, which depends continuously on $T$, $B_{0,v}$, $B_f(f)$, and $\Vert \tau_1 \Vert_{X_{\sigma,T}}$.
    \end{theorem}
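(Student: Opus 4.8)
\emph{Strategy.} Since $\tau_1$ is frozen, $\theta(t):=F(\tau_1(t))$ is a prescribed boundary coefficient with $0<\inf_{[0,T]}\theta\le\theta\le 1$, and because $X_{\sigma,T}\hookrightarrow H^2(0,T)\hookrightarrow C^{1,\alpha}[0,T]$ the linear theory of Lemma \ref{lem_existence_of_linearized_equation} and the a priori estimates of Proposition \ref{prop_L2_estimate_for_linear_heat_eq}, Proposition \ref{prop_L_infty_H1_estimate_for_v}, Corollary \ref{cor_H3_estimate_for_v} and Lemma \ref{lemma_H2m_a_priori_estimates} all apply with $\theta$ as time-dependent boundary coefficient and $\tau_1$ in the role of the prescribed $\sigma$ (whose influence on the $v$-estimates is only through $\|\tau_1'\|_{L^2(0,T)}$ and $\|\tau_1''\|_{L^2(0,T)}$, both finite on $X_{\sigma,T}$). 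I would construct $(v_1,v_2)$ by the Leray--Schauder (Schaefer) fixed point theorem in $Z:=(H^1_tH^1_x(Q_T)\cap L^2_tH^3_x(Q_T))^2$, after modifying $N_{v,1}$ and the superlinear $v_2$-self-interaction outside the physically relevant range so that they become bounded, the former vanishing on $(-\infty,0]$ and the latter globally dissipative; an a posteriori bound $0\le v_j\le R_0$ will show the modifications are inactive, so that the constructed pair solves the original system. For fixed $\tau_1$, the map from the forcing $\tilde N_{v,j}(\psi_1,\psi_2;\tau_1)$ and the initial data to the solution of (\ref{eq_abstract_filter_clogging_equation_in_the_interior_for_fixed_point_theorem}) is a bounded \emph{linear} operator $X_{f,T}\to X_{v,T}$.

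\emph{The solution map.} For $(\psi_1,\psi_2)\in Z$, Proposition \ref{prop_nonlinear_estimates_for_Nv} and Proposition \ref{prop_nonlinear_estimates_for_cdv}, together with $H^1(I)\hookrightarrow L^\infty(I)$ and $H^1((0,T);H^1(I))\hookrightarrow C([0,T];H^1(I))$, give $\tilde N_{v,j}(\psi_1,\psi_2;\tau_1)\in X_{f,T}=H^1_tL^2_x(Q_T)\cap L^2_tH^2_x(Q_T)$ with norm controlled by $A_v(\psi_1),A_v(\psi_2),A_\tau(\tau_1),B_f(f)$. Lemma \ref{lem_existence_of_linearized_equation} then yields $v_j=\mathcal{S}_{v,j}(\psi_1,\psi_2;\tau_1)$ and Lemma \ref{lemma_H2m_a_priori_estimates} with $m=1$ places $v_j\in X_{v,T}$ with a bound of the same type. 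Hence $\Phi:(\psi_1,\psi_2)\mapsto(v_1,v_2)$ sends $Z$-bounded sets into $X_{v,T}$-bounded sets, and since $X_{v,T}$ is compactly embedded in $Z$ by the Aubin--Lions--Simon lemma, $\Phi$ is compact. Continuity of $\Phi$ on $Z$ follows because, $\tau_1$ and the initial data being fixed, $\Phi(\psi^{(n)})-\Phi(\psi^{\infty})$ is the image under the linear solution operator of $\tilde N_{v,j}(\psi^{(n)};\tau_1)-\tilde N_{v,j}(\psi^{\infty};\tau_1)$, which converges to $0$ in $X_{f,T}$ along $\psi^{(n)}\to\psi^{\infty}$ in $Z$ by the difference estimates of Proposition \ref{prop_estimates_for_difference_between_nonlinear_terms} and Proposition \ref{prop_estimates_for_difference_between_convection_terms}.

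\emph{Uniform bound, positivity, uniqueness.} Let $v=\lambda\Phi(v)$ with $\lambda\in[0,1]$; then $v_j$ solves the modified nonlinear drift--diffusion equation $\partial_t v_j-\partial_x^2 v_j+\lambda c(\tau_1)\partial_x v_j=\lambda R_j(v_1,v_2)+\delta_{1j}\lambda f$ with $B_1(v_j;\theta)=B_2(v_j;\theta)=0$ and $v_j|_{t=0}=\lambda v_{j,0}$. Testing with $(v_j)_-$ (and symmetrically for the upper bound), the boundary contribution of $-\partial_x^2$ cancels because $\gamma_-v_j=(1-\theta)\gamma_+v_j$ and $\gamma_+\partial_x v_j=(1-\theta)\gamma_-\partial_x v_j$, while the drift contributes $\tfrac{\lambda c}{2}(1-(1-\theta)^2)\gamma_+(v_j)_-^2\ge 0$ --- this is the point at which $0<F(\tau_1)\le 1$ enters decisively; combining with $f\ge 0$, the vanishing of the modified $N_{v,1}$ on $(-\infty,0]$, the dissipativity of the modified $v_2$-interaction, and the $C^1([0,T];H^1(I))$ regularity that justifies these integrations by parts, one obtains $0\le v_j\le R_0$ with $R_0$ depending only on $\|v_{j,0}\|_{L^\infty}$, $\|f\|_{L^\infty(Q_T)}$ and the structural constants. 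Choosing the modification radius larger than $R_0$ makes $v$ a solution of (\ref{eq_abstract_filter_clogging_equation_in_the_interior_for_fixed_point_theorem}); and, $0\le v_j\le R_0$ now being available, the $-\alpha_2 v_2^3$ term is dissipative, so Proposition \ref{prop_L2_estimate_for_linear_heat_eq} closes the $L^2$-estimate, which fed successively into Proposition \ref{prop_L_infty_H1_estimate_for_v}, Corollary \ref{cor_H3_estimate_for_v} and Lemma \ref{lemma_H2m_a_priori_estimates} ($m=1$) --- using Proposition \ref{prop_nonlinear_estimates_for_Nv}, Proposition \ref{prop_nonlinear_estimates_for_cdv} and $\tau_1'\in L^2(0,T)$ --- yields a bound on $\|(v_1,v_2)\|$ in $H^2_tL^2_x(Q_T)^2\cap L^2_tH^4_x(Q_T)^2$ depending continuously on $T$, $B_{0,v}$, $B_f(f)$ and $\|\tau_1\|_{X_{\sigma,T}}$ and, crucially, independent of $\lambda$. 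Schaefer's theorem then gives a fixed point $(v_1,v_2)\in X_{v,T}$, which is non-negative and obeys the asserted estimate. For uniqueness, two solutions with the same $\tau_1$ and data give differences solving (\ref{eq_difference_equation_for_iteration_of_full_filter_clogging_equations}) with $\tau_1=\tau_3$, hence with homogeneous fourth boundary conditions and zero initial data, and an $L^2$ energy estimate using Proposition \ref{prop_estimates_for_difference_between_nonlinear_terms}(i)--(ii), Proposition \ref{prop_estimates_for_difference_between_convection_terms}(i) and Gronwall forces the differences to vanish.

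\emph{Main obstacle.} The principal difficulty is making the fourth boundary condition cooperate throughout: one must verify that the boundary terms generated by every integration by parts --- in the $\lambda$-uniform a priori bound for the Leray--Schauder family and in the maximum-principle/Stampacchia argument for positivity alike --- have the right sign, which rests on $0<F(\tau_1)\le 1$, and one must carry the higher-order nonlinear a priori estimates uniformly in $\lambda$. The auxiliary modification of the nonlinearities is the device that breaks the circularity between positivity (needed to exploit the dissipative $v_2^3$ structure that closes the $L^2$-estimate) and the $C^1([0,T];H^1(I))$ regularity (needed to justify the integrations by parts underlying the positivity proof).
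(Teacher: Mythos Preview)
Your proposal is correct and follows the paper's overall Leray--Schauder strategy in the same space $Z=(H^1_tH^1_x\cap L^2_tH^3_x)^2$: compactness via $X_{v,T}\hookrightarrow\hookrightarrow Z$, continuity via the difference estimates of Propositions \ref{prop_estimates_for_difference_between_nonlinear_terms}--\ref{prop_estimates_for_difference_between_convection_terms}, the $\lambda$-uniform a priori bound by bootstrapping Proposition \ref{prop_L2_estimate_for_linear_heat_eq} through Corollary \ref{cor_H3_estimate_for_v}, and uniqueness by Gronwall.

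The one genuine tactical difference is in how you obtain positivity for the Leray--Schauder family $v=\lambda\Phi(v)$. You modify the nonlinearities outside the physical range, run a Stampacchia test with $(v_j)_-$ (correctly observing that the fourth boundary condition makes the $-\partial_x^2$ boundary term cancel and the drift boundary term come out with the right sign because $0<1-\theta<1$), and then remove the modification a posteriori. The paper instead argues directly by the classical maximum principle: any $v$ satisfying $v=\lambda\Phi(v)$ is automatically in $X_{v,T}\subset C^1([0,T];H^1)\cap C([0,T];H^3)\subset C^1_tC_x\cap C_tC^2_x$ because it lies in the range of the linear solution operator applied to $\tilde N_{v,j}(v;\tau_1)\in X_{f,T}$, so the strong maximum principle applies without any auxiliary modification. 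In other words, the circularity you flag in your final paragraph does not actually arise in the Leray--Schauder setup, since regularity of fixed points comes for free from the linear theory \emph{before} one needs positivity to close the $L^2$ energy estimate. Your route is more robust and would survive in lower-regularity settings, but here the paper's shortcut is shorter. Your upper bound $v_j\le R_0$ is likewise not needed: positivity alone already gives $-\int N_{v,1}(v_1)v_2v_1\le 0$ and $-\alpha_2\int v_2^3\le 0$, which is all the paper uses to close the base $L^2$ estimate.
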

    \begin{proof}
        We find from Propositions \ref{prop_nonlinear_estimates_for_Nv} and \ref{prop_nonlinear_estimates_for_cdv} that
        \begin{align} \label{eq_L2_estimate_for_Nvj}
            \begin{split}
                & \Vert
                    \tilde{N}_{v,1}(\psi_1, \psi_2; \tau_1)
                \Vert_{L^2_tL^2_x(Q_T)}^2
                + \Vert
                    \tilde{N}_{v,2}(\psi_1, \psi_2; \tau_1)
                \Vert_{L^2_tL^2_x(Q_T)}^2\\
                & \leq C
                \sum_{j=1,2}
                    \Vert
                        \psi_j
                    \Vert_{L^2_t H^1_x(Q_T)}^2\\
                & + C
                \Vert
                    \psi_2
                \Vert_{L^2_t L^2_x(Q_T)}^2
                + C \Vert
                    \psi_2
                \Vert_{L^\infty_t L^2_x(Q_T)}^2
                \Vert
                    \psi_2
                \Vert_{L^2_t H^1_x(Q_T)}^2\\
                & + C B_f(f)^2.
            \end{split}
        \end{align}
        Similarly, we also deduce that
        \begin{align} \label{eq_L2_estimate_for_dt_Nvj}
            \begin{split}
                & \Vert
                    \partial_t \tilde{N}_{v,1}(\psi_1, \psi_2; \tau_1)
                \Vert_{L^2_tL^2_x(Q_T)}^2
                + \Vert
                    \partial_t \tilde{N}_{v,2}(\psi_1, \psi_2; \tau_1)
                \Vert_{L^2_tL^2_x(Q_T)}^2\\
                & \leq C
                \sum_{j=1,2}
                    (
                        \Vert
                            \tau_1^\prime
                        \Vert_{L^2(0,T)}^2
                        \Vert
                            \psi_j
                        \Vert_{L^\infty_t H^1_x(Q_T)}^2
                        + \Vert
                            \psi_j
                        \Vert_{H^1_t H^1_x(Q_T)}^2
                    )\\
                & + C
                    (
                        \Vert
                            \psi_1
                        \Vert_{H^1_t H^1_x(Q_T)}^2
                        \Vert
                            \psi_2
                        \Vert_{L^\infty_t H^1_x(Q_T)}^2\\
                & \quad\quad
                        + \Vert
                            \psi_2
                        \Vert_{H^1_t H^1_x(Q_T)}^2
                        + \Vert
                            \psi_2
                        \Vert_{H^1_t L^2_x(Q_T)}^2
                        \Vert
                            \psi_2
                        \Vert_{L^\infty_t H^1_x(Q_T)}^2
                    )\\
                & + B_f(f)^2,
            \end{split}
        \end{align}
        and
        \begin{align}\label{eq_L2_estimate_for_dx2_Nvj}
            \begin{split}
                & \Vert
                    \partial_x^2 \tilde{N}_{v,1}(\psi_1, \psi_2; \tau_1)
                \Vert_{L^2_tL^2_x(Q_T)}^2
                + \Vert
                    \partial_x^2 \tilde{N}_{v,2}(\psi_1, \psi_2; \tau_1)
                \Vert_{L^2_tL^2_x(Q_T)}^2\\
                & \leq C
                \sum_{j=1,2}
                    \Vert
                        \psi_j
                    \Vert_{L^2_t H^3_x(Q_T)}^2\\
                & + C
                    (
                        \Vert
                            \psi_1
                        \Vert_{L^\infty_t H^1_x(Q_T)}^2
                        \Vert
                            \psi_1
                        \Vert_{L^2_t H^2_x(Q_T)}^2
                        \Vert
                            \psi_2
                        \Vert_{L^\infty_t H^1_x(Q_T)}^2\\
                    & \quad\quad\quad 
                        + \Vert
                            \psi_1
                        \Vert_{H^1_t H^2_x(Q_T)}^2
                        \Vert
                            \psi_2
                        \Vert_{L^\infty_t H^1_x(Q_T)}^2\\
                    & \quad\quad\quad
                        + \Vert
                            \psi_1
                        \Vert_{L^\infty_t H^1_x(Q_T)}^2
                        \Vert
                            \psi_2
                        \Vert_{L^2_t H^2_x(Q_T)}^2
                        %+ \Vert
                        %    \psi_1
                        %\Vert_{H^1_t H^1_x(Q_T)}^2
                        %\Vert
                        %    \psi_2
                        %\Vert_{L^2_t H^2_x(Q_T)}^2
                    )\\
                & + C
                        \Vert
                            \psi_2
                        \Vert_{L^\infty_t H^1_x(Q_T)}^2
                        \Vert
                            \psi_2
                        \Vert_{L^2_t H^2_x(Q_T)}^2
                        %+ \Vert
                        %    \psi_2
                        %\Vert_{H^1_t H^1_x(Q_T)}^2
                        %\Vert
                        %    \psi_2
                        %\Vert_{L^2_t H^2_x(Q_T)}^2
                    \\
                & + C B_f(f)^2.
            \end{split}
        \end{align}
        Therefore, we find from Lemma \ref{lemma_H2m_a_priori_estimates} and the embedding $H^1(0,T) \hookrightarrow L^\infty(0,T)$ that the solution $(v_1, v_2)$ satisfy
        \begin{align*}
            v_1, v_2
            \in H^2_t L^2_x(Q_T)
            \cap H^1_t H^2_x(Q_T)
            \cap L^2_t H^4_x(Q_T)
        \end{align*}
        and bounded.
        Moreover, we see that the solution operators $\mathcal{S}_{v, j}(\cdot, \cdot; \tau_1)$ are relatively compact operators in $H^1_tH^1_x(Q_T) \cap L^2_t H^3_x(Q_T)$.
        We next prove that $\mathcal{S}_{v, j}(\cdot, \cdot; \tau_1)$ are continuous.
        Let
        \begin{align*}
            \psi_3, \psi_4
            \in H^1_tH^1_x(Q_T) \cap L^2_t H^3_x(Q_T).
        \end{align*}
        We denote the differences by
        \begin{align*}
            & \tilde{\mathcal{N}}_{v,1}(\psi_1, \psi_2; \psi_3, \psi_4; \tau_1)\\
            & \quad := - c(\tau_1) (\psi_1 - \psi_3)
            + (
                \tilde{N}_{v,1}(\psi_1, \psi_2; \tau_1)
                - \tilde{N}_{v,1}(\psi_3, \psi_4; \tau_1)
            ),\\
            & \tilde{\mathcal{N}}_{v,2}(\psi_1, \psi_2; \psi_3, \psi_4; \tau_1)\\
            & \quad := - c(\tau_1) (\psi_2 - \psi_4)
            - (
                \tilde{N}_{v,2}(\psi_1, \psi_2; \tau_1)
                + \tilde{N}_{v,2}(\psi_3, \psi_4; \tau_1)
            ).
        \end{align*}
        We apply Propositions \ref{prop_estimates_for_difference_between_nonlinear_terms} and \ref{prop_estimates_for_difference_between_convection_terms} to $\tilde{\mathcal{N}}_{v,1}$ and $\tilde{\mathcal{N}}_{v,2}$ as $\tau_1=\tau_2$ to see that
        \begin{align} \label{eq_L2_estimate_for_difference_Nvj}
            \begin{split}
                & \Vert
                    \tilde{\mathcal{N}}_{v,1}(\psi_1, \psi_2; \psi_3, \psi_4; \tau_1)
                \Vert_{L^2_tL^2_x(Q_T)}^2
                + \Vert
                    \tilde{\mathcal{N}}_{v,2}(\psi_1, \psi_2; \psi_3, \psi_4; \tau_1)
                \Vert_{L^2_tL^2_x(Q_T)}^2\\
                & \leq C
                \sum_{j=1,2}
                    \Vert
                        \psi_j - \psi_{j+2}
                    \Vert_{L^2_t H^1_x(Q_T)}^2\\
                & + C
                (
                    1
                    + \sum_{j=1,2,3,4}
                        \Vert
                            \psi_j
                        \Vert_{L^2_tH^1_x(Q_T)}^2
                )
                \sum_{j=1,2}
                    \Vert
                        \psi_j
                        - \psi_{j+2}
                    \Vert_{L^\infty_tL^2_x(Q_T)}^2.
            \end{split}
        \end{align}
        Similarly, we also deduce that
        \begin{align}\label{eq_L2_estimate_for_difference_dt_Nvj}
            \begin{split}
                & \Vert
                    \partial_t \tilde{\mathcal{N}}_{v,1}(\psi_1, \psi_2; \psi_3, \psi_4; \tau_1)
                \Vert_{L^2_tL^2_x(Q_T)}^2\\
                & + \Vert
                    \partial_t \tilde{\mathcal{N}}_{v,2}(\psi_1, \psi_2; \psi_3, \psi_4; \tau_1)
                \Vert_{L^2_tL^2_x(Q_T)}^2\\
                & \leq C
                (
                    \Vert
                        \tau_1^\prime
                    \Vert_{L^2(0,T)}^2
                    \sum_{j=1,2}
                        \Vert
                            \psi_j
                            - \psi_{j+2}
                        \Vert_{L^\infty_t H^1_x(Q_T)}^2\\
                & \quad\quad\quad
                    + \sum_{j=1,2}
                        \Vert
                            \psi_j
                            - \psi_{j+2}
                        \Vert_{H^1_t H^1_x(Q_T)}^2
                )\\
                & + C[
                    (
                        \sum_{j=1,3}
                            \Vert
                                \psi_j
                            \Vert_{H^1_t L^2_x(Q_T)}^2
                        \Vert
                            \psi_2
                        \Vert_{L^\infty_t H^1_x(Q_T)}^2\\
                & \quad\quad\quad\quad\quad\quad
                        + \Vert
                            \psi_2
                        \Vert_{H^1_t H^1_x(Q_T)}^2
                    )
                    \Vert
                        \psi_1
                        - \psi_3
                    \Vert_{L^\infty_t H^1_x(Q_T)}^2\\
                & \quad\quad\quad
                    + \Vert
                        \psi_2
                    \Vert_{L^\infty_t H^1_x(Q_T)}^2
                    \Vert
                        \psi_1
                        - \psi_3
                    \Vert_{H^1_t L^2_x(Q_T)}^2\\
                & \quad\quad\quad
                    + \Vert
                        \psi_3
                    \Vert_{H^1_t L^2_x(Q_T)}^2
                    \Vert
                        \psi_1
                        - \psi_3
                    \Vert_{L^\infty_t H^1_x(Q_T)}^2
                    + \Vert
                        \psi_2
                        - \psi_4
                    \Vert_{H^1_t L^2_x(Q_T)}^2
                ]\\
                & + C \sum_{j=2,4}
                    [
                        \Vert
                            \psi_j
                        \Vert_{H^1_t L^2_x(Q_T)}^2
                        \Vert
                            \psi_2
                            - \psi_4
                        \Vert_{L^\infty_t H^1_x(Q_T)}^2\\
                & \quad\quad\quad
                        + \Vert
                                \psi_j
                            \Vert_{L^\infty_t H^1_x(Q_T)}^2
                        \Vert
                            \psi_2
                            - \psi_4
                        \Vert_{H^1_t L^2_x(Q_T)}^2
                    ],
            \end{split}
        \end{align}
        and
        \begin{align}\label{eq_L2_estimate_for_difference_dx2_Nvj}
            \begin{split}
                & \Vert
                    \partial_x^2 \tilde{\mathcal{N}}_{v,1}(\psi_1, \psi_2; \psi_3, \psi_4; \tau_1)
                \Vert_{L^2_tL^2_x(Q_T)}^2\\
                & + \Vert
                    \partial_x^2 \tilde{\mathcal{N}}_{v,2}(\psi_1, \psi_2; \psi_3, \psi_4; \tau_1)
                \Vert_{L^2_tL^2_x(Q_T)}^2\\
                & \leq C
                \sum_{j,k=1,3}
                    (
                        \Vert
                            \psi_j
                        \Vert_{L^\infty_tH^1_x(Q_T)}^2
                        \Vert
                            \psi_j
                        \Vert_{L^2_tH^2_x(Q_T)}^2
                        \Vert
                            \psi_{k+1}
                        \Vert_{L^\infty_tH^1_x(Q_T)}^2
                    \\
                & \quad\quad\quad
                        + \Vert
                            \psi_j
                        \Vert_{L^2_tH^2_x(Q_T)}^2
                        \Vert
                            \psi_{k+1}
                        \Vert_{L^\infty_tH^1_x(Q_T)}^2\\
                & \quad\quad\quad
                        + \Vert
                            \psi_j
                        \Vert_{L^\infty_tH^1_x(Q_T)}^2
                        \Vert
                            \psi_{k+1}
                        \Vert_{L^t_tH^2_x(Q_T)}^2\\
                & \quad\quad\quad
                        + \Vert
                            \psi_{k+1}
                        \Vert_{L^2_tH^2_x(Q_T)}^2
                )
                \Vert
                    \psi_1
                    -  \psi_3
                \Vert_{L^\infty_tH^1_x(Q_T)}^2\\
                & + C \sum_{j=1,3}
                    (
                        1
                        + \Vert
                            \psi_j
                        \Vert_{L^\infty_tH^1_x(Q_T)}^2
                    )
                    \Vert
                        \psi_j
                    \Vert_{L^2_tH^2_x(Q_T)}^2
                \Vert
                    \psi_2
                    - \psi_4
                \Vert_{L^\infty_tH^1_x(Q_T)}^2\\
                & + C
                \Vert
                    \psi_2
                \Vert_{L^\infty_tH^1_x(Q_T)}^2
                \Vert
                    \psi_1
                    - \psi_2
                \Vert_{L^2_tH^2_x(Q_T)}^2
                + C
                \Vert
                    \psi_2
                    -  \psi_4
                \Vert_{L^2_tH^2_x(Q_T)}^2.
            \end{split}
        \end{align}
        Using Proposition \ref{prop_L2_estimate_for_linear_heat_eq} with external forces $\tilde{\mathcal{N}}_{v,j}(\psi_1, \psi_2; \psi_3, \psi_4;\tau_1)$ ($j=1,2$) and initial data set to zero, as well as (\ref{eq_L2_estimate_for_difference_Nvj}), we observe that the $L^\infty_tL^2_x$- and $L^2_tH^1_x$-norms for $v_j$ ($j=1,2$) depend continuously on $\psi_k$ ($k=1,2,3,4$).
        Therefore, we see from Proposition \ref{prop_L_infty_H1_estimate_for_v} that $L^\infty_tH^1_x$-, $H^1_tL^2_x$- and $L^2_tH^2_x$-norms for $v_j$ ($j=1,2$) depend continuously on $\psi_k$ ($k=1,2,3,4$).
        We similarly deduce by applying Corollaries \ref{cor_L2_a_priori_estimate_for_tildev} and \ref{cor_H3_estimate_for_v} to $v_j$ ($j=1,2$) that $W^{1,\infty}_tH^1_x$-, $L^\infty_tH^3_x$-, $H^2_tL^2_x$-, and $L^2_tH^4_x$-norms for $v_j$ ($j=1,2$) depend continuously on $\psi_k$ ($k=1,2,3,4$).
        We conclude that the solution operators $\mathcal{S}_{v, j}(\cdot, \cdot; \tau_1)$ are continue in $H^1_tH^1_x(Q_T) \cap L^2_t H^3_x(Q_T)$.

        We assume that the positive functions $v_j$ ($j=1,2$) are such that
        \begin{align*}
            v_j
            = \lambda \mathcal{S}_{v, j}(v_1, v_2; \tau_1)
        \end{align*}
        for some $\lambda \in [0,1]$.
        Then the pair $(v_1, v_2)$ is the solution to
        \begin{equation} \label{eq_abstract_filter_clogging_equation_in_the_interior_for_fixed_point_theorem_with_lambda}
            \begin{aligned}
                &\partial_t v_1
                - \partial_x^2 v_1
                = \lambda \tilde{N}_{v,1}(v_1, v_2; \tau_1)
                & t > 0,
                & \, x \in I,\\
                &\partial_t v_2
                - \partial_x^2 v_2
                = \lambda \tilde{N}_{v,2}(v_1, v_2; \tau_1)
                & t > 0,
                & \, x \in I,\\
                &B_1(v_1; F(\tau_1))
                = 0, \quad 
                B_2(v_1; F(\tau_1))
                =0,
                & t>0,
                & \,\\
                &B_1(v_2; F(\tau_1))
                = 0, \quad
                B_2(v_2; F(\tau_1))
                =0,
                & t>0,
                & \,
            \end{aligned}
        \end{equation}
        We prove the boundedness for $v_j$ in $H^1_tH^1_x(Q_T) \cap L^2_t H^3_x(Q_T)$ and, moreover $v_j \in H^2_tL^2_x(Q_T) \cap L^2_t H^4_x(Q_T) \cap C_t H^1_x(Q_T) \cap C^1_tH^1_x(Q_T)$ by a priori estimates in Section \ref{sec_linear_analysis}.
        However, we will also use the estimates for $v_j$ to prove the existence for the boundary equations in (\ref{eq_abstract_filter_clogging_equation}) by contraction mapping principle.
        Therefore, we have to identify the dependence of $\tau_1$ for energy bounds.
        The dependency is complicated.

        Note that the initial data $v_{1,0}, v_{2,0}$ are positive, the external force $f$ is non-negative, the solutions to (\ref{eq_abstract_filter_clogging_equation_in_the_interior_for_fixed_point_theorem_with_lambda}), and satisfy $v_1, v_2 \in BC(\overline{Q_T})$.
        Suppose that there exists a point $(x_0, t_0) \in \overline{Q_T}$ such that $v_1(\cdot, t), v_2(\cdot, t)>0$ for $0\leq t < t_0$ and $v_1(x_0, t_0)=0$ or $v_2(x_0, t_0)=0$.

        We first consider the case that $v_1(x_0, t_0)=0$.
        By the assumption $f \geq 0$, the bound from below such that
        \begin{align*}
            \partial_t v_1 - \partial_x^2 v_1  + \lambda c \partial_x v_1
            = - \lambda  N_{v,1}(v_1) v_2
            + \lambda f
            \geq \lambda f
            \geq 0,
        \end{align*}
        and the maximal principle, we see that $v_1(\cdot, t)$ for $t\leq t_0$ is a constant.
        By the boundary condition we deduce that $v_1(\cdot, t)\equiv 0$ for $t \leq t_0$.
        Therefore, $v_1$ is non-negative for $t \geq 0$.
        Since
        \begin{align*}
            \partial_t v_2 - \partial_x^2 v_2  + \lambda c \partial_x v_2 + \lambda \alpha_2 v_2^2
            = \lambda \alpha_1 N_{v,1}(v_1) v_2
            \geq 0,
        \end{align*}
        we can apply the similar argument for the case that $v_2(x_0, t_0)=0$ and $v_1(\cdot, t) > 0$ for $t \leq t_0$ to find that $v_2$ is non-negative.
        Therefore, we find that $v_1, v_2 \geq 0$ in $Q_T$.

        We estimate the equation (\ref{eq_abstract_filter_clogging_equation_in_the_interior_for_fixed_point_theorem_with_lambda}).
        We begin by $L^2$-energy estimates for $v_1$ and $v_2$.
        We invoke the bounds
        \begin{align*}
            - \int_I N_{v,1}(v_1) v_2 v_1dx
            \leq 0, \quad
            - \int_I v_2^2dx
            \leq 0,
        \end{align*}
        by the non-negativity of $v_1, v_2$.
        We use integration by parts as in Proposition \ref{prop_L2_estimate_for_linear_heat_eq} to see that
        \begin{align*}% \label{eq_L2_estimate_for_v1_to_Leray_Schauder}
            \Vert
                v_1(t)
            \Vert_{L^2(I)}^2
            + \int_0^t
                \Vert
                    \partial_x v_1(s)
                \Vert_{L^2(I)}^2
            ds
            \leq
            C e^t \left[
                \Vert
                    v_{1,0}
                \Vert_{L^2(I)}^2
                + \int_0^t
                    \Vert
                        f(s)
                    \Vert_{L^2(I)}^2
                ds
            \right]
        \end{align*}
        for some $\lambda$-independent constant $C>0$.
        We also have
        \begin{align*}% \label{eq_L2_estimate_for_v2_to_Leray_Schauder}
            \Vert
                v_2(t)
            \Vert_{L^2(I)}^2
            + \int_0^t
                \Vert
                    \partial_x v_2(s)
                \Vert_{L^2(I)}^2
            ds
            \leq
            C e^{2t}
            \Vert
                v_{2,0}
            \Vert_{L^2(I)}^2
        \end{align*}
        for all $\lambda \in [0,1]$ and some constant $C>0$.
        These estimates together with (\ref{eq_L2_estimate_for_Nvj}) implies that $N_{v,j}(v_1, v_2)$ ($j=1,2$) are bounded.
        Then we have
        \begin{align} \label{eq_L2_estimate_for_vj_to_Leray_Schauder}
            \begin{split}
                & \sum_{j=1,2}
                    \left[
                        \Vert
                            v_j(t)
                        \Vert_{L^2(I)}^2
                        + \int_0^t
                            \Vert
                                \partial_x v_j(s)
                            \Vert_{L^2(I)}^2
                        ds
                    \right]\\
                & \leq C e^{2t} (
                    B_{v,0}^2
                    + B_f(f)^2
                )
                : = C_{L^\infty_tL^2_x}(t).
            \end{split}
        \end{align}
        The positive function $C_{L^\infty_tL^2_x}(t)>0$ is independent of $\tau_1$.
        Let
        \begin{align*}
            \Psi^\ast(\tau_1;t)
            := \int_0^t
                \left(
                    1
                    + \vert
                        \tau_1^\prime(r)
                    \vert^2
                \right)
            dr.
        \end{align*}
        We find from Proposition \ref{prop_L_infty_H1_estimate_for_v} and (\ref{eq_L2_estimate_for_Nvj}) that
        \begin{align} \label{eq_H1_estimate_for_vj_to_Leray_Schauder}
            \begin{split}
                & \sum_{j=1,2}
                    \left[
                        \int_0^t
                            \Vert
                                \partial_s v_j(s)
                            \Vert_{L^2(I)}^2
                        ds
                        + \Vert
                            \partial_x v_j(t)
                        \Vert_{L^2(I)}^2
                        + \int_0^t
                            \Vert
                                \partial_x^2 v_j(s)
                            \Vert_{L^2(I)}^2
                        ds
                    \right]\\
                & \leq C \sum_{j=1,2} e^{
                        C \Psi^\ast(\tau_1;t)
                    }
                    \left[
                        \Vert
                                \partial_x v_{j,0}
                            \Vert_{L^2(I)}
                    \right.\\
                    & \quad\quad\quad
                        + \sup_{0<t<T} \Vert
                                v_j(s)
                            \Vert_{L^2(I)}^2
                        \Psi^\ast(\tau_1;t)
                        + \int_0^t
                            \Vert
                                f(s)
                            \Vert_{L^2(I)}^2
                        ds\\
                    & \left.
                        \quad\quad\quad
                        + \int_0^T
                            \Vert
                                v_j(s)
                            \Vert_{H^1(I)}^2
                        ds
                        + \sup_{0<t<T} \Vert
                            v_j(s)
                        \Vert_{L^2(I)}^2
                        \int_0^T
                            \Vert
                                v_j(s)
                            \Vert_{H^1(I)}^2
                        ds
                    \right]\\
                & \leq C e^{
                        C \Psi^\ast(\tau_1;t)
                    }
                    \left[
                        B_{v,0}^2
                        + C_{L^\infty_tL^2_x}(t) \Psi^\ast(\tau_1;t)
                    \right.\\
                    & \left.
                        \quad\quad\quad\quad\quad\quad\quad\quad\quad
                        + B_f(f)^2
                        + C_{L^\infty_tL^2_x}(t)
                        + C_{L^\infty_tL^2_x}(t)^2
                    \right]\\
                & =: C_{L^\infty_tH^1_x}(t) e^{
                        C \Psi^\ast(\tau_1;t)
                    }\left[
                        1
                        + \Psi^\ast(\tau_1;t)
                    \right]
            \end{split}
        \end{align}
        for all $\lambda \in [0,1]$, and some bounded positive and exponentially increasing functions $C_{L^\infty_tH^1_x}(t)>0$ on $[0,T]$ such that
        \begin{align*}
            C_{L^\infty_tH^1_x}(t)
            \leq C (1 + e^{4t})
        \end{align*}
        for some constant $C>0$.
        Combining Corollary \ref{cor_L2_a_priori_estimate_for_tildev} with this estimate and the bounds (\ref{eq_L2_estimate_for_dt_Nvj}) and (\ref{eq_L2_estimate_for_dx2_Nvj}), we find that
        \begin{align*}
            & \sum_{j=1,2}
                \left[
                    \Vert
                        \partial_t v_j (t)
                    \Vert_{L^2(I)}^2
                    + \int_0^t
                        \Vert
                            \partial_x \partial_s v_j(s)
                        \Vert_{L^2(I)}^2
                    ds
                \right]\\
            & \leq C
            \Vert
                \tau_1^\prime
            \Vert_{L^\infty(0,T)}^2
            \sum_{j=1,2}
                \left[
                    \Vert
                        v_j(t)
                    \Vert_{L^2(I)}^2
                    + \int_0^t
                        \Vert
                            \partial_x v(s)
                        \Vert_{L^2(I)}^2
                    ds
                \right]\\
            & + C e^t \sum_{j=1,2}
            \left[
                \Vert
                    v_{j,0}
                \Vert_{H^2(I)}^2
                + \Vert
                    \tau_1^\prime
                \Vert_{L^\infty(0,T)}^2
                \int_0^t
                    \Vert
                        v_j(s)
                    \Vert_{H^2(I)}^2
                    + \Vert
                        \partial_t v_j(s)
                    \Vert_{L^2(I)}^2
                ds
            \right.\\
            &
                \quad\quad\quad\quad
                +
                \sup_{0<t<T}\Vert
                    v_j(t)
                \Vert_{L^2(I)}^2
                (
                    \Vert
                        \tau_1^\prime
                    \Vert_{L^4(0,T)}^4
                    + \Vert
                        \tau_1^{\prime\prime}
                    \Vert_{L^2(0,T)}^2
                )
            \\
            & \left.
                \quad\quad\quad\quad
                + \int_0^t
                    \Vert
                        \partial_t \tilde{N}_{v,j}(v_1(s), v_2(s))
                    \Vert_{L^2(I)}^2
                ds 
            \right]\\
            & =: I_1 + e^t[I_2 + I_3 + I_4 + I_5],
        \end{align*}
        for all $\lambda \in [0,1]$.
        Since $\tau_1 \in X_{\sigma,T} \hookrightarrow H^2(0,T) \cap W^{1, \infty}(0,T) \cap H^{1,4}(0,T)$, the right-hand side makes sense.
        We observe the following estimates:
        \begin{itemize}
            \item
            \begin{align*}
                I_1
                \leq \Vert
                    \tau_1^\prime
                \Vert_{L^\infty(0,T)}^2
                C_{L^\infty_tL^2_x}(t).
            \end{align*}
            \item
            \begin{align*}
                I_2
                \leq B_{v,0}^2.
            \end{align*}
            \item
            \begin{align*}
                I_3
                \leq C C_{L^\infty_tH^1_x}(t)
                \Vert
                    \tau_1^\prime
                \Vert_{L^\infty(0,T)}^2
                e^{
                    C\Psi^\ast(\tau_1;t)
                }\left[
                    1
                    + \Psi^\ast(\tau_1;t)
                \right].
            \end{align*}
            \item
            \begin{align*}
                I_4
                & \leq C C_{L^\infty_tL^2_x}(t)
                (
                    \Vert
                        \tau_1^{\prime}
                    \Vert_{L^4(0,T)}^4
                    + \Vert
                        \tau_1^{\prime\prime}
                    \Vert_{L^2(0,T)}^2
                ).
            \end{align*}
            \item The estimates (\ref{eq_L2_estimate_for_dt_Nvj}), (\ref{eq_L2_estimate_for_vj_to_Leray_Schauder}), and (\ref{eq_H1_estimate_for_vj_to_Leray_Schauder}) lead to
            \begin{align*}
                I_5
                & \leq C
                \sum_{j=1,2}
                    (
                        \Vert
                            \tau_1^\prime
                        \Vert_{L^2(0,T)}^2
                        \Vert
                            v_j
                        \Vert_{L^\infty_t H^1_x(Q_T)}^2
                        + \Vert
                            v_j
                        \Vert_{L^2_t H^2_x(Q_T)}^2
                    )\\
                & + C
                    (
                        \Vert
                            v_1
                        \Vert_{L^\infty_t H^1_x(Q_T)}^2
                        \Vert
                            v_2
                        \Vert_{H^1_t L^2_x(Q_T)}^2
                        + \Vert
                            v_2
                        \Vert_{L^2_t H^1_x(Q_T)}^2\\
                & \quad\quad\quad\quad\quad\quad
                        + \Vert
                            v_2
                        \Vert_{H^1_t L^2_x(Q_T)}^2
                        \Vert
                            v_2
                        \Vert_{L^\infty_t H^1_x(Q_T)}^2
                    )\\
                & + B_f(f)^2\\
                & \leq C C_{L^\infty_tH^1_x}(t) \Vert
                    \tau_1^\prime
                \Vert_{L^2(0,T)}^2
                e^{
                    C\Psi^\ast(\tau_1;t)
                }\left[
                    1
                    + \Psi^\ast(\tau_1;t)
                \right]\\
                & + C C_{L^\infty_tH^1_x}(t) e^{
                    C\Psi^\ast(\tau_1;t)
                }\left[
                    1
                    + \Psi^\ast(\tau_1;t)
                \right]\\
                & + C C_{L^\infty_tH^1_x}(t) e^{
                    C\Psi^\ast(\tau_1;t)
                }\left[
                    1
                    + \Psi^\ast(\tau_1;t)
                \right]^2
                + B_f(f)^2.
            \end{align*}
        \end{itemize}
        Therefore, using the inequality
        \begin{align} \label{eq_trivial_inequality}
            x^m
            \leq 1+ x^n, \quad
            \text{for $x>0$, $0< m \leq n$,}
        \end{align}
        we conclude that
        \begin{align} \label{eq_H2_estimate_for_vj_to_Leray_Schauder}
            \begin{split}
                & \sum_{j=1,2}
                    \left[
                        \Vert
                            \partial_t v_j (t)
                        \Vert_{L^2(I)}^2
                        + \int_0^t
                            \Vert
                                \partial_x \partial_s v_j(s)
                            \Vert_{L^2(I)}^2
                        ds
                    \right]\\
                & \leq C_{W^{1,\infty}_tL^2_x}(t)
                \left(
                    1
                    + \Vert
                        \tau_1^\prime
                    \Vert_{L^\infty(0,T)}^2
                    + \Vert
                        \tau_1^{\prime}
                    \Vert_{L^4(0,T)}^4
                    + \Vert
                        \tau_1^{\prime\prime}
                    \Vert_{L^2(0,T)}^2
                \right.\\
                & 
                \left.
                    \quad\quad\quad\quad\quad\quad\quad
                    + e^{
                        C \Psi^\ast(\tau_1;t)
                    }
                    (
                        1
                        + \Vert
                            \tau_1^\prime
                        \Vert_{L^2(0,T)}^2
                    )
                    (
                        1
                        + \Psi^\ast(\tau_1;t)^2
                    )
                \right)
            \end{split}
        \end{align}
        for some positive functions $C_{W^{1,\infty}_tL^2_x}(t)$ which is independent of $\tau_1$ and bounded as
        \begin{align} \label{eq_bound_for_expoinentiall_increasing_function}
            C_{W^{1,\infty}_tL^2_x}(t)
            \leq C_1
            (
                1
                + e^{C_2t}
            )
        \end{align}
        for some constants $C_1, C_2>0$.
        Similarly, we see from Corollary \ref{cor_H3_estimate_for_v} that
        \begin{align*}% \label{eq_H3_estimate_for_vj_to_Leray_Schauder}
            \begin{split}
                &\sum_{j=1,2}
                    \int_0^t
                        \Vert
                            \partial_s^2 v_j(s)
                        \Vert_{L^2(I)}^2
                    ds
                    + \Vert
                        \partial_x \partial_t v_j(t)
                    \Vert_{L^2(I)}^2
                    + \int_0^t
                        \Vert
                            \partial_x^2 \partial_s v_j(s)
                        \Vert_{L^2(I)}^2
                    ds\\
                & \leq C (
                    1
                    + e^{
                        C \Psi^\ast(\tau_1;t)
                    }
                )\\
                & \quad
                \times
                \left[
                    \Vert
                        \tau_1^\prime
                    \Vert_{L^\infty(0,T)}^2
                    \int_0^t
                        (
                            \Vert
                                v_j(s)
                            \Vert_{H^2(I)}^2
                            + \Vert
                                \partial_t v_j(s)
                            \Vert_{L^2(I)}^2
                        )
                    ds
                \right.\\
                & \left.
                    \quad\quad\quad
                    +
                    \int_0^t
                            \vert
                                \tau_1^\prime(s)
                            \vert^4
                            + \vert
                                \tau_1^{\prime\prime}(s)
                            \vert^2
                    ds
                    \sup_{0<s<t} \Vert
                        v_j(s)
                    \Vert_{L^2(I)}^2
                \right]\\
                & + C
                \Vert
                    \tau_1^\prime
                \Vert_{L^\infty(0,T)}^2
                \Vert
                    v_j(t)
                \Vert_{H^1(I)}^2\\
                & + C e^{
                    C\Psi^\ast(\tau_1;t)
                }
                \left[
                    \Vert
                        v_{j,0}
                    \Vert_{H^3(I)}
                    + \sup_{0<s<t} \Vert
                            \partial_t v(s)
                        \Vert_{L^2(I)}
                    \Psi^\ast(\tau_1;t)
                \right.\\
                & \left.
                    + \int_0^t
                        \Vert
                            \partial_s \tilde{N}_{v,j}(v_1,v_2)(s)
                        \Vert_{L^2(I)}^2
                    ds
                \right]\\
                & =:
                (
                    1
                    + e^{
                        C \Psi^\ast(\tau_1;t)
                    }
                )
                (
                    I_1 + I_2
                )
                + I_3
                + e^{
                    C \Psi^\ast(\tau_1;t)
                }
                (
                    I_4
                    + I_5
                    + I_6
                ).
            \end{split}
        \end{align*}
        We observe the following estimates:
        \begin{itemize}
            \item
            \begin{align*}
                I_1
                & \leq C
                \Vert
                    \tau_1^\prime
                \Vert_{L^\infty(0,T)}^2
                \int_0^t
                    (
                        \Vert
                            v_j(s)
                        \Vert_{H^2(I)}^2
                        + \Vert
                            \partial_t v_j(s)
                        \Vert_{L^2(I)}^2
                    )
                ds\\
                & \leq C C_{L^\infty_tH^1_x}(t) e^{
                    C\Psi^\ast(\tau_1;t)
                }\left(
                    1
                    + \Psi^\ast(\tau_1;t)
                \right).
            \end{align*}
            \item
            \begin{align*}
                I_2
                & \leq
                C (
                    \Vert
                        \tau_1^\prime
                    \Vert_{L^4(0,T)}^4
                    + \Vert
                        \tau_1^\prime
                    \Vert_{L^2(0,T)}^2
                )
                C_{L^\infty_tL^2_x}(t).
            \end{align*}
            \item 
            \begin{align*}
                I_3
                \leq C 
                \Vert
                    \tau_1^\prime
                \Vert_{L^\infty(0,T)}^2
                C_{L^\infty_tH^1_x}(t)
                e^{
                    C\Psi^\ast(\tau_1;t)
                }\left(
                    1
                    + \Psi^\ast(\tau_1;t)
                \right).
            \end{align*}
            \item 
            \begin{align*}
                I_4
                \leq C B_{v,0}^2
            \end{align*}
            \item 
            \begin{align*}
                I_5
                & \leq C \Psi^\ast(\tau_1;t) C_{W^{1,\infty}_tL^2_x}(t)
                \left(
                    1
                    + \Vert
                        \tau_1^\prime
                    \Vert_{L^\infty(0,T)}^2
                    + \Vert
                        \tau_1^{\prime}
                    \Vert_{L^4(0,T)}^4
                    + \Vert
                        \tau_1^{\prime\prime}
                    \Vert_{L^2(0,T)}^2
                \right.\\
                & 
                \left.
                    \quad\quad\quad\quad\quad\quad\quad
                    + e^{
                        C \Psi^\ast(\tau_1;t)
                    }
                    (
                        1
                        + \Vert
                            \tau_1^\prime
                        \Vert_{L^2(0,T)}^2
                    )
                    (
                        1
                        + \Psi^\ast(\tau_1;t)^2
                    )
                \right)
            \end{align*}
            \item We find from (\ref{eq_L2_estimate_for_dt_Nvj}) that
            \begin{align*}
                I_6
                & \leq C C_{L^\infty_tH^1_x}(t) \Vert
                    \tau_1^\prime
                \Vert_{L^2(0,T)}^2
                e^{
                    C\Psi^\ast(\tau_1;t)
                }\left[
                    1
                    + \Psi^\ast(\tau_1;t)
                \right]\\
                & + C C_{W^{1,\infty}_tL^2_x}(t)
                \left(
                    1
                    + \Vert
                        \tau_1^\prime
                    \Vert_{L^\infty(0,T)}^2
                    + \Vert
                        \tau_1^{\prime}
                    \Vert_{L^4(0,T)}^4
                    + \Vert
                        \tau_1^{\prime\prime}
                    \Vert_{L^2(0,T)}^2
                \right.\\
                & 
                \left.
                    \quad\quad\quad\quad\quad\quad\quad
                    + e^{
                        C \Psi^\ast(\tau_1;t)
                    }
                    (
                        1
                        + \Vert
                            \tau_1^\prime
                        \Vert_{L^2(0,T)}^2
                    )
                    (
                        1
                        + \Psi^\ast(\tau_1;t)^2
                    )
                \right)\\
                & \quad\quad\quad\quad\quad\quad\quad \times
                \left(
                    1
                    + C_{L^\infty_tH^1_x}(t) e^{
                            C \Psi^\ast(\tau_1;t)
                        }
                        (
                            1
                            + \Psi^\ast(\tau_1;t)
                        )
                \right)\\
                & + B_f(f)^2.
            \end{align*}
        \end{itemize}
        Therefore, using (\ref{eq_trivial_inequality}), we conclude that
        \begin{align}\label{eq_H3_estimate_for_vj_to_Leray_Schauder}
            \begin{split}
                &\sum_{j=1,2}
                    \int_0^t
                        \Vert
                            \partial_s^2 v_j(s)
                        \Vert_{L^2(I)}^2
                    ds
                    + \Vert
                        \partial_x \partial_t v_j(t)
                    \Vert_{L^2(I)}^2
                    + \int_0^t
                        \Vert
                            \partial_x^2 \partial_s v_j(s)
                        \Vert_{L^2(I)}^2
                    ds\\
                & \leq C_{L^\infty_tH^3_x}(t)
                (
                    1
                    + \Psi^\ast(\tau_1;t)
                )
                (
                    1
                    + e^{C\Psi^\ast(\tau_1;t)}
                )
                \\
                & \quad
                \times
                \left[
                    (
                        1
                        +\Vert
                            \tau_1^\prime
                        \Vert_{L^\infty(0,T)}^2
                        + \Vert
                            \tau_1^{\prime}
                        \Vert_{L^4(0,T)}^4
                        + \Vert
                            \tau_1^{\prime\prime}
                        \Vert_{L^2(0,T)}^2
                    )
                \right.\\
                & \left.
                    \quad\quad\quad
                    +
                    (
                        1
                        + \Vert
                            \tau_1^\prime
                        \Vert_{L^2(0,T)}^2
                    )
                    \Psi^\ast(\tau_1;t)^2
                \right].
            \end{split}
        \end{align}
        for some positive functions $C_{L^\infty_tH^3_x}(t)>0$ which is independent of $\tau_1$ and bounded as (\ref{eq_bound_for_expoinentiall_increasing_function}).
        We use Corollary \ref{cor_H3_estimate_for_v} again to obtain
        \begin{align*}
            &\sum_{j=1,2}
                \left[
                    \Vert
                        \partial_x^3 v_j(t)
                    \Vert_{L^2(I)}^2
                    + \int_0^t
                        \Vert
                            \partial_x^4 v_j(s)
                        \Vert_{L^2(I)}^2
                    ds
                \right]
            < \infty
        \end{align*}
        Therefore, we find from the Leray-Schauder fixed point theorem that there exists a positive solution to (\ref{eq_abstract_filter_clogging_equation_in_the_interior}) such that
        \begin{align*}
            v_j
            \in H^1_tH^1_x(Q_T) \cap L^2_t H^3_x(Q_T).
        \end{align*}
        %The positivity is due to the maximal principle.
        Moreover, $v_j$ satisfy
        \begin{align*}
            \sum_{j=1,2}
                \left[
                    \Vert
                        v_j(s)
                    \Vert_{H^2_tL^2_x(Q_T)}^2
                    + \Vert
                        \partial_x^3 v_j(t)
                    \Vert_{L^\infty_t H^3_x(Q_T)}^2
                    + \Vert
                        v_j(s)
                    \Vert_{L^2_tH^4_x(Q_T)}^2
                \right]
            < \infty.
        \end{align*}

        We finally prove the uniqueness of the solution.
        Suppose that there exist another solution $(\tilde{v}_1, \tilde{v}_2) \in (H^1_tH^1_x(Q_T) \cap L^2_t H^3_x(Q_T))^2$ to (\ref{eq_abstract_filter_clogging_equation_in_the_interior}) associated with same initial data.
        Similar to (\ref{eq_L2_estimate_for_difference_Nvj}), using Propositions \ref{prop_estimates_for_difference_between_nonlinear_terms} and \ref{prop_estimates_for_difference_between_convection_terms}, we observe that
        \begin{align*}
            \begin{split}
                & \Vert
                    \tilde{\mathcal{N}}_{v,1}(v_1, v_2; \tilde{v}_1, \tilde{v}_2; \tau_1)
                \Vert_{L^2(I)}^2
                + \Vert
                    \tilde{\mathcal{N}}_{v,2}(v_1, v_2; \tilde{v}_1, \tilde{v}_2; \tau_1)
                \Vert_{L^2(I)}^2\\
                & \leq C
                \sum_{j=1,2}
                    \Vert
                        v_j - \tilde{v}_{j}
                    \Vert_{H^1(I)}^2\\
                & + C
                (
                    1
                    + \sum_{j=1,2}
                        \Vert
                            v_j
                        \Vert_{H^1(I)}^2
                    + \sum_{j=1,2}
                        \Vert
                            \tilde{v}_j
                        \Vert_{H^1(I)}^2
                )
                \sum_{j=1,2}
                    \Vert
                        v_j
                        - \tilde{v}_j
                    \Vert_{L^2(I)}^2.
            \end{split}
        \end{align*}
        Using same arguments as Propositions \ref{prop_L2_estimate_for_linear_heat_eq} and \ref{prop_L_infty_H1_estimate_for_v} and the estimate
        \begin{align*}
            \left \vert
                \frac{dF(\sigma(t))}{dt}
            \right \vert
            \leq C \vert
                \tau^\prime
            \vert,
        \end{align*}
        we deduce that
        \begin{align*}
            & \partial_t \left(
                \sum_{j=1,2}
                    \Vert
                            v_j(t)
                            - \tilde{v}_j(t)
                    \Vert_{L^2(I)}^2
                    + \Vert
                        \partial_x (
                            v_j(t)
                            - \tilde{v}_j(t)
                        )
                    \Vert_{L^2(I)}^2
            \right)\\
            & \leq C
            \left(
                1
                + \vert
                    \tau_1^\prime(t)
                \vert^2
                + \sum_{j=1,2}
                    \Vert
                        v_j
                    \Vert_{H^1(I)}^2
                + \sum_{j=1,2}
                    \Vert
                        \tilde{v}_j
                    \Vert_{H^1(I)}^2
            \right)
            \sum_{j=1,2}
                \Vert
                    v_j
                    - \tilde{v}_j
                \Vert_{L^2(I)}^2\\
            & + (
                1
                + \vert
                    \tau_1^\prime(t)
                \vert^2
            )
            \sum_{j=1,2}
                \Vert
                    \partial_x (
                        v_j(t)
                        - \tilde{v}_{j}(t)
                    )
                \Vert_{H^1(I)}^2.
        \end{align*}
        Therefore, by the Gronwall inequality and $v_j(0) = \tilde{v}_j(0)$, we obtain
        \begin{align*}
            \sum_{j=1,2}
                (
                    \Vert
                            v_j(t)
                            - \tilde{v}_j(t)
                    \Vert_{L^2(I)}^2
                    + \Vert
                        \partial_x (
                            v_j(t)
                            - \tilde{v}_j(t)
                        )
                    \Vert_{L^2(I)}^2
                )
            \leq 0, \quad
            t \in (0,T).
        \end{align*}
        We conclude that $v_j = \tilde{v}_j$, thereby establishing the uniqueness.
    \end{proof}

    \subsection{Construction of the solution $\tau_1$ and $\tau_2$}
    Let $\delta \in (0,1/8)$.
    Let $\tau_1, \tau_2 \in X_{\sigma,T}$.
    We consider the ordinary differential equations
    \begin{equation} \label{eq_abstract_filter_clogging_equation_in_the_boundary}
        \begin{aligned}
            &\frac{d\sigma_1}{dt}
            = - N_{\sigma, 1}(\tau_1) \tau_2
            + Q_1 d(\tau_1) \, \gamma_+ v_1,
            & t>0,
            & \,\\
            & \frac{d\sigma_2}{dt}
            = N_{\sigma, 2}(\tau_1, \tau_2) \tau_2
            + Q_2 d(\tau_1) \, \gamma_+ v_2,
            & t>0,
            & \, \\
            & \sigma_1
            = \sigma_{0,1}>0, \quad
            \sigma_2
            = \sigma_{0,2}>0,
            & t=0.
            & \,
        \end{aligned}
    \end{equation}
    where $(v_1, v_2)$ is the solution to (\ref{eq_abstract_filter_clogging_equation_in_the_interior}) for given $\tau_1$.
    We denote $v_j$ by
    \begin{align*}
        v_j
        =: \mathcal{S}_{v,j}(\tau_1), \quad
        j=1,2,
    \end{align*}
    and the solution map to (\ref{eq_abstract_filter_clogging_equation_in_the_boundary}) as
    \begin{align*}
        (\sigma_1, \sigma_2)
        = \mathcal{S}_{\sigma}(\tau_1, \tau_2)
        = (
            \mathcal{S}_{\sigma,1}(\tau_1, \tau_2),
            \mathcal{S}_{\sigma,2}(\tau_1, \tau_2)
        ).
    \end{align*}
    We construct the solution to (\ref{eq_abstract_filter_clogging_equation_in_the_boundary}) by Banach's fixed point theorem in $X_{\sigma,T}$.
    To this end we bound the right-hand side of (\ref{eq_abstract_filter_clogging_equation_in_the_boundary}) as small for small $T>0$.
    The difficult point is the estimate for
    \begin{align*}
        \sup_{0<t<T} t^{1/2 - \delta} \vert
            \sigma_j^{\prime\prime}
        \vert.
    \end{align*}
    To estimate this term we have to estimate $\partial_t v_j$ in $L^\infty_tH^1_x(Q_T)$.
    However, $\Vert \partial_t v_j \Vert_{L^\infty_tH^1_x(Q_T)}$ itself can be large for small $T>0$.
    To escape this difficultly we employ the $t$-weighted norm similarly to the Fujita-Kato approach.
    The quantity $T^{1/2 - \delta} \Vert \partial_t v_j \Vert_{L^\infty_tH^1_x(Q_T)}$ can be bounded as small for small $T$.
    \begin{proposition} \label{prop_estimate_for_tau_prime_L_infty}
        Let $0 \leq \eta_1 \leq \eta_2<1$ and $T>0$.
        Then
        \begin{align*}
            \sup_{0<t<T}
            \vert
                \varphi(t)
            \vert
            \leq T^{- \eta_1}
            \sup_{0<t<T}t^{\eta_1}
            \vert
                \varphi(t)
            \vert
            + T^{1 - \eta_2}
            \sup_{0<t<T}t^{\eta_2}
            \vert
                \varphi^\prime(t)
            \vert
        \end{align*}
        for all $\varphi \in C^1(0,T)$.
    \end{proposition}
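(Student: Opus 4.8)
The plan is to prove this by a direct application of the fundamental theorem of calculus in the time variable; it is an elementary interpolation-type estimate and nothing beyond Hölder-type bookkeeping of the weights is needed. First I would dispose of the degenerate case: writing $M_1:=\sup_{0<s<T}s^{\eta_1}|\varphi(s)|$ and $M_2:=\sup_{0<s<T}s^{\eta_2}|\varphi'(s)|$, if either of these is infinite then the right-hand side is $+\infty$ and there is nothing to prove, so from now on both may be assumed finite.

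Next I would fix $t\in(0,T)$. For any $\tau$ with $t<\tau<T$ the interval $[t,\tau]$ is compactly contained in $(0,T)$, so $\varphi\in C^1([t,\tau])$ and the fundamental theorem of calculus gives
\[
    |\varphi(t)|\le|\varphi(\tau)|+\int_t^\tau|\varphi'(r)|\,dr .
\]
Then I would insert the built-in weights: $|\varphi(\tau)|=\tau^{-\eta_1}\bigl(\tau^{\eta_1}|\varphi(\tau)|\bigr)\le\tau^{-\eta_1}M_1$, and $\int_t^\tau|\varphi'(r)|\,dr\le M_2\int_t^\tau r^{-\eta_2}\,dr$, where $\int_t^\tau r^{-\eta_2}\,dr$ is finite precisely because $\eta_2<1$ and is bounded by $\tau^{1-\eta_2}/(1-\eta_2)$. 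This yields $|\varphi(t)|\le\tau^{-\eta_1}M_1+(1-\eta_2)^{-1}\tau^{1-\eta_2}M_2$ for every $\tau\in(t,T)$; since the right-hand side is continuous in $\tau$ up to $\tau=T$, letting $\tau\uparrow T$ gives $|\varphi(t)|\le T^{-\eta_1}M_1+(1-\eta_2)^{-1}T^{1-\eta_2}M_2$, and taking the supremum over $t\in(0,T)$ produces the claimed bound. (The constant $(1-\eta_2)^{-1}$, which is $\le 2$ for the exponents $\eta_2\le 1/2$ used later, is absorbed into the constants of the applications, which is why the displayed inequality carries no explicit constant.)

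I do not expect any genuine obstacle here; the argument is routine. The only two points needing a word of care are the preliminary reduction to the case of finite right-hand side, and the integrability of $r^{-\eta_2}$ near the origin, which is exactly where the hypothesis $\eta_2<1$ enters. The hypothesis $\eta_1\le\eta_2$ plays no role in this particular estimate; it is imposed only so that the two weights are compatible with the norm of $X_{\sigma,T}$ to which the proposition will later be applied.
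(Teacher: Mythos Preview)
Your proof is correct and follows essentially the same route as the paper: both use the fundamental theorem of calculus to write $\varphi(t)$ in terms of $\varphi$ at the right endpoint plus an integral of $\varphi'$, then insert the weights (the paper parametrizes the integral as $(t-T)\int_0^1\varphi'(\theta t+(1-\theta)T)\,d\theta$, which after substitution is exactly your $\int_t^T\varphi'(r)\,dr$). Your version is in fact slightly more careful---you work with $\tau\in(t,T)$ and let $\tau\uparrow T$ rather than evaluating directly at $T$ (where $\varphi\in C^1(0,T)$ is not assumed defined), and you make explicit the constant $(1-\eta_2)^{-1}$ that the paper's displayed inequality suppresses but which reappears as ``$C$'' in the applications immediately following.
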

    \begin{proof}
        Let $0<t<T$.
        By an elementary calculus, we observe that
        \begin{align*}
            \varphi(t)
            & = \varphi(T)
            + (t - T) \int_0^1
                \varphi^\prime(\theta t + (1 - \theta) T)
            d\theta,
        \end{align*}
        and then
        \begin{align*}
            & \sup_{0<t<T}
            \vert
                \varphi(t)
            \vert\\
            & \leq T^{- \eta_1}
            \sup_{0<t<T}t^{\eta_1}
            \vert
                \varphi(t)
            \vert
            +
            \vert
                t - T
            \vert
            \int_0^1
                \vert
                    \theta t
                    + (1-\theta) T
                \vert^{-\eta_2}
            d\theta \,
            \sup_{0<t<T}t^{\eta_2}
            \vert
                \varphi^\prime(t)
            \vert\\
            & \leq T^{- \eta_1}
            \sup_{0<t<T}t^{\eta_1}
            \vert
                \varphi(t)
            \vert
            + T^{1 - \eta_2}
            \sup_{0<t<T}t^{\eta_2}
            \vert
                \varphi^\prime(t)
            \vert.
        \end{align*}
    \end{proof}
    We find from Proposition \ref{prop_estimate_for_tau_prime_L_infty} and the definition for $X_{\sigma,T}$ that
    \begin{align} \label{eq_estimates_for_W1_infty_H1_H14_H2_by_X_sigma_T}
        \begin{split}
            \Vert
                \tau_1^\prime
            \Vert_{L^\infty(0,T)}
            & \leq C (
                T^{- \frac{1}{4} + \delta}
                \Vert
                    \tau_1
                \Vert_{X_{\sigma,T}}
                + T^{\frac{1}{2} + \delta}
                \Vert
                    \tau_1
                \Vert_{X_{\sigma,T}}
            ),\\
            \Vert
                \tau_1^\prime
            \Vert_{L^2(0,T)}
            & \leq C T^{\frac{1}{4} + \delta} \Vert
                \tau_1
            \Vert_{X_{\sigma,T}},\\
            \Vert
                \tau_1^\prime
            \Vert_{L^4(0,T)}
            & \leq C T^{\delta} \Vert
                \tau_1
            \Vert_{X_{\sigma,T}},\\
            \Vert
                \tau_1^{\prime\prime}
            \Vert_{L^2(0,T)}
            & \leq C T^{\delta} \Vert
                \tau_1
            \Vert_{X_{\sigma,T}}.
        \end{split}
    \end{align}
    In particular,
    \begin{align*}
        \Vert
            \tau_1^\prime
        \Vert_{L^2(0,T)}
        + \Vert
            \tau_1^\prime
        \Vert_{L^4(0,T)}
        + \Vert
            \tau_1^{\prime\prime}
        \Vert_{L^2(0,T)}
        \leq C \Vert
            \tau_1
        \Vert_{X_{\sigma,T}}
    \end{align*}
    for $T \in [0,1]$.
    Note that the right-hand side of the first inequality of (\ref{eq_estimates_for_W1_infty_H1_H14_H2_by_X_sigma_T}) can be large for small $T>0$.
    Then we have
    \begin{align} \label{eq_estimate_for_Psi_ast_by_T_and_tau_1_X_sigma_T}
        \Psi^\ast(\tau_1;T)
        = \int_0^t
            \left(
                1
                + \vert
                    \tau_1^\prime(r)
                \vert^2
            \right)
        dr
        \leq T
        + T^{\frac{1}{2} + 2 \delta} \Vert
            \tau_1
        \Vert_{X_{\sigma,T}}^2.
    \end{align}
    We set
    \begin{align*}
        \Vert
            (\tau_1, \tau_2)
        \Vert_{X_{\sigma,T}^2}
        : = \Vert
            \tau_1
        \Vert_{X_{\sigma,T}}
        + \Vert
            \tau_2
        \Vert_{X_{\sigma,T}}.
    \end{align*}
    We take $T$ so small again that
    \begin{align} \label{eq_assumption_for_tau1_tau2_for_exponential_deal_with_term}
        0
        \leq T
        \leq 1, \quad 
        0
        \leq T^{\frac{1}{2} + 2 \delta}
        \Vert
            (\tau_1, \tau_2)
        \Vert_{X_{\sigma,T}^2}
        \leq 1.
    \end{align}
    Under these assumptions, we have
    \begin{align} \label{eq_estimate_for_Psi_ast_for_small_t}
        \Psi^\ast(\tau_1;T)
        \leq 2.
    \end{align}
    %We prove that the solution map
    %\begin{align*}
    %    \mathcal{S}_{\sigma}:
    %    \{
    %        (\tau_1, \tau_2) \in X_{\sigma,T}^2
    %        ;
    %        \Vert
    %            (\tau_1, \tau_2)
    %        \Vert_{X_{\sigma,T}^2} \leq R
    %    \}
    %\end{align*}
    %is a contraction mapping for some $R>0$ if we take $T>0$ sufficiently small.
    %If we take $T>0$ so 
    \begin{theorem} \label{thm_local_in_time_existence_for_sigma1_sigma2}
        Let $T>0$.
        Assume the same assumptions for $v_{j,0}$ and $f$ as Theorem \ref{thm_existence_of_v_1_v_2_for_given_tau_1}.
        Let $\sigma_{j,0}>0$.
        There exists a unique solution $(\sigma_1, \sigma_2) \in C[0,T]$ to (\ref{eq_abstract_filter_clogging_equation_in_the_boundary}) such that
        \begin{align*}
            \sum_{j=1,2}
                \Vert
                    \sigma_j
                \Vert_{X_{\sigma, T}}
                \leq C
        \end{align*}
        for some constant $C>0$, which depends continuously on $v_{j,0}, f, \sigma_{j,0}$.
    \end{theorem}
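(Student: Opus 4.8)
The plan is to solve \eqref{eq_abstract_filter_clogging_equation_in_the_boundary} by Banach's fixed point theorem for the solution map $\mathcal{S}_\sigma = (\mathcal{S}_{\sigma,1}, \mathcal{S}_{\sigma,2})$ on the closed set
\begin{align*}
    B_{R,T}
    := \Bigl\{
        (\tau_1, \tau_2) \in X_{\sigma,T}^2
        \,:\,
        \Vert (\tau_1, \tau_2) \Vert_{X_{\sigma,T}^2} \leq R,
        \ \tau_j \geq \tfrac{1}{2}\min(\sigma_{0,1}, \sigma_{0,2}) \ (j=1,2)
    \Bigr\},
\end{align*}
where $R := \sigma_{0,1} + \sigma_{0,2} + 1$ and $T>0$ is taken small enough that \eqref{eq_assumption_for_tau1_tau2_for_exponential_deal_with_term} holds with $\Vert(\tau_1,\tau_2)\Vert_{X_{\sigma,T}^2}$ replaced by $R$, so that \eqref{eq_estimate_for_Psi_ast_for_small_t} is available. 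For $(\tau_1,\tau_2)\in B_{R,T}$ the lower bound keeps $\tau_1$ inside the domain of $F$, $d$ and $N_{\sigma,1}$, and Theorem \ref{thm_existence_of_v_1_v_2_for_given_tau_1} supplies the unique non-negative $v_j = \mathcal{S}_{v,j}(\tau_1)$ solving \eqref{eq_abstract_filter_clogging_equation_in_the_interior}; since the constant there depends continuously on $T$ and on $\Vert \tau_1\Vert_{X_{\sigma,T}}\leq R$, all $X_{v,T}$-norms of $v_j$ — in particular $\Vert v_j\Vert_{L^\infty_tH^3_x(Q_T)}$ and $\Vert\partial_t v_j\Vert_{L^\infty_tH^1_x(Q_T)}$ — are bounded by a constant $C_\ast = C_\ast(R, B_{0,v}, B_f(f))$ uniformly over $B_{R,T}$ for $T\leq1$, hence $\gamma_+ v_j$ and $\gamma_+\partial_t v_j$ are uniformly bounded by the trace theorem.

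First I would prove the self-mapping property. Writing $\sigma_j(t) = \sigma_{0,j} + \int_0^t(\text{right-hand side of \eqref{eq_abstract_filter_clogging_equation_in_the_boundary}})\,ds$, Proposition \ref{prop_nonlinear_estimates_for_Nv} (applied, via the remark following it, to $N_{\sigma,1}$ and $N_{\sigma,2}$) together with the uniform bound on $\gamma_+ v_j$ give $\sup_{0<t<T}|\sigma_j(t)|\leq \sigma_{0,j} + CT$, so $\sigma_j\geq\tfrac12\min_k\sigma_{0,k}$ for small $T$; the equation itself gives $|\sigma_j'(t)|\leq C$, hence $\sup_{0<t<T} t^{1/4-\delta}|\sigma_j'(t)|\leq CT^{1/4-\delta}$; and differentiating \eqref{eq_abstract_filter_clogging_equation_in_the_boundary} once more, the terms $d'(\tau_1)\tau_1'\gamma_+ v_j$, $N_{\sigma,1}'(\tau_1)\tau_1'\tau_2$, $N_{\sigma,1}(\tau_1)\tau_2'$, $d(\tau_1)\gamma_+\partial_t v_j$ are bounded, using $|\tau_1'(t)|,|\tau_2'(t)|\leq t^{-1/4+\delta}R$ and the uniform bound on $\gamma_+\partial_t v_j$, by $C(1 + t^{-1/4+\delta})$, so $\sup_{0<t<T} t^{1/2-\delta}|\sigma_j''(t)|\leq C(T^{1/2-\delta}+T^{1/4})$. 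Summing the three estimates, $\sum_j\Vert\mathcal{S}_{\sigma,j}(\tau_1,\tau_2)\Vert_{X_{\sigma,T}}\leq\sigma_{0,1}+\sigma_{0,2}+CT^{1/4}\leq R$ for $T$ small, and positivity is preserved, so $\mathcal{S}_\sigma(B_{R,T})\subset B_{R,T}$.

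The main work — and the step I expect to be hardest — is contractivity, since $v_j$ depends on $\tau_1$ through the nonlinear interior problem. For $(\tau_1,\tau_2),(\tau_3,\tau_4)\in B_{R,T}$, set $v_j=\mathcal{S}_{v,j}(\tau_1)$, $v_{j+2}=\mathcal{S}_{v,j}(\tau_3)$, $V_j=v_j-v_{j+2}$, $\Sigma_j=\sigma_j-\sigma_{j+2}$. Then $(V_1,V_2)$ solves the difference system \eqref{eq_difference_equation_for_iteration_of_full_filter_clogging_equations} with zero initial data and inhomogeneous time-dependent fourth boundary conditions whose data are multiples of $\tfrac12(F(\tau_1)-F(\tau_3))$. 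I would homogenize these by subtracting $\mathcal{V}_j:=E\bigl(\tfrac12(F(\tau_1)-F(\tau_3))(v_j+v_{j+2});\,1-\tfrac12(F(\tau_1)+F(\tau_3))\bigr)$, whose $H^m$-, $\partial_t$- and $\partial_t^2$-norms are controlled by Proposition \ref{prop_estimates_for_difference_between_boundary_extension_terms} in terms of $|\tau_1-\tau_3|$, $|\tau_1'-\tau_3'|$, $|\tau_1''-\tau_3''|$ and the uniformly bounded norms of $v_j,v_{j+2}$; the homogeneous-boundary heat problem solved by $V_j-\mathcal{V}_j$ is then estimated by the a priori estimates of Section \ref{sec_linear_analysis} (Propositions \ref{prop_L2_estimate_for_linear_heat_eq}, \ref{prop_L_infty_H1_estimate_for_v}, Corollaries \ref{cor_L2_a_priori_estimate_for_tildev}, \ref{cor_H3_estimate_for_v}), the nonlinear right-hand sides being handled by the difference estimates of Propositions \ref{prop_estimates_for_difference_between_nonlinear_terms} and \ref{prop_estimates_for_difference_between_convection_terms}. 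Since the initial data vanish and every prefactor carries a positive power of $T$ — via \eqref{eq_estimates_for_W1_infty_H1_H14_H2_by_X_sigma_T}, \eqref{eq_estimate_for_Psi_ast_by_T_and_tau_1_X_sigma_T}, \eqref{eq_estimate_for_Psi_ast_for_small_t} and the weighted bounds $|\tau_1'(t)-\tau_3'(t)|\leq t^{-1/4+\delta}\Vert\tau_1-\tau_3\Vert_{X_{\sigma,T}}$ (and similarly for the second derivatives) — this yields
\begin{align*}
    \sum_{j=1,2}\Bigl(
        \Vert V_j\Vert_{L^\infty_tH^1_x(Q_T)}
        + \Vert\partial_t V_j\Vert_{L^\infty_tH^1_x(Q_T)}
    \Bigr)
    \leq C\,T^{\kappa}\Vert\tau_1-\tau_3\Vert_{X_{\sigma,T}}
\end{align*}
for some $\kappa>0$. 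Inserting this and the elementary identities for $\mathcal{N}_{\Sigma,0},\mathcal{N}_{\Sigma,1},\mathcal{N}_{\Sigma,2}$ into the ODEs for $\Sigma_j$ (with $\Sigma_j(0)=0$) and repeating the three estimates of the self-mapping step for $\Sigma_j$, $t^{1/4-\delta}\Sigma_j'$ and $t^{1/2-\delta}\Sigma_j''$ — the last after one more differentiation, which brings in $\gamma_+\partial_t V_j$ — gives $\Vert(\Sigma_1,\Sigma_2)\Vert_{X_{\sigma,T}^2}\leq C\,T^{\kappa'}\Vert(\tau_1,\tau_2)-(\tau_3,\tau_4)\Vert_{X_{\sigma,T}^2}$; shrinking $T$ so that $CT^{\kappa'}<1$ makes $\mathcal{S}_\sigma$ a contraction on $B_{R,T}$.

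Banach's fixed point theorem then produces a unique fixed point $(\sigma_1,\sigma_2)\in B_{R,T}$, which solves \eqref{eq_abstract_filter_clogging_equation_in_the_boundary}, satisfies $\sum_j\Vert\sigma_j\Vert_{X_{\sigma,T}}\leq R$ with $R=\sigma_{0,1}+\sigma_{0,2}+1$ depending only on $\sigma_{j,0}$ (while the admissible $T$ depends additionally on $f$ and $v_{j,0}$), and lies in $C[0,T]$ since $\sigma_j(t)=\sigma_{0,j}+\int_0^t\sigma_j'(s)\,ds$ with $\sigma_j'\in L^1(0,T)$. The hard part will be the contraction estimate for the top-order weighted term $\sup_{0<t<T}t^{1/2-\delta}|\Sigma_j''|$: it forces control of $\gamma_+\partial_t V_j$, hence of $\partial_t V_j$ in $H^1(I)$, which is available only through the full chain of $L^2$–$H^1$ a priori estimates for the difference equation under inhomogeneous time-dependent fourth boundary conditions, and the $t$-weights must be arranged to absorb the fact that $\Vert\partial_t v_j\Vert_{L^\infty_tH^1_x(Q_T)}$ does not tend to $0$ as $T\to0$.
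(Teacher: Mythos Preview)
Your overall architecture --- Banach fixed point for $\mathcal S_\sigma$ on a ball in $X_{\sigma,T}^2$, self-mapping via the integral formulation and the interior estimates, contraction via the difference system \eqref{eq_difference_equation_for_iteration_of_full_filter_clogging_equations} homogenized by the extensions $\mathcal V_j$ --- is exactly the paper's Step~1/Step~2 route, and the ingredients you cite (Propositions \ref{prop_estimates_for_difference_between_nonlinear_terms}, \ref{prop_estimates_for_difference_between_convection_terms}, \ref{prop_estimates_for_difference_between_boundary_extension_terms} and the linear a~priori estimates) are the right ones.

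There is, however, a genuine gap in how you track the $T$-dependence. You assert that ``all $X_{v,T}$-norms of $v_j$ --- in particular $\Vert\partial_t v_j\Vert_{L^\infty_tH^1_x(Q_T)}$ --- are bounded by a constant $C_\ast=C_\ast(R,B_{0,v},B_f(f))$ uniformly over $B_{R,T}$ for $T\leq1$'', and later that the difference satisfies $\Vert\partial_t V_j\Vert_{L^\infty_tH^1_x}\leq CT^{\kappa}\Vert\tau_1-\tau_3\Vert_{X_{\sigma,T}}$. Neither is true. The $X_{\sigma,T}$-norm controls only $t^{1/4-\delta}|\tau_1'(t)|$, so on $B_{R,T}$ one has $\Vert\tau_1'\Vert_{L^\infty(0,T)}\lesssim T^{-1/4+\delta}R$ (see \eqref{eq_estimates_for_W1_infty_H1_H14_H2_by_X_sigma_T}), and this negative power of $T$ enters the energy bound for $\partial_t v_j$ via Corollary~\ref{cor_H3_estimate_for_v}; the paper's estimate \eqref{eq_H3_estimate_for_vj_to_Leray_Schauder} and Proposition~\ref{prop_H3_a_pripori_estimate_for_V} show explicitly that both $\Vert\partial_t v_j\Vert_{L^\infty_tH^1_x}^2$ and $\Vert\partial_t V_j\Vert_{L^\infty_tH^1_x}^2$ carry a term of order $T^{-1/2+2\delta}$. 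So ``every prefactor carries a positive power of $T$'' is false at the top order.

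The repair --- which your final paragraph half-acknowledges but your displayed estimates contradict --- is not to make $\Vert\partial_t V_j\Vert_{L^\infty_tH^1_x}$ small in $T$, but to let the weight $t^{1/2-\delta}$ in the $X_{\sigma,T}$-norm of $\Sigma_j''$ absorb the blow-up: $T^{1/2-\delta}\cdot T^{-1/4+\delta}=T^{1/4}$. This is precisely how the paper closes both the self-mapping estimate for $\sup t^{1/2-\delta}|\sigma_j''|$ (the $I_3$ term after \eqref{eq_estimate_for_fixed_point_L2_estimate_for_sigma}) and the contraction estimate \eqref{eq_estimate_for_difference_between_sigma1_sigma3_for_fixed_point_2}. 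Once you replace your claimed $T^{\kappa}$-smallness of $\partial_t V_j$ by the correct bound $C_1+C_2T^{-1/2+2\delta}$ and push the weight through, the argument goes through as in the paper.
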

%    \begin{proof}
    To prove this theorem we need several steps:

    \subsubsection*{Step1: boundedness}
        We take $0<T\leq1$ so small that (\ref{eq_assumption_for_tau1_tau2_for_exponential_deal_with_term}) and (\ref{eq_estimate_for_Psi_ast_for_small_t}) hold.
        We deduce from the estimates (\ref{eq_L2_estimate_for_vj_to_Leray_Schauder}), (\ref{eq_H1_estimate_for_vj_to_Leray_Schauder}), (\ref{eq_H2_estimate_for_vj_to_Leray_Schauder}), and (\ref{eq_H3_estimate_for_vj_to_Leray_Schauder}) that
        \begin{align} \label{eq_L^2_estimate_for_vj_for_contraction_mapping_principle}
            \sum_{j=1,2}
                \left[
                    \int_0^t
                        \Vert
                            \partial_x v_j(s)
                        \Vert_{L^2(I)}^2
                    ds
                    + \Vert
                        v_j(t)
                    \Vert_{(I)}^2
                \right]
            \leq C,
        \end{align}
        \begin{align} \label{eq_H1_estimate_for_vj_for_contraction_mapping_principle}
            \begin{split}
                & \sum_{j=1,2}
                    \left[
                        \int_0^t
                            \Vert
                                \partial_s v_j(s)
                            \Vert_{L^2(I)}^2
                        ds
                        + \Vert
                            v_j(t)
                        \Vert_{H^1(I)}^2
                        + \int_0^t
                            \Vert
                                \partial_x^2 v_j(s)
                            \Vert_{L^2(I)}^2
                        ds
                    \right]\\
                & \leq C_{L^\infty_tH^1_x}(t) e^{
                    C \Psi^\ast(\tau_1;t)
                }\left[
                    1
                    + \Psi^\ast(\tau_1;t)
                \right]\\
                &\leq C(
                    1
                    + \sum_{j=1,3}
                        \Vert
                            \tau_j
                        \Vert_{X_{\sigma,T}}^2
                ),
            \end{split}
        \end{align}
        and
        \begin{align}% \label{eq_H2_estimate_for_vj_to_Leray_Schauder}
            \begin{split}
                & \sum_{j=1,2} [
                    \Vert
                        \partial_t v_j(t)
                    \Vert_{H^1(I)}^2
                    + \Vert
                        \partial_x^2 v_j(t)
                    \Vert_{H^1(I)}^2\\
                & \quad
                    + \int_0^t
                        \Vert
                            \partial_s^2 v_j(s)
                        \Vert_{L^2(I)}^2
                        + \Vert
                            \partial_x^2 \partial_s v_j(s)
                        \Vert_{L^2(I)}^2\\
                & \quad\quad\quad\quad
                        + \Vert
                            \partial_x \partial_s v_j(s)
                        \Vert_{L^2(I)}^2
                        + \Vert
                            \partial_x^4 v_j(s)
                        \Vert_{L^2(I)}^2
                    ds
                ]\\
                & \leq C_{L^\infty_tH^3_x}(t)
                (
                    1
                    + \Psi^\ast(\tau_1;t)
                )
                (
                    1
                    + e^{C\Psi^\ast(\tau_1;t)}
                )
                \\
                & \quad
                \times
                \left[
                    (
                        1
                        +\Vert
                            \tau_1^\prime
                        \Vert_{L^\infty(0,T)}^2
                        + \Vert
                            \tau_1^{\prime}
                        \Vert_{L^4(0,T)}^4
                        + \Vert
                            \tau_1^{\prime\prime}
                        \Vert_{L^2(0,T)}^2
                    )
                \right.\\
                & \left.
                    \quad\quad\quad
                    +
                    (
                        1
                        + \Vert
                            \tau_1^\prime
                        \Vert_{L^2(0,T)}^2
                    )
                    \Psi^\ast(\tau_1;t)^2
                \right]\\
                & \leq C \sum_{j=1,3}
                    [
                        1
                        + (
                            1
                            + T^{-\frac{1}{2}+2\delta}
                        )
                        \Vert
                            \sigma_j
                        \Vert_{X_{\sigma,T}}^2
                        + \Vert
                            \sigma_j
                        \Vert_{X_{\sigma,T}}^4
                        + \Vert
                            \sigma_j
                        \Vert_{X_{\sigma,T}}^6
                    ]\\
                & \leq C \sum_{j=1,3}
                    [
                        1
                        + T^{-\frac{1}{2}+2\delta}
                        \Vert
                            \sigma_j
                        \Vert_{X_{\sigma,T}}^2
                        + \Vert
                            \sigma_j
                        \Vert_{X_{\sigma,T}}^6
                    ]
            \end{split}
        \end{align}
        for $T \in [0,1]$, where we used (\ref{eq_estimates_for_W1_infty_H1_H14_H2_by_X_sigma_T}) and (\ref{eq_estimate_for_Psi_ast_for_small_t}), the estimates $e^{\Psi^\ast(\tau, t)}\leq C$.
        %\begin{align*}
        %    \Psi^\ast(\tau, t) \Vert
        %        \tau_j
        %    \Vert_{L^\infty(0,T)}^2
        %    & \leq C
        %    (
        %        T
        %        + T^{\frac{1}{2} + 2 \delta} \Vert
        %            \tau_1
        %        \Vert_{X_{\sigma,T}}^2
        %    )
        %    (
        %        T^{- \frac{1}{2} + 2\delta}
        %        + T^{1 + 2\delta}
        %    )\Vert
        %        \tau_j
        %    \Vert_{X_{\sigma,T}}^2\\
        %    & \leq C (
        %        1
        %        + T^{4\delta} \Vert
        %        \tau_j
        %    \Vert_{X_{\sigma,T}}^2
        %    )\\
        %    & \leq C (
        %        1
        %        + \Vert
        %        \tau_j
        %    \Vert_{X_{\sigma,T}}^2
        %    )
        %\end{align*}
        %for $T \in [0,1]$ and some constant $C>0$.

        Using the integral formulations for (\ref{eq_abstract_filter_clogging_equation_in_the_boundary}), the above estimates, and the trace theorem, we deduce that
        \begin{align*}
            \begin{split}
                \vert
                    \sigma_1(t)
                \vert
                & \leq \sigma_{1,0}
                + C T \Vert
                    \tau_2
                \Vert_{L^\infty(0,T)}
                + C T \Vert
                    v_1
                \Vert _{L^\infty_tH^1_x(Q_T)}\\
                &\leq C B_{0, \sigma}
                + C T \Vert
                    \tau_2
                \Vert_{X_{\sigma,T}}
                + C T e^{
                    C\Psi^\ast(\tau_1;T)
                }\left[
                    1
                    + \Psi^\ast(\tau_1;T)
                \right]^{\frac{1}{2}},\\
                & \leq C
                + C T \Vert
                    \tau_2
                \Vert_{L^\infty(0,T)}
                + C T
                (
                    1
                    + \sum_{j=1,3}
                        \Vert
                            \tau_j
                        \Vert_{X_{\sigma,T}}^2
                )^{\frac{1}{2}}\\
                & \leq C
                + C T
                \Vert
                    (\tau_1, \tau_2)
                \Vert_{X_{\sigma,T}^2},
                %+ T^{\frac{5}{4} + \delta}
                %\Vert
                %    (\tau_1, \tau_2)
                %\Vert_{X_{\sigma,T}^2}
            \end{split}
        \end{align*}
        and
        \begin{align*}
            \begin{split}
                \vert
                    \sigma_2(t)
                \vert
                & \leq \sigma_{2,0}
                + C T \Vert
                    \tau_2
                \Vert_{L^\infty(0,T)}
                + C T \left(
                    \Vert
                        \tau_2
                    \Vert_{L^\infty(0,T)}
                    + \Vert
                        v_2
                    \Vert_{L^\infty_tH^1_x(Q_T)}
                \right)\\
                & \leq C B_{0, \sigma}
                + C T
                \Vert
                    \tau_2
                \Vert_{X_{\sigma, T}}
                + C T e^{
                    C\Psi^\ast(\tau_1;T)
                }\left[
                    1
                    + \Psi^\ast(\tau_1;T)
                \right]^{\frac{1}{2}}\\
                & \leq C
                + C T
                \Vert
                    (\tau_1, \tau_2)
                \Vert_{X_{\sigma, T}}.
            \end{split}
        \end{align*}
        Similarly we also deduce that
        \begin{align} \label{eq_estimate_for_fixed_point_L2_estimate_for_sigma}
            \begin{split}
                \sup_{0<t<T}
                    t^{\frac{1}{4}-\delta}
                    \left \vert
                        \frac{d\sigma_1(t)}{dt}
                    \right \vert
                & \leq C T^{\frac{1}{4}-\delta} \left(
                    \Vert
                        \tau_2
                    \Vert_{X_{\sigma,T}}
                    + \Vert
                        v_1
                    \Vert_{L^\infty_tH^1_x(Q_T)}
                \right)\\
                & \leq C T^{\frac{1}{4}-\delta} 
                \Vert
                    \tau_2
                \Vert_{X_{\sigma,T}}\\
                & + C T^{\frac{1}{4}-\delta} e^{
                    C\Psi^\ast(\tau_1;T)
                }\left[
                    1
                    + \Psi^\ast(\tau_1;T)
                \right]^{\frac{1}{2}}\\
                & \leq C T^{\frac{1}{4}-\delta} (
                    1
                    + \Vert
                        (\tau_1, \tau_2)
                    \Vert_{X_{\sigma, T}}
                ),
            \end{split} 
        \end{align}
        and
        \begin{align*}
            \begin{split}
                \sup_{0<t<T} t^{\frac{1}{4}-\delta}
                    \left \vert
                        \frac{d\sigma_2(t)}{dt}
                    \right \vert
                & \leq C T^{\frac{1}{4}-\delta} \left(
                    \Vert
                        \tau_2
                    \Vert_{L^\infty(0,T)}
                    + \Vert
                        \tau_2
                    \Vert_{L^\infty(0,T)}^2
                    + \Vert
                        v_2
                    \Vert_{L^\infty_tH^1_x(Q_T)}
                \right)\\
                & \leq C T^{\frac{1}{4}-\delta}
                + C T^{\frac{1}{4}-\delta} 
                (
                    \Vert
                        \tau_2
                    \Vert_{X_{\sigma,T}}
                    + \Vert
                        \tau_2
                    \Vert_{X_{\sigma,T}}^2
                )\\
                & + C T^{\frac{1}{4}-\delta} e^{
                    C\Psi^\ast(\tau_1;T)
                }\left[
                    1
                    + \Psi^\ast(\tau_1;T)
                \right]^{\frac{1}{2}}\\
                & \leq C T^{\frac{1}{4}-\delta}
                (
                    1
                    + \Vert
                        (\tau_1, \tau_2)
                    \Vert_{X_{\sigma, T}}
                    + \Vert
                        (\tau_1, \tau_2)
                    \Vert_{X_{\sigma, T}}^2
                )
            \end{split}
        \end{align*}
        for some constant $C>0$.
        We observe that
        \begin{align*}
            &\frac{d^2\sigma_1}{dt^2}
            = - \frac{d}{dt}
            \left[
                N_{\sigma, 1}(\tau_1) \tau_2
            \right] 
            + Q_1 d^\prime(\tau_1) \tau^\prime_1 \gamma_+ v_1
            + Q_1 d(\tau_1) \gamma_+ \partial_t v_1,\\
            & \frac{d^2\sigma_2}{dt^2}
            = \frac{d}{dt} \left[
                N_{\sigma, 2}(\tau_1, \tau_2) \tau_2
            \right]
            + Q_2 d^\prime(\tau_1) \tau^\prime_1 \gamma_+ v_2
            + Q_2 d(\tau_1) \gamma_+ \partial_t v_2.
        \end{align*}
        Therefore, we deduce from Proposition \ref{prop_nonlinear_estimates_for_Nv} (i) and the estimates for $v_j$ (\ref{eq_H1_estimate_for_vj_to_Leray_Schauder}) and (\ref{eq_H3_estimate_for_vj_to_Leray_Schauder}) that
        \begin{align*}
            & \sup_{0<t<T} t^{\frac{1}{2} - \delta}
                \left \vert
                    \frac{d^2 \sigma_1(t)}{dt}
                \right \vert\\
            & \leq C \sup_{0<t<T} t^{\frac{1}{2} - \delta}
            \left(
                \vert
                    \tau^\prime_1(t)
                \vert
                \vert
                    \tau_2(t)
                \vert
                + \vert
                    \tau^\prime_2(t)
                \vert
            \right)\\
            & + C \sup_{0<t<T} t^{\frac{1}{2} - \delta}
                \vert
                    \tau^\prime_1(t)
                \vert
                \Vert
                    v_1(t)
                \Vert_{H^1(I)}
            + C \sup_{0<t<T} t^{\frac{1}{2} - \delta}
                \Vert
                    \partial_t v_1(t)
                \Vert_{H^1(I)}\\
            & =: I_1 + I_2 + I_3.
        \end{align*}
        We observe that
        \begin{align*}
            I_1
            \leq C T^{\frac{1}{4}}
            \left(
                \Vert
                    \tau_1(t)
                \Vert_{X_{\sigma, T}}
                \Vert
                    \tau_2(t)
                \Vert_{X_{\sigma, T}}
                + \Vert
                    \tau_2(t)
                \Vert_{X_{\sigma, T}}
            \right),
        \end{align*}
        \begin{align*}
            I_2
            & \leq C T^{\frac{1}{4}} \Vert
                    \tau_1(t)
                \Vert_{X_{\sigma, T}}
            e^{
                C\Psi^\ast(\tau_1;T)
            }\left[
                1
                + \Psi^\ast(\tau_1;T)
            \right]^{\frac{1}{2}}\\
            & \leq C T^{\frac{1}{4}} \Vert
                    \tau_1(t)
                \Vert_{X_{\sigma, T}}
            [
                1
                + \Vert
                    \tau_1
                \Vert_{X_{\sigma,T}}^2
            ]^{\frac{1}{2}},
        \end{align*}
        and
        \begin{align*}
            I_3
            & \leq C T^{\frac{1}{2} - \delta}
                (
                    1
                    + \Psi^\ast(\tau_1;t)
                )^{\frac{1}{2}}
                (
                    1
                    + e^{C\Psi^\ast(\tau_1;t)}
                )^{\frac{1}{2}}
                \\
                & \quad
                \times
                \left[
                    (
                        1
                        +\Vert
                            \tau_1^\prime
                        \Vert_{L^\infty(0,T)}
                        + \Vert
                            \tau_1^{\prime}
                        \Vert_{L^4(0,T)}^2
                        + \Vert
                            \tau_1^{\prime\prime}
                        \Vert_{L^2(0,T)}
                    )
                \right.\\
                & \left.
                    \quad\quad\quad\quad\quad\quad
                    +
                    (
                        1
                        + \Vert
                            \tau_1^\prime
                        \Vert_{L^2(0,T)}
                    )
                    \Psi^\ast(\tau_1;t)
                \right]\\
            & \leq C T^{\frac{1}{2} - \delta}
                (
                    1
                    + T^{\frac{1}{2} + 2 \delta} \Vert
                        \tau_1
                    \Vert_{X_{\sigma,T}}^2
                )^{\frac{1}{2}}
                \\
                & \quad
                \times
                \left[
                    1
                    + T^{- \frac{1}{4} + \delta}
                    \Vert
                        \tau_1
                    \Vert_{X_{\sigma,T}}
                    + T^{\frac{1}{2} + \delta}
                    \Vert
                        \tau_1
                    \Vert_{X_{\sigma,T}}
                    + T^{2\delta}
                    \Vert
                        \tau_1
                    \Vert_{X_{\sigma,T}}^2
                    + T^{\delta}\Vert
                            \tau_1
                        \Vert_{X_{\sigma,T}}
                \right.\\
                & \left.
                    \quad\quad\quad\quad
                    +
                    (
                        1
                        + T^{\delta} \Vert
                            \tau_1
                        \Vert_{X_{\sigma,T}}
                    )
                    (
                        T
                        + T^{\frac{1}{2} + 2\delta} \Vert
                            \tau_1
                        \Vert_{X_{\sigma,T}}^2
                    )
                \right]\\
            & \leq C T^{\frac{1}{4}}
                (
                    1
                    + \Vert
                        \tau_1
                    \Vert_{X_{\sigma,T}}^4
                )
        \end{align*}
        We find (\ref{eq_trivial_inequality}) that
        \begin{align*}
            \begin{split}
                \sup_{0<t<T} t^{\frac{1}{2} - \delta}
                    \left \vert
                        \frac{d^2 \sigma_1(t)}{dt}
                    \right \vert
                \leq C T^{\frac{1}{4}}
                + C T^{\frac{1}{4}}
                \Vert
                    (\tau_1, \tau_2)
                \Vert_{X_{\sigma,T}^2}^4.
            \end{split}
        \end{align*}
        Similarly, we observe that
        \begin{align*}
            \begin{split}
                & \sup_{0<t<T} t^{\frac{1}{2} - \delta}
                    \left \vert
                        \frac{d^2 \sigma_2(t)}{dt}
                    \right \vert\\
                & \leq \sup_{0<t<T} t^{\frac{1}{2} - \delta}
                \left(
                    \vert
                        \tau^\prime_1(t)
                    \vert
                    \vert
                        \tau_2(t)
                    \vert
                    + \vert
                        \tau^\prime_2(t)
                    \vert
                    + \vert
                        \tau^\prime_2(t)
                    \vert
                    \vert
                        \tau_2(t)
                    \vert
                \right)\\
                & + C \sup_{0<t<T} t^{\frac{1}{2} - \delta}
                    \vert
                        \tau^\prime_1(t)
                    \vert
                    \Vert
                        v_2(t)
                    \Vert_{H^1(I)}\\
                & + C \sup_{0<t<T} t^{\frac{1}{2} - \delta}
                    \Vert
                        \partial_t v_2(t)
                    \Vert_{H^1(I)}\\
                & \leq C T^{\frac{1}{4}}
                + C T^{\frac{1}{4}}
                \Vert
                    (\tau_1, \tau_2)
                \Vert_{X_{\sigma,T}^2}^4.
            \end{split}
        \end{align*}
        We conclude that
        \begin{align}\label{eq_quadratic_estimate_for_tau1_tau2_in_X_sigma_T}
            \Vert
                (\sigma_1, \sigma_2)
            \Vert_{X_{\sigma,T}^2}
            \leq C_1
            + C_2 T^{\frac{1}{4}}
            \Vert
                (\tau_1, \tau_2)
            \Vert_{X_{\sigma,T}^2}^4.
        \end{align}
        Let $y^\ast_1, y^\ast_2$ be the positive solution of the algebraic equation
        \begin{align*}
            y
            = C_1
            + C_2 T^{\frac{1}{4}} y^4.
        \end{align*}
        Note that since $C_2 T^{\frac{1}{4} - \delta} y^4$ is convex and zero when $y=0$, if we take $T$ sufficiently small there exist two positive solution $0<y^\ast_1 < y^\ast_2$.
        For $(\tau_1, \tau_2) \in X_{\sigma,T}^2$ such that (\ref{eq_assumption_for_tau1_tau2_for_exponential_deal_with_term}) and $\Vert (\tau_1, \tau_2)\Vert_{X_{\sigma,T}^2} \leq y^\ast_1$, we see from (\ref{eq_quadratic_estimate_for_tau1_tau2_in_X_sigma_T}) that for small $T>0$
        \begin{align*}
            \Vert
                (\sigma_1, \sigma_2)
            \Vert_{X_{\sigma,T}^2}
            & \leq C_1
            + C T^{\frac{1}{4}}
            \Vert
                (\tau_1, \tau_2)
            \Vert_{X_{\sigma,T}^2}^4
            \leq C_1
            + C T^{\frac{1}{4}}
            (y^\ast_1)^4\\
            & = y^\ast_1
        \end{align*}
        in the second inequality, we used convexity of $C_1 + C_2 T^{\frac{1}{4} - \delta} y^4$.
        Note that $y^\ast_1$ does not growth as $T \rightarrow 0$.
        Therefore if we take $T$ sufficiently small, the assumption (\ref{eq_estimate_for_Psi_ast_for_small_t}) is automatically satisfied on a ball with radius $y^\ast_1$
        \begin{align*}
            B_{X_{\sigma, T}^2, y^\ast_1}
            = \{
                (\tau_1, \tau_2) \in X_{\sigma, T}^2
                ;
                \Vert
                    ((\tau_1, \tau_2))
                \Vert_{X_{\sigma, T}^2}
                \leq y^\ast_1
            \}.
        \end{align*}
        Therefore, we find that the solution mapping $\mathcal{S}_{\sigma}$ is a self-mapping on $B_{X_{\sigma, T}^2, y^\ast_1}$.

        \subsubsection*{Step2: contraction mapping}
        We next prove connectivity of $B_{X_{\sigma, T}^2, y^\ast_1}$ for small $T \in (0,1]$.
        We can assume that (\ref{eq_estimate_for_Psi_ast_for_small_t}) is satisfied on $B_{X_{\sigma, T}^2, y^\ast_1}$.
        For $(\tau_3, \tau_4) \in X_{\sigma,T}^2$, we denote
        \begin{align*}
            v_3
            := \mathcal{S}_{v,1}(\tau_3), \quad
            v_4
            := \mathcal{S}_{v,2}(\tau_3).
        \end{align*}
        We consider the equations (\ref{eq_difference_equation_for_iteration_of_full_filter_clogging_equations}) under $\psi_j = v_j$ ($j=1,2,3,4$).
        We set
        \begin{align*}
            \mathcal{V}_1
            & := E \left(
                \frac{
                    F(\tau_1)
                    - F(\tau_3)
                }{
                    2
                }
                (
                    v_1
                    + v_3
                )
                ;
                1 - \frac{
                    F(\tau_1)
                    + F(\tau_3)
                }{
                    2
                }
            \right),\\
            \mathcal{V}_2
            & := E \left(
                \frac{
                    F(\tau_1)
                    - F(\tau_3)
                }{
                    2
                }
                (
                    v_2
                    + v_4
                )
                ;
                1 - \frac{
                    F(\tau_1)
                    + F(\tau_3)
                }{
                    2
                }
            \right),\\
            \mathcal{W}_1
            & := \partial_t \mathcal{V}_1
            - \partial_x^2 \mathcal{V}_1,\\
            \mathcal{W}_2
            & := \partial_t \mathcal{V}_2
            - \partial_x^2 \mathcal{V}_2,
        \end{align*}
        and
        \begin{align*}
            \overline{\mathcal{V}}_1
            & := V_1 - \mathcal{V}_1,\\
            \overline{\mathcal{V}}_2
            & := V_2 - \mathcal{V}_2.
        \end{align*}
        Then we find from (\ref{eq_difference_equation_for_iteration_of_full_filter_clogging_equations}) that $(\overline{\mathcal{V}}_1, \overline{\mathcal{V}}_2)$ satisfies
    \begin{equation}\label{eq_overline_V_difference_equation_for_iteration_of_full_filter_clogging_equations}
        \begin{aligned}
            &\partial_t \overline{\mathcal{V}}_1
            - \partial_x^2 \overline{\mathcal{V}}_1
            = - \mathcal{N}_{V,0}(v_1, v_3, \tau_1, \tau_3)
            - \mathcal{N}_{V,1}(v_1, v_2, v_3, v_4)
            - \mathcal{W}_1,
            & t>0, \,
            & x \in I,\\
            &\partial_t \overline{\mathcal{V}}_2
            - \partial_x^2 \overline{\mathcal{V}}_2
            = - \mathcal{N}_{V,0}(v_2, v_4, \tau_1, \tau_3)
            + \mathcal{N}_{V,2}(v_1, v_2, v_3, v_4)
            - \mathcal{W}_2,
            & t>0, \,
            & x \in I,\\
            &B_1 \left(
                \overline{\mathcal{V}}_1;
                \frac{
                    F(\tau_1)
                    + F(\tau_3)
                }{
                    2
                }
            \right)
            = 0, \quad
            %& t>0,
            %& \,\\
            %&
            B_2\left(
                \overline{\mathcal{V}}_1;
                \frac{
                    F(\tau_1)
                    + F(\tau_3)
                }{
                    2
                }
            \right)
            = 0,
            & t>0,
            & \,\\
            &B_1 \left(
                \overline{\mathcal{V}}_2;
                \frac{
                    F(\tau_1)
                    + F(\tau_3)
                }{
                    2
                }
            \right)
            = 0, \quad
            %& t>0,
            %& \,\\
            %&
            B_2\left(
                \overline{\mathcal{V}}_2;
                \frac{
                    F(\tau_1)
                    + F(\tau_3)
                }{
                    2
                }
            \right)
            = 0,
            & t>0,
            & \,\\
            & \overline{\mathcal{V}}_1
            =0, \quad
            \overline{\mathcal{V}}_2
            = 0,
            & t=0,
            & \,
        \end{aligned}
    \end{equation}
    By the definition, we have
    \begin{align*}
        V_1
        = \overline{\mathcal{V}}_1
        + \mathcal{V}_1, \quad
        V_2
        = \overline{\mathcal{V}}_2
        + \mathcal{V}_2.
    \end{align*}
    Note that by the definition of $\tau_j$ and the condition $\mathcal{V}_j(\cdot, 0) = 0$, we have $\overline{\mathcal{V}}_1(\cdot, 0) = \overline{\mathcal{V}}_2(\cdot, 0) = 0$.
    We show the $L^\infty_tL^2_x$-estimates for $V_j$ and $\partial_t V_j$.
    We begin by showing some estimates for $\mathcal{V}_j$. 
    \begin{proposition}\label{prop_estimates_for_mathcal_V_j}
        \begin{enumerate}[label=(\roman*)]
            \item 
            It follows that
            \begin{align} \label{eq_L2H2_estimate_for_mathcal_V}
                \begin{split}
                    & \sum_{j=1,2}
                        \Vert
                            \mathcal{V}_j(t)
                        \Vert_{L^2(I)}^2
                    \leq C \Vert
                        \tau_1
                        - \tau_3
                    \Vert_{X_{\sigma,T}}^2,\\
                    %%%%%%%%%%%%%%%%%%%%%%%%%%%%
                    & \sum_{j=1,2}
                        \Vert
                            \mathcal{V}_j(t)
                        \Vert_{H^1(I)}^2
                    \leq C \Vert
                        \tau_1
                        - \tau_3
                    \Vert_{X_{\sigma,T}}^2
                    (
                        1
                        + \sum_{j=1,3}
                            \Vert
                                \tau_j
                            \Vert_{X_{\sigma,T}}^2
                    ),\\
                    %%%%%%%%%%%%%%%%%%%%%%%%%%%%
                    & \sum_{j=1,2} \int_0^t
                        \Vert
                            \mathcal{V}_j(s)
                        \Vert_{H^2(I)}^2
                    ds
                    \leq C
                    \Vert
                        \tau_1
                        - \tau_3
                    \Vert_{X_{\sigma,T}}^2
                    (
                        1
                        + \sum_{j=1,3}
                            \Vert
                                \tau_j
                            \Vert_{X_{\sigma,T}}^2
                    )
                \end{split}
            \end{align}
            for some constant $C>0$, which is bounded for $T \in (0,1]$.
            \item It follows that
            \begin{align}\label{eq_estimate_for_dt_mathcal_V_j}
                \begin{split}
                    & \sum_{j=1,2}
                        \Vert
                            \partial_t \mathcal{V}_j(t)
                        \Vert_{L^2(I)}^2\\
                    & \leq C (
                        1
                        + \sum_{j=1,3}
                            \Vert
                                \tau_j
                            \Vert_{X_{\sigma,T}}^2
                    )^4
                    \Vert
                        \tau_1
                        - \tau_3
                    \Vert_{X_{\sigma,T}}^2\\
                    & \quad\quad\quad + C T^{-\frac{1}{2}+2\delta} \Vert
                        \tau_1
                        - \tau_3
                    \Vert_{X_{\sigma,T}}^2
                    \sum_{j=1,3}
                        \Vert
                            \tau_j
                        \Vert_{X_{\sigma,T}}^2,\\
                    %%%%%%%%%%%%%%%%%%%%%%%%%%%
                    & \sum_{j=1,2} \int_0^t
                        \Vert
                            \partial_t \mathcal{V}_j(s)
                        \Vert_{L^2(I)}^2
                    ds
                    \leq C
                    \Vert
                        \tau_1
                        - \tau_3
                    \Vert_{X_{\sigma,T}}^2
                    (
                        1
                        + \sum_{j=1,3}
                            \Vert
                                \tau_j
                            \Vert_{X_{\sigma,T}}^2
                    ).
                \end{split}
            \end{align}
            for some constant $C>0$, which is bounded for $T \in (0,1]$.
    
            \item It follows that
            \begin{align}\label{eq_estimate_for_dt_mathcal_W_j}
                \begin{split}
                    & \sum_{j=1,2}
                        \int_0^t
                            \Vert
                                \partial_s^2 \mathcal{V}_j
                            \Vert_{L^2(I)}^2
                            + \Vert
                                \partial_s \mathcal{V}_j
                            \Vert_{H^2(I)}^2
                        ds\\
                    & \leq C (
                        1
                        + \sum_{j=1,3}
                            \Vert
                                \tau_j
                            \Vert_{X_{\sigma,T}}^2
                    )^4
                    \Vert
                        \tau_1
                        - \tau_3
                    \Vert_{X_{\sigma,T}}^2\\
                    & + C T^{-\frac{1}{2}+2\delta} \Vert
                        \tau_1
                        - \tau_3
                    \Vert_{X_{\sigma,T}}^2
                    \sum_{j=1,3}
                        \Vert
                            \tau_j
                        \Vert_{X_{\sigma,T}}^2
                    \end{split}
            \end{align}
            for some constant $C>0$ which is bounded for $T \in (0,1]$.
        \end{enumerate}
    \end{proposition}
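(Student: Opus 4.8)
The plan is to read all three parts off Proposition~\ref{prop_estimates_for_difference_between_boundary_extension_terms}, applied with its generic function $v$ taken to be $v_j+v_{j+2}$ (so $v_1+v_3$ for $\mathcal{V}_1$ and $v_2+v_4$ for $\mathcal{V}_2$) and with the pair $(\tau_1,\tau_2)$ there replaced by the present $(\tau_1,\tau_3)$. Here $v_j=\mathcal{S}_{v,j}(\tau_1)$ and $v_{j+2}=\mathcal{S}_{v,j}(\tau_3)$ for $j=1,2$, so the energy bounds for $v_1,v_2$ built in Theorem~\ref{thm_existence_of_v_1_v_2_for_given_tau_1} and displayed in (\ref{eq_L2_estimate_for_vj_to_Leray_Schauder}), (\ref{eq_H1_estimate_for_vj_for_contraction_mapping_principle}), (\ref{eq_H2_estimate_for_vj_to_Leray_Schauder}), (\ref{eq_H3_estimate_for_vj_to_Leray_Schauder}) and Corollary~\ref{cor_H3_estimate_for_v}, together with the corresponding ones for $\tau_3$, are all at our disposal. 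I would work throughout with $0<T\le1$ so small that (\ref{eq_assumption_for_tau1_tau2_for_exponential_deal_with_term}) and (\ref{eq_estimate_for_Psi_ast_for_small_t}) hold, so that every factor $e^{C\Psi^\ast(\tau_1;T)}$ and $1+\Psi^\ast(\tau_1;T)$ is $O(1)$. The remaining inputs are (\ref{eq_estimates_for_W1_infty_H1_H14_H2_by_X_sigma_T}) for $\|\tau_1'\|_{L^p(0,T)}$, $\|\tau_1''\|_{L^2(0,T)}$ (and the identical bounds for $\tau_1-\tau_3$), together with the trivial pointwise bounds $|\tau_l'(t)|\le t^{-\frac14+\delta}\|\tau_l\|_{X_{\sigma,T}}$, $|\tau_l''(t)|\le t^{-\frac12+\delta}\|\tau_l\|_{X_{\sigma,T}}$, $|\tau_1'(t)-\tau_3'(t)|\le t^{-\frac14+\delta}\|\tau_1-\tau_3\|_{X_{\sigma,T}}$, $|\tau_1''(t)-\tau_3''(t)|\le t^{-\frac12+\delta}\|\tau_1-\tau_3\|_{X_{\sigma,T}}$ dictated by the definition of $X_{\sigma,T}$.

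For (i): Proposition~\ref{prop_estimates_for_difference_between_boundary_extension_terms}(i) with $m=0,1,2$ gives $\|\mathcal{V}_j(t)\|_{H^m(I)}\le C|\tau_1(t)-\tau_3(t)|\,\|(v_j+v_{j+2})(t)\|_{H^m(I)}$. Bounding $|\tau_1(t)-\tau_3(t)|\le\|\tau_1-\tau_3\|_{X_{\sigma,T}}$ (the first summand of that norm), $\|v_j+v_{j+2}\|_{L^2}$ by a $\tau$-independent constant via (\ref{eq_L2_estimate_for_vj_to_Leray_Schauder}), and $\|(v_j+v_{j+2})(t)\|_{H^1}^2$ and $\int_0^t\|(v_j+v_{j+2})(s)\|_{H^2}^2\,ds$ by $C(1+\sum_{j=1,3}\|\tau_j\|_{X_{\sigma,T}}^2)$ via (\ref{eq_H1_estimate_for_vj_for_contraction_mapping_principle}) produces exactly the three lines of (\ref{eq_L2H2_estimate_for_mathcal_V}). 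For (ii): I would apply Proposition~\ref{prop_estimates_for_difference_between_boundary_extension_terms}(ii) with $m=0$, which reduces matters to the constant bound for $\|v_j+v_{j+2}\|_{L^2}$ and to $\|\partial_t(v_j+v_{j+2})(t)\|_{L^2}$; the latter is supplied by (\ref{eq_H2_estimate_for_vj_to_Leray_Schauder}), and after inserting (\ref{eq_estimates_for_W1_infty_H1_H14_H2_by_X_sigma_T}) and $\Psi^\ast\le2$ it carries the factor $T^{-\frac12+2\delta}$ (coming from $\|\tau_1'\|_{L^\infty(0,T)}^2\le CT^{-\frac12+2\delta}\|\tau_1\|_{X_{\sigma,T}}^2$), which is precisely the origin of the second term on the right of (\ref{eq_estimate_for_dt_mathcal_V_j}); the first term then absorbs the remaining polynomial dependence on $\|\tau_j\|_{X_{\sigma,T}}$ multiplying $|\tau_1-\tau_3|^2$ as well as the $|\tau_l'(t)|^2|\tau_1-\tau_3|^2$ contributions. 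The time-integrated estimate in (\ref{eq_estimate_for_dt_mathcal_V_j}) follows by squaring the pointwise bound and integrating over $(0,t)$: since $-\tfrac12+2\delta>-1$ the weights $t^{-\frac12+2\delta}$ are integrable and yield only $\tau$-independent constants, which is why that second estimate is free of negative powers of $T$.

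For (iii): $\partial_s\mathcal{V}_j$ in $H^2$ is controlled by Proposition~\ref{prop_estimates_for_difference_between_boundary_extension_terms}(ii) with $m=2$ and $\partial_s^2\mathcal{V}_j$ in $L^2$ by part~(iii) with $m=0$. One now additionally needs the $H^2$-in-space bounds for $v_j$ and $\partial_tv_j$ and the $L^2$-bound for $\partial_t^2v_j$, all of which come from (\ref{eq_H1_estimate_for_vj_for_contraction_mapping_principle}), (\ref{eq_H2_estimate_for_vj_to_Leray_Schauder}), (\ref{eq_H3_estimate_for_vj_to_Leray_Schauder}) and Corollary~\ref{cor_H3_estimate_for_v}, together with the $L^4$- and $L^2$-in-time bounds for $\tau_l'$, $\tau_l''$, $\tau_1'-\tau_3'$, $\tau_1''-\tau_3''$ from (\ref{eq_estimates_for_W1_infty_H1_H14_H2_by_X_sigma_T}). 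Squaring and integrating, the singular weights $|\tau_l'|^4\lesssim t^{-1+4\delta}$ and $|\tau_l''|^2\lesssim t^{-1+2\delta}$ are again integrable, while the $L^2_t$-bound for $\partial_t^2v_j$ drawn from Corollary~\ref{cor_H3_estimate_for_v} contributes the $T^{-\frac12+2\delta}$ appearing in (\ref{eq_estimate_for_dt_mathcal_W_j}).

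The main obstacle is the bookkeeping of the weights at $t=0$: $\tau_l'$, $\tau_l''$ and their differences are genuinely singular like $t^{-\frac14+\delta}$, $t^{-\frac12+\delta}$, and the quantities $\|\partial_tv_j\|_{L^\infty_tH^1_x(Q_T)}$, $\|\partial_t^2v_j\|_{L^2_tL^2_x(Q_T)}$ are themselves only finite, not small, and in fact grow like a negative power of $T$ as $T\to0$ (again through $\|\tau_1'\|_{L^\infty(0,T)}$). One must therefore keep careful track of which contributions are integrable in time — these enter the time-averaged estimates with a positive power of $T$ and a clean polynomial in $\|\tau_j\|_{X_{\sigma,T}}$ — and which must be carried along with the unavoidable prefactor $T^{-\frac12+2\delta}$; the latter are exactly the terms that are later annihilated by the $t^{\frac12-\delta}$-weight built into the $X_{\sigma,T}$-norm when Proposition~\ref{prop_estimates_for_mathcal_V_j} is used to close the contraction argument, so it is harmless that they cannot be made small here. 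All of this has to be done while not losing track of the polynomial order in $\|\tau_1\|_{X_{\sigma,T}}$ and $\|\tau_3\|_{X_{\sigma,T}}$, which is why the bounds $(1+\sum\|\tau_j\|_{X_{\sigma,T}}^2)^4$ rather than a linear dependence show up on the right of (\ref{eq_estimate_for_dt_mathcal_V_j}) and (\ref{eq_estimate_for_dt_mathcal_W_j}).
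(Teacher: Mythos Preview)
Your proposal is correct and follows essentially the same approach as the paper: apply Proposition~\ref{prop_estimates_for_difference_between_boundary_extension_terms} with $v=v_j+v_{j+2}$ and $(\tau_1,\tau_2)\mapsto(\tau_1,\tau_3)$, then feed in the energy bounds (\ref{eq_L2_estimate_for_vj_to_Leray_Schauder}), (\ref{eq_H1_estimate_for_vj_for_contraction_mapping_principle}), (\ref{eq_H2_estimate_for_vj_to_Leray_Schauder}), (\ref{eq_H3_estimate_for_vj_to_Leray_Schauder}) and the $X_{\sigma,T}$-weight estimates (\ref{eq_estimates_for_W1_infty_H1_H14_H2_by_X_sigma_T}). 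Your identification of the $T^{-\frac12+2\delta}$ factor as arising from $\|\tau'\|_{L^\infty(0,T)}^2$ inside the $\|\partial_t v_j\|$ bound, and your remark that the time-integrated estimates are clean because the singular weights are integrable, match the paper's organization exactly.
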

    \begin{proof}
        We deduce (\ref{eq_L2H2_estimate_for_mathcal_V}) from Proposition \ref{prop_estimates_for_difference_between_boundary_extension_terms}(i) and (\ref{eq_H1_estimate_for_vj_to_Leray_Schauder}).
        We will prove (ii).
        Since $\sup_{0<t<1} \Vert v_j(t) \Vert_{L^2(I)}$ is uniformly bounded for $\tau_j$ and it holds that
        \begin{align*}
            \sup_{0<t<1} \Vert
                v_j
            \Vert_{L^2(I)}
            \leq C \sum_{j=1,3}
            [
                1
                + T^{-\frac{1}{2}+2\delta}
                \Vert
                    \sigma_j
                \Vert_{X_{\sigma,T}}^2
                + \Vert
                    \sigma_j
                \Vert_{X_{\sigma,T}}^6
            ],
        \end{align*}
        using Proposition \ref{prop_estimates_for_difference_between_boundary_extension_terms}(ii) and the estimates (\ref{eq_trivial_inequality}), (\ref{eq_H2_estimate_for_vj_to_Leray_Schauder}) and (\ref{eq_estimates_for_W1_infty_H1_H14_H2_by_X_sigma_T}), we find that
        \begin{align*}
            & \Vert
                \partial_t \mathcal{V}(t)
            \Vert_{L^2(I)}^2\\
            & \leq C \left(
                \sum_{j=1,2}
                    \vert
                        \tau_j^\prime(t)
                    \vert^2
                \vert
                    \tau_1(t)
                    - \tau_2(t)
                \vert^2
                +
                \vert
                    \tau_1^\prime(t)
                    - \tau_2^\prime(t)
                \vert^2
            \right)
            \Vert
                v(t)
            \Vert_{L^2(I)}^2\\
            & + C
            \vert
                \tau_1(t)
                - \tau_2(t)
            \vert^2
            \Vert
                \partial_t v(t)
            \Vert_{L^2(I)}^2\\
            & \leq C T^{-\frac{1}{2}+2\delta}
            \sum_{j=1,3}
                \Vert
                    \tau_j
                \Vert_{X_{\sigma,T}}^2
            \Vert
                \tau_1
                - \tau_3
            \Vert_{X_{\sigma,T}}^2\\
            & + C 
            \Vert
                \tau_1
                - \tau_3
            \Vert_{X_{\sigma,T}}^2
            \sum_{j=1,3}
            [
                1
                + T^{-\frac{1}{2}+2\delta}
                \Vert
                    \tau_j
                \Vert_{X_{\sigma,T}}^2
                + \Vert
                    \tau_j
                \Vert_{X_{\sigma,T}}^6
            ].
        \end{align*}
        for some constant $C>0$.
        Similarly, using (\ref{eq_H1_estimate_for_vj_to_Leray_Schauder}), we observe that
        \begin{align*}% \label{eq_H1L2_estimate_for_mathcal_V} 
            \begin{split}
                & \sum_{j=1,2} \int_0^t
                    \Vert
                        \partial_t \mathcal{V}_j(s)
                    \Vert_{L^2(I)}^2
                ds\\
                & \leq C
                \int_0^t
                    \left(
                        \sum_{k=1,3}
                            \vert
                                \tau_k^\prime(s)
                            \vert^2
                        \vert
                            \tau_1(s)
                            - \tau_3(s)
                        \vert^2
                        +
                        \vert
                            \tau_1^\prime(s)
                            - \tau_3^\prime(s)
                        \vert^2
                    \right)\\
                & \quad\quad\quad\quad\quad\quad \times
                    \sum_{k=1, \ldots, 4}
                        \Vert
                            v_k(s)
                        \Vert_{L^2(I)}^2
                ds\\
                & + C \int_0^t
                    \vert
                        \tau_1(s)
                        - \tau_3(s)
                    \vert^2
                    \sum_{k=1, \ldots, 4}
                        \Vert
                            \partial_t v_k(s)
                        \Vert_{L^2(I)}^2
                ds\\
                %& \leq C T^{\frac{3}{4}+\delta}
                %(
                %    \sum_{j=1,3}
                %        \Vert
                %            \tau_j
                %        \Vert_{X_{\sigma,T}}
                %    + 
                %    1
                %)
                %\Vert
                %    \tau_1
                %    - \tau_3
                %\Vert_{X_{\sigma,T}}
                %\sum_{j=1,3}
                %    \sup_{0<t<T}
                %        \Vert
                %            v_j(t)
                %        \Vert_{L^2(I)}\\
                %& + C
                %\Vert
                %    \tau_1
                %    - \tau_3
                %\Vert_{X_{\sigma,T}}
                %\left[
                %    C_{L^\infty_tH^1_x, 1}(t)
                %    + C_{L^\infty_tH^1_x, 2}(t)
                %    \Psi^\ast(\tau_1;t)
                %\right].
                %& \leq C T^{\frac{1}{2}+2\delta}
                %(
                %    \sum_{j=1,3}
                %        \Vert
                %            \tau_j
                %        \Vert_{X_{\sigma,T}}^2
                %    + 
                %    1
                %)
                %\Vert
                %    \tau_1
                %    - \tau_3
                %\Vert_{X_{\sigma,T}}^2\\
                %& + C
                %\Vert
                %    \tau_1
                %    - \tau_3
                %\Vert_{X_{\sigma,T}}^2
                %(
                %    1
                %    + \sum_{j=1,3}
                %        \Vert
                %            \tau_j
                %        \Vert_{X_{\sigma,T}}^2
                %)\\
                & \leq C
                \Vert
                    \tau_1
                    - \tau_3
                \Vert_{X_{\sigma,T}}^2
                (
                    1
                    + \sum_{j=1,3}
                        \Vert
                            \tau_j
                        \Vert_{X_{\sigma,T}}^2
                )
            \end{split}
        \end{align*}
        for some constant $C>0$.
        We will prove (iii).
        Similarly, we deduce that
        \begin{align*}
            & \sum_{j=1,2}
                \int_0^t
                    \Vert
                        \partial_s^2 \mathcal{V}_j
                    \Vert_{L^2(I)}^2
                    + \Vert
                        \partial_s \mathcal{V}_j
                    \Vert_{H^2(I)}^2
                ds\\
            & \leq C \int_0^t
                (
                    \sum_{k=1,3}
                        \vert
                            \tau_k^\prime
                        \vert^4
                    \vert
                        \tau_1
                        - \tau_3
                    \vert^2
                    + 
                    \vert
                        \tau_k^{\prime\prime}
                    \vert^2
                    \vert
                        \tau_1
                        - \tau_3
                    \vert^2
                \\
                &
                    \quad\quad\quad\quad\quad
                    + \vert
                        \tau_k^\prime
                    \vert^2
                    \vert
                        \tau_1^\prime
                        - \tau_3^\prime
                    \vert^2
                    +
                    \vert
                        \tau_1^{\prime\prime}
                        - \tau_3^{\prime\prime}
                    \vert^2
                ) \sum_{j=1,\ldots,4}
                    \Vert
                        v_j
                    \Vert_{L^2(I)}\\
                & \quad\quad\quad
                + (
                    \sum_{k=1,3}
                        \vert
                            \tau_k^\prime
                        \vert^2
                    \vert
                        \tau_1
                        - \tau_3
                    \vert^2
                    +
                    \vert
                        \tau_1^\prime
                        - \tau_3^\prime
                    \vert^2
                )
                \sum_{j=1,\ldots,4}
                    \Vert
                        \partial_s v_j
                    \Vert_{L^2(I)}^2\\
                & \quad\quad\quad
                + \vert
                    \tau_1
                    - \tau_3
                \vert^2
                \sum_{j=1,\ldots,4}
                    \Vert
                        \partial_s^2 v_j
                    \Vert_{L^2(I)}^2
            ds\\
            & + C \int_0^t
                (
                    \sum_{k=1,3}
                        \vert
                            \tau_k^\prime
                        \vert^2
                    \vert
                        \tau_1
                        - \tau_3
                    \vert^2
                    +
                    \vert
                        \tau_1^\prime
                        - \tau_3^\prime
                    \vert^2
                )
                \sum_{j=1,\ldots, 4}
                    \Vert
                        v_j
                    \Vert_{H^2(I)}^2\\
                & \quad\quad\quad
                +
                \vert
                    \tau_1
                    - \tau_2
                \vert^2
                \sum_{j=1,\ldots, 4}
                    \Vert
                        \partial_s v_j
                    \Vert_{H^2(I)}^2
            ds\\
            & =: \int_0^t
                I_1
                \sum_{j=1,\ldots,4}
                    \Vert
                        v_j
                    \Vert_{L^2(I)}^2
                + I_2
                \sum_{j=1,\ldots,4}
                    \Vert
                        \partial_s v_j
                    \Vert_{L^2(I)}^2
                + I_3
                \sum_{j=1,\ldots,4}
                    \Vert
                        \partial_s^2 v_j
                    \Vert_{L^2(I)}^2\\
            & \quad\quad\quad 
                + I_4
                \sum_{j=1,\ldots,4}
                    \Vert
                        v_j
                    \Vert_{H^2(I)}^2
                + I_5
                \sum_{j=1,\ldots,4}
                    \Vert
                        \partial_s v_j
                    \Vert_{H^2(I)}^2
            ds
        \end{align*}
        We observe that
        \begin{align*}
            & \int_0^t
                I_1
                \sum_{j=1,\ldots,4}
                    \Vert
                        v_j
                    \Vert_{L^2(I)}^2
            ds\\
            & \leq C
                T^{4\delta}
                \sum_{j=1,3}
                    \Vert
                        \tau_j
                    \Vert_{X_{\sigma,T}}^4
                \sum_{j=1,3}
                    \Vert
                        \tau_1
                        - \tau_3
                    \Vert_{X_{\sigma,T}}^2\\
            & + C T^{2\delta}
                \sum_{j=1,3}
                    \Vert
                        \tau_j
                    \Vert_{X_{\sigma,T}}^2
                \sum_{j=1,3}
                    \Vert
                        \tau_1
                        - \tau_3
                    \Vert_{X_{\sigma,T}}^2\\
            %&
            %    + C T^{2\delta}
            %    \sum_{j=1,3}
            %        \Vert
            %            \tau_j
            %        \Vert_{X_{\sigma,T}}^2
            %    \sum_{j=1,3}
            %        \Vert
            %            \tau_1
            %            - \tau_3
            %        \Vert_{X_{\sigma,T}}^2\\
            & 
                + C T^{2\delta}
                \sum_{j=1,3}
                    \Vert
                        \tau_1
                        - \tau_3
                    \Vert_{X_{\sigma,T}}^2\\
            & \leq C T^{2\delta}
            (
                1
                + \sum_{j=1,3}
                    \Vert
                        \tau_j
                    \Vert_{X_{\sigma,T}}^4
            )
            \sum_{j=1,3}
                \Vert
                    \tau_1
                    - \tau_3
                \Vert_{X_{\sigma,T}}^2.
        \end{align*}
        Using (\ref{eq_H1_estimate_for_vj_to_Leray_Schauder}) and (\ref{eq_estimates_for_W1_infty_H1_H14_H2_by_X_sigma_T}), we deduce that
        \begin{align*}
            & \sum_{j=1,\ldots,4} \int_0^t
                I_2
                    \Vert
                        \partial_s v_j
                    \Vert_{L^2(I)}^2
                + I_4
                    \Vert
                        v_j
                    \Vert_{H^2(I)}^2
            ds\\
            & \leq C T^{\frac{1}{2}+2\delta}
            (
                \sum_{j=1,3}
                    \Vert
                        \tau_j
                    \Vert_{X_{\sigma,T}}^2
                + 1
            )
            \Vert
                \tau_1
                - \tau_3
            \Vert_{X_{\sigma,T}}^2
            \sum_{j=1,3}
            [
                1
                + T^{-\frac{1}{2}+2\delta}
                \Vert
                    \tau_j
                \Vert_{X_{\sigma,T}}^2
                + \Vert
                    \tau_j
                \Vert_{X_{\sigma,T}}^6
            ]\\
            & \leq C (
                1
                + \sum_{j=1,3}
                    \Vert
                        \tau_j
                    \Vert_{X_{\sigma,T}}^2
            )^4
            \Vert
                \tau_1
                - \tau_3
            \Vert_{X_{\sigma,T}}^2.
        \end{align*}
        Similarly, we find from (\ref{eq_H3_estimate_for_vj_to_Leray_Schauder}) that
        \begin{align*}
            & \sum_{j=1,\ldots,4}\int_0^t
                I_3
                \Vert
                    \partial_s^2 v_j
                \Vert_{L^2(I)}^2
                + I_5
                \Vert
                    \partial_s v_j
                \Vert_{H^2(I)}^2
            ds\\
            & \leq C \Vert
                \tau_1
                - \tau_3
            \Vert_{X_{\sigma,T}}^2
            \sum_{j=1,3}
            [
                1
                + T^{-\frac{1}{2}+2\delta}
                \Vert
                    \tau_j
                \Vert_{X_{\sigma,T}}^2
                + \Vert
                    \tau_j
                \Vert_{X_{\sigma,T}}^6
            ].
        \end{align*}
        Therefore, we obtain (\ref{eq_estimate_for_dt_mathcal_W_j})
    \end{proof}

    Proposition \ref{prop_estimates_for_mathcal_V_j} yields
    \begin{align} \label{eq_estimate_for_mathcal_W_in_L2L2}
        \sum_{j=1,2}
            \int_0^t
                \Vert
                    \mathcal{W}_j(s)
                \Vert_{L^2(I)}^2
            ds
        \leq C (
            1
            + \sum_{j=1,3}
            \Vert
                \tau_j
            \Vert_{X_{\sigma,T}}^2
        )
        \Vert
            \tau_1
            - \tau_3
        \Vert_{X_{\sigma,T}}^2
    \end{align}
    for some constant $C>0$ and $T \in [0, 1]$.
    We next show some estimates for $\overline{\mathcal{V}}_j$.
    \begin{proposition} \label{prop_L2_estimate_for_V_j}
        Let $T \in [0,1]$ and $(\tau_j, \tau_{j+1}) \in B_{X_{\sigma, T}^2, y^\ast_1}$ for $j=1,3$.
        Assume that $T$ satisfies (\ref{eq_assumption_for_tau1_tau2_for_exponential_deal_with_term}).
        Then
        \begin{align*}% \label{eq_L2_estimate_for_V_j}
            \begin{split}
                \sum_{j=1,2}
                \left(
                    \Vert
                        V_j(t)
                    \Vert_{L^2(I)}^2
                    + \int_0^t
                        \Vert
                            \partial_x V_j(s)
                        \Vert_{L^2(I)}^2
                    ds
                \right)
                \leq C
                \Vert
                    \tau_1
                    - \tau_3
                \Vert_{X_{\sigma,T}}^2.
            \end{split}
        \end{align*}
        for some constant $C>0$, which depends continuously on $\Vert \tau_j \Vert_{X_{\sigma,T}} \in [0, y^\ast_1]$.
    \end{proposition}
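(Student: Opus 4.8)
The plan is to work with the decomposition $V_j=\overline{\mathcal{V}}_j+\mathcal{V}_j$ already in place, where $\mathcal{V}_j$ is the explicit extension term and $\overline{\mathcal{V}}_j$ solves the difference system (\ref{eq_overline_V_difference_equation_for_iteration_of_full_filter_clogging_equations}) with \emph{homogeneous} fourth boundary conditions and zero initial data. Proposition \ref{prop_estimates_for_mathcal_V_j}(i) already bounds $\sum_{j}\big(\Vert\mathcal{V}_j(t)\Vert_{L^2(I)}^2+\int_0^t\Vert\mathcal{V}_j(s)\Vert_{H^1(I)}^2\,ds\big)$ by $C\Vert\tau_1-\tau_3\Vert_{X_{\sigma,T}}^2$ on the ball $B_{X_{\sigma,T}^2,y_1^\ast}$ (here the dependence on $\Vert\tau_j\Vert_{X_{\sigma,T}}\le y_1^\ast$ reduces to a fixed constant), so by the triangle inequality it suffices to establish the same bound for $\overline{\mathcal{V}}_j$.

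First I would run the $L^2$-energy estimate for $\overline{\mathcal{V}}_j$: multiply the $j$-th equation of (\ref{eq_overline_V_difference_equation_for_iteration_of_full_filter_clogging_equations}) by $\overline{\mathcal{V}}_j$ and integrate over $I$. Since $\overline{\mathcal{V}}_j$ satisfies $B_1(\overline{\mathcal{V}}_j;\theta)=B_2(\overline{\mathcal{V}}_j;\theta)=0$ with $\theta=(F(\tau_1)+F(\tau_3))/2\in(0,1]$, the boundary terms produced when integrating $\partial_x^2\overline{\mathcal{V}}_j$ by parts vanish exactly as in the computation opening Section 2.2, leaving $\tfrac12\tfrac{d}{dt}\Vert\overline{\mathcal{V}}_j\Vert_{L^2(I)}^2+\Vert\partial_x\overline{\mathcal{V}}_j\Vert_{L^2(I)}^2$ on the left. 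On the right one gets the pairings of $\overline{\mathcal{V}}_j$ against $\mathcal{N}_{V,0}$, the reaction differences $\mathcal{N}_{V,j}$, and $\mathcal{W}_j$. The reaction differences are controlled by Proposition \ref{prop_estimates_for_difference_between_nonlinear_terms}(i)--(ii): their $L^2$-norms are at most $C(1+\Vert v_2\Vert_{H^1(I)}+\Vert v_4\Vert_{H^1(I)})(\Vert V_1\Vert_{L^2(I)}+\Vert V_2\Vert_{L^2(I)})$, where $\Vert v_k\Vert_{L^\infty_tH^1_x}$ is bounded by a fixed constant through (\ref{eq_H1_estimate_for_vj_for_contraction_mapping_principle}), and writing $\Vert V_k\Vert_{L^2(I)}\le\Vert\overline{\mathcal{V}}_k\Vert_{L^2(I)}+\Vert\mathcal{V}_k\Vert_{L^2(I)}$ feeds the $\overline{\mathcal{V}}_k$ part into the Gronwall loop and the $\mathcal{V}_k$ part into the data side via Proposition \ref{prop_estimates_for_mathcal_V_j}(i). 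The contribution of $\mathcal{W}_j$ is integrated in time with (\ref{eq_estimate_for_mathcal_W_in_L2L2}).

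The delicate term is the convection difference $\mathcal{N}_{V,0}(v_1,v_3,\tau_1,\tau_3)=c(\tau_1)\partial_x v_1-c(\tau_3)\partial_x v_3$. I would not invoke Proposition \ref{prop_estimates_for_difference_between_convection_terms}(i) directly, since it produces a term $\Vert V_1\Vert_{H^1(I)}$ containing $\Vert\partial_x\overline{\mathcal{V}}_1\Vert_{L^2(I)}$ that the dissipation cannot absorb with a small constant; instead I split $\mathcal{N}_{V,0}=(c(\tau_1)-c(\tau_3))\partial_x v_1+c(\tau_3)\partial_x V_1$. For the first piece, $\Vert(c(\tau_1)-c(\tau_3))\partial_x v_1\Vert_{L^2(I)}\le C\vert\tau_1(t)-\tau_3(t)\vert\,\Vert\partial_x v_1\Vert_{L^2(I)}$ with $\vert\tau_1(t)-\tau_3(t)\vert\le\Vert\tau_1-\tau_3\Vert_{X_{\sigma,T}}$, and $\int_0^t\Vert\partial_x v_1(s)\Vert_{L^2(I)}^2\,ds$ is a fixed constant by (\ref{eq_L^2_estimate_for_vj_for_contraction_mapping_principle}). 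For the second piece, since $c(\tau_3)$ depends only on $t$, the pairing against $\overline{\mathcal{V}}_1$ gives $-\int_I c(\tau_3)\,\partial_x\overline{\mathcal{V}}_1\,\overline{\mathcal{V}}_1\,dx=-\tfrac{c(\tau_3)}{2}\big(\gamma_+\overline{\mathcal{V}}_1^2-\gamma_-\overline{\mathcal{V}}_1^2\big)=-\tfrac{c(\tau_3)}{2}(2\theta-\theta^2)\gamma_+\overline{\mathcal{V}}_1^2\le0$ by $\gamma_-\overline{\mathcal{V}}_1=(1-\theta)\gamma_+\overline{\mathcal{V}}_1$ and $\theta\in(0,1]$, so it is simply discarded, while the leftover $-\int_I c(\tau_3)\,\partial_x\mathcal{V}_1\,\overline{\mathcal{V}}_1\,dx$ is bounded by $\tfrac12\Vert\partial_x\mathcal{V}_1\Vert_{L^2(I)}^2+\tfrac12\Vert\overline{\mathcal{V}}_1\Vert_{L^2(I)}^2$ and Proposition \ref{prop_estimates_for_mathcal_V_j}(i). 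Collecting terms yields $\tfrac{d}{dt}\sum_j\Vert\overline{\mathcal{V}}_j\Vert_{L^2(I)}^2+\sum_j\Vert\partial_x\overline{\mathcal{V}}_j\Vert_{L^2(I)}^2\le C\sum_j\Vert\overline{\mathcal{V}}_j\Vert_{L^2(I)}^2+C\Vert\tau_1-\tau_3\Vert_{X_{\sigma,T}}^2$ with $C$ depending only on $y_1^\ast$ and the fixed data; Gronwall's inequality with $\overline{\mathcal{V}}_j(\cdot,0)=0$ gives the bound for $\overline{\mathcal{V}}_j$, and adding the $\mathcal{V}_j$-bound concludes. The main obstacle is precisely this organization of the convection term so that no $H^1$-norm of $V_j$ loops back into the estimate; once the splitting is used, the sign coming from the homogeneous fourth boundary condition renders the problematic piece harmless.
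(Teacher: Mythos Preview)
Your argument is correct and follows essentially the same route as the paper: decompose $V_j=\overline{\mathcal{V}}_j+\mathcal{V}_j$, run the $L^2$-energy estimate on $\overline{\mathcal{V}}_j$ with homogeneous boundary conditions, apply Gronwall (with zero initial data), and add back the $\mathcal{V}_j$ bound from Proposition~\ref{prop_estimates_for_mathcal_V_j}(i). The only divergence is in your handling of the convection difference. Your sign trick for $-\int_I c(\tau_3)\partial_x\overline{\mathcal{V}}_j\,\overline{\mathcal{V}}_j\,dx$ is valid, but your stated motivation for it---that the direct estimate ``cannot absorb with a small constant''---is mistaken: since $c$ is bounded uniformly by $\Omega$, the pairing $\int_I c(\tau_3)\partial_x V_j\,\overline{\mathcal{V}}_j\,dx$ is controlled by $\varepsilon\Vert\partial_x V_j\Vert_{L^2(I)}^2+C_\varepsilon\Vert\overline{\mathcal{V}}_j\Vert_{L^2(I)}^2$, and after splitting $\partial_x V_j=\partial_x\overline{\mathcal{V}}_j+\partial_x\mathcal{V}_j$ the $2\varepsilon\Vert\partial_x\overline{\mathcal{V}}_j\Vert_{L^2(I)}^2$ piece is absorbed into the dissipation for any $\varepsilon<1/2$. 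This is exactly what the paper does in (\ref{eq_differential_ineq_for_mathcal_V_in_L2}), where the coefficient $\tfrac12$ appears on $\Vert\partial_x\overline{\mathcal{V}}_j\Vert_{L^2(I)}^2$. Your variant is a perfectly fine alternative, just not a necessary one.
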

    \begin{proof}
    Propositions \ref{prop_L2_estimate_for_linear_heat_eq}, \ref{prop_nonlinear_estimates_for_Nv}(i)-(ii), \ref{prop_nonlinear_estimates_for_cdv}(i) and the Young inequality yield
    \begin{align} \label{eq_differential_ineq_for_mathcal_V_in_L2}
        \begin{split}
            & \sum_{j=1,2}
            \left(
                \frac{\partial_t}{2} \Vert
                    \overline{\mathcal{V}}_j(t)
                \Vert_{L^2(I)}^2
                + \Vert
                    \partial_x \overline{\mathcal{V}}_j(t)
                \Vert_{L^2(I)}^2
            \right)\\
            & \leq C
                \sum_{j=1,2}
                    \Vert
                        \mathcal{N}_{V,0}(v_j, v_{j+2}, \tau_1, \tau_3)
                    \Vert_{L^2(I)}
                    \Vert
                        \overline{\mathcal{V}}_j(t)
                    \Vert_{L^2(I)}
            \\
            & + C \sum_{k=1,2}
                (
                    \Vert
                        \mathcal{N}_{V,j}
                    \Vert_{L^2(I)}
                    + \Vert
                        \mathcal{W}_j
                    \Vert_{L^2(I)}
                )
                \Vert
                    \overline{\mathcal{V}}_j(t)
                \Vert_{L^2(I)}\\
            & \leq C
            \vert
                \tau_1(t)
                - \tau_3(t)
            \vert^2
            \sum_{k=1, \ldots,4}
                \Vert
                    v_k(t)
                \Vert_{H^1(I)}^2\\
            & + \frac{1}{2} \sum_{j=1,2}
            (
                \Vert
                    \partial_x \mathcal{V}_j(t)
                \Vert_{L^2(I)}^2
                + \Vert
                    \partial_x \overline{\mathcal{V}}_j(t)
                \Vert_{L^2(I)}^2
            )\\
            & + C \sum_{j=1,2}
                \Vert
                    \overline{\mathcal{V}}_j(t)
                \Vert_{L^2(I)}^2\\
            & + C \sum_{k=2,4}
            (
                1
                + \Vert
                    v_k
                \Vert_{H^1(I)}
            )
            \sum_{k=1,2}
            (
                \Vert
                    \mathcal{V}_j
                \Vert_{L^2(I)}^2
                + \Vert
                    \overline{\mathcal{V}}_j
                \Vert_{L^2(I)}^2
            )\\
            & + C \sum_{j=1,2}
                \Vert
                    \mathcal{W}_j
                \Vert_{L^2(I)}^2.
        \end{split}
    \end{align}
    Therefore, we find from the Gronwall inequality, the estimate (\ref{eq_H1_estimate_for_vj_to_Leray_Schauder}), and Proposition \ref{prop_estimates_for_mathcal_V_j} that
    \begin{align} \label{eq_L2_estimate_for_mathcal_overline_V}
        \begin{split}
            & \sum_{j=1,2}
            \left(
                \Vert
                    \overline{\mathcal{V}}_j(t)
                \Vert_{L^2(I)}^2
                + \int_0^t
                    \Vert
                        \partial_x \overline{\mathcal{V}}_j(s)
                    \Vert_{L^2(I)}^2
                ds
            \right)\\
            & \leq C
            \Vert
                \tau_1
                - \tau_3
            \Vert_{X_{\sigma,T}}^2
            + C
            \Vert
                \tau_1
                - \tau_3
            \Vert_{X_{\sigma,T}}^2
            (
                1
                + \sum_{j=1,3}
                    \Vert
                        \tau_j
                    \Vert_{X_{\sigma,T}}^2
            )\\
            & \leq C
            \Vert
                \tau_1
                - \tau_3
            \Vert_{X_{\sigma,T}}^2
            (
                1
                + \sum_{k=1,3}
                    \Vert
                        \tau_k
                    \Vert_{X_{\sigma,T}}^2
            )
        \end{split}
    \end{align}
    for some constant $C>0$.
    Using Proposition \ref{prop_estimates_for_mathcal_V_j}, (\ref{eq_L2H2_estimate_for_mathcal_V}), and the definition $V_j = \overline{\mathcal{V}}_j + \mathcal{V}_j$, we conclude that
    \begin{align}\label{eq_L2_estimate_for_V_j}
        \begin{split}
            & \sum_{j=1,2}
            \left(
                \Vert
                    V_j(t)
                \Vert_{L^2(I)}^2
                + \int_0^t
                    \Vert
                        \partial_x V_j(s)
                    \Vert_{L^2(I)}^2
                ds
            \right)\\
            & \leq C
            \Vert
                \tau_1
                - \tau_3
            \Vert_{X_{\sigma,T}}^2
            (
                1
                + \sum_{k=1,3}
                    \Vert
                        \tau_k
                    \Vert_{X_{\sigma,T}}^2
            ).
        \end{split}
    \end{align}
\end{proof}

\begin{proposition} \label{prop_H1_estimate_for_V_j}
    Let $T \in [0,1]$ and $(\tau_j, \tau_{j+1}) \in B_{X_{\sigma, T}^2, y^\ast_1}$ for $j=1,3$.
    Assume that $T$ satisfies (\ref{eq_assumption_for_tau1_tau2_for_exponential_deal_with_term}).
    Then
    \begin{align*}% \label{eq_L2_estimate_for_V_j}
        \begin{split}
            & \sum_{j=1,2} \left[
                    \int_0^t
                        \Vert
                            \partial_s V_j(s)
                        \Vert_{L^2(I)}^2
                    ds
                    + \Vert
                        \partial_x V_j(t)
                    \Vert_{L^2(I)}^2
                    + \int_0^t
                        \Vert
                            \partial_x^2 V_j(s)
                        \Vert_{L^2(I)}^2
                    ds
                \right]\\
            & \leq C
            \Vert
                \tau_1
                - \tau_3
            \Vert_{X_{\sigma,T}}^2.
        \end{split}
    \end{align*}
    for some constant $C>0$, which depend continuously on $\Vert \tau_j \Vert_{X_{\sigma,T}} \in [0, y^\ast_1]$.
\end{proposition}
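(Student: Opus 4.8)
The plan is to repeat the argument of Proposition \ref{prop_L2_estimate_for_V_j} one derivative higher, using the decomposition $V_j=\overline{\mathcal{V}}_j+\mathcal{V}_j$ from (\ref{eq_overline_V_difference_equation_for_iteration_of_full_filter_clogging_equations}). The extension part $\mathcal{V}_j$ is already under control: Proposition \ref{prop_estimates_for_mathcal_V_j}(ii)--(iii) together with (\ref{eq_L2H2_estimate_for_mathcal_V}) bound $\Vert\mathcal{V}_j\Vert_{L^\infty_tH^1_x(Q_T)}^2$, $\int_0^t\Vert\partial_s\mathcal{V}_j\Vert_{L^2(I)}^2\,ds$ and $\int_0^t\Vert\partial_x^2\mathcal{V}_j\Vert_{L^2(I)}^2\,ds$ by $C\Vert\tau_1-\tau_3\Vert_{X_{\sigma,T}}^2$ times bounded powers of $\sum_{j=1,3}\Vert\tau_j\Vert_{X_{\sigma,T}}$, which stay bounded on the ball $B_{X_{\sigma,T}^2,y_1^\ast}$. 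Hence it is enough to prove the same bound for $\overline{\mathcal{V}}_j$, and then conclude by the triangle inequality $V_j=\overline{\mathcal{V}}_j+\mathcal{V}_j$.

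First I would note that $\overline{\mathcal{V}}_j$ solves the linear heat equation with the \emph{homogeneous} fourth boundary condition for the parameter $\theta=\tfrac12\bigl(F(\tau_1)+F(\tau_3)\bigr)$, which lies in $C^1(0,T)$ since $\tau_1,\tau_3\in X_{\sigma,T}\hookrightarrow H^2(0,T)$ and satisfies $\bigl|d\theta/dt\bigr|\le C\bigl(|\tau_1'|+|\tau_3'|\bigr)$. Thus Proposition \ref{prop_L_infty_H1_estimate_for_v} applies to $\overline{\mathcal{V}}_j$ verbatim, with $f$ replaced by the right-hand side of (\ref{eq_overline_V_difference_equation_for_iteration_of_full_filter_clogging_equations}) and the role of $|\sigma'|^2$ played by $|\tau_1'|^2+|\tau_3'|^2$. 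Since $\overline{\mathcal{V}}_j(\cdot,0)=0$, estimate (\ref{eq_L_infty_H1_estimate_for_v}) gives
\begin{align*}
    & \sum_{j=1,2}\left[
        \int_0^t\Vert\partial_s\overline{\mathcal{V}}_j(s)\Vert_{L^2(I)}^2\,ds
        + \Vert\partial_x\overline{\mathcal{V}}_j(t)\Vert_{L^2(I)}^2
        + \int_0^t\Vert\partial_x^2\overline{\mathcal{V}}_j(s)\Vert_{L^2(I)}^2\,ds
    \right]\\
    & \leq C\,e^{\int_0^t C(1+|\tau_1'|^2+|\tau_3'|^2)\,dr}
    \left(
        \sup_{0<s<t}\sum_{j=1,2}\Vert\overline{\mathcal{V}}_j(s)\Vert_{L^2(I)}^2
        \int_0^t\!\bigl(1+|\tau_1'|^2+|\tau_3'|^2\bigr)\,ds
        + \int_0^t\Vert g(s)\Vert_{L^2(I)}^2\,ds
    \right),
\end{align*}
where $g$ denotes the forcing $-\mathcal{N}_{V,0}(v_j,v_{j+2},\tau_1,\tau_3)\mp\mathcal{N}_{V,j}(v_1,v_2,v_3,v_4)-\mathcal{W}_j$. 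Under the smallness assumption (\ref{eq_assumption_for_tau1_tau2_for_exponential_deal_with_term}) the exponent and the factor $\int_0^t(1+|\tau_1'|^2+|\tau_3'|^2)\,ds$ are bounded by an absolute constant (cf. (\ref{eq_estimate_for_Psi_ast_for_small_t})), while $\sup_s\Vert\overline{\mathcal{V}}_j(s)\Vert_{L^2(I)}^2$ is already bounded by $C\Vert\tau_1-\tau_3\Vert_{X_{\sigma,T}}^2$ via (\ref{eq_L2_estimate_for_mathcal_overline_V}); so everything reduces to bounding $\int_0^t\Vert g(s)\Vert_{L^2(I)}^2\,ds$.

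For the forcing I would treat each piece separately. The convection difference $\mathcal{N}_{V,0}$ is estimated by Proposition \ref{prop_estimates_for_difference_between_convection_terms}(i) and the uniform $H^1_tL^2_x$, $L^2_tH^2_x$-bounds (\ref{eq_H1_estimate_for_vj_for_contraction_mapping_principle}) for $v_k$; the prey--predator differences $\mathcal{N}_{V,1},\mathcal{N}_{V,2}$ by Proposition \ref{prop_estimates_for_difference_between_nonlinear_terms}(i)--(ii) and the same bounds; and the commutator term $\mathcal{W}_j$ by (\ref{eq_estimate_for_mathcal_W_in_L2L2}). Summing and using (\ref{eq_trivial_inequality}), each contribution is $\le C\Vert\tau_1-\tau_3\Vert_{X_{\sigma,T}}^2$ times bounded powers of $\sum_{j=1,3}\Vert\tau_j\Vert_{X_{\sigma,T}}\le y_1^\ast$, which yields the asserted $H^1$-bound for $\overline{\mathcal{V}}_j$; combining with the bound for $\mathcal{V}_j$ completes the proof, with a constant depending continuously on $\Vert\tau_j\Vert_{X_{\sigma,T}}\in[0,y_1^\ast]$.

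The hard part will be the bookkeeping of $T$-powers for the terms carrying $\Vert\tau_1'\Vert_{L^\infty(0,T)},\Vert\tau_3'\Vert_{L^\infty(0,T)}$ — which, by (\ref{eq_estimates_for_W1_infty_H1_H14_H2_by_X_sigma_T}), contain the negative power $T^{-1/4+\delta}$ and are therefore not small for small $T$. The resolution, exactly as in Proposition \ref{prop_estimates_for_mathcal_V_j}(iii), is that these factors always appear multiplied by $\Vert\tau_1-\tau_3\Vert_{X_{\sigma,T}}^2$ and by powers of $\Vert\tau_j\Vert_{X_{\sigma,T}}$ that remain $\le y_1^\ast$, while all $L^2$- and $L^4$-in-time norms of $\tau_j',\tau_j''$ carry only nonnegative powers of $T$; bounding $\mathcal{W}_j$ also needs the $\partial_t^2$- and $\partial_x^2\partial_t$-control of $v_k$ from (\ref{eq_H3_estimate_for_vj_to_Leray_Schauder}), which has already been assembled. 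So no genuinely new estimate is needed beyond a careful tally; this is the technical heart of the argument.
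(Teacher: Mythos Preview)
Your proposal is correct and follows essentially the same route as the paper: decompose $V_j=\overline{\mathcal{V}}_j+\mathcal{V}_j$, apply Proposition~\ref{prop_L_infty_H1_estimate_for_v} to $\overline{\mathcal{V}}_j$ (zero initial data, homogeneous boundary, forcing $g$), bound $\int_0^t\Vert g\Vert_{L^2(I)}^2\,ds$ via Propositions~\ref{prop_estimates_for_difference_between_nonlinear_terms}, \ref{prop_estimates_for_difference_between_convection_terms} and (\ref{eq_estimate_for_mathcal_W_in_L2L2}), control $\sup_s\Vert\overline{\mathcal{V}}_j\Vert_{L^2}^2$ by (\ref{eq_L2_estimate_for_mathcal_overline_V}), and finish with Proposition~\ref{prop_estimates_for_mathcal_V_j} for the $\mathcal{V}_j$ piece.

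One remark: your final paragraph overanticipates difficulties that do not arise at this $H^1$ level. No $\Vert\tau_j'\Vert_{L^\infty(0,T)}$ appears here --- the exponential in (\ref{eq_L_infty_H1_estimate_for_v}) only involves $\int_0^t|\tau_j'|^2\,ds\le\Psi^\ast$, which is bounded by (\ref{eq_estimate_for_Psi_ast_for_small_t}); and bounding $\int_0^t\Vert\mathcal{W}_j\Vert_{L^2}^2\,ds$ requires only the $H^1_tL^2_x\cap L^2_tH^2_x$ control of $v_k$ from (\ref{eq_H1_estimate_for_vj_for_contraction_mapping_principle}), not the higher-order bound (\ref{eq_H3_estimate_for_vj_to_Leray_Schauder}). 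The paper's final estimate (\ref{eq_H1_estimate_for_mathcal_overline_V}) carries no negative power of $T$; those appear only at the next stage (Proposition~\ref{prop_H3_a_pripori_estimate_for_V}).
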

\begin{proof}
    Therefore, in the same way as (\ref{eq_differential_ineq_for_mathcal_V_in_L2}), we use Propositions \ref{prop_estimates_for_difference_between_nonlinear_terms}, \ref{prop_estimates_for_difference_between_convection_terms}, and (\ref{prop_estimates_for_difference_between_boundary_extension_terms}) to see that
    \begin{align*}
        & \sum_{j=1,2}
            \int_0^t
                \Vert
                    \mathcal{N}_{V,0}(v_j, v_{j+2}, \tau_1, \tau_3)
                    - \mathcal{N}_{V,j}(v_1, v_2, v_3, v_4)
                    - \mathcal{W}_j
                \Vert_{L^2(I)}^2
            ds\\
        & \leq C \int_0^t
            \Vert
                \tau_1
                - \tau_3
            \Vert_{X_{\sigma,T}}^2
            \sum_{k=2,4}
                \Vert
                    v_k(t)
                \Vert_{H^1(I)}^2
            + \sum_{j=1,2}
                \Vert
                    \partial_x V_j(t)
                \Vert_{L^2(I)}^2
        ds\\
        & + C \sum_{k=2,4} \int_0^t
            (
                1
                + \Vert
                    v_k
                \Vert_{H^1(I)}
            )
            \sum_{j=1,2}
                \Vert
                    V_j
                \Vert_{L^2(I)}^2
        ds\\
        & + C \sum_{j=1,2}
            \int_0^t
                \Vert
                    \mathcal{W}_j
                \Vert_{L^2(I)}^2
            ds\\
        & \leq C \Vert
            \tau_1
            - \tau_3
        \Vert_{X_{\sigma,T}}^2
        (
            1
            + \sum_{j=1,3}
                \Vert
                    \tau_j
                \Vert_{X_{\sigma,T}}^2
        )\\
        & + C (T+1)
        \Vert
            \tau_1
            - \tau_3
        \Vert_{X_{\sigma,T}}^2
        (
            1
            + \sum_{k=1,3}
                \Vert
                    \tau_k
                \Vert_{X_{\sigma,T}}^2
        )\\ 
        %& + C (
        %    1
        %    + \sum_{j=1,3}
        %    \Vert
        %        \tau_j
        %    \Vert_{X_{\sigma,T}}^2
        %)
        %\Vert
        %    \tau_1
        %    - \tau_3
        %\Vert_{X_{\sigma,T}}^2\\
        & \leq C (
            1
            + \sum_{j=1,3}
                \Vert
                    \tau_j
                \Vert_{X_{\sigma,T}}^2
        )
        \Vert
            \tau_1
            - \tau_3
        \Vert_{X_{\sigma,T}}^2
    \end{align*}
    for some constant $C>0$.
    We find from Proposition \ref{prop_L_infty_H1_estimate_for_v} that
    \begin{align}\label{eq_H1_estimate_for_mathcal_overline_V}
        \begin{split}
            & \sum_{j=1,2} \left[
                \int_0^t
                    \Vert
                        \partial_s \overline{\mathcal{V}}_j(s)
                    \Vert_{L^2(I)}^2
                ds
                + \Vert
                    \partial_x \overline{\mathcal{V}}_j(t)
                \Vert_{L^2(I)}^2
                + \int_0^t
                    \Vert
                        \partial_x^2 \overline{\mathcal{V}}_j(s)
                    \Vert_{L^2(I)}^2
                ds
            \right]\\
            & \leq C
            \sup_{0<t<T} \Vert
                \overline{\mathcal{V}}_j(s)
            \Vert_{L^2(I)}^2
            \int_0^t
                (
                    1
                    + \sum_{k=1,3}
                        \vert
                            \tau_k^\prime(s)
                        \vert^2
                )
            ds\\
            & + C
            \sum_{j=1,2}
                \int_0^t
                    \Vert
                        \mathcal{N}_{V,0}(v_1, v_3, \tau_1, \tau_3)
                        - \mathcal{N}_{V,j}(v_1, v_2, v_3, v_4)
                        - \mathcal{W}_j
                    \Vert_{L^2(I)}^2
                ds\\
            & \leq C
            \Vert
                \tau_1(t)
                - \tau_3(t)
            \Vert_{X_{\sigma,T}}^2
            (
                1
                + \sum_{k=1,3}
                    \Vert
                        \tau_k
                    \Vert_{X_{\sigma,T}}^2
            )
            (
                T
                + T^{\frac{1}{2}+2\delta} \Vert
                    \tau_j
                \Vert_{X_{\sigma,T}}^2
            )
            \\
            & +  C (
                1
                + \sum_{j=1,3}
                    \Vert
                        \tau_j
                    \Vert_{X_{\sigma,T}}^2
            )
            \Vert
                \tau_1
                - \tau_3
            \Vert_{X_{\sigma,T}}^2\\
            & \leq C (
                1
                + \sum_{j=1,3}
                    \Vert
                        \tau_j
                    \Vert_{X_{\sigma,T}}^2
            )^2
            \Vert
                \tau_1
                - \tau_3
            \Vert_{X_{\sigma,T}}^2.
        \end{split}
    \end{align}
    Using Proposition \ref{prop_estimates_for_mathcal_V_j} and the definition $V_j = \overline{\mathcal{V}}_j + \mathcal{V}_j$, we obtain
    \begin{align} \label{eq_H1_estimate_for_V_j}
        \begin{split}
            & \sum_{j=1,2} \left[
                    \int_0^t
                        \Vert
                            \partial_s V_j(s)
                        \Vert_{L^2(I)}^2
                    ds
                    + \Vert
                        \partial_x V_j(t)
                    \Vert_{H^1(I)}^2
                    + \int_0^t
                        \Vert
                            \partial_x^2 V_j(s)
                        \Vert_{L^2(I)}^2
                    ds
                \right]\\
            & \leq C (
                1
                + \sum_{j=1,3}
                \Vert
                    \tau_j
                \Vert_{X_{\sigma,T}}^2
            )^2
            \Vert
                \tau_1
                - \tau_3
            \Vert_{X_{\sigma,T}}^2.
        \end{split}
    \end{align}
\end{proof}

    \begin{proposition}\label{prop_estimate_for_dt_mathcal_N_j}
        %It follows that
        %\begin{align*}
        %    \sum_{j=1,2}
        %    \int_0^t
        %        \Vert
        %            \partial_t \mathcal{N}_{V,j}(v_1, v_2, v_3, v_4)
        %        \Vert_{L^2(I)}^2
        %    ds
        %    \leq C (
        %        1
        %        + \sum_{j=1,3}
        %        \Vert
        %            \tau_j
        %        \Vert_{X_{\sigma,T}}^2
        %    )^4
        %    \Vert
        %        \tau_1
        %        - \tau_3
        %    \Vert_{X_{\sigma,T}}^2
        %\end{align*}
        %for some constant $C$ which is bounded for $T \in [0,1]$.
        Let $T \in (0,1]$ and $(\tau_j, \tau_{j+1}) \in B_{X_{\sigma, T}^2, y^\ast_1}$ for $j=1,3$.
        Assume that $T$ satisfies (\ref{eq_assumption_for_tau1_tau2_for_exponential_deal_with_term}).
        Then
        \begin{align*}
            \sum_{j=1,2}
            \int_0^t
                \Vert
                    \partial_t \mathcal{N}_{V,j}(v_1, v_2, v_3, v_4)
                \Vert_{L^2(I)}^2
            ds
            \leq C
            \Vert
                \tau_1
                - \tau_3
            \Vert_{X_{\sigma,T}}^2
        \end{align*}
        for some constant $C>0$ which depends continuously on $\Vert \tau_j \Vert_{X_{\sigma,T}} \in [0, y^\ast_1]$.
    \end{proposition}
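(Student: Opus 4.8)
The plan is to reduce the estimate to the pointwise-in-time difference bounds already established in Proposition~\ref{prop_estimates_for_difference_between_nonlinear_terms}(iii)--(iv), specialized to the derivative $\partial=\partial_t$ and to the choice $\varphi_1=v_1$, $\varphi_2=v_3$, $\psi_1=v_2$, $\psi_2=v_4$. Under this choice the differences appearing on the right-hand sides are exactly $v_1-v_3=V_1$, $v_2-v_4=V_2$ and their time derivatives $\partial_t V_1,\partial_t V_2$, while the remaining factors $\Vert v_j\Vert_{H^1(I)}$, $\Vert \partial_t v_j\Vert_{L^2(I)}$ play the role of coefficients controlled by the a~priori bounds (\ref{eq_H1_estimate_for_vj_to_Leray_Schauder}) and (\ref{eq_H2_estimate_for_vj_to_Leray_Schauder}) for the solutions $v_j=\mathcal{S}_{v,j}(\tau_1)$ and $v_{j+2}=\mathcal{S}_{v,j}(\tau_3)$. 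Concretely, Proposition~\ref{prop_estimates_for_difference_between_nonlinear_terms}(iii) with $\partial=\partial_t$ gives, for a.a.\ $t$,
\begin{align*}
    & \Vert \partial_t \mathcal{N}_{V,1}(v_1,v_2,v_3,v_4)(t)\Vert_{L^2(I)}\\
    & \leq C\Big(\sum_{j=1,3}\Vert \partial_t v_j(t)\Vert_{L^2(I)}\Vert v_2(t)\Vert_{H^1(I)}+\Vert \partial_t v_2(t)\Vert_{L^2(I)}\Big)\Vert V_1(t)\Vert_{H^1(I)}\\
    & \quad + C\Vert v_2(t)\Vert_{H^1(I)}\Vert \partial_t V_1(t)\Vert_{L^2(I)}
    + C\Vert \partial_t v_3(t)\Vert_{L^2(I)}\Vert V_2(t)\Vert_{H^1(I)}
    + C\Vert \partial_t V_2(t)\Vert_{L^2(I)},
\end{align*}
and Proposition~\ref{prop_estimates_for_difference_between_nonlinear_terms}(iv) adds to the $j=2$ case only further terms of the same shape, namely $\Vert \partial_t v_j(t)\Vert_{L^2(I)}\Vert V_2(t)\Vert_{H^1(I)}$ and $\Vert v_j(t)\Vert_{H^1(I)}\Vert \partial_t V_2(t)\Vert_{L^2(I)}$.

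Next I would square these inequalities, integrate over $(0,t)$, and in each resulting term pull out the $L^\infty$-in-time factors. The quantities $\sup_{0<s<T}\Vert v_j(s)\Vert_{H^1(I)}$ and $\sup_{0<s<T}\Vert \partial_t v_j(s)\Vert_{L^2(I)}$ are finite by (\ref{eq_H1_estimate_for_vj_to_Leray_Schauder}) and (\ref{eq_H2_estimate_for_vj_to_Leray_Schauder}); the factor $\sup_{0<s<T}\Vert V_j(s)\Vert_{H^1(I)}^2$ is bounded by $C\Vert \tau_1-\tau_3\Vert_{X_{\sigma,T}}^2$ by Propositions~\ref{prop_L2_estimate_for_V_j} and \ref{prop_H1_estimate_for_V_j}; and the remaining time integrals are either $\int_0^t\Vert \partial_s V_j(s)\Vert_{L^2(I)}^2\,ds\leq C\Vert \tau_1-\tau_3\Vert_{X_{\sigma,T}}^2$ (again Proposition~\ref{prop_H1_estimate_for_V_j}) or $\int_0^t\Vert \partial_s v_j(s)\Vert_{L^2(I)}^2\,ds<\infty$, which is bounded by a constant depending continuously on $\Vert \tau_j\Vert_{X_{\sigma,T}}$ through the a~priori estimates (\ref{eq_H1_estimate_for_vj_to_Leray_Schauder})--(\ref{eq_H3_estimate_for_vj_to_Leray_Schauder}). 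Collecting the estimates and summing over $j=1,2$ then yields $\sum_{j=1,2}\int_0^t\Vert \partial_t\mathcal{N}_{V,j}\Vert_{L^2(I)}^2\,ds\leq C\Vert \tau_1-\tau_3\Vert_{X_{\sigma,T}}^2$, which is the claimed bound.

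The estimate is essentially bookkeeping once Proposition~\ref{prop_estimates_for_difference_between_nonlinear_terms} and the difference estimates of Propositions~\ref{prop_L2_estimate_for_V_j}--\ref{prop_H1_estimate_for_V_j} are in hand; there is no genuine analytic obstacle. The only point requiring a little care is to keep the pairing of $L^\infty_t$ and $L^2_t$ norms correct, so that each product of a bounded coefficient, a factor of order $\Vert \tau_1-\tau_3\Vert_{X_{\sigma,T}}$, and a finite time integral never forces a norm of $V_j$ stronger than what Propositions~\ref{prop_L2_estimate_for_V_j} and \ref{prop_H1_estimate_for_V_j} supply --- in particular only $\Vert V_j\Vert_{L^\infty_tH^1_x}$ and $\Vert \partial_t V_j\Vert_{L^2_tL^2_x}$, never $\Vert V_j\Vert_{L^\infty_tH^2_x}$. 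Inspection of the right-hand sides of Proposition~\ref{prop_estimates_for_difference_between_nonlinear_terms}(iii)--(iv) for $\partial=\partial_t$ confirms that this is automatic.
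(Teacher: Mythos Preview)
Your proposal is correct and follows essentially the same approach as the paper: apply Proposition~\ref{prop_estimates_for_difference_between_nonlinear_terms}(iii)--(iv) with $\partial=\partial_t$ and $(\varphi_1,\varphi_2,\psi_1,\psi_2)=(v_1,v_3,v_2,v_4)$, square and integrate in time, then pair the $L^\infty_t$ and $L^2_t$ factors using the a~priori bounds (\ref{eq_H1_estimate_for_vj_to_Leray_Schauder}) together with the difference estimates of Propositions~\ref{prop_L2_estimate_for_V_j}--\ref{prop_H1_estimate_for_V_j}. The paper labels the resulting six integrals $I_1,\ldots,I_6$ and bounds each separately, arriving at $C(1+\sum_{j=1,3}\Vert\tau_j\Vert_{X_{\sigma,T}}^2)^4\Vert\tau_1-\tau_3\Vert_{X_{\sigma,T}}^2$, which is then absorbed into the constant $C$ depending on $y_1^\ast$; your outline reproduces exactly this structure. (Incidentally, the paper's citation of Proposition~\ref{prop_nonlinear_estimates_for_Nv} at the start of its proof appears to be a slip for Proposition~\ref{prop_estimates_for_difference_between_nonlinear_terms}, which you cite correctly.)
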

    \begin{proof}
    We find from Proposition \ref{prop_nonlinear_estimates_for_Nv} that
    \begin{align*}
        & \sum_{j=1,2}
        \int_0^t
            \Vert
                \partial_t \mathcal{N}_{V,j}(v_1, v_2, v_3, v_4)
            \Vert_{L^2(I)}^2
        ds\\
        & \leq C
        \int_0^t
            (
                \sum_{j=1,3}
                    \Vert
                        \partial_t v_j
                    \Vert_{L^2(I)}^2
                \Vert
                    v_2
                \Vert_{H^1(I)}^2
                + \Vert
                    \partial_t v_2
                \Vert_{L^2(I)}^2
            )
            \Vert
                V_1
            \Vert_{H^1(I)}^2\\
        & \quad\quad\quad
            + \Vert
                v_2
            \Vert_{H^1(I)}^2
            \Vert
                \partial_t V_1
            \Vert_{L^2(I)}^2
            + \Vert
                \partial_t v_3
            \Vert_{L^2(I)}^2
            \Vert
                V_2
            \Vert_{H^1(I)}^2
            + \Vert
                \partial_t V_2
            \Vert_{L^2(I)}^2
        ds\\
        &+ C \sum_{j=2,4}
            \int_0^t
                \Vert
                    \partial_t v_j
                \Vert_{L^2(I)}^2
                \Vert
                    V_2
                \Vert_{H^1(I)}^2
                + \Vert
                    v_j
                \Vert_{H^1(I)}^2
                \Vert
                    \partial_t V_2
                \Vert_{L^2(I)}^2
            ds\\
        & =: I_1 + I_2 + I_3 + I_4 + I_5 + I_6.
    \end{align*}
    We find from (\ref{eq_H1_estimate_for_vj_to_Leray_Schauder}) and (\ref{eq_H1_estimate_for_V_j}) that
    \begin{align*}
        I_1
        & \leq C 
        (
            1
            + \sum_{j=1,3}
                \Vert
                    \tau_j
                \Vert_{X_{\sigma,T}}^2
        )
        (
            1
            + \sum_{j=1,3}
                \Vert
                    \tau_j
                \Vert_{X_{\sigma,T}}^2
        )
        (
            1
            + \sum_{j=1,3}
            \Vert
                \tau_j
            \Vert_{X_{\sigma,T}}^2
        )^2
        \Vert
            \tau_1
            - \tau_3
        \Vert_{X_{\sigma,T}}^2\\
        & \leq C (
            1
            + \sum_{j=1,3}
            \Vert
                \tau_j
            \Vert_{X_{\sigma,T}}^2
        )^4
        \Vert
            \tau_1
            - \tau_3
        \Vert_{X_{\sigma,T}}^2
    \end{align*}
    Similarly, we observe that
    \begin{align*}
        I_2 + I_3 + I_5 + I_6
        \leq C (
            1
            + \sum_{j=1,3}
            \Vert
                \tau_j
            \Vert_{X_{\sigma,T}}^2
        )^4
        \Vert
            \tau_1
            - \tau_3
        \Vert_{X_{\sigma,T}}^2,
    \end{align*}
    and
    \begin{align*}
        I_4
        \leq C (
            1
            + \sum_{j=1,3}
            \Vert
                \tau_j
            \Vert_{X_{\sigma,T}}^2
        )^2
        \Vert
            \tau_1
            - \tau_3
        \Vert_{X_{\sigma,T}}^2.
    \end{align*}
    We obtain that
    \begin{align} \label{eq_L2_estimate_for_dt_mathcal_N_V_j}
        \begin{split}
            & \sum_{j=1,2}
            \int_0^t
                \Vert
                    \partial_t \mathcal{N}_{V,j}(v_1, v_2, v_3, v_4)
                \Vert_{L^2(I)}^2
            ds\\
            & \leq C (
                1
                + \sum_{j=1,3}
                \Vert
                    \tau_j
                \Vert_{X_{\sigma,T}}^2
            )^4
            \Vert
                \tau_1
                - \tau_3
            \Vert_{X_{\sigma,T}}^2.
        \end{split}
    \end{align}
\end{proof}
%We next estimate the drift-terms.
\begin{proposition} \label{prop_integration_by_parts_for_dt_mathcal_V0_dt_mathcal_Vj}
    Let $T \in (0,1]$ and $(\tau_j, \tau_{j+1}) \in B_{X_{\sigma, T}^2, y^\ast_1}$ for $j=1,3$.
    Assume that $T$ satisfies (\ref{eq_assumption_for_tau1_tau2_for_exponential_deal_with_term}).
    Then
    \begin{align*}
        \begin{split}
            & \left \vert
                \int_I
                    \partial_t \mathcal{N}_{V,0}(v_j, v_{j+2}, \tau_1, \tau_3) \partial_t \overline{\mathcal{V}}_j
                dx
            \right \vert\\
            & \leq C
            (
                1
                + \sum_{j=1,3}
                \Vert
                    \tau_j
                \Vert_{X_{\sigma,T}}^2
            )^4
            \Vert
                \tau_1
                - \tau_3
            \Vert_{X_{\sigma,T}}^2\\
            & + C T^{-\frac{1}{2}+2\delta} \Vert
                \tau_1
                - \tau_3
            \Vert_{X_{\sigma,T}}^2
            \sum_{j=1,3}
                \Vert
                    \tau_j
                \Vert_{X_{\sigma,T}}^2\\
            & + C \Vert
                \partial_t \overline{\mathcal{V}}_j
            \Vert_{L^2(I)}^2
            + \frac{1}{2} \sum_{j=1,2}
            \Vert
                \partial_x \partial_t  \overline{\mathcal{V}}_j
            \Vert_{L^2(I)}^2.
        \end{split}
    \end{align*}
    for some constant $C>0$ which depends continuously on $\Vert \tau_j \Vert_{X_{\sigma,T}} \in [0, y^\ast_1]$.
\end{proposition}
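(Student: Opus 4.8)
The plan is to expand $\partial_t\mathcal{N}_{V,0}(v_j,v_{j+2},\tau_1,\tau_3)$ by the same difference-of-products device used in Proposition~\ref{prop_estimates_for_difference_between_convection_terms}, to peel off the single genuinely dangerous contribution, and to integrate by parts in $x$ only in that one. Recalling $\mathcal{N}_{V,0}(v_j,v_{j+2},\tau_1,\tau_3)=c(\tau_1)\partial_x v_j-c(\tau_3)\partial_x v_{j+2}$ and $V_j=v_j-v_{j+2}$, I would write
\begin{align*}
 \partial_t\mathcal{N}_{V,0}(v_j,v_{j+2},\tau_1,\tau_3)
 & = \bigl(c'(\tau_1)-c'(\tau_3)\bigr)\tau_1'\,\partial_x v_j
 + c'(\tau_3)(\tau_1'-\tau_3')\,\partial_x v_j
 + c'(\tau_3)\tau_3'\,\partial_x V_j\\
 & \quad + \bigl(c(\tau_1)-c(\tau_3)\bigr)\partial_x\partial_t v_j
 + c(\tau_3)\,\partial_x\partial_t V_j
 =: A_1+A_2+A_3+A_4+A_5 .
\end{align*}

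The terms $A_1,\dots,A_4$ are benign: each is the product of a small factor ($|c(\tau_1)-c(\tau_3)|+|c'(\tau_1)-c'(\tau_3)|\le C|\tau_1-\tau_3|$, or a difference $\tau_1'-\tau_3'$, or a single $\tau_3'$) with a quantity for which a pointwise-in-$t$ bound is already at hand: $\Vert\partial_x v_j(t)\Vert_{L^2(I)}$ and $\Vert\partial_x\partial_t v_j(t)\Vert_{L^2(I)}$ from \ref{eq_H1_estimate_for_vj_to_Leray_Schauder}--\ref{eq_H3_estimate_for_vj_to_Leray_Schauder}, and $\Vert\partial_x V_j(t)\Vert_{L^2(I)}\le\Vert\partial_x\overline{\mathcal{V}}_j(t)\Vert_{L^2(I)}+\Vert\mathcal{V}_j(t)\Vert_{H^1(I)}$ from \ref{eq_L2H2_estimate_for_mathcal_V} and \ref{eq_H1_estimate_for_mathcal_overline_V}. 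Under the standing smallness \ref{eq_assumption_for_tau1_tau2_for_exponential_deal_with_term}--\ref{eq_estimate_for_Psi_ast_for_small_t} the exponential factors in those bounds are $\le C$, and Proposition~\ref{prop_estimate_for_tau_prime_L_infty} together with \ref{eq_estimates_for_W1_infty_H1_H14_H2_by_X_sigma_T} gives (for $T\le1$) $\Vert\tau'\Vert_{L^\infty(0,T)}^2\le CT^{-\frac{1}{2}+2\delta}\Vert\tau\Vert_{X_{\sigma,T}}^2$, which is the source of the factor $T^{-1/2+2\delta}$ in the statement. Pairing each $A_k$ ($k\le4$) with $\partial_t\overline{\mathcal{V}}_j$, applying H\"older and Young (keeping a reservoir $C\Vert\partial_t\overline{\mathcal{V}}_j\Vert_{L^2(I)}^2$) and absorbing the bounded powers of $\Vert\tau_j\Vert_{X_{\sigma,T}}$ into $C$, yields the first three groups of terms on the right-hand side.

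The term $A_5=c(\tau_3)\,\partial_x\partial_t V_j$ is the only one carrying a spatial derivative of $\partial_t V_j$, and this is where integration by parts is forced. Since $\tau_3$, hence $c(\tau_3)$, is $x$-independent,
\[
 \int_I c(\tau_3)\,\partial_x\partial_t V_j\,\partial_t\overline{\mathcal{V}}_j\,dx
 = c(\tau_3)\bigl[\gamma_+\partial_t V_j\,\gamma_+\partial_t\overline{\mathcal{V}}_j-\gamma_-\partial_t V_j\,\gamma_-\partial_t\overline{\mathcal{V}}_j\bigr]
 - c(\tau_3)\int_I \partial_t V_j\,\partial_x\partial_t\overline{\mathcal{V}}_j\,dx .
\]
I would then substitute $\partial_t V_j=\partial_t\overline{\mathcal{V}}_j+\partial_t\mathcal{V}_j$ everywhere; the self-interaction $\int_I\partial_t\overline{\mathcal{V}}_j\,\partial_x\partial_t\overline{\mathcal{V}}_j\,dx$ is again a pure boundary term, so all bulk contributions reduce by Young to $\varepsilon\Vert\partial_x\partial_t\overline{\mathcal{V}}_j\Vert_{L^2(I)}^2+C\Vert\partial_t\mathcal{V}_j\Vert_{L^2(I)}^2$, while every boundary contribution reduces to $|\gamma_\pm\partial_t\overline{\mathcal{V}}_j|^2$ and $|\gamma_\pm\partial_t\mathcal{V}_j|^2$. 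The traces are controlled by the interpolation--trace inequality \ref{eq_interpolation_inequality_for_trace_operator}: $|\gamma_\pm\partial_t\overline{\mathcal{V}}_j|^2\le C\Vert\partial_t\overline{\mathcal{V}}_j\Vert_{L^2(I)}^2+\varepsilon\Vert\partial_x\partial_t\overline{\mathcal{V}}_j\Vert_{L^2(I)}^2$ (choosing $\varepsilon$ small so that the accumulated coefficient of $\Vert\partial_x\partial_t\overline{\mathcal{V}}_j\Vert_{L^2(I)}^2$ equals $\tfrac12$), and $|\gamma_\pm\partial_t\mathcal{V}_j|^2\le C\Vert\partial_t\mathcal{V}_j(t)\Vert_{H^1(I)}^2$. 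Finally $\Vert\partial_t\mathcal{V}_j(t)\Vert_{L^2(I)}^2$ is bounded pointwise by Proposition~\ref{prop_estimates_for_mathcal_V_j}(ii), whose right-hand side is exactly the first two groups of terms claimed here, and $\Vert\partial_t\mathcal{V}_j(t)\Vert_{H^1(I)}^2$ is bounded by the same quantity through the embedding $H^1_tL^2_x(Q_T)\cap L^2_tH^2_x(Q_T)\hookrightarrow C([0,T];H^1(I))$ applied to the time-integrated bounds of Proposition~\ref{prop_estimates_for_mathcal_V_j}(iii). Collecting everything gives the inequality.

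The hard part is $A_5$: $\partial_t\overline{\mathcal{V}}_j$ is only under control at the level of $H^1$ in $x$, and the homogeneous conditions $B_1=B_2=0$ satisfied by $\overline{\mathcal{V}}_j$ do not force $\gamma_\pm\partial_t\overline{\mathcal{V}}_j$ to vanish, so one cannot avoid spending a small but positive multiple of the top-order norm $\Vert\partial_x\partial_t\overline{\mathcal{V}}_j\Vert_{L^2(I)}^2$; this is precisely why the statement carries the explicit coefficient $\tfrac12$, the remainder being absorbed on the left side of the $\partial_t\overline{\mathcal{V}}_j$-energy identity in which this proposition will be used. A secondary, purely bookkeeping, point is to verify that among the many powers of $T$ generated by $\Vert\tau'\Vert_{L^\infty(0,T)}$, $\Vert\tau'\Vert_{L^4(0,T)}$ and $\Vert\tau''\Vert_{L^2(0,T)}$ (all handled via Proposition~\ref{prop_estimate_for_tau_prime_L_infty} and \ref{eq_estimates_for_W1_infty_H1_H14_H2_by_X_sigma_T} for $T\le1$) the most singular one that survives is exactly $T^{-1/2+2\delta}$.
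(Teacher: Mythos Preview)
Your decomposition $\partial_t\mathcal{N}_{V,0}=A_1+\cdots+A_5$ is correct and your handling of $A_1,\dots,A_4$ matches the paper. The divergence is in how you treat $A_5=c(\tau_3)\,\partial_x\partial_t V_j$. You integrate by parts in $x$, which forces you to deal with boundary traces of $\partial_t\overline{\mathcal{V}}_j$ and $\partial_t\mathcal{V}_j$; you then close these via the trace--interpolation inequality \eqref{eq_interpolation_inequality_for_trace_operator} and a pointwise $H^1$ bound on $\partial_t\mathcal{V}_j$ extracted from Proposition~\ref{prop_estimates_for_mathcal_V_j}(iii) through the embedding $H^1_tL^2_x\cap L^2_tH^2_x\hookrightarrow C_tH^1_x$. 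This works.

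The paper, however, does \emph{not} integrate by parts at all, despite the label of the proposition. It simply applies Cauchy--Schwarz,
\[
\Bigl|\int_I \partial_t\mathcal{N}_{V,0}\,\partial_t\overline{\mathcal{V}}_j\,dx\Bigr|
\le \Vert\partial_t\mathcal{N}_{V,0}\Vert_{L^2(I)}\,\Vert\partial_t\overline{\mathcal{V}}_j\Vert_{L^2(I)},
\]
and estimates $\Vert\partial_t\mathcal{N}_{V,0}\Vert_{L^2(I)}$ by Proposition~\ref{prop_estimates_for_difference_between_convection_terms}(ii). The only ``dangerous'' contribution is $\Vert\partial_t\partial_x V_j\Vert_{L^2(I)}$, which is split as $\Vert\partial_t\partial_x\mathcal{V}_j\Vert_{L^2(I)}+\Vert\partial_t\partial_x\overline{\mathcal{V}}_j\Vert_{L^2(I)}$; the second piece paired with $\Vert\partial_t\overline{\mathcal{V}}_j\Vert_{L^2(I)}$ gives the $\tfrac12\Vert\partial_x\partial_t\overline{\mathcal{V}}_j\Vert_{L^2(I)}^2$ term directly by Young's inequality, and the first piece lands in $\tilde I_2$ and is time-integrated via Proposition~\ref{prop_estimates_for_mathcal_V_j}(iii). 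No boundary terms ever appear.

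What each approach buys: the paper's route is considerably shorter and avoids any appeal to trace estimates or to the $C_tH^1_x$ embedding for $\partial_t\mathcal{V}_j$. Your route is more in the spirit of an energy argument and makes the structural role of $A_5$ explicit, but at the cost of extra machinery (boundary terms, trace interpolation, a pointwise $H^1$ control of $\partial_t\mathcal{V}_j$) that turns out to be unnecessary here. Both are valid; the paper's is the cleaner one.
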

\begin{proof}
    We find from Proposition \ref{prop_estimates_for_difference_between_convection_terms} and the estimate (\ref{eq_H3_estimate_for_vj_to_Leray_Schauder}) that
    \begin{align}\label{eq_integration_by_parts_for_dt_mathcal_V0_dt_mathcal_Vj}
        \begin{split}
            & \left \vert
                \int_I
                    \partial_t \mathcal{N}_{V,0}(v_j, v_{j+2}, \tau_1, \tau_3) \partial_t \overline{\mathcal{V}}_j
                dx
            \right \vert\\
            & \leq C \left[
                (
                \sum_{k=1,3}
                    \vert
                        \tau_k^\prime
                    \vert
                    \vert
                        \tau_1
                        - \tau_3
                    \vert
                    + \vert
                        \tau_1^\prime
                        - \tau_3^\prime
                    \vert
                )
                \sum_{j=1,\ldots,4}
                    \Vert
                        \partial_x v_j
                    \Vert_{L^2(I)}
            \right.\\
            & \quad\quad\quad
            + \vert
                \tau_1
                - \tau_3
            \vert
            \sum_{k=1,\ldots,4}
                \Vert
                    \partial_t \partial_x v_k
                \Vert_{L^2(I)}\\
            & \left. \quad\quad\quad
                + \sum_{k=1,3}
                    \vert
                        \tau_k^\prime
                    \vert
                \sum_{j=1,2}
                    \Vert
                        \partial_x V_j
                    \Vert_{L^2(I)}
                + \sum_{j=1,2}
                    \Vert
                        \partial_t \partial_x V_j
                    \Vert_{L^2(I)}
            \right]
            \Vert
                \partial_t \overline{\mathcal{V}}_j
            \Vert_{L^2(I)}\\
            & =: (
                I_1 + I_2 + I_3
            )
            \Vert
                \partial_t \overline{\mathcal{V}}_j
            \Vert_{L^2(I)}.
        \end{split}
    \end{align}
    We observe from the Young inequality that
    \begin{align} \label{eq_definition_of_tilde_I1}
        \begin{split}
            & (
                I_1
                + I_2
            )
            \Vert
                \partial_t \overline{\mathcal{V}}_j
            \Vert_{L^2(I)}\\
            & \leq C
            (
                \sum_{k=1,3}
                \vert
                    \tau_k^\prime
                \vert^2
                \vert
                    \tau_1
                    - \tau_3
                \vert^2
                + \vert
                    \tau_1^\prime
                    - \tau_3^\prime
                \vert^2
            )
            \sum_{j=1,\ldots,4}
                \Vert
                    \partial_x v_j
                \Vert_{L^2(I)}^2
            \\
            & + \vert
                \tau_1
                - \tau_3
            \vert^2 
            \sum_{k=1,\ldots,4}
                \Vert
                    \partial_t \partial_x v_k
                \Vert_{L^2(I)}^2
            + C \Vert
                \partial_t \overline{\mathcal{V}}_j
            \Vert_{L^2(I)}^2\\
            & =: \tilde{I}_1
            + C \Vert
                \partial_t \overline{\mathcal{V}}_j
            \Vert_{L^2(I)}^2
        \end{split}
    \end{align}
    and
    \begin{align}  \label{eq_definition_of_tilde_I2}
        \begin{split}
            & I_3
            \Vert
                \partial_t \overline{\mathcal{V}}_j
            \Vert_{L^2(I)}\\
            %& \leq C 
            %\sum_{k=1,3}
            %    \vert
            %        \tau_k^\prime
            %    \vert
            %\sum_{j=1,2}
            %    \Vert
            %        \partial_x V_j
            %    \Vert_{L^2(I)}
            %\Vert
            %    \partial_t \overline{\mathcal{V}}_j
            %\Vert_{L^2(I)}\\
            %& + C \sum_{j=1,2}
            %    (
            %        \Vert
            %            \partial_t \partial_x \mathcal{V}_j
            %        \Vert_{L^2(I)}
            %        + \Vert
            %            \partial_t \partial_x \overline{\mathcal{V}}_j
            %        \Vert_{L^2(I)}
            %    )
            %\Vert
            %    \partial_t \overline{\mathcal{V}}_j
            %\Vert_{L^2(I)}\\
            & \leq 
            \sum_{k=1,3}
                \vert
                    \tau_k^\prime
                \vert^2
            \sum_{j=1,2}
                \Vert
                    \partial_x V_j
                \Vert_{L^2(I)}^2
            \\
            & + C \sum_{j=1,2}
                    \Vert
                        \partial_x \partial_t \mathcal{V}_j
                    \Vert_{L^2(I)}^2
                + C \Vert
                    \partial_t \overline{\mathcal{V}}_j
                \Vert_{L^2(I)}^2
                + \frac{1}{2} \sum_{j=1,2}
                \Vert
                    \partial_x \partial_t  \overline{\mathcal{V}}_j
                \Vert_{L^2(I)}^2\\
            & =: \tilde{I}_2
            + C \Vert
                \partial_t \overline{\mathcal{V}}_j
            \Vert_{L^2(I)}^2
            + \frac{1}{2} \sum_{j=1,2}
            \Vert
                \partial_x \partial_t  \overline{\mathcal{V}}_j
            \Vert_{L^2(I)}^2
        \end{split}
    \end{align}
    for some constant $C>0$.
    %Note that the last term can be absorbed into the left-hand side when we use integration by parts.
    We deduce from (\ref{eq_H1_estimate_for_vj_to_Leray_Schauder}) that
    \begin{align} \label{eq_estimate_for_tilde_I1_to_estimate_dt_overline_mathcal_Vj}
        \begin{split}
            \int_0^t
                \tilde{I}_1
            ds
            & \leq C T^{\frac{1}{2}+ 2\delta}
            \Vert
                \tau_1
                - \tau_3
            \Vert_{X_{\sigma,T}}^2
            (
                \sum_{j=1,3}
                    \Vert
                        \tau_j
                    \Vert_{X_{\sigma,T}}^2
                + 1
            )\\
            & + C \Vert
                \tau_1
                - \tau_3
            \Vert_{X_{\sigma,T}}^2
            \sum_{j=1,3}
                (
                    1
                    + T^{-\frac{1}{2}+2\delta}
                    \Vert
                        \tau_j
                    \Vert_{X_{\sigma,T}}^2
                    + \Vert
                        \tau_j
                    \Vert_{X_{\sigma,T}}^6
                )
        \end{split}
    \end{align}
    and
    \begin{align} \label{eq_estimate_for_tilde_I2_to_estimate_dt_overline_mathcal_Vj}
        \begin{split}
            \int_0^t
                \tilde{I}_2
            ds
            & \leq C T^{\frac{1}{2}+2\delta}
            \sum_{j=1,3}
                \Vert
                    \tau_j
                \Vert_{X_{\sigma,T}}^2
            (
                1
                + \sum_{j=1,3}
                \Vert
                    \tau_j
                \Vert_{X_{\sigma,T}}^2
            )^2
            \Vert
                \tau_1
                - \tau_3
            \Vert_{X_{\sigma,T}}^2\\
            & + C (
                1
                + \sum_{j=1,3}
                    \Vert
                        \tau_j
                    \Vert_{X_{\sigma,T}}^2
            )^4
            \Vert
                \tau_1
                - \tau_3
            \Vert_{X_{\sigma,T}}^2\\
            & + C T^{-\frac{1}{2}+2\delta} \Vert
                \tau_1
                - \tau_3
            \Vert_{X_{\sigma,T}}^2
            \sum_{j=1,3}
                \Vert
                    \tau_j
                \Vert_{X_{\sigma,T}}^2\\
            & \leq 
            C
            (
                1
                + \sum_{j=1,3}
                \Vert
                    \tau_j
                \Vert_{X_{\sigma,T}}^2
            )^4
            \Vert
                \tau_1
                - \tau_3
            \Vert_{X_{\sigma,T}}^2\\
            & + C T^{-\frac{1}{2}+2\delta} \Vert
                \tau_1
                - \tau_3
            \Vert_{X_{\sigma,T}}^2
            \sum_{j=1,3}
                \Vert
                    \tau_j
                \Vert_{X_{\sigma,T}}^2.
        \end{split}
    \end{align}
    Summing up these inequalities, we obtain (\ref{eq_integration_by_parts_for_dt_mathcal_V0_dt_mathcal_Vj}).
\end{proof}

\begin{proposition}
    Let $T \in (0,1]$ and $(\tau_j, \tau_{j+1}) \in B_{X_{\sigma, T}^2, y^\ast_1}$ for $j=1,3$.
    Assume that $T$ satisfies (\ref{eq_assumption_for_tau1_tau2_for_exponential_deal_with_term}).
    Then
    \begin{align*}
        \begin{split}
            & \sum_{j=1,2} \Vert
                    \partial_t \overline{\mathcal{V}}_j(t)
                \Vert_{L^2(I)}^2
            + \int_0^t
                \Vert
                    \partial_x \partial_s \overline{\mathcal{V}}_j(s)
                \Vert_{L^2(I)}^2
            ds\\
            & \leq C_1
            \Vert
                \tau_1
                - \tau_3
            \Vert_{X_{\sigma,T}}^2
            + C_2 T^{-\frac{1}{2}+2\delta} \Vert
                \tau_1
                - \tau_3
            \Vert_{X_{\sigma,T}}^2
        \end{split}
    \end{align*}
    for some constants $C_1, C_2>0$ which depends continuously on $\Vert \tau_j \Vert_{X_{\sigma,T}} \in [0, y^\ast_1]$.
    Moreover $V$ also satisfies the same inequality for some constants $C_1, C_2 >0$.
\end{proposition}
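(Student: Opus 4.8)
The plan is to obtain the asserted bound as the ``one time–derivative higher'' counterpart of Proposition~\ref{prop_H1_estimate_for_V_j}: differentiate the equation for $\overline{\mathcal{V}}_j$ in $t$ and re–homogenize the boundary condition produced by the $t$–dependence of the fourth boundary coefficient, using the extension operator of Lemma~\ref{lem_extension_theorem}. Recall from \eqref{eq_overline_V_difference_equation_for_iteration_of_full_filter_clogging_equations} that $\overline{\mathcal{V}}_j$ solves the heat equation under the \emph{homogeneous} fourth boundary condition with coefficient $\theta(t):=\tfrac12\bigl(F(\tau_1(t))+F(\tau_3(t))\bigr)$, with right–hand side $\tilde f_j:=-\mathcal{N}_{V,0}(v_j,v_{j+2},\tau_1,\tau_3)+\varepsilon_j\mathcal{N}_{V,j}(v_1,v_2,v_3,v_4)-\mathcal{W}_j$ ($\varepsilon_1=-1,\ \varepsilon_2=+1$) and with $\overline{\mathcal{V}}_j(\cdot,0)=0$; here $|\theta'|\le C(|\tau_1'|+|\tau_3'|)$ and $|\theta''|\le C(|\tau_1'|^2+|\tau_3'|^2+|\tau_1''|+|\tau_3''|)$. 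Writing $w_j:=\partial_t\overline{\mathcal{V}}_j$, one has $\partial_t w_j-\partial_x^2 w_j=\partial_t\tilde f_j$ together with the $\theta'$–perturbed boundary conditions $B_1(w_j;\theta)=\theta'\gamma_+\overline{\mathcal{V}}_j$ and $B_2(w_j;\theta)=-\theta'\gamma_-\partial_x\overline{\mathcal{V}}_j$, and the initial value of $w_j$ is controlled by the (vanishing) differences of the initial data together with $\tau_1(0)=\tau_3(0)$, hence contributes harmlessly.

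First I would split $w_j=w_{j,1}+w_{j,2}$ with $w_{j,1}:=E(\theta'\overline{\mathcal{V}}_j;1-\theta)$, so that $w_{j,2}$ solves the heat equation under the homogeneous fourth boundary condition with forcing $\partial_t\tilde f_j-(\partial_t-\partial_x^2)w_{j,1}$ and zero initial data; then apply the $L^2_tH^1_x$ heat estimate of Proposition~\ref{prop_L2_estimate_for_linear_heat_eq} to $w_{j,2}$ (the homogeneous fourth condition makes the integration–by–parts boundary term vanish, exactly as in Proposition~\ref{prop_L2_estimate_for_linear_heat_eq}), while $w_{j,1}$, $\partial_t w_{j,1}$ and $\partial_x^2 w_{j,1}$ are estimated directly through Lemma~\ref{lem_extension_theorem} and Proposition~\ref{prop_estimate_for_dtE}, which reduce them to $|\theta'|,|\theta''|$ weighted $H^1$– and $H^2$–norms of $\overline{\mathcal{V}}_j$. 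This yields, after absorbing the $|\theta'|^2\|w_j\|_{L^2(I)}^2$–contribution of $(\partial_t-\partial_x^2)w_{j,1}$, a differential/integral inequality of the form $\|w_{j,2}(t)\|_{L^2(I)}^2+\int_0^t\|\partial_x w_{j,2}(s)\|_{L^2(I)}^2\,ds\le Ce^t\bigl[\int_0^t C(1+|\tau_1'|^2+|\tau_3'|^2)\|w_{j,2}\|_{L^2(I)}^2+R_j(s)\,ds\bigr]$, to which Gronwall applies using $\int_0^T(1+|\tau_1'|^2+|\tau_3'|^2)\,ds\le C\bigl(\Psi^\ast(\tau_1;T)+\Psi^\ast(\tau_3;T)\bigr)\le C$ from \eqref{eq_estimate_for_Psi_ast_for_small_t}. (Equivalently one may run the energy identity directly on $w_j$ as in the derivation of \eqref{eq_diff_eq_L_infty_H1_estimate_for_v} in Proposition~\ref{prop_L_infty_H1_estimate_for_v}; the role of $w_{j,1}$ is precisely to kill the stubborn boundary remainder.)

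The remaining work is to bound $R_j$ term by term with estimates already in hand. The convection contribution $\int_I\partial_t\mathcal{N}_{V,0}(v_j,v_{j+2},\tau_1,\tau_3)\,\partial_t\overline{\mathcal{V}}_j\,dx$ is exactly Proposition~\ref{prop_integration_by_parts_for_dt_mathcal_V0_dt_mathcal_Vj} (one integrates by parts in $x$ so the extra derivative lands on $\partial_t\overline{\mathcal{V}}_j$ and is reabsorbed into $\tfrac12\|\partial_x\partial_t\overline{\mathcal{V}}_j\|_{L^2(I)}^2$). The reaction contribution obeys $\int_0^t\|\partial_t\mathcal{N}_{V,j}(v_1,v_2,v_3,v_4)\|_{L^2(I)}^2\,ds\le C\bigl(1+\sum_{k=1,3}\|\tau_k\|_{X_{\sigma,T}}^2\bigr)^4\|\tau_1-\tau_3\|_{X_{\sigma,T}}^2$ by Proposition~\ref{prop_estimate_for_dt_mathcal_N_j} (that is, \eqref{eq_L2_estimate_for_dt_mathcal_N_V_j}), and the correction $\partial_t\mathcal{W}_j=\partial_t^2\mathcal{V}_j-\partial_x^2\partial_t\mathcal{V}_j$ is controlled by \eqref{eq_estimate_for_dt_mathcal_W_j} in Proposition~\ref{prop_estimates_for_mathcal_V_j}; the $(\partial_t-\partial_x^2)w_{j,1}$–terms reduce, through Lemma~\ref{lem_extension_theorem} and Proposition~\ref{prop_estimate_for_dtE}, to $L^2$– and $H^2$–norms of $\overline{\mathcal{V}}_j$ (hence of $V_j$ and $\mathcal{V}_j$) weighted by $|\theta'|,|\theta''|$, which are handled by Propositions~\ref{prop_L2_estimate_for_V_j} and~\ref{prop_H1_estimate_for_V_j} together with the norm conversions \eqref{eq_estimates_for_W1_infty_H1_H14_H2_by_X_sigma_T}; the weight $\|\theta'\|_{L^\infty(0,T)}^2\le CT^{-1/2+2\delta}\sum_{k=1,3}\|\tau_k\|_{X_{\sigma,T}}^2$ is what generates the second term in the claimed bound. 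Repeated use of $x^m\le1+x^n$ \eqref{eq_trivial_inequality} and of $\|\tau_k\|_{X_{\sigma,T}}\le y^\ast_1$ forces everything into the two shapes $\|\tau_1-\tau_3\|_{X_{\sigma,T}}^2$ and $T^{-1/2+2\delta}\|\tau_1-\tau_3\|_{X_{\sigma,T}}^2$, giving the estimate for $\partial_t\overline{\mathcal{V}}_j$; the statement for $V_j=\overline{\mathcal{V}}_j+\mathcal{V}_j$ then follows by adding the bounds on $\partial_t\mathcal{V}_j$ and $\partial_x\partial_t\mathcal{V}_j$ from \eqref{eq_estimate_for_dt_mathcal_V_j} in Proposition~\ref{prop_estimates_for_mathcal_V_j}.

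The main obstacle will be, as throughout this part of the paper, the boundary remainder: since the fourth boundary condition couples the two endpoints (unlike the Dirichlet, Neumann, or Robin conditions), differentiating in $t$ through a $t$–dependent $\theta$ produces a term involving $\gamma_-\partial_x(\partial_t\overline{\mathcal{V}}_j)$ that is one half–derivative too strong to be absorbed into the dissipation $\|\partial_x\partial_t\overline{\mathcal{V}}_j\|^2$; this is precisely the difficulty Lemma~\ref{lem_extension_theorem} was built to bypass, and it must be re–run here at the level of $\partial_t\overline{\mathcal{V}}_j$. The secondary difficulty is purely bookkeeping — keeping all constants uniform over the ball $B_{X_{\sigma,T}^2,y^\ast_1}$ and confining every contribution to the two admissible forms above — which rests on the smallness $\Psi^\ast(\tau_1;T)\le2$ guaranteed by \eqref{eq_assumption_for_tau1_tau2_for_exponential_deal_with_term}.
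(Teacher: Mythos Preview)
Your proposal is correct and follows essentially the same route as the paper. The paper phrases the argument as a direct appeal to the structure of Corollary~\ref{cor_L2_a_priori_estimate_for_tildev} (yielding the six pieces $J_1,\dots,J_6$), whereas you unwind that corollary explicitly via the extension splitting $w_j=w_{j,1}+w_{j,2}$; the ingredients invoked---Propositions~\ref{prop_integration_by_parts_for_dt_mathcal_V0_dt_mathcal_Vj} and~\ref{prop_estimate_for_dt_mathcal_N_j} for the forcing, Proposition~\ref{prop_estimates_for_mathcal_V_j} for $\partial_t\mathcal W_j$, Propositions~\ref{prop_L2_estimate_for_V_j}--\ref{prop_H1_estimate_for_V_j} for the lower-order energies, and \eqref{eq_estimates_for_W1_infty_H1_H14_H2_by_X_sigma_T}, \eqref{eq_estimate_for_Psi_ast_for_small_t} for the $\tau$-norm bookkeeping---match the paper exactly, including the identification of $\|\theta'\|_{L^\infty}^2$ as the source of the $T^{-1/2+2\delta}$ factor.
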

\begin{proof}
    We see in the way as Corollary \ref{cor_L2_a_priori_estimate_for_tildev} that
    \begin{align*}
        \begin{split}
            & \sum_{j=1,2} \Vert
                    \partial_t \overline{\mathcal{V}}_j(t)
                \Vert_{L^2(I)}^2
            + \int_0^t
                \Vert
                    \partial_x \partial_s \overline{\mathcal{V}}_j(s)
                \Vert_{L^2(I)}^2
            ds\\
            & \leq C \sum_{k=1,3} \Vert
                    \tau_k^\prime
                \Vert_{L^\infty(0,T)}^2
            \sum_{j=1,2}
                \left[
                    \Vert
                        \overline{\mathcal{V}}_j(t)
                    \Vert_{L^2(I)}^2
                    + \int_0^t
                        \Vert
                            \partial_x  \overline{\mathcal{V}}_j(s)
                        \Vert_{L^2(I)}^2
                    ds
                \right]\\
            & + C \sum_{k=1,3} \Vert
                    \tau_k^\prime
                \Vert_{L^\infty(0,T)}^2
            \sum_{j=1,2}
                \int_0^t
                    \Vert
                        \overline{\mathcal{V}}_j(s)
                    \Vert_{H^2(I)}^2
                    + \Vert
                        \partial_t \overline{\mathcal{V}}_j(s)
                    \Vert_{L^2(I)}^2
                ds\\
            & + C \sum_{j=1,2}
                \int_0^t
                    \sum_{k=1,3}
                        \vert
                            \tau_k^\prime(t)
                        \vert^4
                    \Vert
                        \overline{\mathcal{V}}_j(s)
                    \Vert_{L^2(I)}^2
                ds\\
            & + C \sum_{j=1,2} \sup_{0<t<T}
                \Vert
                    \overline{\mathcal{V}}_j(t)
                \Vert_{L^2(I)}^2
            \sum_{k=1,3}
                \int_0^t
                    \vert
                        \tau_k^{\prime\prime}(s)
                    \vert^2
                ds\\
            & + C \sum_{j=1,2}
            \int_0^t
                \tilde{I}_1
                + \tilde{I}_2
            ds\\
            & + C \sum_{j=1,2}
            \int_0^t
                \Vert
                    \partial_t \mathcal{N}_{V,j}(v_1, v_2, v_3, v_4)
                \Vert_{L^2(I)}^2
                + \Vert
                    \partial_t \mathcal{W}_j
                \Vert_{L^2(I)}^2
            ds\\
            & =: J_1 + J_2 + J_3 + J_4 + J_5 + J_6,
        \end{split}
    \end{align*}
    where $\tilde{I}_1$ and $\tilde{I}_2$ are functions defined in (\ref{eq_definition_of_tilde_I1}) and (\ref{eq_definition_of_tilde_I2}), respectively.
    We see from (\ref{eq_L2_estimate_for_mathcal_overline_V}) and (\ref{eq_H1_estimate_for_mathcal_overline_V}) that
    \begin{align*}
        J_1 +J_2
        \leq C T^{-\frac{1}{2}+2\delta}(
            1
            + \sum_{j=1,3}
                \Vert
                    \tau_j
                \Vert_{X_{\sigma,T}}^2
        )^2
        \Vert
            \tau_1
            - \tau_3
        \Vert_{X_{\sigma,T}}^2.
    \end{align*}
    We use (\ref{eq_L2_estimate_for_mathcal_overline_V}) again to see that
    \begin{align*}
        J_3 + J_4
        \leq C T^{2\delta}
        (
            1
            + \sum_{j=1,3}
                \Vert
                    \tau_j
                \Vert_{X_{\sigma,T}}^2
        )
        \Vert
            \tau_1
            - \tau_3
        \Vert_{X_{\sigma,T}}^2.
    \end{align*}
    Combining the above estimates with the estimates (\ref{eq_L2_estimate_for_dt_mathcal_N_V_j}), (\ref{eq_estimate_for_tilde_I1_to_estimate_dt_overline_mathcal_Vj}), and (\ref{eq_estimate_for_tilde_I2_to_estimate_dt_overline_mathcal_Vj}), and Propositions \ref{prop_estimates_for_mathcal_V_j}, we see that
    \begin{align*}
        \begin{split}
            & \sum_{j=1,2} \Vert
                    \partial_t \overline{\mathcal{V}}_j(t)
                \Vert_{L^2(I)}^2
            + \int_0^t
                \Vert
                    \partial_x \partial_s \overline{\mathcal{V}}_j(s)
                \Vert_{L^2(I)}^2
            ds\\
            & \leq C
            (
                1
                + \sum_{j=1,3}
                    \Vert
                        \tau_j
                    \Vert_{X_{\sigma,T}}^2
            )^4
            %& \leq C
            %(
            %    1
            %    + \sum_{j=1,3}
            %        \Vert
            %            \tau_j
            %        \Vert_{X_{\sigma,T}}^2
            %)^4
            \Vert
                \tau_1
                - \tau_3
            \Vert_{X_{\sigma,T}}^2\\
            & + C T^{-\frac{1}{2}+2\delta} \Vert
                \tau_1
                - \tau_3
            \Vert_{X_{\sigma,T}}^2
            (
                1
                + \sum_{j=1,3}
                    \Vert
                        \tau_j
                    \Vert_{X_{\sigma,T}}^2
            )^2.
        \end{split}
    \end{align*}
    Therefore, we find from Proposition \ref{prop_estimates_for_mathcal_V_j} that
    \begin{align} \label{eq_L2_estimate_for_partial_t_V_j_}
        \begin{split}
            & \sum_{j=1,2} \Vert
                    \partial_t V_j(t)
                \Vert_{L^2(I)}^2
            + \int_0^t
                \Vert
                    \partial_x \partial_s V_j(s)
                \Vert_{L^2(I)}^2
            ds\\
            & \leq C
            (
                1
                + \sum_{j=1,3}
                    \Vert
                        \tau_j
                    \Vert_{X_{\sigma,T}}^2
            )^4
            \Vert
                \tau_1
                - \tau_3
            \Vert_{X_{\sigma,T}}^2\\
            & + C T^{-\frac{1}{2}+2\delta} \Vert
                \tau_1
                - \tau_3
            \Vert_{X_{\sigma,T}}^2
            (
                1
                + \sum_{j=1,3}
                    \Vert
                        \tau_j
                    \Vert_{X_{\sigma,T}}^2
            )^2.
        \end{split}
    \end{align}
\end{proof}

\begin{proposition} \label{prop_integration_by_parts_for_dt_mathcal_N_V}
    Let $T \in [0,1]$ and $(\tau_j, \tau_{j+1}) \in B_{X_{\sigma, T}^2, y^\ast_1}$ for $j=1,3$.
    Assume that $T$ satisfies (\ref{eq_assumption_for_tau1_tau2_for_exponential_deal_with_term}).
    Then
    \begin{align*}
        \begin{split}
            &\Vert
                \partial_t \mathcal{N}_{V,0}(v_j, v_{j+2}, \tau_1, \tau_3)
            \Vert_{L^2_t L^2_x(Q_T)}^2\\
            & \leq C_1
            \Vert
                \tau_1
                - \tau_3
            \Vert_{X_{\sigma,T}}^2
            + C_2 T^{-\frac{1}{2}+2\delta} \Vert
                \tau_1
                - \tau_3
            \Vert_{X_{\sigma,T}}^2,
        \end{split}
    \end{align*}
    for some constants $C_1, C_2>0$ which depend continuously on $\Vert \tau_j \Vert_{X_{\sigma,T}} \in [0, y^\ast_1]$.
\end{proposition}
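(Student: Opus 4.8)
The plan is to reduce the estimate to the difference bound for convection terms in Proposition~\ref{prop_estimates_for_difference_between_convection_terms}(ii) and then absorb its right-hand side using the a priori estimates for $v_j$ and for the differences $V_j = v_j - v_{j+2}$ obtained in the preceding subsections. Since $\mathcal{N}_{V,0}(v_j, v_{j+2}, \tau_1, \tau_3) = c(\tau_1)\partial_x v_j - c(\tau_3)\partial_x v_{j+2}$, applying Proposition~\ref{prop_estimates_for_difference_between_convection_terms}(ii) with $m=0$, $\varphi_1 = v_j$, $\varphi_2 = v_{j+2}$, and $(\tau_1, \tau_3)$ in place of $(\tau_1, \tau_2)$ yields a pointwise-in-$t$ bound of $\|\partial_t \mathcal{N}_{V,0}(v_j, v_{j+2}, \tau_1, \tau_3)\|_{L^2(I)}$ by four groups of terms: (a) $\big(\sum_{k=1,3}|\tau_k^\prime|\,|\tau_1-\tau_3| + |\tau_1^\prime - \tau_3^\prime|\big)\sum_{k=1,\ldots,4}\|v_k\|_{H^1(I)}$; (b) $|\tau_1-\tau_3|\sum_{k=1,\ldots,4}\|\partial_t v_k\|_{H^1(I)}$; (c) $\sum_{k=1,3}|\tau_k^\prime|\,\|V_j\|_{H^1(I)}$; and (d) $\|\partial_t V_j\|_{H^1(I)}$. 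I would then square, sum over $j=1,2$, integrate over $(0,T)$, and estimate each of the four resulting integrals separately.

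For group (a) I would use $\sup_{0<t<T}|\tau_1(t) - \tau_3(t)| \le \|\tau_1 - \tau_3\|_{X_{\sigma,T}}$, the uniform $H^1$-in-$x$ bound $\sup_t\|v_k(t)\|_{H^1(I)}^2$ from~(\ref{eq_H1_estimate_for_vj_to_Leray_Schauder}), and the $L^2_t$-bounds $\|\tau_k^\prime\|_{L^2(0,T)}, \|(\tau_1-\tau_3)^\prime\|_{L^2(0,T)} \le C T^{1/4+\delta}\,\|\cdot\|_{X_{\sigma,T}}$ from~(\ref{eq_estimates_for_W1_infty_H1_H14_H2_by_X_sigma_T}); this group contributes only to the $C_1$-part. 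Group (c) is handled the same way, now with $\sup_t\|V_j(t)\|_{H^1(I)}^2$ controlled by Proposition~\ref{prop_H1_estimate_for_V_j}, and again produces a $C_1$-type term. For group (d) I would split $\int_0^T\|\partial_t V_j\|_{H^1(I)}^2\,dt = \int_0^T\|\partial_t V_j\|_{L^2(I)}^2\,dt + \int_0^T\|\partial_x\partial_t V_j\|_{L^2(I)}^2\,dt$, bound the first summand by $T\sup_t\|\partial_t V_j(t)\|_{L^2(I)}^2$ and the second directly, both through~(\ref{eq_L2_estimate_for_partial_t_V_j_}); this is where a factor $T^{-1/2+2\delta}\|\tau_1-\tau_3\|_{X_{\sigma,T}}^2\sum_{j=1,3}\|\tau_j\|_{X_{\sigma,T}}^2$ first enters and is carried into $C_2$.

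The main obstacle is group (b): it requires a pointwise-in-$t$ $H^1$-in-$x$ bound for $\partial_t v_k$, and the only such bound available (obtained from Corollary~\ref{cor_H3_estimate_for_v} and quantified in Step~1 of the proof of Theorem~\ref{thm_local_in_time_existence_for_sigma1_sigma2}) behaves like $C\sum_{j=1,3}[1 + T^{-1/2+2\delta}\|\tau_j\|_{X_{\sigma,T}}^2 + \|\tau_j\|_{X_{\sigma,T}}^6]$, i.e. it is singular as $T\to 0$. Multiplying by $\int_0^T|\tau_1-\tau_3|^2\,dt \le T\|\tau_1-\tau_3\|_{X_{\sigma,T}}^2$ removes one power of $T$ but still leaves a $T^{1/2+2\delta}$, respectively $T^{-1/2+2\delta}$, prefactor, which is the origin of the $C_2 T^{-1/2+2\delta}$ term in the statement; this is the same mechanism already used in Proposition~\ref{prop_integration_by_parts_for_dt_mathcal_V0_dt_mathcal_Vj} and in~(\ref{eq_estimate_for_dt_mathcal_W_j}). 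Finally, because $(\tau_j, \tau_{j+1}) \in B_{X_{\sigma, T}^2, y^\ast_1}$, every norm $\|\tau_j\|_{X_{\sigma,T}}$ is bounded by $y^\ast_1$, so by the elementary inequality~(\ref{eq_trivial_inequality}) each polynomial factor in $\|\tau_j\|_{X_{\sigma,T}}$ multiplying $\|\tau_1-\tau_3\|_{X_{\sigma,T}}^2$ can be absorbed into the constants $C_1$ and $C_2$, giving the claimed bound.
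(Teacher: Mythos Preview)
Your proposal is correct and follows essentially the same route as the paper's proof: both arguments expand $\partial_t \mathcal{N}_{V,0}$ via the convection-difference bound (the paper does this directly, you cite Proposition~\ref{prop_estimates_for_difference_between_convection_terms}(ii)), split into the same four groups, and control them using~(\ref{eq_H1_estimate_for_vj_to_Leray_Schauder}), the pointwise $H^1$ bound on $\partial_t v_k$ from Step~1, Proposition~\ref{prop_H1_estimate_for_V_j}, and~(\ref{eq_L2_estimate_for_partial_t_V_j_}). One minor remark: in your group (b), multiplying by $\int_0^T|\tau_1-\tau_3|^2\,dt \le T\|\tau_1-\tau_3\|_{X_{\sigma,T}}^2$ actually leaves only a $T^{1/2+2\delta}$ prefactor (not $T^{-1/2+2\delta}$), so group (b) contributes only to $C_1$; the genuine $T^{-1/2+2\delta}$ contribution comes from group (d) through~(\ref{eq_L2_estimate_for_partial_t_V_j_}), exactly as you already noted there.
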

\begin{proof}
    In the same way as (\ref{eq_integration_by_parts_for_dt_mathcal_V0_dt_mathcal_Vj}), we deduce that
    \begin{align*}
        \begin{split}
            &\Vert
                \partial_t \mathcal{N}_{V,0}(v_j, v_{j+2}, \tau_1, \tau_3)
            \Vert_{L^2_t L^2_x(Q_T)}^2\\
            & \leq C \int_0^t
                (
                \sum_{k=1,3}
                    \vert
                        \tau_k^\prime
                    \vert^2
                    \vert
                        \tau_1
                        - \tau_3
                    \vert^2
                    + \vert
                        \tau_1^\prime
                        - \tau_3^\prime
                    \vert^2
                )
                \sum_{j=1,\ldots,4}
                    \Vert
                        \partial_x v_j
                    \Vert_{L^2(I)}^2
            ds\\
            & + C \int_0^t
                \vert
                    \tau_1
                    - \tau_3
                \vert^2
                \sum_{k=1,\ldots,4}
                    \Vert
                        \partial_t \partial_x v_k
                    \Vert_{L^2(I)}^2
            ds\\
            & + C \int_0^t
                \sum_{k=1,3}
                    \vert
                        \tau_k^\prime
                    \vert^2
                \sum_{j=1,2}
                    \Vert
                        \partial_x V_j
                    \Vert_{L^2(I)}^2
                + \sum_{j=1,2}
                    \Vert
                        \partial_t \partial_x V_j
                    \Vert_{L^2(I)}^2
            ds\\
            & \leq C T^{\frac{1}{2} + 2\delta}
            (
                \sum_{k=1,3}
                    \Vert
                        \tau_k
                    \Vert_{X_{\sigma,T}}^2
                + 1
            )^2
            \Vert
                \tau_1
                - \tau_3
            \Vert_{X_{\sigma,T}}^2\\
            & + C
            \Vert
                \tau_1
                - \tau_3
            \Vert_{X_{\sigma,T}}^2
            \sum_{k=1,3}
                [
                    1
                    + T^{-\frac{1}{2}+2\delta}
                    \Vert
                        \tau_k
                    \Vert_{X_{\sigma,T}}^2
                    + \Vert
                        \tau_k
                    \Vert_{X_{\sigma,T}}^6
                ]\\
            & + C
            T^{\frac{1}{2} + 2 \delta}
            (
                1
                + \sum_{j=1,3}
                \Vert
                    \tau_j
                \Vert_{X_{\sigma,T}}^2
            )^2
            \Vert
                \tau_1
                - \tau_3
            \Vert_{X_{\sigma,T}}^2
            \\
            & + C
            (
                1
                + \sum_{j=1,3}
                    \Vert
                        \tau_j
                    \Vert_{X_{\sigma,T}}^2
            )^4
            \Vert
                \tau_1
                - \tau_3
            \Vert_{X_{\sigma,T}}^2\\
            & + C T^{-\frac{1}{2}+2\delta} \Vert
                \tau_1
                - \tau_3
            \Vert_{X_{\sigma,T}}^2
            (
                1
                + \sum_{j=1,3}
                    \Vert
                        \tau_j
                    \Vert_{X_{\sigma,T}}^2
            )^2\\
            & \leq C
            (
                1
                + \sum_{j=1,3}
                    \Vert
                        \tau_j
                    \Vert_{X_{\sigma,T}}^2
            )^4
            \Vert
                \tau_1
                - \tau_3
            \Vert_{X_{\sigma,T}}^2\\
            & + C T^{-\frac{1}{2}+2\delta} \Vert
                \tau_1
                - \tau_3
            \Vert_{X_{\sigma,T}}^2
            (
                1
                + \sum_{j=1,3}
                    \Vert
                        \tau_j
                    \Vert_{X_{\sigma,T}}^2
            )^2.
        \end{split}
    \end{align*}
\end{proof}

\begin{proposition} \label{prop_H3_a_pripori_estimate_for_V}
    Let $T \in [0,1]$ and $(\tau_j, \tau_{j+1}) \in B_{X_{\sigma, T}^2, y^\ast_1}$ for $j=1,3$.
    Assume that $T$ satisfies (\ref{eq_assumption_for_tau1_tau2_for_exponential_deal_with_term}).
    Then
    \begin{align} \label{eq_H3_a_pripori_estimate_for_V}
        \begin{split}
            & \sum_{j=1,2}
                \Vert
                    \partial_x \partial_t \overline{\mathcal{V}}_j(t)
                \Vert_{L^2(I)}^2\\
            & + \sum_{j=1,2}
            \left[
                \int_0^t
                    \Vert
                        \partial_s^2 \overline{\mathcal{V}}_j(s)
                    \Vert_{L^2(I)}^2
                ds
                + \int_0^t
                    \Vert
                        \partial_x^2 \partial_s \overline{\mathcal{V}}_j(s)
                    \Vert_{L^2(I)}^2
                ds
            \right]\\
            & \leq C_1
            \Vert
                \tau_1
                - \tau_3
            \Vert_{X_{\sigma,T}}^2
            + C_2 T^{-\frac{1}{2}+2\delta} \Vert
                \tau_1
                - \tau_3
            \Vert_{X_{\sigma,T}}^2
        \end{split}
    \end{align}
    for some constants $C_1, C_2>0$ which depend continuously on $\Vert \tau_j \Vert_{X_{\sigma,T}} \in [0, y^\ast_1]$.
    Moreover, $V_j$ for $j=1,2$ also satisfy the same inequality for some $C_1, C_2>0$.
\end{proposition}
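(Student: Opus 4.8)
The plan is to differentiate the equation (\ref{eq_overline_V_difference_equation_for_iteration_of_full_filter_clogging_equations}) for $\overline{\mathcal{V}}_j$ in $t$ and to re-run, one derivative higher, the $H^1$--a priori estimate argument already used in Proposition \ref{prop_L_infty_H1_estimate_for_v} and Corollary \ref{cor_H3_estimate_for_v}. Writing $\Theta := \frac{1}{2}(F(\tau_1) + F(\tau_3))$, the function $\overline{\mathcal{V}}_j$ solves the heat equation with homogeneous fourth boundary conditions $B_1(\overline{\mathcal{V}}_j; \Theta) = B_2(\overline{\mathcal{V}}_j; \Theta) = 0$ and forcing $-\mathcal{N}_{V,0}(v_j, v_{j+2}, \tau_1, \tau_3) \mp \mathcal{N}_{V,j}(v_1, v_2, v_3, v_4) - \mathcal{W}_j$, the sign of the middle term depending on $j$. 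Because $\Theta$ depends on $t$, applying $\partial_t$ produces the inhomogeneous data $B_1(\partial_t \overline{\mathcal{V}}_j; \Theta) = \Theta^\prime \gamma_+ \overline{\mathcal{V}}_j$ and $B_2(\partial_t \overline{\mathcal{V}}_j; \Theta) = - \Theta^\prime \gamma_- \partial_x \overline{\mathcal{V}}_j$; I would remove these with the extension operator of Lemma \ref{lem_extension_theorem}, setting $\overline{\mathcal{V}}_{j,1} := E(\Theta^\prime \overline{\mathcal{V}}_j; 1 - \Theta)$ and $\overline{\mathcal{V}}_{j,2} := \partial_t \overline{\mathcal{V}}_j - \overline{\mathcal{V}}_{j,1}$, so that $\overline{\mathcal{V}}_{j,2}$ satisfies the heat equation with \emph{homogeneous} boundary conditions relative to $\Theta$, with vanishing initial data (recall $\overline{\mathcal{V}}_j(\cdot,0) = 0$), and with forcing $\tilde{f}_\ast := \partial_t(-\mathcal{N}_{V,0} \mp \mathcal{N}_{V,j} - \mathcal{W}_j) - (\partial_t - \partial_x^2)\overline{\mathcal{V}}_{j,1}$. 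To $\overline{\mathcal{V}}_{j,2}$ I would then apply the energy estimate of Proposition \ref{prop_L_infty_H1_estimate_for_v} verbatim (with $F(\sigma)$ replaced by $\Theta$ and $|dF(\sigma)/dt| \leq C|\sigma^\prime|$ replaced by $|\Theta^\prime| \leq C(|\tau_1^\prime| + |\tau_3^\prime|)$), multiply by $\partial_t \overline{\mathcal{V}}_{j,2}$, integrate by parts using the homogeneous boundary conditions, and close by Gronwall's inequality, obtaining the $L^\infty_t L^2_x$-bound for $\partial_x \overline{\mathcal{V}}_{j,2}$ and the $L^2_t L^2_x$-bounds for $\partial_t \overline{\mathcal{V}}_{j,2}$ and $\partial_x^2 \overline{\mathcal{V}}_{j,2}$ in terms of $\Vert \tilde{f}_\ast \Vert_{L^2_t L^2_x(Q_T)}^2$.

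The second step is to bound $\Vert \tilde{f}_\ast \Vert_{L^2_t L^2_x(Q_T)}^2$, and all the ingredients are already available. The term $\Vert \partial_t \mathcal{N}_{V,j} \Vert_{L^2_t L^2_x(Q_T)}^2$ is controlled by Proposition \ref{prop_estimate_for_dt_mathcal_N_j}, $\Vert \partial_t \mathcal{N}_{V,0} \Vert_{L^2_t L^2_x(Q_T)}^2$ by Proposition \ref{prop_integration_by_parts_for_dt_mathcal_N_V}, and $\partial_t \mathcal{W}_j = \partial_t^2 \mathcal{V}_j - \partial_x^2 \partial_t \mathcal{V}_j$ by Proposition \ref{prop_estimates_for_mathcal_V_j}(iii). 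For the extension correction $(\partial_t - \partial_x^2)\overline{\mathcal{V}}_{j,1}$ I would use Proposition \ref{prop_estimate_for_dtE} to move the $t$-derivatives onto $\Theta^\prime \overline{\mathcal{V}}_j$, which produces terms in $\Theta^\prime$ and $\Theta^{\prime\prime}$ multiplied by $H^1$- and $H^2$-norms of $\overline{\mathcal{V}}_j$ and by $\partial_t \overline{\mathcal{V}}_j$; these norms are bounded by Proposition \ref{prop_H1_estimate_for_V_j} and by the $L^\infty_t L^2_x \cap L^2_t H^1_x$-estimate for $\partial_t \overline{\mathcal{V}}_j$ established just above, while the factors $\Theta^\prime$, $\Theta^{\prime\prime}$ are estimated by $|\tau_j^\prime|$, $|\tau_j^\prime|^2$, $|\tau_j^{\prime\prime}|$ exactly as in the proof of Proposition \ref{prop_estimates_for_difference_between_boundary_extension_terms}, and then converted into $\Vert \tau_j \Vert_{X_{\sigma,T}}$ and powers of $T$ via (\ref{eq_estimates_for_W1_infty_H1_H14_H2_by_X_sigma_T}). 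Collecting everything, using $x^m \leq 1 + x^n$ and the normalization (\ref{eq_assumption_for_tau1_tau2_for_exponential_deal_with_term})--(\ref{eq_estimate_for_Psi_ast_for_small_t}), and adding the already estimated piece $\overline{\mathcal{V}}_{j,1}$ back in, gives (\ref{eq_H3_a_pripori_estimate_for_V}) for $\overline{\mathcal{V}}_j$. The assertion for $V_j$ then follows from $V_j = \overline{\mathcal{V}}_j + \mathcal{V}_j$ by adding the bounds on $\partial_x \partial_t \mathcal{V}_j$, $\partial_t^2 \mathcal{V}_j$ and $\partial_x^2 \partial_t \mathcal{V}_j$ supplied by Proposition \ref{prop_estimates_for_mathcal_V_j}, which are of the same form $C_1 \Vert \tau_1 - \tau_3 \Vert_{X_{\sigma,T}}^2 + C_2 T^{-1/2+2\delta} \Vert \tau_1 - \tau_3 \Vert_{X_{\sigma,T}}^2$.

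The main obstacle is the bookkeeping of the powers of $T$. The forcing $\tilde{f}_\ast$ unavoidably contains the second time derivative $\tau_j^{\prime\prime}$, both through $\mathcal{W}_j = \partial_t \mathcal{V}_j - \partial_x^2 \mathcal{V}_j$ and through the extension of $\Theta^\prime \overline{\mathcal{V}}_j$, and in $X_{\sigma,T}$ this quantity is controlled only by $\sup_{0<t<T} t^{1/2-\delta}|\tau_j^{\prime\prime}(t)|$, so that passing to $L^2$ or $L^\infty$ in time costs a negative power of $T$. One has to arrange that, after Gronwall and the estimates (\ref{eq_estimates_for_W1_infty_H1_H14_H2_by_X_sigma_T}), the only negative power surviving in front of $\Vert \tau_1 - \tau_3 \Vert_{X_{\sigma,T}}^2$ is exactly $T^{-1/2+2\delta}$ and that $C_2$ remains continuous in $\Vert \tau_j \Vert_{X_{\sigma,T}}$ on $[0, y^\ast_1]$; as in Theorem \ref{thm_existence_of_v_1_v_2_for_given_tau_1} and the preceding propositions, this is achieved by systematically using the $L^2(0,T)$ and $L^4(0,T)$ norms of $\tau_j^\prime$ in place of its $L^\infty(0,T)$ norm wherever the argument allows. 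A secondary subtlety is that after differentiating in $t$ \emph{both} boundary operators $B_1$ and $B_2$ become inhomogeneous, so the full strength of Lemma \ref{lem_extension_theorem} --- which matches both trace relations simultaneously with an $\alpha$-independent constant --- is essential; a correction homogenizing only $B_1$ would leave an uncontrollable boundary term in the integration by parts.
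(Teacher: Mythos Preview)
Your proposal is correct and follows essentially the same route as the paper: the paper simply invokes Corollary~\ref{cor_H3_estimate_for_v} (applied to $\overline{\mathcal{V}}_j$ with $\sigma$ replaced by $\tau_1,\tau_3$ through $\Theta$), which already packages the differentiate-in-$t$, extend-to-homogenize, and apply-Proposition~\ref{prop_L_infty_H1_estimate_for_v} steps that you spell out explicitly, and then bounds the resulting terms $I_1$--$I_4$ using exactly the Propositions~\ref{prop_estimates_for_mathcal_V_j}, \ref{prop_estimate_for_dt_mathcal_N_j}, \ref{prop_integration_by_parts_for_dt_mathcal_N_V} and estimates (\ref{eq_L2_estimate_for_mathcal_overline_V}), (\ref{eq_H1_estimate_for_mathcal_overline_V}) you identify. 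Your remarks on the $T$-bookkeeping and the need for the full two-sided extension of Lemma~\ref{lem_extension_theorem} are accurate and match the mechanism hidden inside Corollary~\ref{cor_H3_estimate_for_v}.
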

\begin{proof}
    By Corollary \ref{cor_H3_estimate_for_v}, we observe that
    \begin{align*}
        \begin{split}
            & \sum_{j=1,2}
            \left[
                \int_0^t
                    \Vert
                        \partial_s^2 \overline{\mathcal{V}}_j(s)
                    \Vert_{L^2(I)}^2
                ds
                + \Vert
                    \partial_x \partial_t \overline{\mathcal{V}}_j(t)
                \Vert_{L^2(I)}^2
                + \int_0^t
                    \Vert
                        \partial_x^2 \partial_s \overline{\mathcal{V}}_j(s)
                    \Vert_{L^2(I)}^2
                ds
            \right]\\
            & \leq C
            \sum_{j=1,2}
            \left[
                \sum_{k=1,3}
                    \Vert
                        \tau_k^\prime
                    \Vert_{L^\infty(0,T)}^2
                \int_0^t
                    (
                        \Vert
                            \overline{\mathcal{V}}_j(s)
                        \Vert_{H^2(I)}^2
                        + \Vert
                            \partial_t \overline{\mathcal{V}}_j(s)
                        \Vert_{L^2(I)}^2
                    )
                ds
            \right.\\
            & \left.
                \quad\quad\quad
                +
                \int_0^t
                    \sum_{k=1,3}
                    (
                        \vert
                            \tau_k^\prime(s)
                        \vert^4
                        + \vert
                            \tau_k^{\prime\prime}(s)
                        \vert^2
                    )
                ds
                \sup_{0<t<T} \Vert
                    \overline{\mathcal{V}}_j(t)
                \Vert_{L^2(I)}^2
            \right]\\
            & + C
            \sum_{j=1,2}
                \sum_{k=1,3}
                    \Vert
                        \tau_k^\prime
                    \Vert_{L^\infty(0,T)}^2
                \Vert
                    \overline{\mathcal{V}}_j(t)
                \Vert_{H^1(I)}^2\\
            & + C
            \sum_{j=1,2}
            \left[
                \sup_{0<t<T} \Vert
                        \overline{\mathcal{V}}_j(s)
                    \Vert_{L^2(I)}
                \int_0^t
                    \left(
                        1
                        +
                        \sum_{k=1,3}
                            \vert
                                \tau_k^\prime(s)
                            \vert^2
                    \right)
                ds
            \right]\\
            & + C
            \sum_{j=1,2}
                \int_0^t
                    \Vert
                        \partial_s \mathcal{N}_{V,0}(v_1, v_3, \tau_1, \tau_3)
                        - \partial_s \mathcal{N}_{V,j}(v_1, v_2, v_3, v_4)
                        - \partial_s \mathcal{W}_j
                    \Vert_{L^2(I)}^2
                ds\\
            & =: I_1 + I_2 + I_3 + I_4.
        \end{split}
    \end{align*}
    We find from (\ref{eq_H1_estimate_for_mathcal_overline_V}) that
    \begin{align*}
        I_1
        & \leq C
        T^{-\frac{1}{2}+2\delta}
        \sum_{k=1,3}
            \Vert
                \tau_k
            \Vert_{X_{\sigma,T}}^2
        (
                1
                + \sum_{j=1,3}
                \Vert
                    \tau_j
                \Vert_{X_{\sigma,T}}^2
            )^2
            \Vert
                \tau_1
                - \tau_3
            \Vert_{X_{\sigma,T}}^2\\
        & + C
        \sum_{k=1,3}
        (
            T^{4\delta} \Vert
                \tau_k
            \Vert_{X_{\sigma,T}}^4
            + T^{2\delta} \Vert
                \tau_k
            \Vert_{X_{\sigma,T}}^2
        )
        \Vert
            \tau_1
            - \tau_3
        \Vert_{X_{\sigma,T}}^2
        (
            1
            + \sum_{j=1,3}
                \Vert
                    \tau_j
                \Vert_{X_{\sigma,T}}^2
        ).
    \end{align*}
    We find from (\ref{eq_L2_estimate_for_mathcal_overline_V}) that
    \begin{align*}
        I_2
        \leq C
        T^{-\frac{1}{2}+2\delta}
        \sum_{k=1,3}
            \Vert
                \tau_k
            \Vert_{X_{\sigma,T}}^2
        (
            1
            + \sum_{j=1,3}
                \Vert
                    \tau_j
                \Vert_{X_{\sigma,T}}^2
        )^2
        \Vert
            \tau_1
            - \tau_3
        \Vert_{X_{\sigma,T}}^2,
    \end{align*}
    and
    \begin{align*}
        I_3
        \leq C
        \Vert
            \tau_1
            - \tau_3
        \Vert_{X_{\sigma,T}}^2
        (
            T
            + T^{\frac{1}{2}+ 2 \delta} \sum_{k=1,3}
                \Vert
                    \tau_k
                \Vert_{X_{\sigma,T}}^2
        )
        (
            1
            + \sum_{k=1,3}
                \Vert
                    \tau_k
                \Vert_{X_{\sigma,T}}^2
        ).
    \end{align*}
    We see from Propositions \ref{prop_estimates_for_mathcal_V_j}, \ref{prop_estimate_for_dt_mathcal_N_j}, and \ref{prop_integration_by_parts_for_dt_mathcal_N_V} that
    \begin{align*}
        I_4
        & \leq C
        (
            1
            + \sum_{j=1,3}
                \Vert
                    \tau_j
                \Vert_{X_{\sigma,T}}^2
        )^4
        \Vert
            \tau_1
            - \tau_3
        \Vert_{X_{\sigma,T}}^2\\
        & + C T^{-\frac{1}{2}+2\delta} \Vert
            \tau_1
            - \tau_3
        \Vert_{X_{\sigma,T}}^2
        \sum_{j=1,3}
            \Vert
                \tau_j
            \Vert_{X_{\sigma,T}}^2.
    \end{align*}
    Summing up these inequalities, we obtain the conclusion.
    Moreover, we find from Proposition \ref{prop_estimates_for_mathcal_V_j} that $V_1, V_2$ also satisfy (\ref{eq_H3_a_pripori_estimate_for_V}).
\end{proof}

\begin{proof}{Proof of Theorem \ref{thm_local_in_time_existence_for_sigma1_sigma2}}
We assume that $\Vert (\tau_1, \tau_2) \Vert_{X_{\sigma,T}^2} \leq y^\ast_1$.
Now we consider the equations for the difference (\ref{eq_difference_equation_for_iteration_of_full_filter_clogging_equations}) with $\psi_j = v_j$ for $j=1,2,3,4$.
We find from Propositions \ref{prop_estimates_for_difference_between_nonlinear_terms}(i) and \ref{prop_H1_estimate_for_V_j} that
\begin{align} \label{eq_estimate_for_difference_between_sigma1_sigma3_for_fixed_point_0}
    \begin{split}
        & \sup_{0<t<T}
            \left \vert
                \sigma_1(t)
                - \sigma_3(t)
            \right \vert
        \\
        & \leq C T \sum_{j=1,2}
            \sup_{0<t<T}
                \vert
                    \mathcal{N}_{\Sigma,j}(\tau_1, \tau_2, \tau_3, \tau_4)
                \vert
        + C T \sup_{0<t<T}
            \vert
                \mathcal{N}_{\Sigma,0}(\tau_1, \tau_2, \tau_3, \tau_4)
            \vert\\
        %& \leq C T
        %    (
        %        1
        %        + \sum_{j=1,3}
        %            \Vert
        %                \tau_j
        %            \Vert_{X_{\sigma,T}}
        %    )
        %    \sum_{j=1,2}
        %        \Vert
        %            \tau_1
        %            - \tau_{j+2}
        %        \Vert_{X_{\sigma,T}}\\
        %& + C T
        %(
        %    1
        %    + \sum_{j=1,3}
        %        \Vert
        %            \tau_j
        %        \Vert_{X_{\sigma,T}}^2
        %)
        %\Vert
        %    \tau_1
        %    - \tau_3
        %\Vert_{X_{\sigma,T}}^2\\
        & \leq C T
        \Vert
            \tau_1
            - \tau_3
        \Vert_{X_{\sigma,T}},
        %& \leq C T (
        %    1
        %    + (y^\ast_1)^2
        %)
        %\Vert
        %    \tau_1
        %    - \tau_3
        %\Vert_{X_{\sigma,T}}
    \end{split} 
\end{align}
where $C>0$ depends continuously on $y^\ast_1$ and $CT$ can be taken small for small $T>0$.
Similarly, we find that
\begin{align} \label{eq_estimate_for_difference_between_sigma1_sigma3_for_fixed_point_1}
    \begin{split}
        & \sup_{0<t<T} t^{\frac{1}{4}-\delta}
            \left \vert
                \frac{d\sigma_1(t)}{dt}
                - \frac{d\sigma_3(t)}{dt}
            \right \vert\\
        %& \leq C T^{\frac{1}{4}-\delta} (
        %    1
        %    + (y^\ast_1)^2
        %)
        & \leq C T^{\frac{1}{4}-\delta}
        \Vert
            \tau_1
            - \tau_3
        \Vert_{X_{\sigma,T}}
    \end{split} 
\end{align}
In the same way as Proposition \ref{prop_estimates_for_difference_between_nonlinear_terms} (iii)-(iv), we observe that
\begin{align*}
    \begin{split}
        & 
        \left \vert
            \sum_{j=1,2}
                \frac{d}{dt}\mathcal{N}_{\Sigma,j}(\tau_1, \tau_2, \tau_3, \tau_4)
        \right \vert\\
        &\leq C \left(
            \sum_{j=1,3}
                \vert
                    \tau_j^\prime
                \vert
            \vert
                \tau_2
            \vert
            + \vert
                \tau_2^\prime
            \vert
        \right)
        \vert
            \tau_1
            - \tau_3
        \vert
        + C \vert
            \tau_2
        \vert
        \vert
            \tau_1^\prime
            - \tau_3^\prime
        \vert\\
        & + C \vert
            \tau_3^\prime
        \vert
        \vert
            \tau_2
            - \tau_4
        \vert
        + C
        \vert
            \tau_2^\prime
            - \tau_4^\prime
        \vert
        + C \sum_{j=2,4}
        (
            \vert
                \tau_j^\prime
            \vert
            \vert
                \tau_2
                - \tau_4
            \vert
            + \vert
                    \tau_j
                \vert
            \vert
                \tau_2^\prime
                - \tau_4^\prime
            \vert
        ).
    \end{split}
\end{align*}
Moreover, in the same way as Proposition \ref{prop_estimates_for_difference_between_convection_terms} (ii), we find from the trace theorem, and Propositions \ref{prop_H1_estimate_for_V_j} and \ref{prop_H3_a_pripori_estimate_for_V} that
\begin{align*}
    & \left \vert
        \frac{d}{dt}\mathcal{N}_{\Sigma,0}(\tau_1, \tau_2, \tau_3, \tau_4)
    \right \vert\\
    & \leq C (
        \sum_{j=1,3}
            \vert
                \tau_j^\prime
            \vert
        \vert
            \tau_1
            - \tau_2
        \vert
        + \vert
            \tau_1^\prime
            - \tau_3^\prime
        \vert
    )
    \sum_{j=1,\ldots,4}
        \Vert
           v_j
        \Vert_{H^1(I)}\\
    & + C \vert
        \tau_1
        - \tau_3
    \vert
    \sum_{j=1,\ldots,4}
        \Vert
           \partial_t v_j
        \Vert_{H^1(I)}\\
    & + \sum_{k=1,3}
        \vert
            \tau_k^\prime
        \vert
    \sum_{j=1,2}
        \Vert
            V_j
        \Vert_{H^1(I)}
    + \sum_{j=1,2}
        \Vert
            \partial_t V_j
        \Vert_{H^1(I)}
    )\\
    %& \leq C (
    %    \sum_{j=1,3}
    %        \Vert
    %            \tau_j
    %        \Vert_{X_{\sigma,T}}
    %    \vert
    %        \tau_1
    %        - \tau_2
    %    \vert
    %    + \vert
    %        \tau_1^\prime
    %        - \tau_3^\prime
    %    \vert
    %)
    %(
    %    1
    %    + \sum_{j=1,3}
    %        \Vert
    %            \tau_j
    %        \Vert_{X_{\sigma,T}}^2
    %)^{\frac{1}{2}}\\
    %& + C \Vert
    %    \tau_1
    %    - \tau_3
    %\Vert_{X_{\sigma,T}}
    %\sum_{j=1,\ldots,4}
    %    [
    %        1
    %        + T^{-\frac{1}{2}+2\delta}
    %        \Vert
    %            \sigma_j
    %        \Vert_{X_{\sigma,T}}^2
    %        + \Vert
    %            \sigma_j
    %        \Vert_{X_{\sigma,T}}^6
    %    ]^\frac{1}{2}\\
    %& + C \sum_{k=1,3}
    %    \vert
    %        \tau_k^\prime
    %    \vert
    %(
    %    1
    %    + \sum_{j=1,3}
    %    \Vert
    %        \tau_j
    %    \Vert_{X_{\sigma,T}}^2
    %)
    %\Vert
    %    \tau_1
    %    - \tau_3
    %\Vert_{X_{\sigma,T}}\\
    %& + C (
    %    1
    %    + T^{-\frac{1}{4}+\delta}
    %)
    %\Vert
    %    \tau_1
    %    - \tau_3
    %\Vert_{X_{\sigma,T}}
    %(
    %    1
    %    + \sum_{j=1,3}
    %        \Vert
    %            \tau_j
    %        \Vert_{X_{\sigma,T}}^2
    %).
    &\leq C (
        1
        + T^{-\frac{1}{4}+\delta}
    )
    \Vert
        \tau_1
        - \tau_3
    \Vert_{X_{\sigma,T}}.
\end{align*}
%Since
%\begin{align*}
%    T^{\frac{1}{2}-\delta}[
%        1
%        + T^{-\frac{1}{2}+2\delta}
%        \Vert
%            \sigma_j
%        \Vert_{X_{\sigma,T}}^2
%        + \Vert
%            \sigma_j
%        \Vert_{X_{\sigma,T}}^6
%    ]^\frac{1}{2}
%    \leq C T^{\frac{1}{4}}
%    (
%        1
%        + \Vert
%            \sigma_j
%        \Vert_{X_{\sigma,T}}^2
%    )^3.
%\end{align*}
Therefore, we conclude that
\begin{align} \label{eq_estimate_for_difference_between_sigma1_sigma3_for_fixed_point_2}
    \begin{split}
        & \sup_{0<t<T}
            t^{\frac{1}{2}-\delta}
            \left \vert
                \frac{d^2(
                    \sigma_1(t)
                    - \sigma_3(t)
                )
                }{dt}
            \right \vert\\
        & \leq C
        \sup_{0<t<T} t^{\frac{1}{2}-\delta} \sum_{j=1,2}
            \left \vert
                \frac{d}{dt}\mathcal{N}_{\Sigma,j}(\tau_1, \tau_2, \tau_3, \tau_4)
            \right \vert\\
        & + C \sup_{0<t<T} t^{\frac{1}{2}-\delta} \sum_{j=1,2}
            \left \vert
                \frac{d}{dt}\mathcal{N}_{\Sigma,0}(\tau_1, \tau_2, \tau_3, \tau_4)
            \right \vert\\
        & \leq C T^{\frac{1}{4}}
        \sum_{j=1,2}
            \Vert
                \tau_j
                - \tau_{j+2}
            \Vert_{X_{\sigma,T}}
        %& \leq C T^{\frac{1}{4}}
        %    \sum_{j=1,\ldots,4}
        %    (
        %        1
        %        + (y_1^\ast)^2
        %    )^3
        %\sum_{j=1,2}
        %    \Vert
        %        \tau_j
        %        - \tau_{j+2}
        %    \Vert_{X_{\sigma,T}}
    \end{split} 
\end{align}
for $C>0$ depends continuously on $y^\ast_1$ and $CT$ can be taken small for small $T>0$.
We find from (\ref{eq_estimate_for_difference_between_sigma1_sigma3_for_fixed_point_0}), (\ref{eq_estimate_for_difference_between_sigma1_sigma3_for_fixed_point_1}), (\ref{eq_estimate_for_difference_between_sigma1_sigma3_for_fixed_point_2}) that the solution operator $(\mathcal{S}_{v,1}, \mathcal{S}_{v,2})$ is a contraction mapping in $B_{X_{\sigma, T}^2, y^\ast_1}$ for small $T>0$.
Banach's fixed point theorem implies that there exists a unique solution to (\ref{eq_abstract_filter_clogging_equation_in_the_interior}) in $X_{\sigma, T}^2$ for small $T>0$.
\end{proof}

We next establish the a priori estimates.
Recall that we have already proved the solution $(v_1, v_2)$ exists globally-in-time.
\subsubsection*{Step 3: a priori estimates}
    Using the positivity of $\sigma_{j,0}>0$ and $v_1$, the definitions of $N_{\sigma, j}$, the comparison principle for ordinary differential equations, we find that $\sigma_1(t)$ is non-negative for all $t\geq0$.
    Therefore, we also see that $\sigma_2(t)$ is non-negative for $t\geq 0$.
    By the Gronwall inequality, we have
    \begin{align*}
        \begin{split}
            \sigma_1(t)
            & \leq 
            \sigma_{0,1}
            + C \int_0^t
                \gamma_+ v_1
            ds\\
            & \leq \sigma_{0,1}
            + C \int_0^t
                \Vert
                    v_1(s)
                \Vert_{H^1(I)}
            ds,\\
            \sigma_2(t)
            & \leq C e^{R_1 t}
            \left[
                \sigma_{0,2}
                + \int_0^t
                    \gamma_+ v_2(s)
                ds
            \right]\\
            & \leq C e^{R_1 t}
            \left[
                \sigma_{0,2}
                + \int_0^t
                    \Vert
                        v_2(s)
                    \Vert_{H^1(I)}
                ds
            \right].
        \end{split}
    \end{align*}
    The integrand in the right-hand side is bounded because of the embedding $L^2(0,T) \hookrightarrow L^1(0,T)$.
    Propositions \ref{prop_estimate_for_dsigma_L_infty} and \ref{prop_nonlinear_estimates_for_Nv} (ii) implies
    \begin{align*}
        \begin{split}
            & \sum_{j=1,2}
                \left \Vert
                    \frac{d\sigma_j}{dt}
                \right \Vert_{L^\infty(0,T)}\\
            & \leq C
            (
                \Vert
                    \sigma_2
                \Vert_{L^\infty(0,T)}
                + \Vert
                    \sigma_2
                \Vert_{L^\infty(0,T)}^2
                + \sum_{j=1,2}
                    \Vert
                        v_j
                    \Vert_{L^\infty_tH^1_x(Q_T)}
            )\\
            & < \infty,
        \end{split}
    \end{align*}
    and
    \begin{align*}
        \begin{split}
            & \sum_{j=1,2}
                \left \Vert
                    \frac{d^2\sigma_j}{dt^2}
                \right \Vert_{L^\infty(0,T)}\\
            & \leq C
            \sum_{j=1,2}
                \Vert
                    \sigma_j
                \Vert_{L^\infty(0,T)}
            \sum_{k=1,2}
                \Vert
                    \sigma_k^\prime
                \Vert_{L^\infty(0,T)}\\
            & + C \sum_{j=1,2} \Vert
                \sigma_j^\prime
            \Vert_{L^\infty(0,T)}
            \sum_{j=1,2}
                \Vert
                    v_j
                \Vert_{L^\infty_tH^1_x(Q_T)}
            + C \sum_{j=1,2}
                \Vert
                    \partial_t v_j
                \Vert_{L^\infty_tH^1_x(Q_T)}\\
            & < \infty.
        \end{split}
    \end{align*}
    Suppose that there exists some $T^\ast>0$ such that $\Vert(\sigma_1, \sigma_2) \Vert_{X_{\sigma,T}^2} = \infty$.
    However, the a priori estimates implies that the solution $\Vert (\sigma_1, \sigma_2)\Vert_{W^{2, \infty}(0,T^\ast)}<\infty$.
    Therefore, there exists $\varepsilon>0$ we can extend the solution $(\sigma_1, \sigma_2)$ can exist up to $(0,T^\ast+\varepsilon)$.
    By contraction argument, $(\sigma_1, \sigma_2)$ exists globally-in-time.

\begin{proof}[Proof of Theorem \ref{thm_global_well_posedness_for_eq_abstract_filter_clogging_equation}]
    This is a direct consequent from arguments in Step 1-3.
    %We invoke that since $v_1 = v_2 =0$ $\sigma_1, \sigma_2 = 0$ are a sub solution to (\ref{eq_filter_clogging_nondimensional}), the solutions $v_1, v_2, \sigma_1, \sigma_2$ are positive if as long as the data $f, v_1(0), v_2(0), \sigma_1(0), \sigma_2(0)$ are all positive by the maximal principle, see Theorem 11-12 in Section 7 in \cite{Evans2010}.
\end{proof}

\section{Numerical Experiments} \label{sec_numerical_experiments}
\subsection{Discretization}
To perform numerical experiments, we implement a discretization scheme for the equations (\ref{eq_filter_clogging_nondimensional}).
In this section, our aim is not to establish specific numerical-analytical results for the equations (\ref{eq_filter_clogging_nondimensional}), but to explain the discretization scheme for the numerical experiments in the Section \ref{subsec_numerical_experiments}.
We will first focus on the simplified toy equation:
\begin{equation} \label{eq_filter_clogging_eq_simplified_for_discretization}
    \begin{split}
        \begin{aligned}
            &\partial_t u - \nu \partial_x^2 u + c \partial_x u
            = f(u),
            & t \in (0, T),
            & \, x \in I,\\
            &B_1(u; \theta)
            = 0,
            & t \in (0, T),
            & \,\\
            &B_2(u; \theta)
            =0,
            & t \in (0, T),
            & \,\\
            &u
            =u_0,
            & t=0,
            &\,x \in I.
        \end{aligned}
    \end{split}
\end{equation}
for some sufficiently smooth function $f$, $\theta$, $u_0$, and the velocity $c = c(t)$.
We employ a spatial discretization technique based on the finite difference method.
The domain $I=(0,1)$ is decomposed into
\begin{align*}
    & I
    = \cup_{j=0,N-1} (x_j, x_{j+1}]
    \cup (x_{N-1}, x_N), \\
    & x_0
    = 0,\,
    x_N
    = 1, \,
    x_{j+1}
    = x_j
    + \Delta x, \,
    \Delta x = \frac{1}{N}.
\end{align*}
We denote the discretization for $u$ by
\begin{align*}
    u_j
    = u_j(t)
    = u(x_j, t),\,
    t>0.
\end{align*}
For boundary points $x_0$ and $x_N$ of $I$, we introduce additional synthetic points $x_{-1}$ and $x_{N+1}$ outside of them.
These points are defined by
\begin{align*}
    x_{-1}
    = - \Delta x, \,
    x_{N+1}
    = 1
    + \Delta x.
\end{align*}
We denote $f_k := f(u_k)$.
We discretize the first equation in (\ref{eq_filter_clogging_eq_simplified_for_discretization}) as follows:
\begin{align} \label{eq_discretization_for_filter_clogging_eq_simplified}
    & \frac{
        d u_j
    }{dt}
    - \nu \frac{
        u_{j+1} - 2 u_{j} + u_{j-1}
    }{
        (\Delta x)^2
    }
    + c \frac{
        u_{j+1} - u_{j-1}
    }{
        2 \Delta x
    }
    = f_j
\end{align}
for $j = 0, 1, \ldots, N$.
We also discretize the boundary conditions in (\ref{eq_filter_clogging_eq_simplified_for_discretization}) as follows:
\begin{align} \label{eq_discretization_for_boundary_conditions_for_filter_clogging_eq_simplified}
    \begin{split}
        & \frac{
            u_{N+1} + u_{N-1}
        }{
            2
        }
        - \frac{
            u_{1} + u_{-1}
        }{
            2
        }
        = \theta  \frac{
            u_{N+1} + u_{N-1}
        }{
            2
        },\\
        & \frac{
            u_{N+1} - u_{N-1}
        }{
            2 \Delta x
        }
        - \frac{
            u_{1} - u_{-1}
        }{
            2 \Delta x
        }
        = - \theta \frac{
            u_{1} - u_{-1}
        }{
            2 \Delta x
        }.
    \end{split}
\end{align}
The discretization is second-order in $\Delta x$.
We solve (\ref{eq_discretization_for_boundary_conditions_for_filter_clogging_eq_simplified}) to obtain the formula
\begin{align} \label{eq_formula_for_u_N+1_u_minus1}
    \left[
        \begin{array}{c}
            u_{N+1}\\
            u_{-1}
        \end{array}
    \right]
    = \frac{
        1
    }{
        1 + A^2
    }
    \left[
        \begin{array}{c}
            (1 - A^2) u_{N-1}
            + 2 A u_{1}\\
            2A u_{N-1}
            + (- 1 + A^2) u_{1}
        \end{array}
    \right]
\end{align}
for
\begin{align*}
    A := 1 - \theta.
\end{align*}
We numerically solve equation (\ref{eq_filter_clogging_eq_simplified_for_discretization}) using the Crank-Nicolson scheme with (\ref{eq_discretization_for_filter_clogging_eq_simplified}) and (\ref{eq_formula_for_u_N+1_u_minus1}).
At time step $t_{k}$ ($k\in \Integer_{\geq0}$), we first solve (\ref{eq_filter_clogging_eq_simplified_for_discretization}) using the Euler scheme and obtain $u^{k+1}_{j} := u_j(t_{k+1})$ ($j=0, 1,\ldots, N$).
Then, we have the approximated values
\begin{align*}
    f^{k+1}_{j} := f(u^{k+1}_{j}), \quad
    j=1,2, \ldots, N.
\end{align*}
Using $f^{k+1}_{j}$, we again solve (\ref{eq_filter_clogging_eq_simplified_for_discretization}) using the Crank-Nicolson scheme.
The order of $\Delta t$ resulting from our time discretization is $(\Delta t)^2$.
We apply the same scheme to solve the equations of $\sigma_1, \sigma_2$ in (\ref{eq_filter_clogging_nondimensional}).

We numerically solve the original equations (\ref{eq_filter_clogging_nondimensional}).
Assume that the solution $((v_1)^k_j, (v_2)^k_j, (\sigma_1)^k, (\sigma_2)^k)$ at time $t_k$ has been computed.
Our update procedures are as follows:
\begin{enumerate}
    \item Calculate $(v_1)^{k+1}_j$ ($j=0, 1,\ldots, N$) from $((v_1)^k_j, (v_2)^k_j, (\sigma_1)^k, (\sigma_2)^k)$.
    \item Calculate $(v_2)^{k+1}_j$ ($j=0, 1,\ldots, N$) from $((v_1)^{k+1}_j, (v_2)^k_j, (\sigma_1)^k, (\sigma_2)^k)$.
    \item Calculate $(\sigma_1)^{k+1}$ from $((v_1)^{k+1}_j, (v_2)^{k+1}_j, (\sigma_1)^k)$.
    \item Calculate $(\sigma_2)^{k+1}$ from $((v_1)^{k+1}_j, (v_2)^{k+1}_j, (\sigma_1)^{k+1}, (\sigma_2)^k)$.
\end{enumerate}

%%%%%%%%%%%%%%%%%%%%%%%%%%%%%%%%%%%%%%%%%%%%%%%%%
%%%%%%%%%%%%%%%%%%%%%%%%%%%%%%%%%%%%%%%%%%%%%%%%%

\subsection{Results of numerical experiments}\label{subsec_numerical_experiments}
We conducted some numerical experiments to demonstrate and understand the (asymptotic) behavior of the solution to (\ref{eq_filter_clogging_nondimensional}).
We did not conduct the numerical experiments to (\ref{eq_filter_clogging}) itself.
\begin{table}[h]
    \centering
    \caption{Default parameter settings.}
    \begin{tabular}{|c|c||c|c|}
    %\begin{tabular}{|c|c|c||c|c||c|c|c|}
        \hline
        Parameters    & Value               & Parameters & Value        \\ \hline
        $\Delta t$    & $0.01$              & $\Delta x$ & $1/32$       \\ \hline
        $\nu_1$       & $0.1$               & $\nu_1$    & $0.1$        \\ \hline
        $A$           & $1.0$               & $B$        & $1.0$        \\ \hline
        $C_u$         & $1.0$               & $C_\rho$   & $1.0$        \\ \hline
        $\tilde{R}_1$ & $0.5$               & $R_2$      & $0.5$        \\ \hline
        $\tilde{S}_1$ & $1.0 \times C_\rho$ & $S_2$      & $1.0$        \\ \hline
        $Q_1$         & $1.0$               & $Q_2$      & $1.0/C_\rho$ \\ \hline
        $\tilde{f}$   & $1.0$               & $\Omega$   & $0.1$        \\ \hline
    \end{tabular}
    \label{fig_parameter_settings}
\end{table}
The default parameter settings for the equations (\ref{eq_filter_clogging_nondimensional}) have been specified in Table \ref{fig_parameter_settings}.
We set domain $I=(0,1)$ and the external force $f$ is constant function in $I \times (0,T)$ for $T>0$.
We define the filter power function $F$ by
\begin{align*}
    F
    = F(\sigma_1)
    = \frac{
        1
    }{
        1 + \beta \sigma_1
    }
\end{align*}
for some $\beta>0$.
Note that $F(0)=1$ and $F(+\infty)=0$.
The parameter $\beta$ indicates the rate of decline in the filter's absorption efficiency.
As $\beta$ increases, the deterioration in the filtration performance of the filter occurs more rapidly.
In our numerical experiments, we have defined $\beta=2.0$.

%%%%%%%%%%%%%%%%%%%%%%%%%%%%%%%%%%%%%%%%%%%%%%%%%%%%%%%%%%%%
%%%%%%%%%%%%%%%%%%%%%%%%%%%%%%%%%%%%%%%%%%%%%%%%%%%%%%%%%%%%
\begin{figure*}%[ht]
    \newpage
    \begin{center}
        %\vspace{-100mm}
        %\includegraphics[keepaspectratio, bb =0 0 1700 1357, scale=0.7]{unified_time_series.png}
        %\includegraphics[keepaspectratio, scale=0.7]{unified_time_series_v1.png}
        \includegraphics[keepaspectratio, scale=0.55]{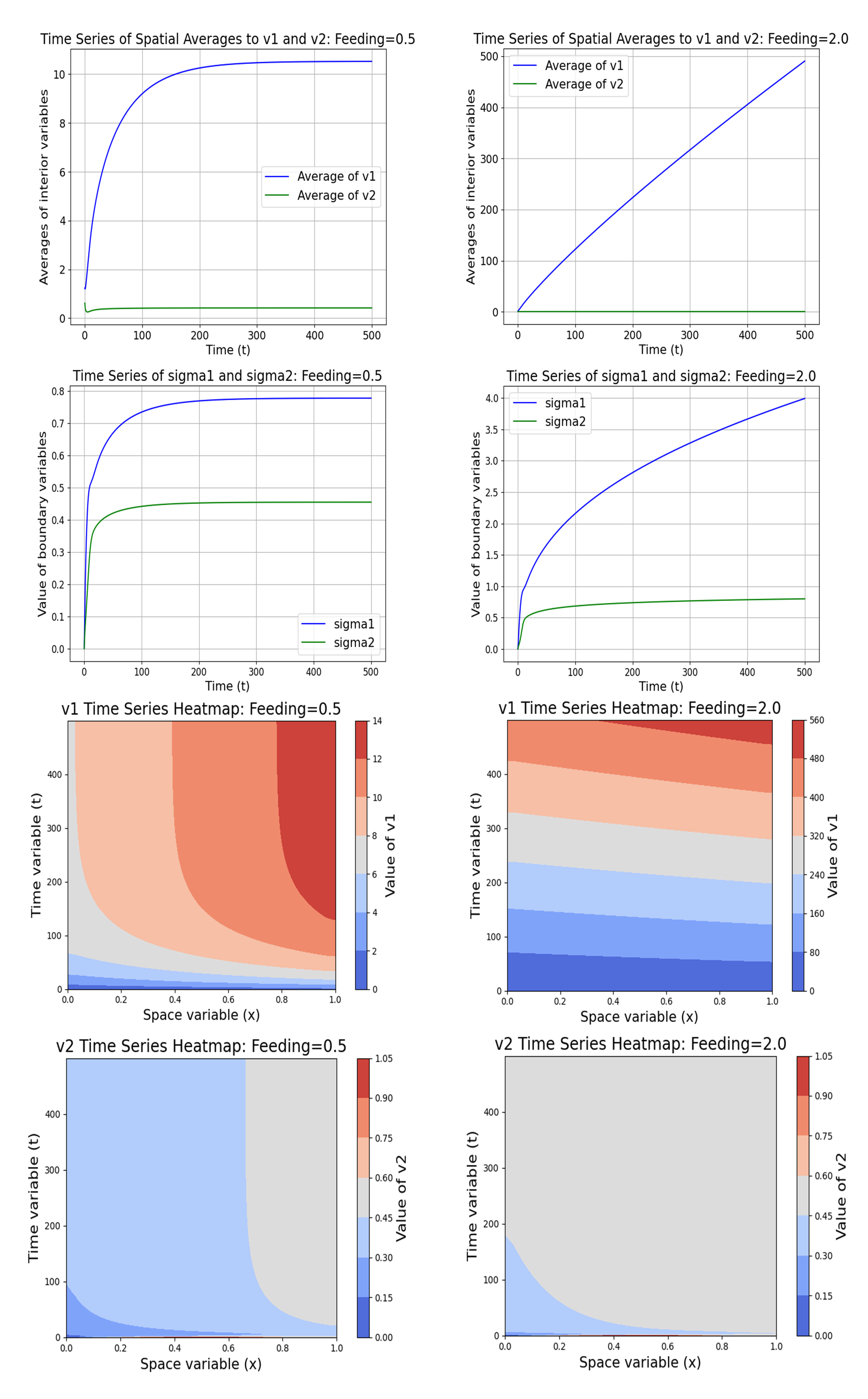}
        \caption{
            Time series of two cases. We compared with two cases: $f=0.5, 2.0$.
            The left-hand and right-hand sides four figures indicate the result when $f=0.5$ and $f=2.0$, respectively.
            The top graphs are time series of the spatial average to $v_1$ and $v_2$.
            The second graphs from the top are time series of $\sigma_1$ and $\sigma_2$.
            The second heatmap from the bottom describes the time series of $v_1$. The horizontal axis describes domain $I$.
            The vertical axis describes time interval $(0,T)$ for $T=500$.
            The second heatmap from the bottom describes the time series of $v_2$.
        }
        \label{fig_comparison_f05_f2}
    \end{center}
\end{figure*}

\begin{figure*}%[ht]
    %\newpage
    \begin{center}
        %\vspace{-100mm}
        %\includegraphics[keepaspectratio, bb =0 0 1700 1357, scale=0.7]{unified_time_series.png}
        %\includegraphics[keepaspectratio, scale=0.9]{test8_clogging_borderline_5000000_editted.png}
        \includegraphics[keepaspectratio, scale=0.9]{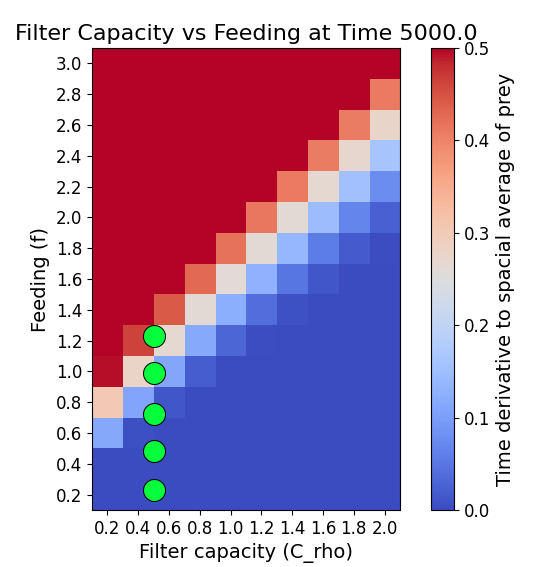}
        \caption{
            Comparison of parameters for the transition.
            The horizontal axis represents the capacity parameter $C_\rho$.
            The vertical axis represents the feeding rate $f$.
            The height corresponds to the value of the time derivative of the time-average of $v_1$ at time $5000$.
            The zero-height area indicates a non-clogging case.
            In cases where the time derivatives are non-zero, continuous growth of $v_1$ (organic matter) is observed.
            For the green points, we illustrate the time series in Fig. \ref{fig_time_series_for_transition}.
        }
        \label{fig_transition_feeding_capacity_vs_feeding}
    \end{center}
\end{figure*}

\begin{figure*}%[ht]
    %\newpage
    \begin{center}
        %\vspace{-100mm}
        %\includegraphics[keepaspectratio, bb =0 0 1700 1357, scale=0.7]{unified_time_series.png}
        \includegraphics[keepaspectratio, scale=0.55]{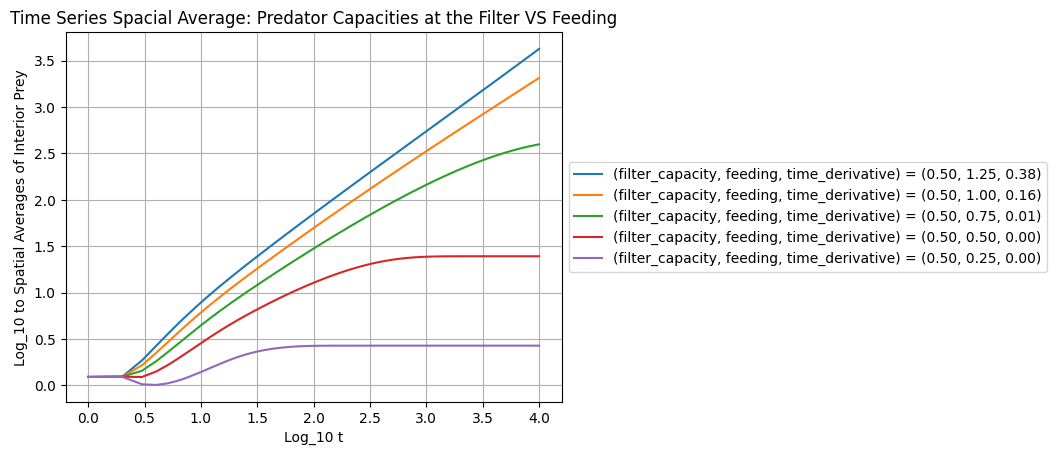}
        \caption{
            Time series graphs for points crossing the transition boundary.
            The horizontal axis represents $log_{10} t$ for time $t \in [0, 5000]$, and the vertical axis represents $log_{10} v_1$. We set $C_{\rho}=0.50$ and conducted numerical experiments for five cases of $f$ with increments of $0.25$, such that $f = 0.25, 0.50, \ldots, 1.25$.
            The time derivative of the spatial average at $t=5000$ is calculated for each case.
        }
        \label{fig_time_series_for_transition}
    \end{center}
\end{figure*}

We first demonstrate that the model (\ref{eq_filter_clogging_nondimensional}) effectively replicates a dynamic in an aquarium.
Figure \ref{fig_comparison_f05_f2} presents time series data for the cases when $f=0.5$ (left-hand side) and $f=2.0$ (right-hand side).
The case with $f=0.5$ shows $v_1, v_2, \sigma_1, \sigma_2$ monotonically converging to some steady state.
This case indicates that when a small amount of dust of organic matter is introduced into the aquarium, it suggests that filtration by bacteria within the filter is effective, and clogging does not occur.
As a result, there is no divergence of $v_1$ within the aquarium $I$.
On the other hand, when $f=2.0$, $v_2$ and $\sigma_2$ converge to some steady states, but $v_1$ and $\sigma_1$ grow up to some unbounded states.
The convergences of $v_2$ and $\sigma_2$ are reasonable because $-v_2^2$ and $\sigma_2^2$ on the right-hand side of (\ref{eq_filter_clogging_nondimensional}) suppress the growth of $v_2$ and $\sigma_2$.
As mentioned in the Introduction, these suppression terms imply that predators do not habit infinitely many in the filtration filter.
In this case, the figures on the right-hand side depict a situation where an excessive amount of dust is introduced into aquarium $I$, resulting in insufficient filtration.
In this scenario, both $v_1$ and $\sigma_1$ exhibit divergence.
The divergence of $\sigma_1$ occurs at a slower rate compared to that of $v_1$.
%Note that the clogging of the filter does not only refer to the state where $F=0$ ($\sigma=\infty$), but also to the temporal increase of $v_1$ and $\sigma_1$.
However, the first and second diagrams from the bottom of Fig. \ref{eq_filter_clogging_nondimensional}, while demonstrating differences in filtration capacity, indicate that filtration indeed works.
In these heatmaps, $v_1$ and $v_2$ absorbed from the right end $x=1$ of domain $I$ are consistently absorbed by the boundary filter.
The portion that was not absorbed flows from the right at $x=0$.
Consequently, the values of $v_1$ and $v_2$ at $x=0$ are smaller than their values at $x=1$.

%%%%%%%%%%%%%%%%%%%%%%%%%%%%%%%%%%%%%%%%%%%%%%%%%%%%%%%%%%%%
%%%%%%%%%%%%%%%%%%%%%%%%%%%%%%%%%%%%%%%%%%%%%%%%%%%%%%%%%%%%

We conducted numerical experiments to demonstrate the transition from cases without clogging to cases with clogging.
Figure \ref{fig_transition_feeding_capacity_vs_feeding} represents the numerical experiment illustrating the transition boundary.
We performed numerical simulations up to time $5000$ for $C_\rho = 0.2, 0.4, \ldots, 2.0$ in increments of 0.2, and for $f = 0.2, 0.4, \ldots, 3.0$ in increments of $0.2$.
The height in the figure represents the time derivative of spatial averages of $v_1$ at time $5000$.
The blue region (with zero height) indicates convergence to a steady state, while the red region (with non-zero height) indicates a growth up.
Cases with a non-zero time derivative imply temporal growth of $v_1$, which can be also understood as a clogging of the filter.
The transition boundary can be characterized as the neighborhood of the line segment connecting the points $(0.2, 0.4)$ and $(2.0, 1.8)$ in Fig. \ref{fig_transition_feeding_capacity_vs_feeding}.

%%%%%%%%%%%%%%%%%%%%%%%%%%%%%%%%%%%%%%%%%%%%%%%%%%%%%%%%%%%%
%%%%%%%%%%%%%%%%%%%%%%%%%%%%%%%%%%%%%%%%%%%%%%%%%%%%%%%%%%%%
We also illustrate the time series graphs for five parameter settings dotted by green points in Fig. \ref{fig_transition_feeding_capacity_vs_feeding}. These points cross the boundary of the transition.
The results are shown in Fig. \ref{fig_time_series_for_transition}.
The horizontal axis represents $\log_{10} t$ for time $t$, and the vertical axis represents $\log_{10} v_1$. We set $C_{\rho}=0.50$ and conducted numerical experiments for five cases of $f$, namely, $f = 0.25, 0.50, \ldots, 1.25$, with increments of $0.25$.
The time derivative for the spatial averages at $t=5000$ is calculated for each case.

The lower two graphs, corresponding to $f=0.25$ and $f=0.50$, represent cases without clogging and are colored in blue in Fig. \ref{fig_transition_feeding_capacity_vs_feeding}.
The upper two graphs, for $f=1.00$ and $f=1.25$, exhibit the divergence of $v_1$ and subsequently filter clogging.
The growth is linear.
The third graph from the bottom, for $f=0.75$, lies on the transition boundary and shows a slower convergence to a steady state without clogging.
The convergence can be observed from the small-time derivative.

\subsection{Conclusion}
The dynamics in an aquarium can be described by the model proposed in this paper.
The model demonstrates a transition from a convergence to a steady state to a divergence.
This transition depends on external forces for the dust $v_1$, and the filtration filter's capacities for microbes $\sigma_2$.
Roughly speaking, the numerical experiments suggest that the convergence of a steady state can be provided when the filtration filter has a sufficiently large amount of filter media compared to the amount of feedings, aligning with our physical intuition.

\section*{Acknowledgments}
The first author is grateful to the members of RIKEN Pioneering Project “Prediction for Science” and Environmental Metabolic Analysis Research Team at RIKEN for their helpful discussions and comments.
The authors are grateful to Professor Takashi Sakajo of Kyoto University for the helpful discussion of the study at this early stage.
The first author was partly supported by RIKEN Pioneering Project “Prediction for Science” and JSPS Grant-in-Aid for Young Scientists No. 22K13948.
The second author was partly supported by JSPS Grant-in-Aid for Scientific Research (B) No. JP21H01004.


\begin{thebibliography}{99}
\bibitem{Abels2002}
    H. Abels, Boundedness of imaginary powers of the Stokes operator in an infinite layer, J. Evol. Equ., 2, 2002, 4, 439--457.

%\bibitem{AlbanezNussenzveigTiti2015}  D. A. F. Albanez, L. H. J. Nussenzveig, and E. S. Titi, Continuous data assimilation for the three-dimensional {N}avier--{S}tokes-{$\alpha$} model, Asymptot. Anal., Asymptotic Analysis, 97, 2016, 1-2, 139-164.

%\bibitem{AzeradGuillen2001} P. Az\'{e}rad and F. Guill\'{e}n, Mathematical justification of the hydrostatic approximation in the primitive equations of geophysical fluid dynamics, SIAMJ. Math. Anal., 33, 4, (2001), 847-859.

%\bibitem{AzouaniOlsonTiti2014} H. Amann, Linear and Quasilinear Parabolic Problems, Monographs in Mathematics, vol. 89, Birkh\"{a}user, 1995.

%\bibitem{AzouaniOlsonTiti2014} A. Azouani, E. Olson, and E.S. Titi, Continuous data assimilation using general interpolant observables, J. Nonlinear Sci., 24, 2014, 2, 277-304.

%\bibitem{CaoTiti2007} C. Cao and E. S. Titi, Global well-posedness of the three-dimensional viscous primitive equations of large scale ocean and atmosphere dynamics, Ann.of Math. (2), 166, 1,  (2007), 245-267.

%\bibitem{CheminDesjardinsGallagherGrenier2006} J. Y. Chemin, B. Desjardins, I. Gallagher, and E. Grenier, Mathematical geophysics, Oxford Lecture Series in %Mathematics and its Applications, The Clarendon Press, Oxford %University Press, Oxford, 32, (2006).

\bibitem{DenkPrussZacher2008} R. Denk, J.Pr\"{u}ss, R. Zacher, Maximal {$L_p$}-regularity of parabolic problems with boundary dynamics of relaxation type J. Funct. Anal., 255, 2008, 11, 3149--3187.

%\bibitem{EngelNagel2000} K. J. Engel and R. Nagel, One-Parameter Semigroups for Linear Evolution Equations. Graduate Texts in Mathematics, vol. 194, Springer, New York, 2000.

\bibitem{Evans2010} L. C. Evans, Partial differential equations, Graduate Studies in Mathematics, 19, Second, American Mathematical Society, Providence, RI, 2010, xxii+749.

%\bibitem{FujitaKato1964} H. Fujita and  T. Kato, {On the {N}avier-{S}tokes initial value problem. {I}}, %Arch. Rational Mech. Anal., 16, (1964), 269-315.

%\bibitem{FurukawaGigaHieberHusseinKashiwabaraWrona2018} K. Furukawa, Y. Giga, M. Hieber, A. Hussein, T. Kashiwabara, and M. Wrona, Rigorous justification of the hydrostatic approximation for the primitive equations by scaledNavier-Stokes equations, Nonlinearity 33,, no. 12, 2020, 6502–-6516.

%\bibitem{FurukawaGigaKashiwabara2021} K. Furukawa, Y. Giga, and T. Kashiwabara, The hydrostatic approximation for the primitive equations by the scaled Navier–Stokes equations under the no-slip boundary condition. J. Evol. Equ. (2021),https://doi.org/10.1007/s00028-021-00674-6.

\bibitem{FurukawaKajiwara2021} K. Furukawa and N. Kajiwara, Maximal {$L_p$}-{$L_q$} regularity for the quasi-steady elliptic problems, J. Evol. Equ., 21, 2021, 2, 1601--1625.

%\bibitem{Gallagher2018} I. Gallagher, Critical function spaces for the well-posedness of the {N}avier-{S}tokes initial value problem, Handbook of mathematical analysis in mechanics of viscous fluids, Springer, Cham, (2018), 647-685.

%\bibitem{GigaGriesHieberHusseinKashiwabara2020_analiticity} Y. Giga, M. Gries, M. Hieber, A. Hussein, and T. Kashiwabara, Analyticity of solutions to the primitive equations, Math. Nachr. 293(2), (2020), 284-304.

\bibitem{GujerHenzeLoosedrechtMino2006} W. Gujer, M. Henze, M. van Loosedrecht, and T. Mino, Activated Sludge Models ASM1, ASM2, ASM2d and ASM3, IWA Publishing, Volume 5, ISBN:9781780402369, 2006.

%\bibitem{GigaGriesHieberHusseinKashiwabara2017_H_infty} Y. Giga, M. Gries, M. Hieber, A. Hussein, and T. Kashiwabara, Bounded {$H^\infty$}-calculus for the hydrostatic Stokes operator on {$L^p$}-spaces and applications, Proc. Amer.Math. Soc., 145, 9, (2017), 3865-3876.

%\bibitem{GigaGriesHieberHusseinKashiwabara2020_L_infty_Lp} Y. Giga, M. Gries, M. Hieber, A. Hussein, and T. Kashiwabara, The hydrostatic {S}tokes semigroup and well-posedness of the primitive equations on spaces of bounded functions, J. Funct. Anal., 279, 3, (2020), 108561, 46.

%\bibitem{GigaGriesHieberHusseinKashiwabara2017_L_infty_L1} Y. Giga, M. Gries, M. Hieber, A. Hussein, and T. Kashiwabara, The Primitive Equations in the scaling invariant space$L^\infty(L^1)$, J. Evol. Equ., (2021), https://doi.org/101007/s00028-021-00716-z.

%\bibitem{GigaGriesHieberHusseinKashiwabara2017_L_infty_L1} Y. Giga, M. Gries, M. Hieber, A. Hussein, and T. Kashiwabara, The Primitive Equations in the scaling invariant space $L^\infty(L^1)%$, arxiv preprint, arXiv:1710.04434, (2017).

%\bibitem{Grafakos2008} L. Grafakos, Classical {F}ourier analysis, Second edition, GraduateTexts in Mathematics, Springer, New York, (2008).

%\bibitem{GuillenMasmoudiRodriguez2001} F. Guill\'{e}n-Gonz\'{a}lez, N. Masmoudi, and M. A. Rodr\'{i}guez-Bellido, Anisotropic estimates and strong solutions of the primitive equations, Differential Integral Equations, 14, 1, (2001), 1381-1408.\bibitem{Sohr2001}

%\bibitem{HieberKashiwabara2016} M. Hieber and T. Kashiwabara, Global strong well-posedness of the three dimensional primitive equations in $l^p$-spaces, Archive Rational Mech. Anal.,(2016), 1077–1115.

%\bibitem{HieberHusseingKashiwabara2016} M. Hieber, A. Hussein, and T. Kashiwabara, Global strong {$L^p$} well-posedness of the 3{D} primitive equations with heat and salinity diffusion, J. Differential Equations, 261, (2016), 6950-6981.

\bibitem{Hintermann1989} T.Hintermann, Evolution equations with dynamic boundary conditions Proc. Roy. Soc. Edinburgh Sect. A, 113, 1989, 1-2, 43-60.

%\bibitem{Kato1984} T. Kato, Strong ${{L}^p}$ solutions of the {N}avier-{S}tokes equations in $\mathbb{{R}}^m$ with applications to weak solutions,  Math. {Z}, 187, (1984), 471-480,

\bibitem{KatoTanabe1962} T. Kato and H. Tanabe, On the abstract evolution equation, Osaka Math. J., Osaka Mathematical Journal, 14, 1962,107-133.

%\bibitem{KukavicaZiane2007} I. Kukavica and M. Ziane, On the regularity of the primitive equations of the ocean, Nonlinearity, 20, 2007, 12, 2739-2753, DOI:{10.1088/0951-7715/20/12/001}.

%\bibitem{KunstmannWeis2004} P. C. Kunstmann and L. Weis, Maximal {$L_p$}-regularity for parabolic equations, {F}ourier multiplier theorems and {$H^\infty$}-functional calculus, Functional analytic methods for evolution equations, Lecture Notes in Math., 1855, 65-311, Springer, Berlin, 2004.

%\bibitem{Lasiecka2013} I. Lasiecka and and M. Wilke, Maximal regularity and global existence of solutions to a quasilinear thermoelastic plate system, Discrete Contin. Dyn. Syst.,33, 2013, 11-12, 5189-5202.

%\bibitem{Lemarie-Rieusset2016} P. G. Lemari\'{e}-Rieusset, The {N}avier-{S}tokes problem in the 21st century, CRC Press, Boca Raton, FL, (2016).
 
%\bibitem{LiTiti2017} J. Li and E. S. Titi, The primitive equations as the small aspect ratio limit of the Navier-Stokes equations: rigorous justification of the hydrostatic approximation, J. Math.Pures Appl. (9) 124, (2019), 30–58.

%\bibitem{LionsTemamWangShou1992} J. L. Lions, R. Temam, and S. H. Wang, New formulations of the primitive equations of atmosphere and applications, Nonlinearity, 5, 2, (1992), 237-288.

%\bibitem{Pei2019} Y. Pei, Continuous data assimilation for the 3{D} primitive equations of the ocean, Commun. Pure Appl. Anal., 18, 2019, 2, 643-661.

%\bibitem{PrussSimonett2004} J. Pr\"{u}ss and G. Simonett, Maximal regularity for evolution equations in weighted $L^p$-spaces, Arch. Math. (Basel), 82, 5, 2004, 415-431.

\bibitem{PrussSimonett2016} J. Pr\"{u}ss and G. Simonett, Moving Interfaces and Quasi linear Parabolic Evolution Equations, Monographs in Mathematics, 105, Birkh\"{a}user, Springer, Cham, 2016, xix+609.

%\bibitem{PrussWilke2017} J. Pr\"{u}ss and M. Wilke, Addendum to the paper “On quasilinear parabolic evolution equations in weighted Lp-spaces II”, MR 3250797, J. Evol. Equ.,  17, 2017, 4, 1381-1388.

\bibitem{Schmudgen2012} K. Schm\"{u}dgen, Unbounded self-adjoint operators on Hilbert space, Graduate Texts in Mathematics, 265, Springer, Dordrecht, 2012, xx+432, 978-94-007-4752-4.

\bibitem{Sohr2001} H. Sohr, The Navier-Stokes equations, Birkh\"{a}user Advanced Texts: Basler Lehrb\"{u}cher., Birkh\"{a}user Advanced Texts: Basel Textbooks, Birkh\"{a}user Verlag, Basel, 2001, x+367.

\bibitem{Tanabebook} H. Tanabe, Equations of evolution, Monographs and Studies in Mathematics, 6, Pitman (Advanced Publishing Program), Boston, Mass.-London, 1979, xii+260.

\end{thebibliography}
\end{document}